\theoremstyle{plain}
\newtheorem{theorem}{Theorem}[section]
\newtheorem{corollary}{Corollary}
\newtheorem{lemma}{Lemma}[section]
\theoremstyle{definition}
\newtheorem{definition}[theorem]{Definition}
\newtheorem{example}{Example}
\newtheorem{remark}{Remark}
\begin{document}

\title{ANOVA for High-dimensional Non-stationary Time Series}
\author{Yunyi Zhang\\
School of Data Science,\\
The Chinese University of Hong Kong, Shenzhen\\
\texttt{zhangyunyi@cuhk.edu.cn}}
\date{}

\maketitle

\begin{abstract}
Temporal dependence and the resulting autocovariances in time series data can introduce bias into ANOVA test statistics, thereby affecting their size and power. This manuscript accounts for temporal dependence in ANOVA and develops a test statistic suitable for high-dimensional, non-stationary time series. Recognizing that the presence of complex fourth-order cumulants may introduce difficulties in variance estimation of the test statistic, we develop a bootstrap algorithm to conduct hypothesis testing through computer simulations. Theoretical results including the asymptotic distribution of the test statistic under the null hypothesis and the validity of the proposed bootstrap algorithm are established. Numerical studies demonstrate a good finite-sample performance of the proposed test statistic. In addition to the new test procedure, this manuscript derives theoretical results on consistency, Gaussian approximation, and variance estimation for quadratic forms of high-dimensional non-stationary time series, which may be of independent interest to researchers.

\textbf{Key words:} ANOVA, Quadratic forms, Non-stationary time series, High-dimensional time series, Bootstrap
\end{abstract}


\section{Introduction} 
\label{section.intro}
Suppose we observe $K$ vector time series $\mathbf{x}_{t,k}\in\mathbf{R}^{d},$ where $k = 1,2,\cdots, K$ and $t = 1,2,\cdots, T_k;$ with $T_k$ denoting the sample size of the $k$-th time series.  Assume that each time series has identical mean $\boldsymbol{\mu}_k = \mathbf{E}[\mathbf{x}_{t,k}]$ for $t = 1,\cdots,T_k.$  This manuscript aims to perform an analysis of variance (ANOVA), which concerns testing the following hypothesis:
\begin{equation}
    H_0: \boldsymbol{\mu}_1 = \boldsymbol{\mu}_2 = \cdots = \boldsymbol{\mu}_K\quad\text{versus}\quad  H_1: \boldsymbol{\mu}_i \neq \boldsymbol{\mu}_j\ \text{for some }i\neq j\in\{1,2,\cdots, K\}.
    \label{eq.ANOVA_hypothesis}
\end{equation}
In particular, when $K = 2,$ the hypothesis in \eqref{eq.ANOVA_hypothesis} reduces to the two-sample test for the equality of two population means.

The two-sample test and the ANOVA problem have been extensively studied in the literature under the assumption of independent observations. For the case $K = 2,$ a classical solution is the Hotelling $T^2$ test introduced in \cite{MR1990662, MR3729293}, which has been widely applied in various fields such as scalp voltage topography analysis \cite{Analysis_of_scalp}, microarray data analysis \cite{10.1093/bioinformatics/bti496},   and proteomic studies \cite{MR2896840}. More recent studies, such as \cite{MR2604697,MR3164870,MR3911118}, have focused on the challenges arising from the high dimensionality of the data, which will be discussed in detail in Section \ref{section.related_literature}.

This manuscript studies ANOVA for high-dimensional time series data. The main distinction between this setting and the setting with independent data lies in the presence of correlations among the observations. Specifically,  when $\mathbf{x}_{t_1,k}$ and $\mathbf{x}_{t_2,k}$ are independent,
the covariance
$$
\mathbf{E}\left[\left(\mathbf{x}_{t_1,k} - \boldsymbol{\mu}_k\right)^\top\left(\mathbf{x}_{t_2,k} - \boldsymbol{\mu}_k\right)\right] = 0.
$$ 
 However, for time series data these covariances are generally nonzero due to the temporal dependence.  Furthermore, test statistics such as that proposed in \cite{MR3911118} consist of the product terms $\mathbf{x}_{t_1,k}^\top\mathbf{x}_{t_2,k}.$  Therefore, the presence of nonzero covariances introduces a non-negligible bias into the expectation of these statistics, which in turn affects the size of the test. 
This phenomenon is further illustrated through numerical examples and calculations in Example \ref{Example.dependency} and Remark \ref{remark.covariance}. 
Consequently, directly applying hypothesis testing procedures developed for independent data to vector time series---without accounting for temporal correlations---may lead to unsatisfactory performance.

\begin{example}
Suppose $K = 2,\ T_1 = T_2 = 100, $ and $d = 200.$ We generate i.i.d. standard normal random variables $\boldsymbol{u}_{t,k},$ where $t = 0,1,\cdots, 100$ and $k = 1,2.$ Define  the following data generating processes:

\begin{enumerate}
    \item \textbf{Independent observations:   }$\mathbf{x}_{t,k} = \boldsymbol{u}_{t,k},$
    \item \textbf{Moving average observations:  } $\mathbf{x}_{t,k} = \boldsymbol{u}_{t,k} + \boldsymbol{u}_{t - 1,k}.$ 
\end{enumerate}
Readers may refer to Chapter 3 of \cite{MR1093459} for an overview of the characteristics of moving average time series. The moving average observations $\mathbf{x}_{t,k}$ are correlated with $\mathbf{x}_{t - 1,k}$ as they share the common innovation term $\boldsymbol{u}_{t - 1,k}.$ Table \ref{table.dependent_error} demonstrates the empirical sizes of several frequently used tests when applied to different data generating processes.  For independent data, all three tests achieve satisfactory sizes (closed to 5\%). However, for moving average observations, the sizes of all tests are substantially inflated.  This inflation is due to temporal correlations in the data, as further discussed in Remark \ref{remark.covariance}.

\begin{table}[htbp]
    \centering
    \caption{Empirical sizes of various hypothesis testing procedures under independent and moving average observations. ``CQ'', ``CLX'', and ``SD,'' respectively, refers to tests mentioned in \cite{MR2604697}, \cite{MR3164870}, and \cite{MR2396970}. Nominal size is 5.0\%, and results is derived through 200 simulations.}
    \begin{tabular}{c c c}
    \hline\hline
    Test procedure     &  Independent &  Moving average\\
    \hline 
    CQ     &   7.5\%  & 100\%\\
    CLX    &   8.5\%  & 88.0\%\\
    SD     &   7.5\%  & 100\%\\
    \hline\hline
    \end{tabular}
    \label{table.dependent_error}
\end{table}
\label{Example.dependency}
\end{example}

\begin{remark}
\label{remark.covariance}
This Remark investigates the underlying cause of the phenomenon in Example \ref{Example.dependency}.  Assume $K = 2$ and consider the test statistic of \cite{MR2604697}. Suppose  $\mathbf{x}_{t,k} = \boldsymbol{\mu}_k + \boldsymbol{\epsilon}_{t,k},$ where $\mathbf{E}[\boldsymbol{\epsilon}_{t,k}] = 0.$ Under this notation, we have 
\begin{equation}
    \mathbf{E} \left[\frac{1}{T_k(T_k - 1)}\sum_{t_1 \neq t_2} \mathbf{x}_{t_1,k}^\top \mathbf{x}_{t_2,k}\right] = \vert\boldsymbol{\mu}_k\vert_2^2 + \frac{2}{T_k(T_k - 1)}\sum_{t_1 = 1}^{T_k}\sum_{t_2 = t_1 + 1}^{T_k}\mathbf{E}\left[\boldsymbol{\epsilon}_{t_1,k}^\top\boldsymbol{\epsilon}_{t_2,k}\right].
    \label{eq.bias_Chen_Qin}
\end{equation}
In the classical setting, $\boldsymbol{\epsilon}_{t,k}$ are mutually independent. Since $t_2 > t_1,$ we have $\mathbf{E}\left[\boldsymbol{\epsilon}_{t_1,k}^\top\boldsymbol{\epsilon}_{t_2,k}\right] = 0, $ and thus the test statistics is unbiased. In contrast, for time series data, we have 
\begin{align*}
\mathbf{E}\left[\boldsymbol{\epsilon}_{t_1,k}^\top\boldsymbol{\epsilon}_{t_2,k}\right]
= \sum_{j = 1}^d\mathbf{E}\left[\boldsymbol{\epsilon}_{t_1,k}^{(j)}\boldsymbol{\epsilon}_{t_2,k}^{(j)}\right],\quad \text{which may have order } O(d).
\end{align*}
Therefore, the bias introduced by autocovariances does not substantially affect the performance of the test statistics when the data dimension $d$ is moderate. However, statisticians need to carefully address these autocovariances when $d$ is large relative to the sample sizes.
\end{remark}

Focusing on ANOVA for high-dimensional non-stationary time series, we modify the test statistics of \cite{MR2604697,MR3911118} and propose a new test statistic that effectively eliminates the bias induced by temporal correlations. In addition, we establish the Gaussian approximation for the distribution of the new test statistic. Since the proposed test statistic involves products of time series data, its variance depends on fourth-order cumulants of data. Consequently, direct estimation of the variance of the test statistic becomes a challenge for statisticians.  To address this issue, we adapt the second-order wild bootstrap algorithm of \cite{MR4829492} to the high-dimensional time series setting and employ it to assist hypothesis testing through computer simulations, so that statisticians avoid complex calculations. The proposed test procedure, together with the adapted bootstrap algorithm, accommodates vector time series whose dimension is comparable to or even exceeds the sample sizes $T_k, k=1,\ldots,K.$ Furthermore, our work does not require the covariance structure of the time series to remain stationary (i.e., satisfying Definition 1.3.2  of \cite{MR1093459}). Since real-life time series often exhibit non-stationary covariance structures for various reasons, as illustrated in \cite{MR3310530, MR4270034}, the proposed test procedure is suitable for a wide range of time series data compared to existing methods.

Beyond its methodological contributions, this manuscript extends the results of \cite{MR4829492} to high-dimensional non-stationary time series, and derives concentration inequalities, Gaussian approximations, and the variance estimation procedure, for a class of quadratic forms. Given the prevalence of quadratic forms in time series analysis, these theoretical results should be of independent interest to researchers.

The remainder of this manuscript is organized as follows. Section \ref{section.related_literature} provides a literature review on high-dimensional ANOVA and time series analysis. Section \ref{section.Setting_Method} develops the ANOVA test statistic for high-dimensional non-stationary vector time series. Furthermore, it introduces the second-order wild bootstrap algorithm to assist hypothesis testing. Section \ref{section.quadratic_form} focuses on a special class of vector time series, referred to as ``$(M,\alpha)$-short-range dependent random vectors,''  and derives theoretical results for their quadratic forms. Section \ref{section.Asymptotic_testing}  establishes the asymptotic distribution of the proposed test statistic and demonstrates the asymptotic validity of the associated second-order wild bootstrap algorithm. Section \ref{section.numerical} demonstrates the finite sample performance of the proposed methods through simulation studies. In addition, it applies the proposed methods to a real-life chickenpox counts dataset. Section \ref{section.conclusion} draws a conclusion. Technical proofs of the theoretical results are postponed to the online supplementary material \cite{Supplement}.

\textbf{Notation: } This paper adopts the standard order notation $O(\cdot)$, $o(\cdot)$, $O_p(\cdot)$, and $o_p(\cdot)$: For two numerical sequences $a_t, b_t$, $t = 1,2,\cdots, $ we say $a_t = O(b_t)$ if there exists a constant $C>0$ such that $\vert a_t\vert\leq C\vert b_t\vert$ for  $\forall t\in\mathbf{N}$; and $a_t = o(b_t)$ if $\lim_{t\to\infty} \frac{a_t}{b_t} = 0$. We say $a_t 	\asymp b_t$ if there exists two constants $0<c\leq C<\infty$ such that $ca_t\leq b_t\leq Ca_t$ for any $t.$ For two random variable sequences $X_t, Y_t$, we say $X_t = O_p(Y_t)$ if for any given $0 < \varepsilon < 1$, there exists a constant $C_\varepsilon > 0$ such that 
$
\mathbf{Pr}(\vert X_t\vert\leq C_\varepsilon\vert Y_t\vert) \geq 1 - \varepsilon
$ for any $t$; and $X_t = o_p(Y_t)$ if $X_t/Y_t\to_p 0$, where the notation $\to_p$ denotes convergence in probability. Readers can refer to \cite{MR2002723} for a detailed introduction.  For a random variable $X\in\mathbf{R}$, define its $m$-norm ($m\geq 1$) as 
$\Vert X\Vert_m = (\mathbf{E}\vert X\vert^m)^{1/m}$. The notation $\wedge$ and $\vee$ respectively represents the minimum and the maximum among two numbers, i.e., 
$a\wedge b = \min(a, b)$ and $a\vee b = \max(a,b)$. In the following sections of the manuscript, we use $C, C_0,C_1,\cdots,$ to represent general constants. Notably, the values of these constants may change from one line to another.

In the remaining parts of this manuscript, bold lowercase (Greek) letters, such as $\mathbf{a} $ or $\boldsymbol{\omega}$, represent vectors; while the bold uppercase (Greek) letters, such as $\mathbf{A}$ and $\boldsymbol\Omega$, represent the matrices. We use the symbol $\top$ to represent matrix transpose. For a vector $\mathbf{a} = (\mathbf{a}^{(1)},\cdots, \mathbf{a}^{(d)})^\top\in\mathbf{R}^d$, define its $m$ norm, $1\leq m < \infty$, as $\vert\mathbf{a}\vert_m = (\sum_{i = 1}^p \vert\mathbf{a}^{(i)}\vert^m)^{1/m}$, and its infinity norm $\vert\mathbf{a}\vert_\infty = \max_{i = 1,\cdots, p}\vert\mathbf{a}^{(i)} \vert$. For a matrix $\mathbf{A}\in \mathbf{R}^{d\times d}$, define its Frobenius norm $\vert\mathbf{A}\vert_F = \sqrt{\sum_{i = 1}^d\sum_{j = 1}^d\mathbf{A}^{(ij)2}}$. For a set $\mathcal{A}$, define $\vert \mathcal{A}\vert$ to be its order, that is, the number of elements inside $\mathcal{A}$. For a vector or a matrix, we use superscripts to represent elements within them.

\section{Related literature}
\label{section.related_literature} 
\textbf{High-dimensional two sample test and ANOVA.} 
Testing the equality of two or more population means is a fundamental topic in statistics literature, and has been extensively studied under the assumption of independent observations. Modern-era datasets often exhibit high dimensionality, where the data dimension $d$ is comparable to---or even exceeds---the sample size $T_k.$  The presence of high dimensionality can fail the classical ANOVA procedures. For example, when $K = 2,$  \cite{MR1399305} showed that the classical Hotelling $T^2$ test statistic was not well-defined for $d > T_1 + T_2 - 2,$ since the pooled sample covariance matrix was not invertible. Even when $d < T_1 + T_2 - 2,$ large dimension $d$ still led to power loss. The literature has proposed various approaches to address the challenge of high dimensionality. The work of \cite{MR2247217, MR2396970, MR2896840, MR2993891, DONG2016127, MR4124345, MR4713924, MR4829486}, among others,  resolved the issue of singular sample covariance matrices by replacing the sample covariance matrix with alternative non-singular matrices, thereby
ensuring that the test statistic was well-defined. \cite{MR1399305, MR2604697, MR3911118, MR4107696} and their references modified the Hotelling $T^2$ test statistic to omit the precision matrix.  \cite{MR3164870, MR3252643} leveraged precision matrices to perform linear transformations of data in two-sample tests.  \cite{MR4124324} introduced a maximum-type test statistics. While Hotelling $T^2$ test and its variations constructed sum-of-squares-type tests,  \cite{MR3551787} introduced a sum-of-powers test statistic, which was better adapted to a wide range of alternative hypotheses. We also mention the works of \cite{MR2985938, MR4589064, MR4627783}, which respectively extended the hypothesis in \eqref{eq.ANOVA_hypothesis} to testing equality of population covariance matrices, distributional equivalence between two populations, and linear constraints in multivariate linear models.

Compared to the rich literature on ANOVA for independent data, relatively little research has been conducted on high-dimensional ANOVA for dependent data, including time series. The studies of \cite{MR3008273, MR3338651, MR3824978, MR4647624}, among others, considered ANOVA for time series. Among them, \cite{MR3824978} investigated ANOVA for stationary time series under the condition $d^{3/2} / \sqrt{T_k} \to 0$. However, to our knowledge,  relatively few studies have been conducted on ANOVA for non-stationary time series---especially in high-dimensional settings where the dimension $d$ is comparable to or exceeds the sample size $T_k$.

\textbf{Analysis of high-dimensional non-staitonary time series. }  Apart from the methodological development of ANOVA for high-dimensional time series, this paper establishes theoretical results---including concentration inequalities, Gaussian approximation, and variance estimation---on quadratic forms of short-range dependent high-dimensional time series. Classical central limit theorems, such as those in Section 1.5 of \cite{MR2002723}, are suitable for linear combinations of data and require the data to have fixed dimensions.  When the data dimension $d$ grows to infinity with respect to the sample size, Gaussian approximations in \cite{MR3161448, MR3350040, MR3693963} approximated the distribution of the maximum of the sample mean vector by that of the maximum of a joint normal random vector. \cite{MR3992401} introduced a Gaussian approximation theorem for quadratic forms of independent data and applied it to analyze Pearson’s $\chi^2$ test statistic.

Deriving distributional results for dependent data can be more challenging. The works of \cite{MR3718156, MR3779697} respectively established Gaussian approximations for the sample mean of high-dimensional stationary and non-stationary time series, while \cite{MR4829492} derived distributional results for quadratic forms of scalar time series. However, to our knowledge, relatively few studies have been conducted on quadratic forms of high-dimensional time series. 
This gap is noteworthy, as the analysis of various important statistics---such as the sample covariance matrix \cite{MR4206676},  the sample precision matrix \cite{MR3161455, MR4134802},  and the spectral density \cite{Chang25042025}, among others---fundamentally depends on distributional results for quadratic forms of vector time series. In this regard, our work should be of independent interest to researchers working on topics beyond ANOVA for vector time series.

In addition to the presence of high dimensionality, advances in data collection, processing, and storage technologies have made it common for statisticians to analyze time series data that exhibit complex temporal dynamics or cover a long time interval. Consequently,  modern vector time series may display non-stationarity, where the marginal distributions or autocovariances evolve over time. Various reasons lead to non-stationarity. For example, FMRI data, seismic signals, and financial time series, such as exchange rates, can display non-stationary temporal dynamics according to \cite{MR2504379, doi:10.1073/pnas.1400181111, MR3310530}. On the other hand, a dataset may span a long time interval. In such case, even for a time series that changes gradually, assuming that its stochastic structure remains invariant over a long time period is unrealistic, as demonstrated in \cite{MR4270034}. The literature offers various tools for analyzing non-stationary time series, including those in \cite{MR3299408, MR3798001, MR4630946} and the references therein.

\section{Setting and Methodology} 
\label{section.Setting_Method}
Focusing on testing the statistical hypothesis \eqref{eq.ANOVA_hypothesis}, this section introduces the test statistic for ANOVA of high-dimensional time series. In addition, it adapts the second-order wild bootstrap algorithm of \cite{MR4829492} to the high-dimensional time series setting,  which facilitates hypothesis testing via simulations.

\subsection{Constructing the test statistic}
Suppose the observed time series data $\mathbf{x}_{t,k}\in\mathbf{R}^d,$ with $k = 1,\cdots,K$ and $t = 1,\cdots, T_k,$ obey the following form:
\begin{equation}
    \mathbf{x}_{t,k} = \boldsymbol{\mu}_k + \boldsymbol{\epsilon}_{t,k},\quad\text{where}\quad \mathbf{E}\left[\boldsymbol{\epsilon}_{t,k}\right] = 0.
    \label{eq.structure_X}
\end{equation}
In other words, the population means of each time series data are equal.
Furthermore, assume that $\mathbf{x}_{t_1,k_1}$ is independent of $\mathbf{x}_{t_2,k_2}$ for any $t_1,t_2$ when $k_1\neq k_2.$ To test the hypothesis in \eqref{eq.ANOVA_hypothesis},  we select two bandwidths $0 < B < B_1 < \min(T_1,\cdots,T_k).$ After that, we construct the following test statistic:
\begin{equation}
    \widehat{R} = \sum_{k = 2}^{K}\widehat{R}_k,
    \label{eq.def_R}
\end{equation}
where 
\begin{equation}
    \begin{aligned}
    \widehat{R}_k & = \frac{1}{V_k\sqrt{d}}\sum_{B\leq \vert t_1 - t_2\vert\leq B_1}^{T_k}\mathbf{x}_{t_1,k}^\top \mathbf{x}_{t_2,k}
    +\frac{1}{V_1\sqrt{d}}\sum_{B\leq \vert t_1 - t_2\vert\leq B_1}^{T_1}\mathbf{x}_{t_1,1}^\top \mathbf{x}_{t_2,1}\\
        & - \frac{2}{T_kT_1\sqrt{d}}\sum_{t_1 = 1}^{T_k}\sum_{t_2 = 1}^{T_1}\mathbf{x}_{t_1,k}^\top\mathbf{x}_{t_2,1},\\
\end{aligned}
\label{eq.def_RK}
\end{equation}
and 
\begin{align*}
    V_k = \sum_{B\leq \vert t_1 - t_2\vert\leq B_1}^{T_k}1 = \left(2T_k - B - B_1\right) \left(B_1-  B+1\right).
\end{align*}
The summation $\sum_{B\leq \vert t_1 - t_2\vert\leq B_1}^{T_k}$ here  ranges over all pairs $(t_1,t_2)$ whose time lag satisfies  $B\leq \vert t_1 - t_2\vert\leq B_1.$

\begin{remark}
    If we choose $B = 1$ and $B_1 > \max(T_1,\cdots, T_K),$ then $V_k =  \sum_{t_1 \neq t_2}^{T_k} 1 = T_k(T_k - 1),$ and the test statistic $\widehat{R}_k$ in \eqref{eq.def_RK} becomes 
    \begin{align*}
        \widehat{R}_k & = \frac{1}{T_k(T_k  - 1)\sqrt{d}}\sum_{t_1\neq t_2}^{T_k}\mathbf{x}_{t_1,k}^\top \mathbf{x}_{t_2,k} 
        + \frac{1}{T_1(T_1 - 1)\sqrt{d}}\mathbf{x}_{t_1,1}^\top \mathbf{x}_{t_2,1}\\
        &- \frac{2}{T_kT_1\sqrt{d}}\sum_{t_1 = 1}^{T_k}\sum_{t_2 = 1}^{T_1}\mathbf{x}_{t_1,k}^\top\mathbf{x}_{t_2,1}.
    \end{align*}
    This form  coincides with the test proposed by \cite{MR2604697, MR3911118}. Therefore, the test statistic in \eqref{eq.def_RK} can be viewed as a generalization of the test in \cite{MR2604697} to time series data. 
\end{remark}

We have demonstrated in Remark \ref{remark.covariance} of Section \ref{section.intro} that temporal dependence may introduce bias into the test statistic, which inflates test sizes. Remark \ref{remark.data_depenence} continues this discussion and demonstrates that, by introducing a bandwidth $B$ and excluding data products $\mathbf{x}_{t_1,k}^\top \mathbf{x}_{t_2,k}$ with $\vert t_1 - t_2\vert < B,$ the bias arising from temporal dependence can be substantially reduced.

\begin{remark}
\label{remark.data_depenence}
    This remark explains the necessity of introducing the bandwidths $B$ and $B_1.$ In our setting,
    $\mathbf{x}_{t_1,k_1}$ is independent of $\mathbf{x}_{t_2,k_2}$ when $k_1\neq k_2.$ From this condition, we have $\mathbf{E}[\mathbf{x}_{t_1,k}^\top\mathbf{x}_{t_2,1}] = \boldsymbol{\mu}_k^\top\boldsymbol{\mu}_1,$ and therefore   
    the expectation of $\widehat{R}_k$ is      
    \begin{align*}
        \mathbf{E}\left[\widehat{R}_k\right] &= \frac{\vert\boldsymbol{\mu}_k  - \boldsymbol{\mu}_1\vert^2_2}{\sqrt{d}} + \frac{1}{V_k\sqrt{d}}\sum_{B\leq \vert t_1 - t_2\vert\leq B_1}^{T_k}\mathbf{E}\left[\boldsymbol{\epsilon}_{t_1,k}^\top\boldsymbol{\epsilon}_{t_2,k}\right]\\
        &+ \frac{1}{V_1\sqrt{d}}\sum_{B\leq \vert t_1 - t_2\vert\leq B_1}^{T_1}\mathbf{E}\left[\boldsymbol{\epsilon}_{t_1,1}^\top\boldsymbol{\epsilon}_{t_2,1}\right].
    \end{align*}
If the innovations $\boldsymbol{\epsilon}_{t,k}$ are mutually independent, then $\mathbf{E}\left[\boldsymbol{\epsilon}_{t_1,k}^\top\boldsymbol{\epsilon}_{t_2,k}\right] = 0$ for $t_1\neq t_2.$ In such case, setting $B = 1$ is sufficient to eliminate the bias arising from data variances, and ensure that the expectation of $\widehat{R}_k$ equals $\frac{\vert\boldsymbol{\mu}_k  - \boldsymbol{\mu}_1\vert^2_2}{\sqrt{d}}.$ Furthermore, under $H_1,$ there exists some $k\neq 1$ such that  $\vert\boldsymbol{\mu}_k  - \boldsymbol{\mu}_1\vert_2 > 0, $ while under $H_0$ all $\vert\boldsymbol{\mu}_k  - \boldsymbol{\mu}_1\vert_2 = 0.$ This distinction guarantees that  $\widehat{R}$ in \eqref{eq.def_R} serves as a valid test statistics for distinguishing between $H_0$ and $H_1.$

The situation becomes more complicated when $\boldsymbol{\epsilon}_{t,k}$ exhibit non-zero covariances.   In this manuscript, we mainly focus on short-range dependent time series. By assuming the $(M,\alpha)$-short-range dependent condition (Definition \ref{def.M_alpha_short_range}) in Section \ref{section.quadratic}, the autocovariances of the  time series  decay polynomially with respect to the time lag: For $t_2 > t_1,$ equation  \eqref{eq.covariance_delta} in the supplementary material implies
\begin{equation}
\begin{aligned}
\left\vert\mathbf{E}\left[\boldsymbol{\epsilon}_{t_1,k}^\top\boldsymbol{\epsilon}_{t_2,k}\right]\right\vert\leq \frac{Cd}{(1  + t_2 - t_1)^\alpha}.
\end{aligned}
\label{eq.covariance}
\end{equation}
Therefore, if we adopt the test statistic of \cite{MR2604697}, i.e., by setting the bandwidth $B = 1$ and taking $B_1 > \max(T_1,\cdots, T_K)$ in \eqref{eq.def_RK}, then 
\begin{align*}
    \left\vert\ 
    \mathbf{E}\left[\widehat{R}_k\right] - \frac{\vert\boldsymbol{\mu}_k  - \boldsymbol{\mu}_1\vert^2_2}{\sqrt{d}}\ 
    \right\vert & \leq \frac{1}{T_k(T_k - 1)\sqrt{d}}\sum_{t_1\neq t_2}^{T_k}\left\vert\ \mathbf{E}\left[\boldsymbol{\epsilon}_{t_1,k}^\top\boldsymbol{\epsilon}_{t_2,k}\right]\ \right\vert\\
    & + \frac{1}{T_1(T_1 - 1)\sqrt{d}}\sum_{t_1\neq t_2}^{T_k}\left\vert\ \mathbf{E}\left[\boldsymbol{\epsilon}_{t_1,1}^\top\boldsymbol{\epsilon}_{t_2,1}\right]\ \right\vert
    \leq C\left(\frac{\sqrt{d}}{T_k} + \frac{\sqrt{d}}{T_1}\right).
\end{align*}
In other words, the bias introduced by data dependence is of order $O\left(\frac{\sqrt{d}}{T_k} + \frac{\sqrt{d}}{T_1}\right).$ In contrast, \cite{MR2604697} showed that, under some regularity conditions, the stochastic error of $\widehat{R}_k$ is of order $O_p\left(\frac{1}{T_k} + \frac{1}{T_1}\right),$ which becomes significantly smaller than the bias if $d \to \infty$ with respect to $T_k, T_1.$ This comparison, along with Example \ref{Example.dependency}, stresses that ignoring temporal dependence in ANOVA can lead to substantial bias, which affects both the size and the power of the test.

We next consider an alternative scenario in which a relatively large bandwidth $B$ is selected. In this case,
\begin{equation}
\begin{aligned}
    \left\vert\ 
    \mathbf{E}\left[\widehat{R}_k\right] - \frac{\vert\boldsymbol{\mu}_k  - \boldsymbol{\mu}_1\vert^2_2}{\sqrt{d}}\ \right\vert
    & \leq \frac{CdT_k}{V_k\sqrt{d}}\sum_{s = B}^{B_1}\frac{1}{(1  + s)^\alpha} + \frac{CdT_1}{V_1\sqrt{d}}\sum_{s = B}^{B_1}\frac{1}{(1  + s)^\alpha}\\
    &\leq \frac{C_1\sqrt{d}}{(B_1 - B)\times  B^{\alpha - 1}}.
\end{aligned}
\label{eq.bias}
\end{equation}
Although the bias from data dependence still remains, its magnitude 
can be  substantially reduced once statisticians use a sufficiently large bandwidth $B.$ Furthermore, as illustrated in Theorem \ref{theorem.consistent_ANOVA}, the stochastic error of $\widehat{R}_k$ is of order $O_p\left(\frac{1}{\sqrt{T_k(B_1 - B)}} +\frac{1}{\sqrt{T_1(B_1  - B)}}\right).$ Hence, with an appropriately chosen bandwidth $B,$ the bias introduced by data dependence becomes asymptotically negligible compared to the stochastic error.

\end{remark}

Apart from the bandwidth $B,$  we introduce the other bandwidth $B_1$ to the  estimator $\widehat{R}$ to mitigate the dependence among the product terms $\mathbf{x}_{t_1,k}^\top\mathbf{x}_{t_2,k},$  where $t_1,t_2 = 1,\cdots,T_k$ and $B\leq \vert t_1 -  t_2\vert\leq B_1.$ Similar to the phenomenon discussed in \cite{MR4829492}, even when the original time series $\mathbf{x}_{t,k}$ is short-range dependent, their products $\mathbf{x}_{t,k}^\top\mathbf{x}_{t - s,k}$ can exhibit long-range dependence when the time lag $s$ is large.  The bandwidth $B_1$ therefore decreases the dependence among the product terms by
disregarding the products $\mathbf{x}_{t_1,k}^\top \mathbf{x}_{t_2,k}$ when the time lag $\vert t_1 - t_2\vert$ exceeds $B_1.$

The selection of bandwidths $B$ and $B_1$ balances the trade-off between bias due to temporal dependence and stochastic error. To further illustrate, from \eqref{eq.bias}, increasing $B$ decreases the magnitude of the bias. On the other hand, Theorem \ref{theorem.consistent_ANOVA} shows that the stochastic error is of order $O_p\left(\frac{1}{\sqrt{T_k(B_1 - B)}} +\frac{1}{\sqrt{T_1(B_1  - B)}}\right),$ which adversely depends on the gap $B_1 - B.$ Since Theorem \ref{theorem.Gaussian_app} requires that $B_1$ remain significantly smaller than the sample size $T_k$  to maintain the Gaussian approximation of the test statistics, selecting a large $B$ inevitably narrows the gap $B_1 - B$, which in turn inflates the stochastic error.

Notice that suboptimal bandwidths $B$ and $B_1$ either increase the bias or enlarge the stochastic error, both of which inflate the overall estimation error.  Algorithm \ref{algorithm.selection} in Section \ref{section.numerical} leverages the work of \cite{MR2380557} and provides a data-driven procedure for selecting $B,B_1$ with the aim of minimizing the total estimation error.

\subsection{Bootstrap assisted hypothesis testing}
Bootstrap methods are powerful tools for analyzing statistics whose asymptotic distributions involve unknown parameters. Since the seminal work of \cite{MR515681}, they have been widely employed in various statistical applications, such as those introduced in \cite{MR868303, MR3750881, MR4298871, MR4441125}, among others. In the context of time series, the presence of complex covariance structures always complicates the variance of the test statistics, making direct estimation of the variance impractical. To address this issue, statisticians have developed various bootstrap algorithms for different setups---such as those in \cite{MR1310224, MR1983228, MR2795613, MR4388918, MR4595470}, among others---enabling hypothesis testing through simulation rather than explicit variance estimations.

Stationarity (weak or strict, as defined in Definitions 1.3.2 and 1.3.3 of \cite{MR1093459}) has become a standard assumption for the validity of classical bootstrap algorithms for time series, such as those in \cite{MR1310224, MR1466304, MR1872222}. However, modern-era time series may violate this assumption for various reasons, as mentioned in \cite{MR4270034}. The literature like \cite{MR2893863, MR4134800} has attempted to relax the stationarity assumptions.  Among them, \cite{MR2656050} introduced the ``dependent wild bootstrap'' for stationary time series, and subsequent works such as \cite{MR3798001, 10.1093/jrsssb/qkad006,MR4829492} adapted this algorithm to settings where the covariance structure changed over time. Apart from the dependent wild bootstrap,  alternative methods, including those in \cite{MR3798001, MR4718536},  have also been developed to address non-stationarity.

Apart from non-stationarity, another critical challenge in our setting is that the test statistic consists of products of time series data $\mathbf{x}_{t_1,k}^\top\mathbf{x}_{t_2,k}$---making its variance depending on fourth-order cumulants. Furthermore, our work allows for the marginal distributions of data to change over time,  these cumulants may also evolve. The original dependent wild bootstrap of \cite{MR2656050} was designed for linear forms of time series and could not capture higher-order moment information, as it weighted time series with normal random variables whose fourth-order cumulants were fixed. Similar limitations applied to the autoregressive sieve bootstrap \cite{MR2893863}. Meanwhile, the validity of other bootstrap algorithms, such as the frequency domain bootstrap \cite{MR4134800}, hinged on the assumption of stationary fourth-order cumulants, which is violated in our setting.

Rather than weighting the raw data with joint normal random variables, the second-order wild bootstrap of \cite{MR4829492} proposed to weight their ``second-order residuals.''  This modification allowed the bootstrap algorithm to capture fourth-order cumulant information, making it suitable for statistics consisting of products of time series.  However, their method was developed only for scalar time series. Building on this idea, Algorithm \ref{algorithm.bootstrap} accommodates their work to high-dimensional time series settings.

\begin{algorithm}
\caption{Second-order wild bootstrap assisted ANOVA}
\label{algorithm.bootstrap}
\begin{algorithmic}[1]
\Require Vector time series data $\mathbf{x}_{t,k}\in\mathbf{R}^d$ for $k = 1,\cdots, K$ and $t = 1,\cdots, T_k,$ $T_k$ here represents the sample size of each time series; bandwidths $B,B_1,H;$ a kernel function $\mathcal{K}(\cdot)$ satisfying Definition \ref{definition.kernel_function} below; the nominal size $\alpha;$ the number of bootstrap replicates $\mathcal{U}.$
\State Derive the test statistics $\widehat{R}$ as in \eqref{eq.def_RK}. After that, calculate  the fitted residuals 
    \begin{align*}
        \widehat{\boldsymbol{\mu}}_k = \frac{1}{T_k}\sum_{t = 1}^{T_k}\mathbf{x}_{t,k},\quad\text{and}\quad 
        \widehat{\boldsymbol{\epsilon}}_{t,k} = \mathbf{x}_{t,k} - \widehat{\boldsymbol{\mu}}_k.
    \end{align*}
\State Derive the ``second-order residuals'' 
    $$
    \widehat{\vartheta}_{t,k} = \sum_{t_2 = (t  - B_1)\vee 1}^{t  - B}\widehat{\boldsymbol{\epsilon}}_{t,k}^\top\widehat{\boldsymbol{\epsilon}}_{t_2,k}\quad \text{for } t = B+1, B+2, \cdots, T_k, \text{where }  k = 1,\cdots, K.
    $$

\For{$u = 1,2,\cdots, \mathcal{U}$}
    \State Generate random variables $\varepsilon^*_{t,k}$ for $k = 1,2,\cdots, K$ and $t = 1,2,\cdots, T_k,$ such that $\varepsilon^*_{1,k},\cdots \varepsilon^*_{T_k,k}$ have joint normal distribution with mean 0 and covariances $\mathrm{Cov}\left(\varepsilon^*_{t_1,k}, \varepsilon^*_{t_2,k}\right) = \mathcal{K}\left(\frac{t_1 - t_2}{H}\right),$ and $\varepsilon^*_{t_1,k_1}$ and $\varepsilon^*_{t_1,k_2}$ are independent for any $k_1\neq k_2$ and any $t_1,t_2.$  
    \State Define 
    $\widehat{A}_{t,k}^* = \widehat{\vartheta}_{t,k} \varepsilon^*_{t,k},$ and calculate
    \begin{equation}
    \begin{aligned}
        \widehat{S}^*_u = 2\sum_{k = 2}^K\frac{\sqrt{\mathcal{T}_\circ(B_1 - B)}}{V_k\sqrt{d}}\sum_{t = B+1}^{T_k}\widehat{A}_{t,k}^* + 2(K - 1)\frac{\sqrt{\mathcal{T}_\circ(B_1 - B)}}{V_1\sqrt{d}}\sum_{t = B+ 1}^{T_k}\widehat{A}_{t,1}^*,
    \end{aligned}
    \label{eq.bootstrap_S_u}
    \end{equation}
where $\mathcal{T}_\circ = \min(T_1,\cdots, T_K).$
\EndFor
\State Calculate the $1 - \alpha$ quantile of $\widehat{S}^*_u$: Sort the bootstrapped statistics to $\widehat{S}^*_{(1)}\leq \widehat{S}^*_{(2)}\leq \cdots \leq \widehat{S}^*_{(\mathcal{U})}$. Choose 
    \begin{align*}
        Q^*_{1 - \alpha} = \widehat{S}^*_{(v)},\quad\text{ where}\quad  v = \min\left\{x = 1,2,\cdots, \mathcal{U}: \frac{x}{\mathcal{U}}\geq 1 - \alpha\right\}.
    \end{align*}
\State Reject $H_0$ if the test statistic $\widehat{R}$ satisfy 
$$
\widehat{R}\geq Q^*_{1 - \alpha}.
$$
\end{algorithmic}
\end{algorithm}

\begin{remark}
The implementation of Algorithm \ref{algorithm.bootstrap} requires a kernel function $\mathcal{K}(\cdot)$ satisfying Definition \ref{definition.kernel_function}. As shown in
Remark \ref{remark.choose_K} of Section \ref{section.quadratic_form}, such kernel functions guarantee that the matrices $\left\{\mathcal{K}\left(\frac{t_1 - t_2}{H}\right)\right\}_{t_1,t_2 = 1,\cdots, T_k}, k = 1,\cdots,K$ are positive semi-definite. This property allows Algorithm  \ref{algorithm.bootstrap} to generate joint normal random variables with the corresponding covariance matrices
$\left\{\mathcal{K}\left(\frac{t_1 - t_2}{H}\right)\right\}_{t_1,t_2 = 1,\cdots, T_k}.$
For practical implementation, a convenient choice that satisfies Definition \ref{definition.kernel_function} is 
$
    \mathcal{K}(x) = \exp\left(-\frac{x^2}{2}\right).
$
\end{remark}

\section{Analysis of quadratic form of vector time series}
\label{section.quadratic_form}
This section introduces a class of non-stationary vector time series, referred to as $(M,\alpha)$-short-range dependent random vectors. We study the asymptotic behaviors of their quadratic forms, establishing a concentration inequality, a Gaussian approximation theorem, and a variance estimation procedure. These results serve as the foundation for Section \ref{section.Asymptotic_testing}, in which we derive the asymptotic distribution of the test statistic $\widehat{R}$ in \eqref{eq.def_R} and establish the asymptotic validity of Algorithm \ref{algorithm.bootstrap}.

Theoretical results on quadratic forms of time series have wide applications in the literature, as the analysis of many widely used estimators and test statistics---such as sample autocovariances, sample autocorrelations, sample covariance and precision matrices \cite{MR4206676, MR3161455, MR4134802}, sample spectral densities \cite{MR4206676, MR3161455, MR4134802, Chang25042025}, and portmanteau test statistics \cite{ad3868f4-d5d8-32fe-9700-07b98018b139}---relies on a solid understanding of quadratic forms. Prior studies, such as \cite{MR362777, MR642724, MR1617055, MR4206676, MR4829492, MR4699549, https://doi.org/10.1111/jtsa.12826}, have analyzed quadratic forms of time series.  However, to our knowledge, existing results either imposed stationarity and additional structural assumptions, such as the linear processes assumption, on observations, neglected the effect of high-dimensionality, or did not derive asymptotic distributions of the quadratic forms. This section simultaneously addresses these challenges and develops distributional results for quadratic forms of high-dimensional, non-stationary time series.

\subsection{($M,\alpha$)-short-range dependent random vectors}
\label{section.quadratic}
Let $\{e_t: t\in\mathbf{Z}\}$ be a sequence of independent (but not necessarily identically distributed) random variables. Suppose the vector time series $\boldsymbol{\epsilon}_t = \left(\boldsymbol{\epsilon}_t^{(1)},\cdots, \boldsymbol{\epsilon}_t^{(d)}\right)^\top\in\mathbf{R}^d$, where $t\in\mathbf{Z},$ satisfies the following form:
\begin{equation}
    \boldsymbol{\epsilon}_t^{(i)} = g_{t,T}^{(i)}\left(\cdots, e_{t - 1}, e_{t}\right),
    \label{eq.physical_dependence}
\end{equation}
the function $g_{t,T}^{(i)}$ in \eqref{eq.physical_dependence} is a measurable function of the $\sigma$-field generated by  $\cdots, e_{t - 1}, e_{t}.$ In other words, $\boldsymbol{\epsilon}_t$ is a function of random variables $\cdots, e_{t - 1}, e_t.$ The subscripts $t$ and $T$ here indicate that the function $g_{t,T}^{(i)}(\cdot)$ may vary both over time $t$ and with the sample size $T.$ With a slight abuse of notation, we omit the sample size $T$ when denoting  $\boldsymbol{\epsilon}_t.$  Since the functions are allowed to evolve with respect to the sample size $T,$  the distributions of  $\boldsymbol{\epsilon}_t$ may also change with the sample size.

The representation \eqref{eq.physical_dependence} is sufficient to capture potential non-stationarity in the data generating process. By allowing an evolving sequence of functions $g_{t,T}^{(i)}, i = 1,2,\cdots, d$ applied to random variables $\cdots, e_t,$ both the marginal distributions of $\boldsymbol{\epsilon}_t^{(i)}$ and their autocovariances may change over time $t,$ leading to non-stationary autocovariance structures.

For any $t\in\mathbf{Z}$, define $e_t^\dagger$ as mutually independent random variables such that $e_{t_1}$ is independent of $e_{t_2}^\dagger$ for any $t_1, t_2\in\mathbf{Z}$, and $e_t^\dagger$ has the same distribution as $e_t$ for any $t.$ Define the random vectors $ \boldsymbol{\epsilon}_t(s) = (\boldsymbol{\epsilon}_t^{(1)}(s),\cdots, \boldsymbol{\epsilon}_t^{(d)}(s))^\top\in\mathbf{R}^d,$ where 
\begin{equation}
    \boldsymbol{\epsilon}_t^{(i)}(s) = 
    \begin{cases}
        g_{t,T}^{(i)}\left(\cdots, e_{t - s - 1}, e_{t - s}^\dagger, e_{t - s + 1},\cdots, e_{t - 1}, e_{t}\right)\quad \text{if } s\geq 0,\\
        \boldsymbol{\epsilon}_t^{(i)}\quad \text{if } s < 0.
    \end{cases}
\label{eq.switch}
\end{equation}
For a given $M > 1$ and any $s\in\mathbf{Z}$, define 
\begin{equation}
    \delta_{s} = \sup_{t\in\mathbf{Z}, \vert\mathbf{a}\vert_2 = 1}
    \left\Vert\ \mathbf{a}^\top \left(\boldsymbol{\epsilon}_t - \boldsymbol{\epsilon}_t(s)\right)\ \right\Vert_M.
    \label{eq.def_deltas}
\end{equation}
According to this definition, $\delta_s = 0$ if $s < 0.$ With a slight abuse of notation, we omit $M$ in $\delta_s$ because $M$ is treated as a fixed number throughout the manuscript; and this abuse of notation should not cause confusion. For any given $M > 1$ and $\alpha > 1,$ we impose the following short-range dependence conditions.

\begin{definition}($(M,\alpha)$-short-range dependence)
\label{def.M_alpha_short_range}
    Suppose random vectors $\boldsymbol{\epsilon}_t, t\in\mathbf{Z}$ satisfy \eqref{eq.physical_dependence}. In addition, assume the following conditions hold true:  
    \begin{enumerate}
        \item $\mathbf{E}\left[\boldsymbol{\epsilon}_t\right]  = 0$ for all $t\in\mathbf{Z},$ and 
        $$
        \sup_{t\in\mathbf{Z}, \vert\mathbf{a}\vert_2 = 1}\left\Vert
        \mathbf{a}^\top \boldsymbol{\epsilon}_t
        \right\Vert_M = O(1).
        $$
    \item  With $\delta_s$ defined as in \eqref{eq.def_deltas}, 
    $$\sup_{s = 0,1,\cdots} (1 + s)^\alpha\sum_{j = s}^\infty  \delta_{s} = O(1).$$
    \end{enumerate}
     We call $\boldsymbol{\epsilon}_t$ an $(M,\alpha)$-short-range dependent random vector process.
\end{definition}

\begin{remark}
    Interest in the form \eqref{eq.physical_dependence} traces back to Wiener's conjecture that, under certain conditions, a stationary process could be expressed as a one-sided function of a sequence of i.i.d. random variables. This conjecture was later proven to be false by \cite{MR2493017}. The work of \cite{MR2172215} introduced the ``physical dependence measure'' to quantify temporal dependence; he also proposed a short-range dependence condition for time series of the form \eqref{eq.physical_dependence}. Later studies, such as \cite{MR3718156, MR3779697, MR4206676, zhang2023statisticalinferencehighdimensionalvector}, among others, extended this concept to vector time series data by imposing short-range dependence conditions either on individual components or on infinite norms. Definition \ref{def.M_alpha_short_range} is also motivated by \cite{MR2172215}. However, it imposes short-range dependence conditions on linear combinations of $\boldsymbol{\epsilon}_t.$ The reason for introducing such conditions is the need to bound the product terms $\boldsymbol{\epsilon}_{t_1}^\top \boldsymbol{\epsilon}_{t_2}$. Similar considerations can be found in condition (C3) of \cite{MR3911118}, where a mixing condition was applied to the components of $\boldsymbol{\epsilon}_{t}.$
\end{remark}

\begin{example}
    Suppose $e_t^{(i)}, t\in\mathbf{Z}, i = 1,\cdots,d$ are mutually independent random variables. Define  $\boldsymbol{\eta}_t = \left(\boldsymbol{\eta}_t^{(1)},\cdots, \boldsymbol{\eta}_t^{(d)}\right)^\top,$ where 
    $
    \boldsymbol{\eta}_t^{(i)} = g_t^{(i)}\left(\cdots, e^{(i)}_{t - 1}, e^{(i)}_t\right), 
    $
    and assume each scalar time series $\boldsymbol{\eta}_t^{(i)}$ satisfies Definition 2.1 of \cite{MR4829492} for all $i.$ Define $\boldsymbol{\epsilon}_t = \mathbf{A}\boldsymbol{\eta}_t,$ where $\mathbf{A}\in\mathbf{R}^{d\times d}$ has bounded singular values. In this setup, $\boldsymbol{\eta}_t$ has independent components, 
    and for any vector $\mathbf{a}\in\mathbf{R}^d$ with $\vert\mathbf{a}\vert_2 =  1,$ from From Theorem 2 of \cite{MR0133849},
    \begin{align*}
        \left\Vert
        \mathbf{a}^\top\boldsymbol{\epsilon}_t
        \right\Vert_{M} = \left\Vert \mathbf{a}\mathbf{A}\boldsymbol{\eta}_t \right\Vert_M\leq C\left\vert\mathbf{A}^\top \mathbf{a}\right\vert_2 = O(1).
    \end{align*}
    For each integer $s\geq 0,$ define $\boldsymbol{\eta}_t^{(i)}(s) = g_t^{(i)}\left(\cdots, e^{(i)}_{t - s - 1}, e^{(i)\dagger}_{t - s}, e^{(i)}_{t - s + 1},\cdots, e_t^{(i)}\right),$ then 
    $
    \boldsymbol{\epsilon}_t(s) = \mathbf{A}\boldsymbol{\eta}_t(s).
    $
    Since $\boldsymbol{\epsilon}_t^{(i)} - \boldsymbol{\epsilon}_t^{(i)}(s)$ and $\boldsymbol{\epsilon}_t^{(j)} - \boldsymbol{\epsilon}_t^{(j)}(s)$ are independent for different $i,j,$ define $\mathbf{b} = \mathbf{A}^\top\mathbf{a},$ we have 
    \begin{align*}
        \left\Vert
        \mathbf{a}^\top\left(\boldsymbol{\epsilon}_t - \boldsymbol{\epsilon}_t(s) \right)
        \right\Vert_M  & = \left\Vert
        \mathbf{a}^\top\mathbf{A}(\boldsymbol{\eta}_t - \boldsymbol{\eta}_t(s) )
        \right\Vert_M\\
        &\leq C\sqrt{\sum_{i = 1}^d \mathbf{b}^{(i)2}\left\Vert
        \boldsymbol{\eta}_t^{(i)} - \boldsymbol{\eta}_t^{(i)}(s) 
        \right\Vert^2_M}\\
        &\leq C\left\vert\mathbf{b}\right\vert_2 \max_{i = 1,\cdots, d}\left\Vert\boldsymbol{\eta}_t^{(i)} - \boldsymbol{\eta}_t^{(i)}(s)\right\Vert_M,
    \end{align*}
    which ensures $\sup_{s = 0,1,\cdots} (1 + s)^\alpha\sum_{j = s}^\infty  \delta_{s} = O(1).$
\label{example.example_Malpha}
\end{example}

\begin{example}
    This example considers a time-varying linear process 
    $$
    \boldsymbol{\epsilon}_t = \sum_{j = 0}^\infty \mathbf{A}_{j,t}\boldsymbol{\eta}_{t - j},
    $$
    where $\boldsymbol{\eta}_{t}\in\mathbf{R}^d$ are mutually independent random vectors with $\mathbf{E}\left[\boldsymbol{\eta}_{t}\right] = 0$ and $\left\Vert\mathbf{a}^\top \boldsymbol{\eta}_{t}\right\Vert_M\leq C\vert \mathbf{a}\vert_2$ for any vector $\mathbf{a}.$ Assume that $\mathbf{A}_{j,t}\in\mathbf{R}^{d\times d}.$ The subscript $t$ means that the coefficient matrices $\mathbf{A}_{j,t}$ are allowed to evolve over $t.$  For any linear combination vectors $\mathbf{a}$ with $\vert \mathbf{a}\vert_2 = 1,$ we have 
    \begin{align*}
        \left\Vert\mathbf{a}^\top\boldsymbol{\epsilon}_t\right\Vert_M\leq \sum_{j = 0}^\infty \left\Vert\mathbf{a}^\top\mathbf{A}_{j,t}\boldsymbol{\eta}_{t - j}\right\Vert_M\leq C\sum_{j = 0}^\infty\left\vert \mathbf{A}_{j,t}^\top\mathbf{a}\right\vert_2\leq C\sum_{j = 0}^\infty\left\vert \mathbf{A}_{j,t}\right\vert_2.
    \end{align*}
   For any integer $s\geq 0,$
    \begin{align*}
        \boldsymbol{\epsilon}_t(s) &= \sum_{j\geq 0, j\neq s}\mathbf{A}_{j,t}\boldsymbol{\eta}_{t - j} + \mathbf{A}_{s,t}\boldsymbol{\eta}_{t - s}^\dagger,
    \end{align*}
    which implies
    \begin{align*}
    \left\Vert
        \mathbf{a}^\top\left(\boldsymbol{\epsilon}_t - \boldsymbol{\epsilon}_t(s) \right)
        \right\Vert_M  &= \Vert 
        \mathbf{a}^\top\mathbf{A}_{s,t}\left(\boldsymbol{\eta}_{t - s} - \boldsymbol{\eta}_{t - s}^\dagger\right)
        \Vert_M
        \leq C\vert
        \mathbf{A}_{s,t}
        \vert_2.
    \end{align*}
    Assume that the matrix two-norm $\vert
        \mathbf{A}_{s,t}
        \vert_2\leq C(1 +s)^{-(\alpha + 1)},$ then  $\boldsymbol{\epsilon}_t, t\in\mathbf{Z}$ satisfy the $(M,\alpha)$-short-range dependent condition.
\end{example}

\subsection{Analysis of quadratic forms of vector time series}
\label{section.theoretical_quadratic}
Suppose $\boldsymbol{\epsilon}_t, t = 1,2,\cdots, T,$ stem from an $(M,\alpha)$-short-range dependent random vector process.  This section establishes theoretical results---including  concentration inequalities, Gaussian approximation, and variance estimation---for the following quadratic form of vector time series:
\begin{equation}
Q = \sum_{t_1 = 1}^T\sum_{t_2 = 1}^T b_{t_1t_2}\left(\boldsymbol{\epsilon}_{t_1}^\top\boldsymbol{\epsilon}_{t_2} - \mathbf{E}\left[\boldsymbol{\epsilon}_{t_1}^\top\boldsymbol{\epsilon}_{t_2}\right]\right),
\label{eq.def_quadratic_form}
\end{equation}
where $b_{t_1t_2}$ are scalar coefficients. Theorem \ref{theorem.consistent_quadratic} begins by introducing the concentration inequality for $Q.$
\begin{theorem}
    Suppose $\boldsymbol{\epsilon}_t, t = 1,2,\cdots, T,$ stem from an $(M,\alpha)$-short-range dependent random vector process with $M > 4, \alpha > 4,$ and $d \asymp T$. Suppose $\vert b_{t_1t_2}\vert\leq 1, t_1,t_2 = 1,2,\cdots, T$ satisfy $b_{t_1t_2} = 0$ if $\vert t_1 - t_2\vert < B$, and the positive integer $B > 1$ satisfies
    $
        T^{\frac{2}{\alpha}} = o(B).$
    Then we have
    \begin{equation}
    \begin{aligned}
        \left\Vert
        Q
        \right\Vert_{M/2}
        = O\left(\sqrt{d\sum_{t_1 = 1}^T\sum_{t_2 = 1}^T  b_{t_1t_2}^2} + \frac{T^{5/2}}{B^\alpha} + \frac{T^{3/2}}{B^{\alpha - 2}}\right).
    \end{aligned}
        \label{eq.whole_formula}
    \end{equation}
    In addition, there exists a constant $C>0$ such that for any integer $\ell\geq B$,
    \begin{equation}
    \begin{aligned}
        & \left\Vert
        \sum_{t_1 = 1}^T\sum_{t_2 = 1}^T b_{t_1t_2}\left(\boldsymbol{\epsilon}_{t_1}^\top \boldsymbol{\epsilon}_{t_2} - \mathbf{E}\left[\boldsymbol{\epsilon}_{t_1}^\top \boldsymbol{\epsilon}_{t_2}\mid\mathcal{F}_{t_1\vee t_2, \ell}\right]\right)
        \right\Vert_{M/2}\\
       &\leq C\sum_{q = 1}^\infty\delta_q\sqrt{d\sum_{\vert t_1 - t_2\vert  = B\vee (\ell + 1 - q)}^{\ell} b^2_{t_1t_2}}\\
       &+  C\sqrt{d\sum_{\vert t_1 - t_2\vert \geq \ell + 1} b^2_{t_1t_2}} + \frac{CdT^{3/2}}{\ell^{\alpha-1}} + \frac{Cd\sqrt{T}}{B^\alpha}.
    \end{aligned}
    \label{eq.truncate_moment}
    \end{equation}
\label{theorem.consistent_quadratic}
\end{theorem}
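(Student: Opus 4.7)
The plan is to exploit the physical dependence structure via Wu's projection operators, combined with a martingale decomposition that takes advantage of the bandwidth constraint $b_{t_1t_2}=0$ for $|t_1-t_2|<B$. Set $\mathcal{F}_j:=\sigma(e_s:s\leq j)$ and $P_jX:=\mathbf{E}[X\mid\mathcal{F}_j]-\mathbf{E}[X\mid\mathcal{F}_{j-1}]$. Writing each centered product as $\boldsymbol{\epsilon}_{t_1}^\top\boldsymbol{\epsilon}_{t_2}-\mathbf{E}[\boldsymbol{\epsilon}_{t_1}^\top\boldsymbol{\epsilon}_{t_2}]=\sum_{j\leq t_1\vee t_2}P_j(\boldsymbol{\epsilon}_{t_1}^\top\boldsymbol{\epsilon}_{t_2})$ and interchanging sums yields $Q=\sum_j D_j$, where $D_j:=\sum_{t_1,t_2}b_{t_1t_2}P_j(\boldsymbol{\epsilon}_{t_1}^\top\boldsymbol{\epsilon}_{t_2})$ is a martingale-difference sequence with respect to $\{\mathcal{F}_j\}$. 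Since $M>4$, Burkholder's inequality gives $\|Q\|_{M/2}^2\leq C\sum_j\|D_j\|_{M/2}^2$, so the proof reduces to bounding each $\|D_j\|_{M/2}$ and carefully summing.

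To bound the bilinear projection I would use the coupling identity $P_jY=\mathbf{E}[Y-Y^{(j)}\mid\mathcal{F}_j]$ where $Y^{(j)}$ replaces $e_j$ by $e_j^\dagger$, and telescope the bilinear difference as
\begin{equation*}
\boldsymbol{\epsilon}_{t_1}^\top\boldsymbol{\epsilon}_{t_2}-\boldsymbol{\epsilon}_{t_1}(t_1{-}j)^\top\boldsymbol{\epsilon}_{t_2}(t_2{-}j)=(\boldsymbol{\epsilon}_{t_1}-\boldsymbol{\epsilon}_{t_1}(t_1{-}j))^\top\boldsymbol{\epsilon}_{t_2}+\boldsymbol{\epsilon}_{t_1}(t_1{-}j)^\top(\boldsymbol{\epsilon}_{t_2}-\boldsymbol{\epsilon}_{t_2}(t_2{-}j)).
\end{equation*}
For each cross term, I condition on the factor unaffected by the $e_j$ swap, normalise it to a unit vector $\mathbf{a}$, and apply the linear-functional bound $\|\mathbf{a}^\top(\boldsymbol{\epsilon}_t-\boldsymbol{\epsilon}_t(s))\|_M\leq \delta_s$ from Definition \ref{def.M_alpha_short_range}. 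Combined with the elementary moment estimate $\bigl\||\boldsymbol{\epsilon}_t|_2\bigr\|_M=O(\sqrt{d})$ (obtained by componentwise triangle inequality in $L^{M/2}$), each pair contributes at order $\sqrt{d}\,(\delta_{t_1-j}+\delta_{t_2-j})$. The gap condition $|t_1-t_2|\geq B$ removes the dangerous near-diagonal terms in which both factors share $e_j$ in tightly coupled fashion. Cauchy--Schwarz over $(t_1,t_2)$ for each fixed $j$ then gives $\|D_j\|_{M/2}^2\leq C\,d\sum_{t_1,t_2}b_{t_1t_2}^2(\delta_{t_1-j}^2+\delta_{t_2-j}^2)$, and summing in $j$ via $\sum_s(1+s)^\alpha\delta_s=O(1)$ yields the leading $\sqrt{d\sum b_{t_1t_2}^2}$ term of \eqref{eq.whole_formula}. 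The residual terms $T^{5/2}/B^\alpha$ and $T^{3/2}/B^{\alpha-2}$ arise from coarser estimates on the tail projections where the above coupling becomes too crude; these are kept under control by $T^{2/\alpha}=o(B)$ together with $\alpha>4$.

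For the truncated bound \eqref{eq.truncate_moment}, the same projection decomposition applies, but now only $j\leq(t_1\vee t_2)-\ell-1$ contributes since projections at later indices are annihilated by conditioning on $\mathcal{F}_{t_1\vee t_2,\ell}$. Reparametrising $q:=(t_1\vee t_2)-j\geq\ell+1$, the three components in \eqref{eq.truncate_moment} correspond to three regimes: the $\delta_q$-weighted sum captures pairs whose separation $|t_1-t_2|$ lies in the active band $[B\vee(\ell+1-q),\ell]$ where the coupling argument applies directly; the $\sqrt{d\sum_{|t_1-t_2|>\ell}b_{t_1t_2}^2}$ term handles widely separated pairs via direct Cauchy--Schwarz on independent-looking factors; and the final two error terms $dT^{3/2}/\ell^{\alpha-1}$ and $d\sqrt{T}/B^\alpha$ are the analogues of the tail residuals in \eqref{eq.whole_formula}.

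The main obstacle is preventing the dimension $d$ from entering as $d$ rather than $\sqrt{d}$ in the leading term. The \emph{linear-functional} formulation of the physical dependence measure in \eqref{eq.def_deltas}, as opposed to componentwise versions used in much prior work, is indispensable: by applying $\delta_s$ to unit-norm directions $\mathbf{a}=\boldsymbol{\epsilon}_{t_k}/|\boldsymbol{\epsilon}_{t_k}|_2$, the dimensional factor is extracted as $\sqrt{d}$ via Cauchy--Schwarz rather than as $d$ via a union bound over coordinates. A secondary delicacy is that the conditional expectations used in the coupling identity must preserve the $O(\sqrt{d})$ Euclidean-norm moment after replacing $e_j\to e_j^\dagger$; this is where the strong moment assumption $M>4$ enters, since the bilinear product absorbs two Cauchy--Schwarz applications before Burkholder can be invoked.
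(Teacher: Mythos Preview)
Your high-level strategy---martingale decomposition via projection operators and exploiting the linear-functional physical dependence measure---is in the same spirit as the paper. But the step you label ``Cauchy--Schwarz over $(t_1,t_2)$'' to obtain $\|D_j\|_{M/2}^2\leq Cd\sum_{t_1,t_2}b_{t_1t_2}^2(\delta_{t_1-j}^2+\delta_{t_2-j}^2)$ is a genuine gap. For fixed $j$, the summands $P_j(\boldsymbol{\epsilon}_{t_1}^\top\boldsymbol{\epsilon}_{t_2})$ are \emph{not} orthogonal or independent across $(t_1,t_2)$: they all share dependence on $e_j$. Cauchy--Schwarz alone gives only $\|D_j\|_{M/2}\leq\sum_{t_1,t_2}|b_{t_1t_2}|\,\|P_j(\cdot)\|_{M/2}$, and turning this $\ell^1$-type bound into the $\ell^2$-type bound you claim would cost an extra factor of order $\sqrt{T}$ or worse, destroying the result. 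The conditioning you describe (``on the factor unaffected by the $e_j$ swap'') does not work directly either, because when $j\leq t_1<t_2$ both factors depend on $e_j$, and even after the telescoping split neither factor is independent of the other.

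The paper avoids this by a different martingale architecture. For $t_2>t_1$ it first decomposes the \emph{later} variable as $\boldsymbol{\epsilon}_{t_2}=\boldsymbol{\gamma}_{t_2,t_2-t_1-1}+\boldsymbol{\eta}_{t_2,t_2-t_1-1}$, where $\boldsymbol{\gamma}_{t_2,t_2-t_1-1}$ is genuinely \emph{independent} of $\boldsymbol{\epsilon}_{t_1}$ (this is where the gap $t_2-t_1\geq B$ makes the $\boldsymbol{\eta}$-remainder small, producing the $T^{5/2}/B^\alpha$ and $T^{3/2}/B^{\alpha-2}$ terms after a separate martingale-in-$t_2$ argument). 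Then it telescopes the \emph{earlier} variable, $\boldsymbol{\epsilon}_{t_1}=\sum_q(\boldsymbol{\gamma}_{t_1,q}-\boldsymbol{\gamma}_{t_1,q-1})$, and for each $q$ builds a martingale \emph{in $t_1$} (reverse order), not in a global projection index $j$. The point is that $\boldsymbol{\gamma}_{t_1,q}-\boldsymbol{\gamma}_{t_1,q-1}$ is independent of $\sum_{t_2}b_{t_1t_2}\boldsymbol{\gamma}_{t_2,t_2-t_1-1}$, so one can condition on the former, treat it as a fixed vector $\boldsymbol{\tau}$ of Euclidean norm $O(\sqrt{d}\,\delta_q)$, and apply Lemma~\ref{lemma.linear_form} to the latter to extract $\sqrt{\sum_{t_2}b_{t_1t_2}^2}$. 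Burkholder in $t_1$ then gives the outer square root. This two-level structure---independence-based conditioning inside, martingale outside---is exactly what your single global-$j$ decomposition lacks, and is the idea you are missing.
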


\begin{remark}
    To further illustrate the moment bounds in \eqref{eq.whole_formula},  assume that $\boldsymbol{\epsilon}_t^{(i)}$ are mutually independent across both $t$ and $i.$ In such case, by Theorem 2 of \cite{MR0133849}, 
    \begin{align*}
        \Vert Q\Vert_{M/2} &= \left\Vert\sum_{j = 1}^d\left(
        \sum_{t_1 = 1}^T\sum_{t_2 = 1}^T b_{t_1t_2}\left(\boldsymbol{\epsilon}_{t_1}^{(j)}\boldsymbol{\epsilon}_{t_2}^{(j)} - \mathbf{E}\left[\boldsymbol{\epsilon}_{t_1}^{(j)}\boldsymbol{\epsilon}_{t_2}^{(j)}\right]\right)
        \right)\right\Vert_{M/2}\\
        &\leq C\sqrt{\sum_{j = 1}^d \left\Vert\sum_{t_1 = 1}^T\sum_{t_2 = 1}^T b_{t_1t_2}\left(\boldsymbol{\epsilon}_{t_1}^{(j)}\boldsymbol{\epsilon}_{t_2}^{(j)} - \mathbf{E}\left[\boldsymbol{\epsilon}_{t_1}^{(j)}\boldsymbol{\epsilon}_{t_2}^{(j)}\right]\right)\right\Vert^2_{M/2}}
        \leq C_1\sqrt{d\sum_{t_1 = 1}^T\sum_{t_2 = 1}^T b_{t_1t_2}^2}.
    \end{align*}
    Therefore, \eqref{eq.whole_formula} attains the oracle bound for independent random variables provided that $T^{5/2} / B^\alpha$ and $T^{3/2} / B^{\alpha - 2}$ are negligible compared to  $\sqrt{d \sum_{t_1 = 1}^T\sum_{t_2 = 1}^T b_{t_1t_2}^2}.$ 

    Remark \ref{remark.dependence_decomposition}  in the online supplement decomposes $Q$ into a 
    martingale sequence with special characteristics, defined in \eqref{eq.b_first_bound}, and the remainder terms, defined in \eqref{eq.second_summ}.  It further shows that the sum of squared coefficients $\sqrt{d\sum_{t_1 = 1}^T\sum_{t_2 = 1}^T b_{t_1t_2}^2}$ is introduced by bounding the moments of the martingale sequence, while the terms $T^{5/2} / B^\alpha$ and $T^{3/2} / B^{\alpha - 2}$ in \eqref{eq.whole_formula} arise from bounding the moments of the remainder terms. Despite the fact that the remainder terms may exhibit complex dependence structures and long-range dependence,  their moments remain small, provided that the bandwidth $B$ is chosen sufficiently large. Accordingly, we adopt a moderately large bandwidth $B$ in $Q$ to mitigate dependence and ensure that the moment of $Q$ approximates that under independence.

    Equation \eqref{eq.whole_formula} also clarifies the impact of data dependence on the choice of bandwidth $B.$ In definition \ref{def.M_alpha_short_range}, larger values of $\alpha$ reflect weaker dependence. When $\alpha$ is close to 1,
    a large $B$ is required to offset the effects of data dependence.
\end{remark}

Our next result establishes the asymptotic normality of the quadratic form $Q.$ Equation \eqref{eq.truncate_moment} shows that the influence of coefficients $b_{t_1t_2}$ differ when approximating $Q$ by quadratic forms of their conditional expectations $\mathbf{E}\left[\boldsymbol{\epsilon}_{t_1}^\top \boldsymbol{\epsilon}_{t_2}\mid\mathcal{F}_{t_1\vee t_2, \ell}\right].$ Specifically, the sum of squares of $b_{t_1t_2}$ substantially contributes to the approximation error when the time lag exceeds $\ell.$ Concerning this, we introduce  an additional bandwidth $B_1$ and set $b_{t_1t_2} = 0$ if  the time lag is larger than $B_1$ to decrease the approximation error.

\begin{theorem}
    Suppose $\boldsymbol{\epsilon}_t, t = 1,\cdots,T$ stem from an $(M,\alpha)$-short-range dependent random vector process with $M >8,\alpha > 4,$ and $d \asymp T.$  Assume that the coefficients $b_{t_1t_2}$ satisfy the following conditions:
\begin{enumerate}
    \item   $\vert b_{t_1t_2}\vert\leq 1$ for $t_1,t_2 = 1,\cdots, T.$
    \item  There exists two integer bandwidths $B < B_1$ such that $ B \asymp T^{\kappa_1}$ and $ B_1 \asymp T^{\kappa_2},$ where $\frac{2}{\alpha} < \kappa_1 < \kappa_2 < 1,$ and    $b_{t_1t_2} = 0$ if $\vert t_1 - t_2\vert < B$ or $\vert t_1  - t_2\vert > B_1.$
    \item  $\sum_{t_1 = 1}^T\sum_{t_2 = 1}^T b_{t_1t_2}^2\asymp T(B_1 - B).$
\end{enumerate}
Suppose the scaling parameter $S_T \asymp \sqrt{Td(B_1 - B)},$ and  there exists a constant $c_1 > 0$ such that 
$$
\mathrm{Var}\left(\frac{Q}{S_T}\right) > c_1
$$
for sufficiently large $T.$
    Then we have 
    \begin{equation}
        \begin{aligned}
            \sup_{x\in\mathbf{R}}\left\vert
            \mathbf{Pr}\left(\frac{Q}{S_T}\leq x\right) - \mathbf{Pr}\left(\xi\leq x\right)
            \right\vert = o(1),
        \end{aligned}
        \label{eq.Gaussian_approx}
    \end{equation}
    where $\xi$ has normal distribution with mean $0$ and variance 
    $$
    \mathrm{Var}(\xi) = \mathrm{Var}\left(
    \frac{Q}{S_T}
    \right).
    $$
\label{theorem.Gaussian_app}
\end{theorem}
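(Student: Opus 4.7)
The strategy is a two-stage reduction: truncate the dependence to a finite window using \eqref{eq.truncate_moment}, then apply a martingale central limit theorem to the truncated object. First, I would choose a truncation bandwidth $\ell$ of the same order as $B_1$ and define
\begin{equation*}
Q_\ell = \sum_{t_1, t_2} b_{t_1 t_2}\bigl(\mathbf{E}[\boldsymbol{\epsilon}_{t_1}^\top \boldsymbol{\epsilon}_{t_2} \mid \mathcal{F}_{t_1 \vee t_2, \ell}] - \mathbf{E}[\boldsymbol{\epsilon}_{t_1}^\top \boldsymbol{\epsilon}_{t_2}]\bigr).
\end{equation*}
Using \eqref{eq.truncate_moment} with $\ell \asymp B_1$, the second term on its right-hand side vanishes (since $b_{t_1 t_2}$ is already supported on $|t_1-t_2| \le B_1$), and the first term is $O(\sqrt{dT})$ because $\alpha > 4$ ensures that $\sum_q \delta_q \sqrt{q}$ is finite. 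The third and fourth terms are $O(dT^{3/2}/B_1^{\alpha-1})$ and $O(d\sqrt{T}/B^\alpha)$, respectively. Comparing with $S_T \asymp T^{1+\kappa_2/2}$ and using $\kappa_1 > 2/\alpha$ together with $\alpha > 4$, each of these contributions is $o(S_T)$. Hence $(Q - Q_\ell)/S_T = o_p(1)$ by Markov, and it suffices to establish the Gaussian approximation for $Q_\ell/S_T$.

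Second, with $\mathcal{F}_j = \sigma(e_s : s \le j)$ and $P_j X = \mathbf{E}[X \mid \mathcal{F}_j] - \mathbf{E}[X \mid \mathcal{F}_{j-1}]$, I would form the martingale-difference decomposition $Q_\ell = \sum_j D_j$ with $D_j = P_j Q_\ell$. The truncation ensures that each term of $Q_\ell$ is measurable with respect to a window of size $\ell$, so $D_j$ aggregates only pairs $(t_1, t_2)$ satisfying $t_1 \vee t_2 \in [j, j+\ell)$, localizing it to a block of $O(\ell(B_1-B))$ contributions. Then I would invoke a martingale central limit theorem (e.g., Hall and Heyde, Corollary 3.1) to obtain that $Q_\ell/(\sigma S_T)$ converges weakly to a standard normal random variable, where $\sigma^2 = \mathrm{Var}(Q/S_T)$. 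Two conditions must be checked: (a) a Lindeberg condition, which follows from a Lyapunov-type bound using $M > 8$ by controlling $\|D_j\|_{M/2}$ via Theorem \ref{theorem.consistent_quadratic} applied to the localized quadratic form defining $D_j$ and then trading the $(M/2)$-th moment against $S_T^{M/2}$ through Markov; and (b) convergence of the conditional quadratic variation, $\sum_j \mathbf{E}[D_j^2 \mid \mathcal{F}_{j-1}]/S_T^2 \to \sigma^2$ in probability. Combining these with Step 1 and the lower bound $\sigma^2 > c_1$ from the hypothesis yields \eqref{eq.Gaussian_approx}.

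The principal difficulty will be verifying condition (b). Because each $D_j$ is itself a quadratic form in $\boldsymbol{\epsilon}$, its conditional second moment $\mathbf{E}[D_j^2 \mid \mathcal{F}_{j-1}]$ involves fourth-order cumulants, and summing over $j$ yields a high-dimensional functional whose fluctuations couple the temporal dependence with the regime $d \asymp T$. My plan is to express $\sum_j \mathbf{E}[D_j^2 \mid \mathcal{F}_{j-1}] - \sum_j \mathbf{E}[D_j^2]$ as an auxiliary quadratic (or quartic) form in $\boldsymbol{\epsilon}$ and iterate Theorem \ref{theorem.consistent_quadratic} in the $(M/2)$-norm, exploiting the control over arbitrary linear combinations in Definition \ref{def.M_alpha_short_range} to handle the inner-product structure. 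An alternative is McLeish's version of the martingale CLT, which replaces (b) by the requirement $\sum_j D_j^2/S_T^2 \to \sigma^2$ in probability and likewise reduces to a concentration estimate on a fourth-order polynomial in $\boldsymbol{\epsilon}$. Either route hinges on taming this fourth-order fluctuation, which is the core technical hurdle of the argument.
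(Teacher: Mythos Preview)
Your truncation step is fine and mirrors the paper's, though the paper takes $\ell = \lfloor T^{(3\kappa_2+1)/4}\rfloor > B_1$ rather than $\ell \asymp B_1$. The real divergence is at Step~2. The paper does not use a martingale CLT; instead it combines Bernstein blocking with a Lindeberg replacement against a smooth test function $g_{\psi,x}$. After truncation, each summand depends on at most $\ell+1$ consecutive innovations, so partitioning $\{1,\dots,T\}$ into big blocks of length $v$ separated by small blocks of length $\ell$ (with $\ell \ll v \ll T$) makes the big-block sums $A_1,\dots,A_R$ mutually independent. The small-block contribution costs $O(\sqrt{\ell/v})$ by Theorem~\ref{theorem.consistent_quadratic}. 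One then swaps each $A_q$ for a Gaussian $A_q^*$ with matching mean and variance, controlling the swap error by a third-order Taylor expansion of $g_{\psi,x}$; the total cost is $O(\psi^3 R\,\max_q\|A_q\|_{M/2}^3/S_T^3) = O(\psi^3\sqrt{v/T})$. The point is that this route needs \emph{only} the moment bound $\|A_q\|_{M/2} \le C\sqrt{dv(B_1-B)}$ from Theorem~\ref{theorem.consistent_quadratic}; no conditional-variance verification enters, because the Gaussian surrogates are built to match variances exactly.

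Your condition~(b) is where your plan stalls, and the proposed fix does not work as written. Theorem~\ref{theorem.consistent_quadratic} bounds moments of \emph{quadratic} forms $\sum_{t_1,t_2} b_{t_1t_2}\,\boldsymbol{\epsilon}_{t_1}^\top\boldsymbol{\epsilon}_{t_2}$ with scalar coefficients; the fluctuation $\sum_j D_j^2 - \sum_j\mathbf{E}[D_j^2]$ (or its conditional analogue) is a \emph{quartic} polynomial in $\boldsymbol{\epsilon}$, not of that form, so the theorem cannot be ``iterated'' onto it. Treating $\boldsymbol{\epsilon}_{t_1}^\top\boldsymbol{\epsilon}_{t_2}$ as a new scalar time series does not help either, since those products do not satisfy Definition~\ref{def.M_alpha_short_range} in the required uniform-over-linear-combinations sense. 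You would first need a concentration inequality for fourth-order polynomials of high-dimensional $(M,\alpha)$-short-range dependent vectors, which the paper neither proves nor uses. (Theorem~\ref{theorem.var_estimation} does treat a related fourth-order object, but under the much stronger hypotheses $\alpha>14$ and $\kappa_2<1/6$, and by direct term-by-term moment calculations rather than by invoking Theorem~\ref{theorem.consistent_quadratic}.) The Bernstein-blocking/Lindeberg approach is precisely the missing idea that circumvents this: independence of the big blocks replaces the conditional-variance condition with a trivial variance-matching construction, so only third-moment control of each block is required.
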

Since $\vert b_{t_1t_2}\vert\leq 1$ and $b_{t_1t_2} = 0$ if $\vert t_1 - t_2\vert < B$ or $\vert t_1  - t_2\vert > B_1,$ we have 
\begin{align*}
    \sum_{t_1 = 1}^T\sum_{t_2 = 1}^T b_{t_1t_2}^2\leq \sum_{B\leq \vert t_1 - t_2\vert\leq B_1} 1\leq CT(B_1 - B).
\end{align*}
Therefore, Condition 3 of Theorem \ref{theorem.Gaussian_app} actually requires that the sum of squares of $b_{t_1t_2}$ not be too small. Furthermore, from Theorem \ref{theorem.consistent_quadratic}, and notice that $S_T \asymp \sqrt{Td (B_1 - B)},$ we have
\begin{align*}
    \left\Vert Q\right\Vert_{2}\leq \left\Vert Q\right\Vert_{M/2}\leq C\sqrt{d\sum_{t_1 = 1}^T\sum_{t_2 = 1}^T  b_{t_1t_2}^2} + \frac{CT^{5/2}}{B^\alpha} + \frac{CT^{3/2}}{B^{\alpha - 2}}\leq C_1S_T,
\end{align*}
which implies that the variance of $\frac{Q}{S_T}$ is of order $O(1).$ The assumption $\mathrm{Var}\left(Q  /S_T\right) > c_1$ is thus introduced to ensure that the quadratic form does not degenerate to $0$ asymptotically.

Estimation of the variance $\mathrm{Var}(Q)$ is challenging in our setup because $Q$ involves products of time series data, whose variances and covariances depend on fourth-order cumulants. Furthermore, since we do not assume stationarity of these cumulants, their direct estimation becomes difficult. Theorem \ref{theorem.var_estimation} addresses this issue by presenting a heteroskedasticity and autocorrelation consistent (HAC) estimator for the variance of $Q.$

Since its introduction by \cite{MR890864}, the HAC estimator and its variants \cite{MR1106513,MR2748557, MR3132458, MR3238584}, among others, have become useful tools for estimating variances and covariances in heterogeneous data settings. The work of \cite{zhang2023statisticalinferencehighdimensionalvector,MR4829492} demonstrated that, when raw data are replaced by their products, the HAC estimator consistently estimates the variance of an estimator even in the presence of fourth-order cumulants. Theorem \ref{theorem.var_estimation} employs this idea in constructing the HAC estimator.

The implementation of the HAC estimator requires a kernel function satisfying certain regularity conditions, which are formally stated in Definition \ref{definition.kernel_function}.

\begin{definition}[Kernel function]
   Suppose a function $\mathcal{K}(\cdot):\mathbf{R}\to[0,\infty)$ be symmetric, continuously differentiable, $\mathcal{K}(0) = 1, \int_{\mathbf{R}} \mathcal{K}(x)\mathrm{d}x < \infty,$ and $\mathcal{K}(\cdot)$ is decreasing on $[0,\infty).$ Define the Fourier transformation of $\mathcal{K}$ as 
   $
   \mathcal{F}\mathcal{K}(x) = \int_{\mathbf{R}} \mathcal{K}(t)\exp(-2\pi\mathrm{i} tx)\mathrm{d}t, 
   $
   where $\mathrm{i} = \sqrt{-1}$. We assume $\mathcal{F}\mathcal{K}(x)\geq 0$ for all $x\in\mathbf{R}$ and $\int_{\mathbf{R}}\mathcal{F}\mathcal{K}(x)\mathrm{d}x < \infty$.
   \label{definition.kernel_function}
\end{definition}

\begin{remark}
\label{remark.choose_K}
Definition \ref{definition.kernel_function} imposes stronger conditions than the common assumptions about kernel functions in the literature,  such as those in \cite{MR3694571}.  In particular, we set constraints on the Fourier transform of $\mathcal{K}$ to guarantee that the matrix $\left\{\mathcal{K}\left(\frac{i - j}{H}\right)\right\}_{i,j = 1,\cdots, T}$ is positive semi-definite, where $H$ is a bandwidth defined in Theorem \ref{theorem.var_estimation}. To illustrate, for any vector $\mathbf{x}\in\mathbf{R}^{T},$  the Fourier inversion theorem (Theorem 8.26 of \cite{MR1681462}) yields
\begin{align*}
    \sum_{i = 1}^T\sum_{j = 1}^T \mathbf{x}^{(i)}\mathbf{x}^{(j)} \mathcal{K}\left(\frac{i - j}{H}\right)
    &= \int_{\mathbf{R}}\sum_{i = 1}^T\sum_{j = 1}^T \mathbf{x}^{(i)}\mathbf{x}^{(j)}\mathcal{F}\mathcal{K}(t)\exp\left(2\pi\mathrm{i}t\frac{i - j}{H}\right)\mathrm{d}t\\
    & = \int_{\mathbf{R}}\mathcal{F}\mathcal{K}(t)\left\vert
    \sum_{j = 1}^T\mathbf{x}^{(j)}\exp\left(2\pi\mathrm{i}\frac{j}{H}\right)
    \right\vert^2\mathrm{d}t\geq 0,
\end{align*}
which verifies that the matrix $\left\{\mathcal{K}\left(\tfrac{i - j}{H}\right)\right\}_{i,j=1,\cdots,T}$ is positive semi-definite.
\end{remark}

\begin{theorem}
    Suppose $\mathcal{K}(\cdot)$ is a kernel function satisfying Definition \ref{definition.kernel_function} and $H > 0$ is a bandwidth. For each $t = B + 1,\cdots, T,$ define 
    \begin{equation}
    \vartheta_t = 2\sum_{t_2 = (t - B_1)\vee 1}^{t - B} b^\circ_{tt_2}\boldsymbol{\epsilon}_{t}^\top \boldsymbol{\epsilon}_{t_2},\quad\text{where }  b^\circ_{tt_2} = \frac{\left(b_{tt_2} + b_{t_2t}\right)}{2}, 
    \label{eq.def_vartheta_t}
    \end{equation}
    the coefficients $b_{tt_2}$ coincide with $Q$ in \eqref{eq.def_quadratic_form}, and  $\boldsymbol{\epsilon}_t, t = 1,\cdots,T$ stem from an $(M,\alpha)$-short-range dependent random vector process with $M >8$ and $\alpha > 14.$ Suppose $d\asymp T,$ $B \asymp T^{\kappa_1}$ with $\frac{2}{\alpha} < \kappa_1< 1/6,$ 
    $B_1 \asymp T^{\kappa_2}$ with $\kappa_1\vee\frac{4}{2\alpha - 3} <\kappa_2 < 1/6,$ $H \asymp T^{1/3}\log(T),$ and the coefficients $b_{t_1t_2}$  satisfy conditions 1-3 in Theorem \ref{theorem.Gaussian_app}. Then
    \begin{equation}
    \begin{aligned}
        & \left\Vert\frac{1}{S^2_T}\sum_{t_1 = B + 1}^T\sum_{t_2 = B+1}^T \mathcal{K}\left(\frac{t_1 - t_2}{H}\right)\vartheta_{t_1}\vartheta_{t_2}
         - \mathrm{Var}\left(
    \frac{Q}{S_T}
    \right)\right\Vert_{M/4}\\
        &= O\left(\frac{\log(T)}{T^{1/6 - \kappa_2}} + \frac{1}{T^{\kappa_2 / 2}}\right),
    \end{aligned}
    \label{eq.var_esti}
    \end{equation}
    where $S_T$ coincides with Theorem \ref{theorem.Gaussian_app}.
    \label{theorem.var_estimation}
\end{theorem}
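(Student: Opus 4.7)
The plan is to let $\widehat{V} = S_T^{-2}\sum_{t_1, t_2=B+1}^{T}\mathcal{K}((t_1-t_2)/H)\vartheta_{t_1}\vartheta_{t_2}$ denote the HAC estimator and decompose $\widehat{V} - \mathrm{Var}(Q/S_T)$ into three pieces, each controlled separately. The first structural observation is that, by the symmetry of $\boldsymbol{\epsilon}_{t_1}^{\top}\boldsymbol{\epsilon}_{t_2}$ in $(t_1,t_2)$, the symmetrization $b^{\circ}_{tt'} = (b_{tt'}+b_{t't})/2$, and the fact that $b_{tt}=0$ since $B\ge 1$, one can rewrite $Q = \sum_{t=B+1}^{T}(\vartheta_t - \mathbf{E}[\vartheta_t])$. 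Consequently $\mathrm{Var}(Q) = \sum_{t_1,t_2}\mathrm{Cov}(\vartheta_{t_1},\vartheta_{t_2})$, which is precisely the quantity the HAC estimator is targeting.

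Using the identity $\vartheta_{t_1}\vartheta_{t_2} = \mathrm{Cov}(\vartheta_{t_1},\vartheta_{t_2}) + \mathbf{E}[\vartheta_{t_1}]\mathbf{E}[\vartheta_{t_2}] + (\vartheta_{t_1}\vartheta_{t_2} - \mathbf{E}[\vartheta_{t_1}\vartheta_{t_2}])$, we write $\widehat{V} - \mathrm{Var}(Q/S_T) = I_1 + I_2 + I_3$, where $I_1 = S_T^{-2}\sum_{t_1,t_2}[\mathcal{K}((t_1-t_2)/H)-1]\mathrm{Cov}(\vartheta_{t_1},\vartheta_{t_2})$ is the HAC kernel bias, $I_2 = S_T^{-2}\sum_{t_1,t_2}\mathcal{K}((t_1-t_2)/H)\mathbf{E}[\vartheta_{t_1}]\mathbf{E}[\vartheta_{t_2}]$ captures contamination from the nonvanishing means of the second-order residuals, and $I_3 = S_T^{-2}\sum_{t_1,t_2}\mathcal{K}((t_1-t_2)/H)(\vartheta_{t_1}\vartheta_{t_2}-\mathbf{E}[\vartheta_{t_1}\vartheta_{t_2}])$ is the stochastic fluctuation.

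For $I_1$, I would split at $|t_1-t_2|\le H$ versus $|t_1-t_2|>H$. On the near-diagonal piece, $|\mathcal{K}(x)-1|\le C|x|$ follows from $\mathcal{K}(0)=1$ and continuous differentiability in Definition \ref{definition.kernel_function}, producing an error that vanishes as $H\to\infty$. On the off-diagonal piece, the covariance $\mathrm{Cov}(\vartheta_{t_1},\vartheta_{t_2})$ decays polynomially in $|t_1-t_2|$ by the $(M,\alpha)$-short-range dependence of $\boldsymbol{\epsilon}_t$ and the compact support $[B,B_1]$ of $b^{\circ}$; the decay can be formalized by a coupling argument using $\boldsymbol{\epsilon}_t(s)$ together with \eqref{eq.truncate_moment}. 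For $I_2$, the bound $|\mathbf{E}[\vartheta_t]|\le Cd\sum_{s\ge B}(1+s)^{-\alpha} = O(d/B^{\alpha-1})$ follows from \eqref{eq.covariance}; combining with $\sum_{t_1,t_2}\mathcal{K}((t_1-t_2)/H) = O(TH)$, the scaling $S_T^2 \asymp Td(B_1-B)$, and the restrictions $\alpha>14$, $\kappa_1>2/\alpha$ shows $I_2$ is dominated by the claimed rate.

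The main obstacle is $I_3$, which is a centered fourth-order polynomial in $\boldsymbol{\epsilon}_t$. To bound its $L_{M/4}$-norm, my plan is to mimic the martingale projection technique used in the proof of Theorem \ref{theorem.consistent_quadratic}. I expand each $\vartheta_{t_1}\vartheta_{t_2}-\mathbf{E}[\vartheta_{t_1}\vartheta_{t_2}]$ as a telescoping sum $\sum_u(P_u - P_{u-1})[\vartheta_{t_1}\vartheta_{t_2}]$, where $P_u[\cdot] = \mathbf{E}[\cdot\mid \mathcal{F}_u] - \mathbf{E}[\cdot\mid\mathcal{F}_{u-1}]$ with $\mathcal{F}_u = \sigma(e_v:v\le u)$; interchanging the order of summation and grouping all $(t_1,t_2)$-contributions indexed by the same $u$ yields a martingale difference sequence $\{D_u\}$. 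Short-range dependence together with the constraint $B\le |t-t'|\le B_1$ guarantee that only $\boldsymbol{\epsilon}$'s in a window of length $O(B_1)$ around $u$ materially feed into $D_u$, and the kernel's effective support confines the $(t_1,t_2)$-pairs contributing nontrivially to an $O(H)$-neighborhood. Applying Burkholder's inequality to $\{D_u\}$ and bounding each $\|D_u\|_{M/4}$ by the product-of-bilinear-form moment estimates of Theorem \ref{theorem.consistent_quadratic}, then balancing $H \asymp T^{1/3}\log T$ against $B_1\asymp T^{\kappa_2}$, should deliver $\|I_3\|_{M/4} = O(\log T / T^{1/6-\kappa_2} + 1/T^{\kappa_2/2})$. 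The assumptions $M>8$, $\alpha>14$, $\kappa_2>4/(2\alpha-3)$, and $\kappa_2<1/6$ are exactly what is needed so that the quartic martingale bound is valid and non-degenerate, and so that $I_1, I_2$ are dominated by this stochastic rate.
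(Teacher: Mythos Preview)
Your proposal is essentially the same approach as the paper's proof: the three-way decomposition $I_1+I_2+I_3$ matches the paper's split into the kernel-bias term, the mean-contamination term (the paper packages this as the quantity $\Theta$ plus the $(\mathcal{K}-1)\mathbf{E}[\vartheta_{t_1}]\mathbf{E}[\vartheta_{t_2}]$ piece), and the stochastic fluctuation, and each piece is controlled by the same tools---kernel Lipschitzness near the diagonal plus covariance decay for $I_1$, the bound $|\mathbf{E}[\vartheta_t]|=O(d/B^{\alpha-1})$ for $I_2$, and a martingale-difference expansion for $I_3$. Two small organizational differences worth noting when you execute: the paper splits the lag in $I_1$ into three ranges $[0,B_1]$, $(B_1,H]$, $(H,T]$ rather than two (the covariance of $\vartheta_{t_1},\vartheta_{t_2}$ only starts to decay once $|t_1-t_2|>B_1$, so the middle range needs separate handling); and for $I_3$ the paper first centers to $\iota_t=\vartheta_t-\mathbf{E}[\vartheta_t]$, then works lag-by-lag via $\omega_{t,s}=\mathbf{E}[\iota_t\mid\mathcal{F}_{t,s}]$ and $\varphi_{t,s}=\iota_t-\omega_{t,s}$ rather than your global projection $P_u$, which makes the ``effective window of length $O(B_1)$'' claim precise and keeps the tail contributions from the infinite past of $\boldsymbol{\epsilon}_t$ tractable.
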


\begin{remark}
    The condition $b_{t_1t_2} = 0$ if $\vert t_1 - t_2\vert < B$  is necessary to maintain the consistency of the HAC estimator in \eqref{eq.var_esti}. Notice that 
    \begin{align*}
        \mathbf{E}\left[\vartheta_t\right] = 2\sum_{t_2 = (t - B_1)\vee 1}^{t - B} b^\circ_{tt_2}\mathbf{E}\left[\boldsymbol{\epsilon}_{t}^\top \boldsymbol{\epsilon}_{t_2}\right] = O\left(\frac{d}{B^{\alpha - 1}}\right),
    \end{align*}
    which in general does not equal $0.$ Moreover, estimating $\mathbf{E}\left[\vartheta_t\right]$ is difficult since no structural or stationary assumptions are imposed on the data covariances.
    After choosing a sufficiently large $B,$ however, such expectation becomes negligible compared to the stochastic order of $\vartheta_t,$ thus eliminating the need to estimate $\mathbf{E}\left[\vartheta_t\right].$
\end{remark}

\section{Asymptotic theory for the testing procedure}
\label{section.Asymptotic_testing}
Section \ref{section.quadratic_form} has established the asymptotic properties of the quadratic form $Q$ in \eqref{eq.def_quadratic_form}.  Building on these results, this section analyzes the test statistic $\widehat{R}$ defined in \eqref{eq.def_RK}.  Specifically, it derives the asymptotic distribution of $\widehat{R}$ under $H_0,$ and establishes the bootstrap consistency. We also analyze the power performance under $H_1. $  The following assumptions are imposed to guarantee the validity of the test procedure.

\textbf{Assumptions: }  1. Suppose $e_{t,k},$ where $t\in\mathbf{Z}$ and $k = 1,2,\cdots, K,$ are mutually independent (but not necessarily identically distributed) random variables. Assume that 
\begin{equation}
\boldsymbol{\epsilon}_{t,k}^{(i)} = g_{t,k}^{(i)}\left(\cdots, e_{t-1, k}, e_{t,k}\right)\quad \text{for any $t\in\mathbf{Z},$ $k = 1,\cdots, K,$ and $ i = 1,\cdots,d.$ }
\end{equation}
Furthermore, for each $k = 1,\cdots, K,$ suppose $\boldsymbol{\epsilon}_{t,k}, t = 1,2,\cdots, T_k,$ stem from an $(M,\alpha)$-short-range dependent random vector process with $M > 8$ and $\alpha > 14.$

2. Define $\mathcal{T}_\circ = \min\left(T_1,\cdots, T_K\right)$ and $\mathcal{T}_\dagger = \max\left(T_1,\cdots, T_K\right).$ Assume that $K = O(1),$ $\mathcal{T}_\circ \asymp \mathcal{T}_\dagger,$ and  $d\asymp \mathcal{T}_\circ.$ Assume $\max_{k = 1,\cdots, K}\vert\boldsymbol{\mu}_k\vert_\infty = O(1).$

3. Suppose $B \asymp \mathcal{T}_\circ^{\kappa_1}$ with $\frac{2}{\alpha} < \kappa_1< 1/6,$ $B_1\asymp \mathcal{T}_\circ^{\kappa_2}$ with $\kappa_1\vee\frac{4}{2\alpha - 3} <\kappa_2 < 1/6,$ 
and $H \asymp \mathcal{T}_\circ^{1/3}\log(\mathcal{T}_\circ).$

4. Suppose there exists a constant $c>0$ such that 
\begin{align*}
    & \sum_{k = 2}^K \mathrm{Var}\left(\frac{\sqrt{\mathcal{T_\circ}(B_1 - B)}}{V_k\sqrt{d}}\sum_{B\leq \vert t_1 - t_2\vert\leq B_1}^{T_k}\left(\boldsymbol{\epsilon}_{t_1,k}^\top\boldsymbol{\epsilon}_{t_2,k} - \mathbf{E}\left[\boldsymbol{\epsilon}_{t_1,k}^\top\boldsymbol{\epsilon}_{t_2,k}\right]\right)\right)\\
    &+ (K - 1)^2 \mathrm{Var}\left(\frac{\sqrt{\mathcal{T}_\circ(B_1 - B)}}{V_1\sqrt{d}}\sum_{B\leq \vert t_1 - t_2\vert\leq B_1}^{T_1}\left(\boldsymbol{\epsilon}_{t_1,k}^\top\boldsymbol{\epsilon}_{t_2,k} - \mathbf{E}\left[\boldsymbol{\epsilon}_{t_1,k}^\top\boldsymbol{\epsilon}_{t_2,k}\right]\right)\right) > c 
\end{align*}
for sufficiently large $\mathcal{T}_\circ.$

Assumption 1 introduces the independent random variables $e_{t,k}$ to preserve the representation in \eqref{eq.physical_dependence} for the time series data, while simultaneously ensuring independence of time series across different populations.  Assumption 2 constraints that the sample sizes of different populations are of the same order. Besides, Assumption 2 allows the dimension $d$ to exceed the largest sample size $\mathcal{T}_\dagger,$ but this work does not accommodate the cases where  $d$ grows substantially faster than  $\mathcal{T}_\dagger.$   Notably, under assumption 2, we have 
$$
\left\vert \boldsymbol{\mu}_k\right\vert_2 = \sqrt{\sum_{i = 1}^d \boldsymbol{\mu}_k^{(i)2}}\leq C\sqrt{d}\asymp \sqrt{\mathcal{T}_\circ}.
$$
Similar to Theorem \ref{theorem.Gaussian_app},  Assumption 4 ensures that the test statistic does not asymptotically degenerate to $0.$

Theorem \ref{theorem.consistent_ANOVA} establishes the consistency of the test statistic and further shows that,  under $H_0,$ its distribution can be approximated by a normal distribution with 0 mean.

\begin{theorem}
    Suppose Assumptions 1-4 hold true, then 
    \begin{equation}
        \left\Vert
        \ 
        \widehat{R} - \frac{1}{\sqrt{d}}\sum_{k = 2}^K\left\vert \boldsymbol{\mu}_k  - \boldsymbol{\mu}_1\right\vert_2^2
        \ 
        \right\Vert_{M/2} = O\left(\frac{1}{\sqrt{\mathcal{T}_\circ}}\right).
        \label{eq.consistency_R}
    \end{equation}
    Moreover, under $H_0,$
    \begin{equation}
        \sup_{x\in\mathbf{R}}\left\vert
        \mathbf{Pr}\left(\sqrt{\mathcal{T}_\circ (B_1- B)}\widehat{R}\leq x\right) - \mathbf{Pr}(\zeta \leq x)
        \right\vert = o(1),
        \label{eq.prob_res}
    \end{equation}
    where $\zeta$ has normal distribution with mean $0$ and variance
    \begin{equation}
    \begin{aligned}
         &\mathrm{Var}(\zeta) = \sum_{k = 2}^K \mathrm{Var}\left(\frac{\sqrt{\mathcal{T_\circ}(B_1 - B)}}{V_k\sqrt{d}}\sum_{B\leq \vert t_1 - t_2\vert\leq B_1}^{T_k}\left(\boldsymbol{\epsilon}_{t_1,k}^\top\boldsymbol{\epsilon}_{t_2,k} - \mathbf{E}\left[\boldsymbol{\epsilon}_{t_1,k}^\top\boldsymbol{\epsilon}_{t_2,k}\right]\right)\right)\\
         & + (K - 1)^2 \mathrm{Var}\left(\frac{\sqrt{\mathcal{T}_\circ(B_1 - B)}}{V_1\sqrt{d}}\sum_{B\leq \vert t_1 - t_2\vert\leq B_1}^{T_1}\left(\boldsymbol{\epsilon}_{t_1,k}^\top\boldsymbol{\epsilon}_{t_2,k} - \mathbf{E}\left[\boldsymbol{\epsilon}_{t_1,k}^\top\boldsymbol{\epsilon}_{t_2,k}\right]\right)\right).
    \end{aligned}
    \label{eq.var_zeta}
    \end{equation}
    \label{theorem.consistent_ANOVA}
\end{theorem}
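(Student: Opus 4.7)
The plan is to decompose $\widehat R$ around its signal by writing $\mathbf x_{t,k}=\boldsymbol{\mu}_k+\boldsymbol{\epsilon}_{t,k}$ in the three sums comprising $\widehat R_k$ in \eqref{eq.def_RK}. Using $|\boldsymbol{\mu}_k|_2^2+|\boldsymbol{\mu}_1|_2^2-2\boldsymbol{\mu}_k^\top\boldsymbol{\mu}_1=|\boldsymbol{\mu}_k-\boldsymbol{\mu}_1|_2^2$, the mean-only contributions assemble into $\frac{1}{\sqrt d}\sum_{k=2}^K|\boldsymbol{\mu}_k-\boldsymbol{\mu}_1|_2^2$, and the residual splits into four pieces to be treated separately: (a) linear cross-terms of the form $\boldsymbol{\mu}^\top\boldsymbol{\epsilon}$; (b) centred within-population quadratic forms $Q_k^{(w)}:=\frac{1}{V_k\sqrt d}\sum_{B\le|t_1-t_2|\le B_1}(\boldsymbol{\epsilon}_{t_1,k}^\top\boldsymbol{\epsilon}_{t_2,k}-\mathbf E[\boldsymbol{\epsilon}_{t_1,k}^\top\boldsymbol{\epsilon}_{t_2,k}])$; (c) a cross-population quadratic $-\tfrac{2}{\sqrt d}\bar{\boldsymbol{\epsilon}}_k^\top\bar{\boldsymbol{\epsilon}}_1$; and (d) the bias from $\mathbf E[\boldsymbol{\epsilon}_{t_1,k}^\top\boldsymbol{\epsilon}_{t_2,k}]\neq 0$ in the within-population sums.

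For \eqref{eq.consistency_R}, I bound each residual in $\|\cdot\|_{M/2}$. Theorem \ref{theorem.consistent_quadratic} with $b_{t_1t_2}=\mathbf 1\{B\le|t_1-t_2|\le B_1\}$ yields $\|Q_k^{(w)}\|_{M/2}=O(1/\sqrt{\mathcal T_\circ(B_1-B)})$; equation \eqref{eq.bias} bounds the bias by $O(\sqrt d/((B_1-B)B^{\alpha-1}))$, which is $o(1/\sqrt{\mathcal T_\circ})$ once $\alpha>14$ and $\kappa_1>2/\alpha$ force $\kappa_1(\alpha-1)>1$. For the cross-population quadratic, inter-population independence gives $\mathbf E[(\mathbf u_k^\top\mathbf u_1)^2]=\mathrm{tr}(\Sigma_{\mathbf u_k}\Sigma_{\mathbf u_1})\le\|\Sigma_{\mathbf u_k}\|_{\mathrm{op}}\,\mathrm{tr}(\Sigma_{\mathbf u_1})$ with $\mathbf u_k:=\sum_t\boldsymbol{\epsilon}_{t,k}$; the $(M,\alpha)$-short-range property supplies $\|\Sigma_{\mathbf u_k}\|_{\mathrm{op}}=O(T_k)$ and $\mathrm{tr}(\Sigma_{\mathbf u_k})=O(T_kd)$, producing $O(1/\mathcal T_\circ)$. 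The linear noise is the largest piece: viewing $(\boldsymbol{\mu}_k/|\boldsymbol{\mu}_k|_2)^\top\boldsymbol{\epsilon}_{t,k}$ as a unit-norm scalar $(M,\alpha)$-short-range-dependent series, a standard Rosenthal-type bound gives $\|\sum_tc_{t,k}\boldsymbol{\mu}_k^\top\boldsymbol{\epsilon}_{t,k}\|_{M/2}=O(|\boldsymbol{\mu}_k|_2\sqrt{\sum_tc_{t,k}^2})=O(\sqrt{dT_k}(B_1-B))$; dividing by $V_k\sqrt d\asymp T_k(B_1-B)\sqrt d$ gives $O(1/\sqrt{\mathcal T_\circ})$, and summing over $k$ proves \eqref{eq.consistency_R}.

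For \eqref{eq.prob_res} under $H_0$, all $\boldsymbol{\mu}_k=\boldsymbol{\mu}_1$, and the within- and cross-population linear pieces for each $k$ collapse into $\frac{2}{\sqrt d}\sum_tw_{t,k}\boldsymbol{\mu}_1^\top\boldsymbol{\epsilon}_{t,k}$ with $w_{t,k}:=c_{t,k}/V_k-1/T_k$ and $c_{t,k}=\#\{t'\in[1,T_k]:B\le|t-t'|\le B_1\}$. A direct case-split gives $w_{t,k}=(B+B_1)/(T_k(2T_k-B-B_1))=O(B_1/T_k^2)$ on the $(T_k-O(B_1))$-sized bulk and $|w_{t,k}|\le 1/T_k$ on the $O(B_1)$-sized boundary, so $\sum_tw_{t,k}^2=O(B_1/T_k^2)$; the scaled linear noise is then $O(\sqrt{(B_1-B)B_1/\mathcal T_\circ})=O(\mathcal T_\circ^{\kappa_2-1/2})=o_p(1)$. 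The scaled cross-population quadratic and bias are $o_p(1)$ by the rates from the previous paragraph. What remains is $\sqrt{\mathcal T_\circ(B_1-B)}\bigl[\sum_{k=2}^KQ_k^{(w)}+(K-1)Q_1^{(w)}\bigr]$, a weighted sum of $K$ independent centred quadratic forms. Applying Theorem \ref{theorem.Gaussian_app} to each $Q_k^{(w)}$ yields marginal Kolmogorov Gaussian approximation; independence across populations (Assumption 1) together with Lévy's continuity theorem promote this to weak convergence of the sum to a Gaussian with variance \eqref{eq.var_zeta}, and continuity of the limit upgrades weak convergence to the uniform-in-$x$ statement \eqref{eq.prob_res}. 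Assumption 4 keeps the limiting variance bounded away from zero.

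The main obstacle is the sharper bound on the linear noise under $H_0$: the $O(1/\sqrt{\mathcal T_\circ})$ bound that suffices for consistency is exactly of the order that would survive rescaling by $\sqrt{\mathcal T_\circ(B_1-B)}$, so without exploiting the cancellation between the coefficient $c_{t,k}/V_k$ (from the within-population sum) and $1/T_k$ (from the cross-population sum) the Gaussian statement would fail. Detecting that $w_{t,k}$ is of bulk order $O(B_1/T_k^2)$ rather than the trivial $O(1/T_k)$ is precisely what makes the rate constraint $\kappa_2<1/6$ sufficient. The subsequent passage from marginal to joint Gaussian approximation is routine thanks to across-population independence.
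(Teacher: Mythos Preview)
Your decomposition for \eqref{eq.consistency_R} and the linear-noise cancellation under $H_0$ (the bulk/boundary split of $w_{t,k}=c_{t,k}/V_k-1/T_k$) are exactly what the paper does. One small slip: for the cross-population piece $\bar{\boldsymbol\epsilon}_k^\top\bar{\boldsymbol\epsilon}_1$ you only compute the second moment via $\mathrm{tr}(\Sigma_{\mathbf u_k}\Sigma_{\mathbf u_1})$, but the statement is in $\|\cdot\|_{M/2}$. The paper obtains the $M/2$ moment by conditioning on $\bar{\boldsymbol\epsilon}_1$ (independent of the $k$-th series) and applying the linear-form bound (Lemma~\ref{lemma.linear_form}) to $\sum_{t}\bar{\boldsymbol\epsilon}_1^\top\boldsymbol\epsilon_{t,k}$; this gives $\|\bar{\boldsymbol\epsilon}_k^\top\bar{\boldsymbol\epsilon}_1\|_{M/2}\le C\,\|\,|\bar{\boldsymbol\epsilon}_1|_2\,\|_{M/2}/\sqrt{T_k}=O(1/\mathcal T_\circ)$. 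Your trace argument only delivers $\|\cdot\|_2$.

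For \eqref{eq.prob_res} your route differs from the paper's, and as written it has a gap. You propose to apply Theorem~\ref{theorem.Gaussian_app} to each $Q_k^{(w)}$ separately and then combine across $k$ via independence. But Theorem~\ref{theorem.Gaussian_app} requires the \emph{individual} variance $\mathrm{Var}(\sqrt{\mathcal T_\circ(B_1-B)}\,Q_k^{(w)})$ to be bounded below, whereas Assumption~4 only lower-bounds the \emph{total}. Nothing prevents some $\sigma_k^2(T)$ from vanishing, in which case the theorem cannot be invoked for that $k$. Moreover, the approximating Gaussians in Theorem~\ref{theorem.Gaussian_app} have $T$-dependent variances, so this is Kolmogorov approximation to a moving target, not weak convergence to a fixed law; ``L\'evy's continuity theorem'' is not the right tool. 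Your idea is salvageable (pass to subsequences where each $\sigma_k^2$ converges; drop populations whose variance tends to zero; for the rest, use the telescoping bound $d_K(\sum_k X_k,\sum_k Z_k)\le\sum_k d_K(X_k,Z_k)$, which holds by conditioning on the independent remaining summands), but you have not done this. The paper sidesteps the issue entirely: it re-runs the $m$-approximation/big-block/small-block/Lindeberg argument directly on the \emph{combined} statistic $\sum_{k=2}^K Q_k^{(w)}+(K-1)Q_1^{(w)}$, forming blocks $A_{q,k}$ simultaneously in $q$ and $k$ (which are jointly independent by Assumption~1), and swapping them one by one for Gaussians. This way the only variance lower bound needed is the total one, which is exactly Assumption~4.
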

It is not surprising that the test statistic $\widehat{R}$ follows an asymptotic normal (rather than $\chi^2$) distribution under $H_0,$ and similar results can be found in \cite{MR2604697,MR3992401}. This asymptotic normality arises because the data dimension grows to infinity with $\mathcal{T}_\circ$.

Since the asymptotic distribution of $\widehat{R}$ is known,  statisticians can approximate the quantiles of the test statistic using those of $\zeta$ and construct a rejection region to control the Type-I error, provided its variance is estimated. However, the variance $\mathrm{Var}(\zeta)$ in \eqref{eq.var_zeta} has a complicated structure because $\widehat{R}$ involves products of time series, and the data exhibit temporal dependence and non-stationarity.  Consequently, direct estimation of this variance requires sophisticated calculations. Algorithm \ref{algorithm.bootstrap} addresses this difficulty by implicitly estimating the variance, thereby assisting hypothesis testing through simulation. We prove this claim in Theorem \ref{Theorem.bootstrap}.

\begin{remark}
\label{remark.power_of_test}
Theorem \ref{theorem.consistent_ANOVA} also assists the analysis of power properties under $H_1.$ In Corollary \ref{corollary.power_performance} of the online supplement, we demonstrate that the power converges to 1 if 
$$
1 = o\left(\sum_{k = 2}^K\vert \boldsymbol{\mu}_k  - \boldsymbol{\mu}_1\vert_2^2\right).
$$
This condition is mild in the high-dimensional setting, where the Euclidean norm $\vert\boldsymbol{\mu}_k\vert_2$ often has order $O(\sqrt{d}).$
\end{remark}

We use the notation 
$$
\mathbf{Pr}^*(\cdot) = \mathbf{Pr}\left(\cdot\mid \mathbf{x}_{t,k},\ \text{where } k = 1,\cdots, K\ \text{and } t = 1,\cdots, T_k\right),
$$
which denotes the conditional probability conditional on all observed data. In the literature such as \cite{MR1707286}, this conditional probability is commonly referred to as the ``probability in the bootstrap world.''  According to  \cite{MR1707286}, the consistency of Algorithm \ref{algorithm.bootstrap} can be established once the following claim holds true:
\begin{equation}
        \sup_{x\in\mathbf{R}}\left\vert
        \mathbf{Pr}^*\left(\widehat{S}^*_u\leq x\right) - \mathbf{Pr}\left(\zeta\leq x\right)
        \right\vert = o_p(1),
        \label{eq.consistency_bootstrap}
\end{equation}
where $\widehat{S}^*_u$ is defined in Algorithm \ref{algorithm.bootstrap} and $\zeta$ is the random variable defined in Theorem \ref{theorem.consistent_ANOVA}. This result is established in Theorem \ref{Theorem.bootstrap}.
\begin{theorem}
    Suppose Assumptions 1-4, then Equation \eqref{eq.consistency_bootstrap} holds true.
    \label{Theorem.bootstrap}
\end{theorem}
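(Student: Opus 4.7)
The plan is to exploit the conditional Gaussianity of $\widehat{S}^*_u$ and reduce \eqref{eq.consistency_bootstrap} to the convergence of the bootstrap variance to $\mathrm{Var}(\zeta)$, which in turn follows from the HAC consistency result of Theorem \ref{theorem.var_estimation}. Conditional on $\{\mathbf{x}_{t,k}\}$, each inner sum $\sum_{t=B+1}^{T_k}\widehat{A}^*_{t,k} = \sum_{t}\widehat{\vartheta}_{t,k}\varepsilon^*_{t,k}$ is a linear combination of the joint normal vector $(\varepsilon^*_{1,k},\ldots,\varepsilon^*_{T_k,k})$, and these vectors are independent across $k$. Hence $\widehat{S}^*_u$ is conditionally normal with mean zero, so that $\mathbf{Pr}^*(\widehat{S}^*_u\leq x) = \Phi(x/\sqrt{V^*})$, where
\begin{equation*}
V^* \;=\; \sum_{k=2}^K c_k^2\, W_k \;+\; (K-1)^2\, c_1^2\, W_1, \qquad c_k := \frac{2\sqrt{\mathcal{T}_\circ(B_1-B)}}{V_k\sqrt{d}},
\end{equation*}
and $W_k := \sum_{t_1,t_2=B+1}^{T_k}\widehat{\vartheta}_{t_1,k}\widehat{\vartheta}_{t_2,k}\,\mathcal{K}\!\left(\tfrac{t_1-t_2}{H}\right)$. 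Since the target CDF $\Phi(x/\sqrt{\mathrm{Var}(\zeta)})$ is continuous and $\mathrm{Var}(\zeta)$ is bounded below by Assumption~4, P\'olya's theorem reduces \eqref{eq.consistency_bootstrap} to verifying $V^*\to_p\mathrm{Var}(\zeta)$.

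To identify $V^*$ with $\mathrm{Var}(\zeta)$, I would introduce the idealized HAC quantity $\widetilde{W}_k$ obtained by replacing $\widehat{\vartheta}_{t,k}$ with its true-innovation analogue $\widetilde{\vartheta}_{t,k} := \sum_{t_2=(t-B_1)\vee 1}^{t-B}\boldsymbol{\epsilon}_{t,k}^\top\boldsymbol{\epsilon}_{t_2,k}$. Writing $Q_k$ for the quadratic form in the $k$-th summand of \eqref{eq.var_zeta} and taking $b_{t_1t_2} = \mathbf{1}\{B\leq|t_1-t_2|\leq B_1\}$ in Theorem~\ref{theorem.var_estimation}, one checks that the $\vartheta_{t,k}$ of \eqref{eq.def_vartheta_t} equals $2\widetilde{\vartheta}_{t,k}$ and $S_{T_k}^2 \asymp T_k(B_1-B)d$. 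Theorem~\ref{theorem.var_estimation} then yields $\widetilde{W}_k = \tfrac{1}{4}\mathrm{Var}(Q_k) + o_p(\mathrm{Var}(Q_k))$. A short calculation (using that the $c_k$ above is exactly twice the scaling factor inside the $k$-th variance term of \eqref{eq.var_zeta}) shows that $\sum_{k\geq 2}c_k^2\cdot\tfrac14\mathrm{Var}(Q_k) + (K-1)^2 c_1^2\cdot\tfrac14\mathrm{Var}(Q_1) = \mathrm{Var}(\zeta)$, so after this matching it remains to control $W_k - \widetilde{W}_k$ produced by the substitution $\widehat{\boldsymbol{\epsilon}}_{t,k} = \boldsymbol{\epsilon}_{t,k} - \Delta_k$ with $\Delta_k := \widehat{\boldsymbol{\mu}}_k - \boldsymbol{\mu}_k = T_k^{-1}\sum_{t}\boldsymbol{\epsilon}_{t,k}$.

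The main technical obstacle is bounding this residual-substitution error. Expanding the bilinear form yields
\begin{equation*}
\widehat{\vartheta}_{t,k} - \widetilde{\vartheta}_{t,k} \;=\; -(B_1-B)\,\boldsymbol{\epsilon}_{t,k}^\top\Delta_k \;-\; \Delta_k^\top\!\!\sum_{t_2=(t-B_1)\vee 1}^{t-B}\!\!\boldsymbol{\epsilon}_{t_2,k} \;+\; (B_1-B)\,|\Delta_k|_2^2 .
\end{equation*}
The preliminary bound $|\Delta_k|_2^2 = O_p(d/T_k)$, obtained by applying Theorem~\ref{theorem.consistent_quadratic} to $\bigl|\sum_{t=1}^{T_k}\boldsymbol{\epsilon}_{t,k}\bigr|_2^2$ (with constant weights $b_{t_1t_2}\equiv 1$) or by a direct moment computation using $(M,\alpha)$-short-range dependence, combined with Cauchy--Schwarz, controls each summand on the right. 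Expanding $W_k - \widetilde{W}_k$ into its three cross-bilinear pieces and using that the kernel mass $\sum_{t_1,t_2}|\mathcal{K}((t_1-t_2)/H)|$ is of size $O(T_k H)$, together with the scalings $B\asymp\mathcal{T}_\circ^{\kappa_1}$, $B_1\asymp\mathcal{T}_\circ^{\kappa_2}$ and $H\asymp\mathcal{T}_\circ^{1/3}\log\mathcal{T}_\circ$ from Assumption~3, each resulting contribution is shown to be $o_p(c_k^{-2})$, so that $c_k^2(W_k-\widetilde{W}_k) = o_p(1)$.

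Combining these pieces yields $V^*\to_p\mathrm{Var}(\zeta)$, and hence $\Phi(x/\sqrt{V^*})\to_p\Phi(x/\sqrt{\mathrm{Var}(\zeta)})$ pointwise in $x$ by the continuous mapping theorem; P\'olya's theorem then upgrades this pointwise convergence to the uniform convergence asserted in \eqref{eq.consistency_bootstrap}. The delicate step is the order-counting for $W_k - \widetilde{W}_k$, because $\widetilde{W}_k$ itself grows with $T_k$, $d$ and $B_1-B$, so the estimation error from replacing $\boldsymbol{\mu}_k$ by $\widehat{\boldsymbol{\mu}}_k$ must be shown to vanish \emph{relative} to this growing signal rather than in an absolute sense; keeping careful track of the interplay between the kernel bandwidth $H$, the gap $B_1-B$, the sample size $T_k$ and the dimension $d$ is the crux of the argument.
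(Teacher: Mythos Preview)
Your proposal is correct and follows essentially the same route as the paper's proof: exploit conditional Gaussianity of $\widehat{S}^*_u$, reduce to consistency of the bootstrap variance $V^*$ for $\mathrm{Var}(\zeta)$, split the latter into (i) the HAC statement of Theorem~\ref{theorem.var_estimation} applied with true innovations and (ii) a residual-substitution error from replacing $\boldsymbol{\epsilon}_{t,k}$ by $\widehat{\boldsymbol{\epsilon}}_{t,k}$. The paper carries out exactly this decomposition, bounding $\bigl\|\widehat{\vartheta}_{t,k}-\vartheta_{t,k}\bigr\|_{M/2}\le CB_1$ and then controlling $W_k-\widetilde{W}_k$ term by term; the resulting rate is $O\bigl(H\sqrt{B_1}/\sqrt{\mathcal{T}_\circ}\bigr)$, which is $o(1)$ under Assumption~3.

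Two minor remarks. First, for the final step the paper uses the smoothing function $g_{\psi,x}$ rather than P\'olya's theorem; your P\'olya argument is cleaner here since both the bootstrap and target laws are Gaussian with the target variance bounded away from zero by Assumption~4. Second, your parenthetical suggestion to bound $|\Delta_k|_2^2$ via Theorem~\ref{theorem.consistent_quadratic} with $b_{t_1t_2}\equiv 1$ does not work as stated, because that theorem requires $b_{t_1t_2}=0$ for $|t_1-t_2|<B$; the paper handles $\|\overline{\boldsymbol{\epsilon}}_k^\top\overline{\boldsymbol{\epsilon}}_k\|_{M/2}$ by splitting the double sum into the near-diagonal block $|t_1-t_2|\le B$ (bounded crudely via \eqref{eq.moment}) and the off-diagonal blocks (to which Theorem~\ref{theorem.consistent_quadratic} does apply). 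Your alternative ``direct moment computation'' is precisely this.
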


\begin{remark}
    Since $\varepsilon^*_{t,k}$ in Algorithm \ref{algorithm.bootstrap} follows a joint normal distribution, $\widehat{S}^*_u$ in Algorithm \ref{algorithm.bootstrap}, which is a linear combination of $\varepsilon^*_{t,k},$ is also normally distributed in the bootstrap world. Hence, the validity of Algorithm \ref{algorithm.bootstrap} depends on the conditional variance of $\widehat{S}^*_u$ being close to $\mathrm{Var}(\zeta).$ 
\end{remark}

\section{Numerical experiment}
\label{section.numerical}
This section examines the finite-sample performance of the proposed test statistic as well as the associated bootstrap algorithm under both independent and dependent data settings. In addition, it applies the proposed methods to the weekly chickenpox case counts dataset to assess the temporal dynamics of chickenpox cases across different cities.

\subsection{Bandwidths selection} 
\label{section.bandwidth_selection}
The proposed test procedure and the associated bootstrap algorithm involve three hyperparameters: 
the lower and upper bandwidths of the test statistic, $B,B_1,$ and the bootstrap bandwidth $H.$ The choice of $B$ and $B_1$ is motivated by the proof of Theorem \ref{theorem.consistent_quadratic}, which simultaneously controls the bias due to data dependence and the stochastic error. From Theorem \ref{theorem.consistent_quadratic}, and Lemma \ref{lemma.linear_form} in the online supplement, we know that the statistic $\widehat{Z}_k$ in Algorithm \ref{algorithm.selection} consistently estimates $\vert\boldsymbol{\mu}_k\vert_2^2.$ Accordingly, Algorithm \ref{algorithm.selection} selects a subseries and constructs the statistic $\widehat{Z}_k^\dagger(B,B_1)$ for different bandwidth choices $B$ and $B_1,$ then chooses the bandwidths $B,B_1$ that minimize the discrepancy between $\widehat{Z}_k$ and $\widehat{\mathcal{Z}}_k^\dagger(B,B_1).$

\begin{algorithm}
\caption{Selection of $B$ and $B_1.$}
\label{algorithm.selection}
\begin{algorithmic}[1]
\Require Vector time series data $\mathbf{x}_{t,k}$ for $k = 1,\cdots, K, t = 1,\cdots, T_k,$ pre-chosen bandwidths $\mathcal{B},\mathcal{B}_1.$ The ratio for subsamples $\beta\in(0,1).$  Set $\mathcal{S}_B$ of potential  $B,$ set $\mathcal{S}_{B_1}$ of potential $B_1.$ \Comment{In this manuscript, we set the ratio for subsamples $\beta = 0.3,$ and the pre-chosen bandwidths $\mathcal{B} = 10,$ $\mathcal{B}_1 = 15.$ }

\State  Derive the statistics 
$$
\widehat{Z}_k = \frac{1}{V_k\sqrt{d}}\sum_{\mathcal{B}\leq \vert t_1 - t_2\vert\leq \mathcal{B}_1}^{T_k}\mathbf{x}_{t_1,k}^\top \mathbf{x}_{t_2,k}
$$

\For{$B\in \mathcal{S}_B$}
\For{$B_1\in \mathcal{S}_{B_1}, B_1 > B$}
    \State Define $\mathcal{T}_k^\dagger = \lfloor\beta T_k\rfloor, $ $\mathcal{B}^\dagger = \lfloor \beta \times B\rfloor, \mathcal{B}_1^\dagger = \lfloor \beta \times B\rfloor$ and calculate 
    $$
    \widehat{\mathcal{Z}}_k^\dagger(B,B_1) = \frac{1}{\mathcal{V}^\dagger_k\sqrt{d}}\sum_{\mathcal{B}^\dagger\leq \vert t_1 - t_2\vert\leq \mathcal{B}_1^\dagger}^{\mathcal{T}_k^\dagger}\mathbf{x}_{t_1,k}^\top \mathbf{x}_{t_2,k},
    $$
    where $\mathcal{V}^\dagger_k = \sum_{\mathcal{B}^\dagger\leq \vert t_1 - t_2\vert\leq \mathcal{B}_1^\dagger}^{\mathcal{T}_k^\dagger}1 =  (2\mathcal{T}_k^\dagger - \mathcal{B}^\dagger - \mathcal{B}^\dagger_1)\times (\mathcal{B}^\dagger_1 -  \mathcal{B}^\dagger +1).$
\EndFor
\EndFor
\State Choose the combination $B, B_1$ that minimize 
$$
\sum_{k = 1}^K \vert\widehat{Z}_k  -  \widehat{\mathcal{Z}}_k^\dagger(B,B_1)\vert.
$$
\end{algorithmic}
\end{algorithm}

The selection of the bootstrap bandwidth $H$ is more complicated. The literature has offered several bandwidth selection algorithms under different bootstrap settings, such as those in \cite{MR1366282,MR2041534,MR2380557}. The original dependent wild bootstrap of  \cite{MR2656050} leveraged the work of \cite{MR2041534} for selecting the bandwidth $H.$ Our work adopts the bandwidth selection method of \cite{MR2380557}, which is implemented in the R package ``blocklength'' \footnote{See \url{10.32614/CRAN.package.blocklength}}. Since the method mentioned in \cite{MR2380557} only supports scalar time series, we apply it to products $\widehat{\vartheta}_{t,k} = \sum_{t_2 = (t  - B_1)\vee 1}^{t  - B}\widehat{\boldsymbol{\epsilon}}_{t,k}^\top\widehat{\boldsymbol{\epsilon}}_{t_2,k},$ and concatenate all $\widehat{\vartheta}_{t,k}$ into a single vector before applying the selection algorithm.

\subsection{Simulation data} 
In this section, we construct various types of independent and dependent data and examine the performance of the proposed test statistics and the associated bootstrap algorithm under both the null and alternative hypothesis. We assume 
$$
\mathbf{x}_{t,k} = \boldsymbol{\mu}_k + \boldsymbol{\epsilon}_{t,k},\quad\text{where}\quad\boldsymbol{\mu}_2 = \boldsymbol{\mu}_3 = \cdots = \boldsymbol{\mu}_K = (1,1,\cdots, 1)^\top.
$$
We then set 
\begin{equation}
    \boldsymbol{\mu}_1 = \left(\boldsymbol{\mu}_1^{(1)},\cdots, \boldsymbol{\mu}_1^{(d)}\right)^\top,\quad\text{where}\quad \boldsymbol{\mu}_1^{(k)} = 1.0 + \boldsymbol{\nu}_1^{(k)},
    \label{eq.nu1_add_mu}
\end{equation}
and the random variables $\boldsymbol{\nu}_1^{(k)}$ follow the distributions specified in Tables \ref{table.two_sample_res} and \ref{table.ANOVA_res}. Suppose $\mathbf{e}_{t,k}^{(i)}$ are mutually independent random variables with uniform distribution on $[-1, 1].$ Following the work of \cite{MR4829492}, we define 
$$
\boldsymbol{\epsilon}_{t,k} = \boldsymbol{\Theta}\boldsymbol{\varepsilon}_{t,k}\quad \text{where}\quad  \boldsymbol{\Theta} = \left[
\begin{matrix}
    1 & 0.5 & 0.3 & 0 & 0 & 0 &\cdots & 0\\
    0.5 & 1 & 0.5 & 0.3 & 0 & 0 &\cdots & 0\\
    0.3 & 0.5 & 1 & 0.5 & 0.3 & 0 & \cdots & 0\\
    0 & 0.3 & 0.5 & 1 & 0.5 & 0.3 & \cdots & 0\\
    \vdots & \vdots & \vdots & \vdots & \vdots & \vdots & \cdots & \vdots\\
    0 & 0 & 0 & 0 & 0 & 0 &\cdots & 1
\end{matrix}
\right],
$$
and consider the following types of $\boldsymbol{\varepsilon}_{t,k}:$
\begin{itemize} 
\item \textbf{Spatially independent innovations.} Here we directly set $\boldsymbol{\epsilon}_{t,k}$ (rather than $\boldsymbol{\varepsilon}_{t,k}$) equal to $\mathbf{e}_{t,k}$ to eliminate the spatial and temporal dependence.
    \item \textbf{Independent innovations. } $\boldsymbol{\varepsilon}_{t,k}^{(i)} = \mathbf{e}_{t,k}^{(i)}$ for any $t,k,i.$
    \item \textbf{Autoregressive innovations. } $\boldsymbol{\varepsilon}_{t,k}^{(i)} = 0.7\boldsymbol{\varepsilon}_{t - 1,k}^{(i)} + 0.2\boldsymbol{\varepsilon}_{t - 2,k}^{(i)} + \mathbf{e}_{t,k}^{(i)}.$
    \item \textbf{Moving-average innovations. }  $\boldsymbol{\varepsilon}_{t,k}^{(i)} = \mathbf{e}_{t,k}^{(i)} + 0.6 \mathbf{e}_{t - 2,k}^{(i)} + 0.4 \mathbf{e}_{t - 5,k}^{(i)} + 0.3 \mathbf{e}_{t - 7,k}^{(i)}.$
    \item \textbf{Non-linear innovations.}  $\boldsymbol{\varepsilon}_{t,k}^{(i)} = \sin(\boldsymbol{\varepsilon}_{t - 1,k}^{(i)}) + \mathbf{e}_{t - 1,k}^{(i)}\times \mathbf{e}_{t,k}^{(i)}$
    \item \textbf{Non-stationary innovations. }
    $\boldsymbol{\varepsilon}_{t,k}^{(i)} = \sin(\boldsymbol{\varepsilon}_{t - 1,k}^{(i)}) + 
    \cos(\boldsymbol{\varepsilon}_{t - 4,k}^{(i)})
    + \begin{cases}
        \mathbf{e}_{t,k}^{(i)}\ \text{if } i \bmod 2 = 0,\\
        \mathbf{e}_{t - 2,k}^{(i)}\times \mathbf{e}_{t,k}^{(i)}\ \text{if } i \bmod 2 = 1
    \end{cases}.
    $
\end{itemize}

From these constructions, the ``spatially independent innovations'' exhibit neither spatial (element-wise) nor temporal dependence, while the ``independent innovations'' exhibit spatial dependence, but remain temporally independent. All other types of innovations exhibit both spatial and temporal dependence.  As shown in Example \ref{example.example_Malpha}, these constructions satisfy the $(M,\alpha)$-short-range dependence condition in Definition \ref{def.M_alpha_short_range}.  In the experiments, we set the number of bootstrap replicates ($\mathcal{U}$ in Algorithm \ref{algorithm.bootstrap}) to 100 and 
repeat the procedure 100 times to evaluate the size and power of the test. Numerical results are reported in Tables \ref{table.two_sample_res} and \ref{table.ANOVA_res}, and Figure \ref{figure,all_power}.

We begin with the two-sample test ($K = 2$), where Algorithm \ref{algorithm.bootstrap} is compared against several existing high-dimensional two-sample test procedures, including those proposed in \cite{MR2396970, MR2604697, MR3164870, MR3551787}. According to Table \ref{table.two_sample_res}, Algorithm \ref{algorithm.bootstrap} achieves the nominal size and demonstrates good power performance under both independent and dependent data settings.  In contrast, methods designed for independent data perform well in terms of size and power when the temporal independence condition holds, but either do not adequately control size or exhibit reduced power when temporal dependence is present. This suboptimal performance arises from the bias induced by data dependence and highlights the importance of explicitly accounting for temporal dependence in high-dimensional two-sample tests.

Table \ref{table.ANOVA_res} and Figure \ref{figure,all_power} present the ANOVA results $(K = 6)$ for the simulation data.  Similar to the two-sample test setting, Algorithm \ref{algorithm.bootstrap} maintains the nominal size and demonstrates good power under temporal dependence. Nevertheless, Figure \ref{figure,all_power} highlights that spatial and temporal dependence can negatively affect the power of the test.

\begin{table}[h]
    \centering
    \scriptsize
    \caption{Performance of the proposed estimator $\widehat{R}$ and Algorithm \ref{algorithm.bootstrap} for the two-sample test. ``CQ'' refers to \cite{MR2604697}, ``Cai'' to \cite{MR3164870}, ``SD''  to \cite{MR2396970}, and ``Xu'' to \cite{MR3551787}. The sample sizes considered are $150$ and  $200,$ with dimension $250.$ The number of bootstrap replicates is set to 100,  and the nominal size is 5\%. The test CQ, Cai, and SD are implemented using the R-package ``highmean (see \url{10.32614/CRAN.package.highmean}).''  }
    \begin{tabular}{c c c c c c c c c}
    \hline\hline
    $\boldsymbol{\nu}_1^{(k)}$ & Distance & Innovation Type &     $(B,B_1,H)$&   \multicolumn{5}{c}{Test Type }             \\
    \cline{5-9}
                                                  &        &      &  &    Ours & CQ   & Cai &  SD & Xu\\                                            
     0($H_0$)            & 0   & \multirow{3}{*}{\textbf{Spatial independent}}       & \multirow{3}{*}{(23,31, 86.1)} &  7\%   & 7\% & 0\% & 0\% & 3\% \\ 
     Unif[0,0.5]  &   1.4 & && 100\% & 99\% & 55\% & 70\% & 99\%\\
     Unif[0,1]    &   5.4 & && 100\% & 100\% & 100\% & 100\% & 100\%\\
     \hline
     0 ($H_0$)      &  0  & \multirow{3}{*}{\textbf{Independent}} & \multirow{3}{*}{(10,30, 58.3)}   &   9\%  & 41\% & 6\% & 10\% & 37\%\\ 
    Unif[0,0.5]  &  1.3  & &    &   100\% & 76\% & 36\% & 44\% & 75\%\\ 
    Unif[0, 1]   &  5.3  & &    &   100\% & 98\% & 85\% & 90\% & 97\%\\
    \hline
     0($H_0$)            & 0   & \multirow{3}{*}{\textbf{Autoregressive}} & \multirow{3}{*}{(31,36, 148.3)} & 9\% & 31\% & 4\% & 6\%  & 27\%\\
     Unif[0,0.5]     & 1.4  & & & 39\% & 50\% & 20\% & 22\% & 47\% \\
     Unif[0,1]       & 5.1  & & & 78\% & 64\% & 41\% & 47\% & 63\%\\
     \hline
      0($H_0$)           & 0   & \multirow{3}{*}{\textbf{Moving-average}} & \multirow{3}{*}{(32,45,70.4)} & 2\% & 36\% & 6\% & 12\%  & 28\% \\
    Unif[0,0.5]     & 1.4 & &&97\% & 65\% &  22\% & 33\% & 60\%\\
    Unif[0,1]      &  5.3 & &&100\% & 93\% & 67\% & 75\% & 93\%\\
    \hline 
    0($H_0$)           & 0   & \multirow{3}{*}{\textbf{Non-linear}} & \multirow{3}{*}{(39,46,89.2)} &    1\%  & 28\% & 8\% & 13\% & 26\%\\
    Unif[0,0.5]    & 1.2  & && 35\% &  75\% & 38\% & 48\% & 71\% \\
    Unif[0,1]      &  5.2 & && 100\% & 96\% & 83\% & 88\% & 96\%\\
    \hline
     0($H_0$)            & 0   & \multirow{3}{*}{\textbf{Non-stationary}}       & \multirow{3}{*}{(30, 43, 125.7)} &  5\%   & 38\% & 3\% & 7\%  & 35\%\\ 
     Unif[0,0.5]  &   1.3 & && 79\% & 59\% & 19\% & 21\%  & 55\%\\
     Unif[0,1]    &   5.4 & && 100\% & 82\% & 39\% & 53\% & 78\% \\
    \hline\hline
    \end{tabular}
    
    \vspace{0.2cm}

    \textbf{Remark: }  \textit{In Tables \ref{table.two_sample_res} and \ref{table.ANOVA_res}, $\boldsymbol{\nu}_1^{(k)}$ denotes the distribution of the random variables added to the population mean $\boldsymbol{\mu}_1^{(k)},$ as defined in \eqref{eq.nu1_add_mu}. Therefore, $\boldsymbol{\nu}_1^{(k)} = 0$ corresponds to   $H_0.$ The ``Distance'' column records the term  
$
    \frac{1}{\sqrt{d}}\sum_{k = 2}^K\vert
    \boldsymbol{\mu}_k - \boldsymbol{\mu}_1 
    \vert_2^2.
$}
    \label{table.two_sample_res}
\end{table}

\begin{table}[h]
    \centering
    \caption{Performance of the proposed estimator $\widehat{R}$ and Algorithm \ref{algorithm.bootstrap} for ANOVA. We choose $K = 6, $ sample size $150 + 5 k$ for $k = 1,2,\cdots, K,$ and the dimension $d = 250.$}
    \scriptsize
    \begin{tabular}{c c c c c}
    \hline\hline
    $\boldsymbol{\nu}_1^{(k)}$ & Distance & Innovation Type &    $(B,B_1,H)$&   Size / Power           \\
    \hline
         0 (N)      &  0  & \multirow{3}{*}{\textbf{Spatial independent}} & \multirow{3}{*}{(5,8,13.7)}  &  6\%   \\ 
    Unif[0,0.5]  &  6.5  & &   & 100\%\\ 
    Unif[0, 1]   &  24.9 & &    &  100\%\\
    \hline 
     0 (N)      &  0  & \multirow{3}{*}{\textbf{Independent}} & \multirow{3}{*}{(29,46,102.2)}  &  10\%   \\ 
    Unif[0,0.5]  &  6.8  & &   & 100\%\\ 
    Unif[0, 1]   &  25.8 & &    &  100\%\\
    \hline
    0 (N)      &  0  & \multirow{3}{*}{\textbf{Autoregressive}} & \multirow{3}{*}{(25,56, 95.5)}  &  6\%   \\ 
    Unif[0,0.5]  &  6.4  & &   & 23\%\\ 
    Unif[0, 1]   &  25.8 & &    &  75\%\\
    \hline
    0 (N)      &  0  & \multirow{3}{*}{\textbf{Moving-average}} & \multirow{3}{*}{(31,40, 41.1)}  &  4\%   \\ 
    Unif[0,0.5]  &  6.4  & &   & 99\%\\ 
    Unif[0, 1]   &  28.9 & &    &  99\%\\
    \hline 
    0 (N)      &  0  & \multirow{3}{*}{\textbf{Non-linear}} & \multirow{3}{*}{(29,49, 57.6)}  &  1\%   \\ 
    Unif[0,0.5]  &  6.3  & &   & 57\%\\ 
    Unif[0, 1]   &  26.1 & &    &  100\%\\
    \hline
    0 (N)      &  0  & \multirow{3}{*}{\textbf{Non-stationary}} & \multirow{3}{*}{(30,45, 66.3)}  &  4\%   \\ 
    Unif[0,0.5]  &  6.3  & &   & 86\%\\ 
    Unif[0, 1]   &  26.1 & &    &  100\%\\
    \hline\hline
    \end{tabular}
    \label{table.ANOVA_res}
\end{table}

\begin{figure}
    \centering
    \includegraphics[width = 9cm]{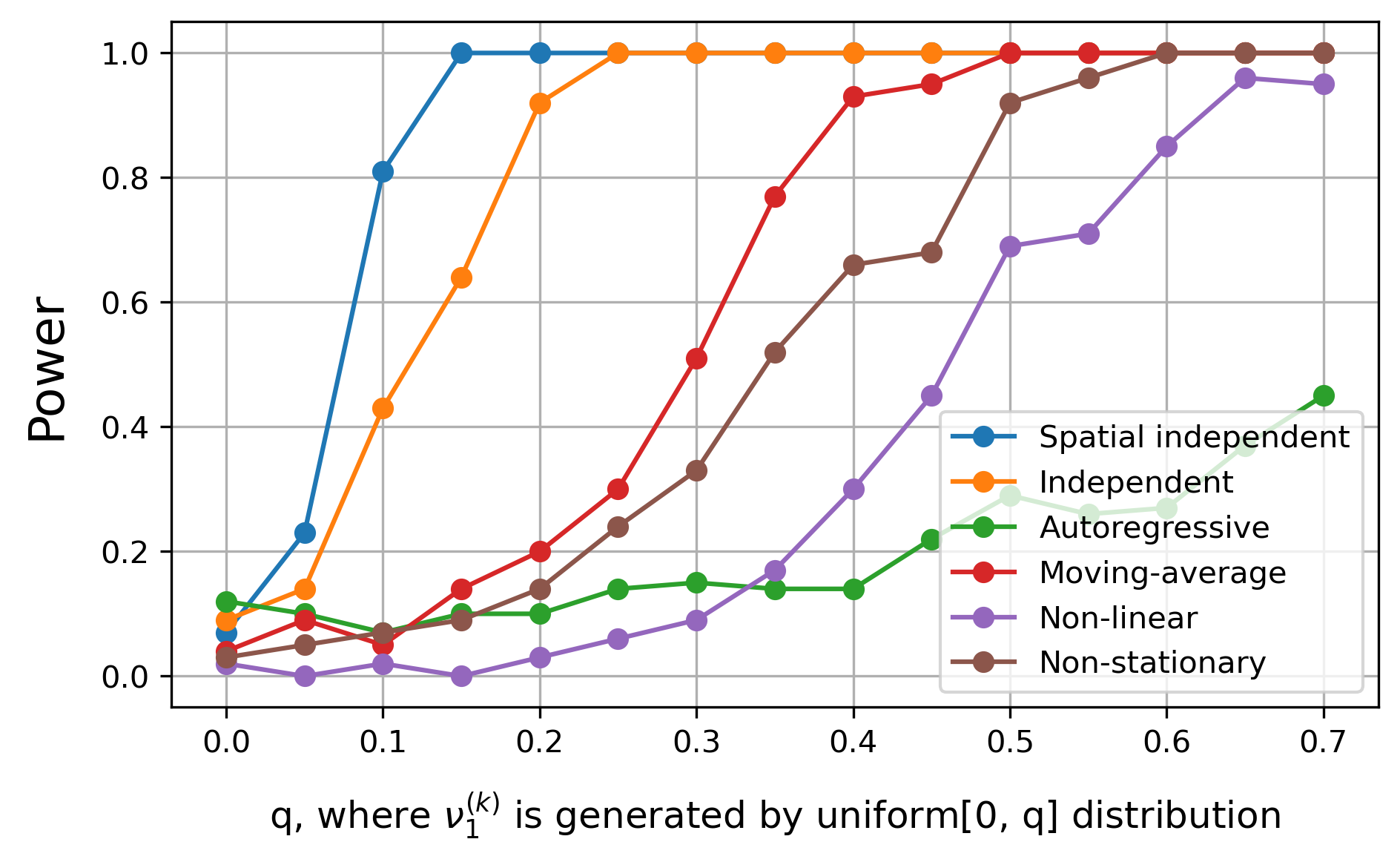}
    \caption{The size and power of Algorithm \ref{algorithm.bootstrap} for different residuals $\boldsymbol{\epsilon}_{t,k}.$ The setting and the bandwidths selection coincide with Table \ref{table.ANOVA_res}. }
    \label{figure,all_power}
\end{figure}

\subsection{Real-life data example}  
\cite{rozemberczki2021chickenpox} collected weekly chickenpox case counts from 20 cities in Hungary,
 covering the year 2005 to 2015.  Figure \ref{fig:sub3} plots the aggregated weekly chickenpox case counts across these cities, revealing a descending pattern. Motivated by this observation, we are interested in the problem of whether the decline is statistically significant. To verify this hypothesis, we divide the data into three-year periods and test the following hypothesis:
$$
H_0:\ \text{The mean chickenpox cases in each city remain stable over time.}
$$

Figures \ref{fig:sub1} and \ref{fig:sub2} plot the weekly chickenpox case counts for Budapest and Baranya, and the sample autocorrelation coefficients of Budapest's weekly case counts. As shown in Figure \ref{fig:sub1},  both cities exhibit similar temporal patterns, indicating strong spatial dependence. Furthermore, the sample autocorrelation coefficients plot in Figure \ref{fig:sub2} reveals a strong temporal dependence in Budapest’s case counts. These two observations provide justification for the setting adopted in this manuscript.

\begin{figure}[htbp]
  \centering
  \begin{subfigure}[b]{0.45\textwidth}
    \includegraphics[width=\textwidth]{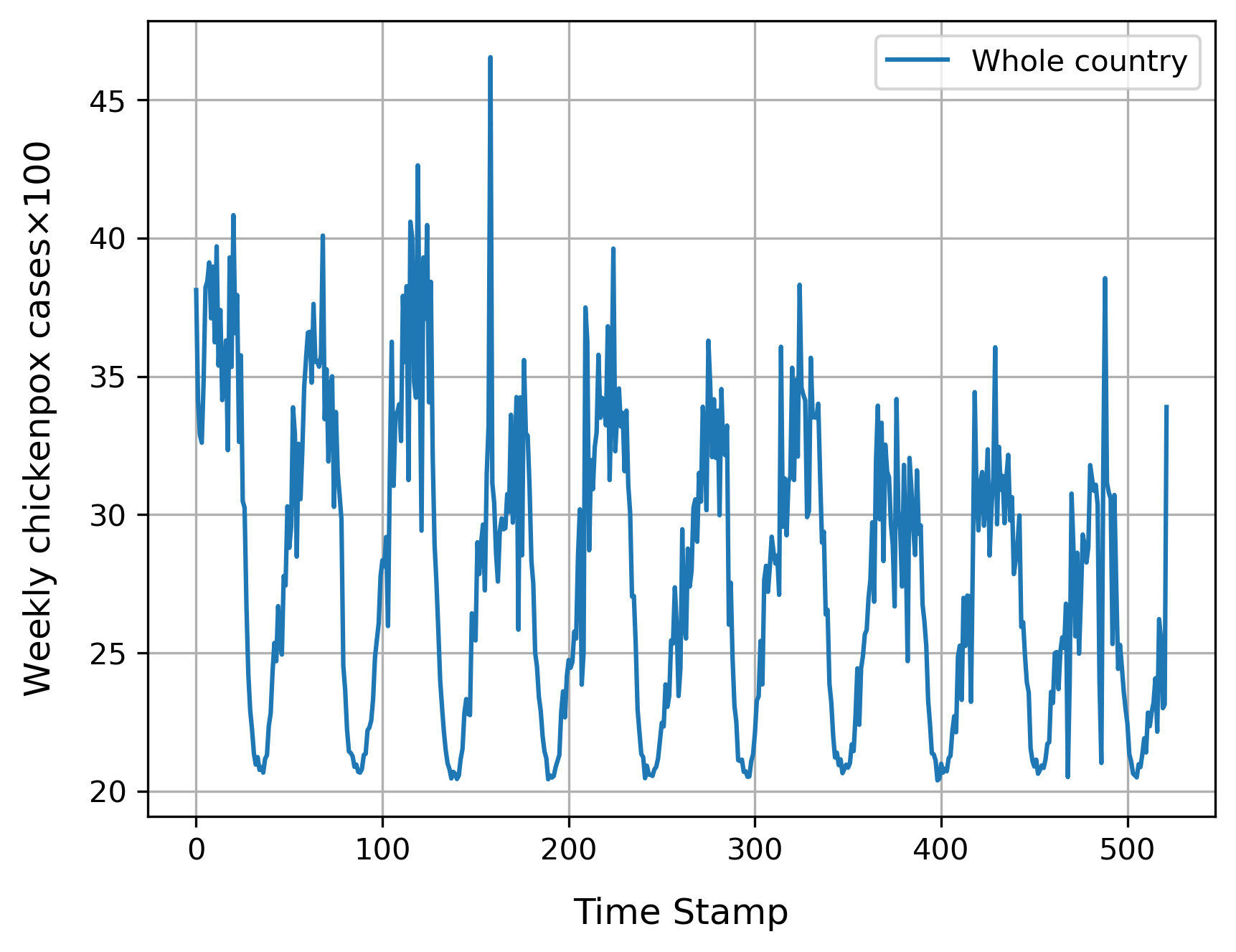}
    \caption{Aggregated weekly chickenpox case counts across these 20 cities.}
    \label{fig:sub3}
\end{subfigure}
\hspace{0.05\textwidth}
  \begin{subfigure}[b]{0.33\textwidth}
    \includegraphics[width=\textwidth]{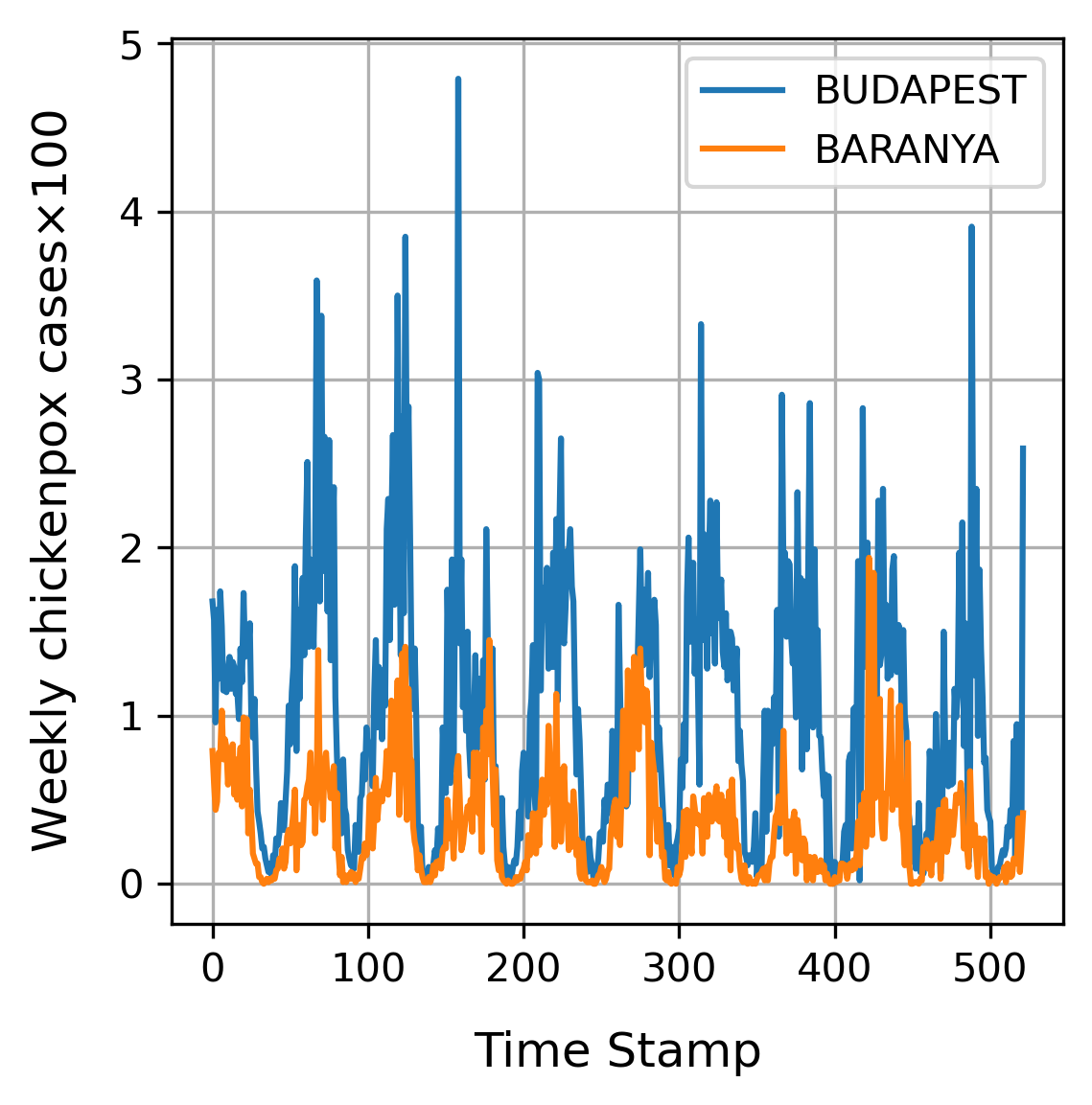}
    \caption{Weekly chickenpox case counts from cities of Budapest and Baranya.}
    \label{fig:sub1}
  \end{subfigure}
  \hfill
  \begin{subfigure}[b]{0.4\textwidth}
    \includegraphics[width=\textwidth]{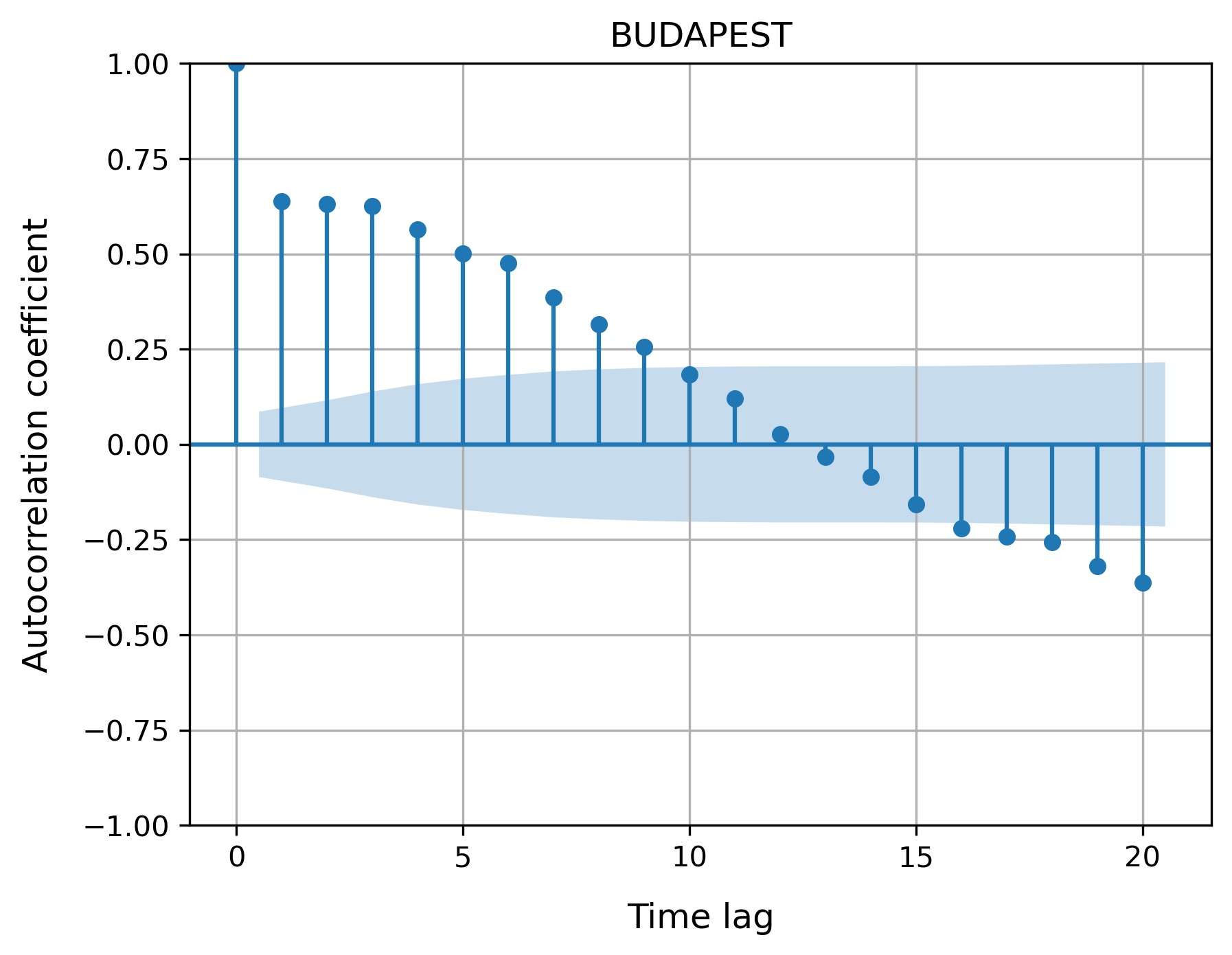}
    \caption{Sample autocorrelation coefficients of Budapest's chickenpox data.}
    \label{fig:sub2}
  \end{subfigure}
  \caption{The weekly chickenpox case counts data and sample autocorrelation coefficients plot.}
  \label{figure.chicken_box}
\end{figure}

Table \ref{chicken_case_res} reports the hypothesis testing results. Since the test statistic exceeds the bootstrapped quantile, we reject $H_0$ and conclude that the population means of chickenpox cases across cities change over time. We attribute part of this change to the introduction of varicella vaccine programs in several European countries \cite{lee2022global}. Although Hungary did not include the varicella vaccine in its national immunization program until 2019, recommendations from healthcare professionals \cite{HUBER20205249} led to an increase in the vaccination population,  contributing to the observed decline in chickenpox infections.

\begin{table}[h]
\caption{Hypothesis testing result. The nominal size is 5\%. }
    \label{chicken_case_res}
    \centering
    \begin{tabular}{c c c c}
    \hline\hline 
     $(B,B_1,H)$    &  Test statistics & Quantile & Decision\\
     \hline 
     (9, 15, 49.3)  &   10.6               &  8.2 & Reject $H_0.$\\
    \hline\hline 
    \end{tabular}
\end{table}

\section{Conclusion}
\label{section.conclusion}
Temporal dependence can introduce bias into classical ANOVA test statistics, leading to unsatisfactory control of size and power. This manuscript introduces an ANOVA test procedure tailored for  high-dimensional non-stationary time series, which generalizes the test statistic of \cite{MR2604697} by excluding product terms $\mathbf{x}_{t_1,k}^\top\mathbf{x}_{t_2, k}$ when $\vert B_1 - B_2\vert$ is either too small or too large.   We demonstrate that this modification effectively reduces bias induced by data dependence. Furthermore, we propose a bootstrap algorithm to assist hypothesis testing through computer simulations.

Apart from methodological developments, this manuscript establishes theoretical results, including concentration inequalities, Gaussian approximation, and variance estimation, on quadratic forms of high-dimensional time series. Leveraging these results, we 
derive the asymptotic distribution of the test statistic under $H_0$ and the asymptotic validity of the bootstrap procedure.   Given the prevalence of time series in practical applications, the proposed method offers a good alternative to classical ANOVA approaches in dependent data settings.




\appendix
\numberwithin{equation}{section}
\numberwithin{lemma}{section}
\numberwithin{figure}{section}
\numberwithin{corollary}{section}
\section{Proofs of theorems in Section \ref{section.theoretical_quadratic}}
\label{section.proof_theorems}
We begin the supplementary material by introducing several notations that will be used frequently in the following sections.  For any $t\in\mathbf{Z}$ and integer $a\geq 0$, define $\mathcal{F}_{t,a}$ as the $\sigma$-field generated by $e_t, e_{t-1},\cdots, e_{t - a}$. By default, if $a < 0$, the conditional expectation $\mathbf{E}\left[\cdot\mid\mathcal{F}_{t,a}\right]$ represents the unconditional expectation $\mathbf{E}[\cdot]$. For any integer $a$,  define
\begin{equation}
\boldsymbol{\gamma}_{t,a} = 
\begin{cases}
\mathbf{E}\left[\boldsymbol{\epsilon}_t\mid\mathcal{F}_{t,a}\right]\ \text{if } a\geq 0,\\
0\ \text{if } a < 0,
\end{cases}
\quad and\quad \boldsymbol{\eta}_{t,a} = \boldsymbol{\epsilon}_t - \boldsymbol{\gamma}_{t,a}.
\label{eq.def_gamma_eta}
\end{equation}
With this definition, for any $0\leq a_1 <  a_2 < \infty$ and any vector $\mathbf{b}\in\mathbf{R}^d$ with  $\vert \mathbf{b}\vert_2 = 1,$ we have  
\begin{align*}
    \left\Vert
    \mathbf{b}^\top\left(\boldsymbol{\gamma}_{t,a_2} - \boldsymbol{\gamma}_{t,a_1}\right)
    \right\Vert_M & \leq \sum_{s = a_1 + 1}^{a_2} \left\Vert
    \mathbf{b}^\top\left(\boldsymbol{\gamma}_{t,s} - \boldsymbol{\gamma}_{t,s - 1}\right)
    \right\Vert_M\\
    & =   \sum_{s = a_1 + 1}^{a_2} \left\Vert
    \mathbf{b}^\top\left(\mathbf{E}[\boldsymbol{\epsilon}_{t}\mid\mathcal{F}_{t,s}] - \mathbf{E}[\boldsymbol{\epsilon}_{t}(s)\mid\mathcal{F}_{t,s}]\right)
    \right\Vert_M\\
    &\leq  \sum_{s = a_1 + 1}^{a_2}\delta_s \leq \frac{C}{(1 + a_1)^\alpha}.
\end{align*}
For any $a\geq 0$, we have 
\begin{align*}
    \left\Vert
    \mathbf{b}^\top\boldsymbol{\eta}_{t,a}
    \right\Vert_M\leq \sum_{s = a + 1}^\infty \left\Vert
    \mathbf{b}^\top\left(\boldsymbol{\gamma}_{t,s} - \boldsymbol{\gamma}_{t,s - 1}\right)
    \right\Vert_M\leq \sum_{s = a + 1}^\infty\delta_s \leq \frac{C}{(1 + a)^\alpha}.
\end{align*}
Lemma \ref{lemma.linear_form} provides concentration inequalities for linear combinations of $(M,\alpha)$-short-range dependent random vectors.  While our main focus is  on quadratic forms, bounding the moments of linear combinations remains essential to the proof of Theorem \ref{theorem.consistent_quadratic}, like those in \eqref{eq.use_linear}.

\begin{lemma}
Suppose $\boldsymbol{\epsilon}_t, t = 1,2,\cdots, T$ stem from $(M,\alpha)$-short-range dependent random vector process with $M > 2, \alpha > 1.$ Then there exists a constant $C>0$ such that for any  vectors
$\boldsymbol{b}_t\in\mathbf{R}^d,  t  =1,\cdots, T,$ and any integer $s\geq 0,$
\begin{equation}
    \left\Vert
    \sum_{t = 1}^T \boldsymbol{b}_t^\top \boldsymbol{\epsilon}_t
    \right\Vert_{M}\leq C\sqrt{\sum_{t = 1}^T \left\vert\mathbf{b}_t\right\vert^2_2}\quad \text{and}\quad \left\Vert\sum_{t = 1}^T \boldsymbol{b}_t^\top \boldsymbol{\eta}_{t,s}\right\Vert_M\leq \frac{C}{(1 + s)^\alpha}\sqrt{\sum_{t = 1}^T \left\vert\mathbf{b}_t\right\vert^2_2}.
    \label{eq.linear_form}
\end{equation}
\label{lemma.linear_form}
\end{lemma}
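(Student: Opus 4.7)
My plan is to apply Wu's martingale-coupling technique with respect to the natural filtration $\mathcal{H}_j := \sigma(e_k : k \leq j)$ and to combine it with Burkholder's inequality. Define the projection $\mathcal{P}_j X := \mathbf{E}[X\mid\mathcal{H}_j] - \mathbf{E}[X\mid\mathcal{H}_{j-1}]$. The key preliminary estimate, obtained from the coupling identity $\mathbf{E}[\boldsymbol{\epsilon}_t\mid\mathcal{H}_{t-s-1}] = \mathbf{E}[\boldsymbol{\epsilon}_t(s)\mid\mathcal{H}_{t-s}]$ (with $\boldsymbol{\epsilon}_t(s)$ as defined in \eqref{eq.switch}) together with Jensen's inequality, is that $\|\mathbf{a}^\top\mathcal{P}_{t-s}\boldsymbol{\epsilon}_t\|_M \leq |\mathbf{a}|_2\,\delta_s$ for every $\mathbf{a}\in\mathbf{R}^d$.

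For the first inequality, I would decompose $\boldsymbol{\epsilon}_t = \sum_{s\geq 0}\mathcal{P}_{t-s}\boldsymbol{\epsilon}_t$ and swap the summations. For each fixed $s$, reindexing via $j := t-s$ shows that $\{\boldsymbol{b}_{j+s}^\top\mathcal{P}_j\boldsymbol{\epsilon}_{j+s}\}_j$ is a martingale difference sequence relative to $\{\mathcal{H}_j\}$. Burkholder's inequality (valid since $M > 2$) followed by Minkowski's inequality in $t$ yields
\[
\Big\|\sum_t\boldsymbol{b}_t^\top\mathcal{P}_{t-s}\boldsymbol{\epsilon}_t\Big\|_M \leq C\delta_s\sqrt{\sum_t|\boldsymbol{b}_t|_2^2}.
\]
Summing over $s$ via Minkowski's inequality and using $\sum_{s\geq 0}\delta_s = O(1)$, which follows from Definition \ref{def.M_alpha_short_range} with $\alpha > 1$, completes this part.

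For the second inequality, I would Wu-decompose $\boldsymbol{\eta}_{t,s} = \sum_{u\geq 0}\mathcal{P}_{t-u}\boldsymbol{\eta}_{t,s}$ and exploit the structural fact that $\boldsymbol{\gamma}_{t,s}$ is $\sigma(e_{t-s},\ldots,e_t)$-measurable, so that $\mathcal{P}_{t-u}\boldsymbol{\gamma}_{t,s} = 0$ whenever $u > s$ and hence $\mathcal{P}_{t-u}\boldsymbol{\eta}_{t,s} = \mathcal{P}_{t-u}\boldsymbol{\epsilon}_t$ in that regime. Restricting the outer sum to $u > s$ and repeating the Burkholder--Minkowski argument from the first part yields
\[
\Big\|\sum_t\boldsymbol{b}_t^\top\sum_{u>s}\mathcal{P}_{t-u}\boldsymbol{\eta}_{t,s}\Big\|_M \leq C\sum_{u>s}\delta_u\sqrt{\sum_t|\boldsymbol{b}_t|_2^2} \leq \frac{C}{(1+s)^\alpha}\sqrt{\sum_t|\boldsymbol{b}_t|_2^2}
\]
directly from the tail-sum condition. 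The residual $I_t := \sum_{u\leq s}\mathcal{P}_{t-u}\boldsymbol{\eta}_{t,s}$ then satisfies $\|I_t\|_M = O((1+s)^{-\alpha})$ and the double orthogonality $\mathbf{E}[I_t\mid\mathcal{F}_{t,s}] = \mathbf{E}[I_t\mid\mathcal{H}_{t-s-1}] = 0$.

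The main obstacle is bounding $\|\sum_t\boldsymbol{b}_t^\top I_t\|_M$ at the sharp rate $C(1+s)^{-\alpha}\sqrt{\sum_t|\boldsymbol{b}_t|_2^2}$. A straightforward Burkholder--Minkowski bound across the $s+1$ Wu positions $u\in\{0,\ldots,s\}$, using only the crude estimate $\|\mathcal{P}_{t-u}I_t\|_M \leq C\delta_u$, loses a factor of $s+1$ and produces the suboptimal rate $(1+s)^{1-\alpha}$. To recover the sharp rate, I would use a second-order coupling that expresses each $\mathcal{P}_{t-u}I_t$ as a double difference of conditional expectations sensitive to two separate innovations $e_{t-u}$ and some $e_{t-v}$ with $v > s$; together with the double orthogonality of $I_t$, this extra decay compensates for the linear factor produced by summing over the $s+1$ Wu levels.
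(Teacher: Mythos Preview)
Your argument for the first inequality is correct and essentially the same as the paper's: both decompose $\boldsymbol{\epsilon}_t$ into martingale increments, apply Burkholder level by level, and sum over levels via Minkowski to collect $\sum_q\delta_q=O(1)$. The only difference is the choice of filtration: you use Wu's forward projections $\mathcal{P}_{j} = \mathbf{E}[\,\cdot\mid\mathcal{H}_j] - \mathbf{E}[\,\cdot\mid\mathcal{H}_{j-1}]$, while the paper uses the finite-window increments $\boldsymbol{\gamma}_{t,q}-\boldsymbol{\gamma}_{t,q-1} = \mathbf{E}[\boldsymbol{\epsilon}_t\mid\mathcal{F}_{t,q}]-\mathbf{E}[\boldsymbol{\epsilon}_t\mid\mathcal{F}_{t,q-1}]$ together with a reverse-time martingale relative to the growing $\sigma$-fields $\sigma(e_T,\ldots,e_{T-s+1-q})$.

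For the second inequality the two filtration choices lead to very different proofs, and the paper's avoids your obstacle entirely. Because $\boldsymbol{\eta}_{t,s}=\boldsymbol{\epsilon}_t-\boldsymbol{\gamma}_{t,s}$ by definition, the finite-window telescope gives the \emph{exact} identity
\[
\boldsymbol{\eta}_{t,s}=\sum_{q=s+1}^{\infty}\bigl(\boldsymbol{\gamma}_{t,q}-\boldsymbol{\gamma}_{t,q-1}\bigr),
\]
with no residual. Reusing the per-level martingale bound from the first part for each $q>s$ and summing by Minkowski yields $C\sum_{q>s}\delta_q\sqrt{\sum_t|\boldsymbol{b}_t|_2^2}\le C(1+s)^{-\alpha}\sqrt{\sum_t|\boldsymbol{b}_t|_2^2}$ in one line. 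Your residual $I_t$ appears only because $\mathcal{P}_{t-u}$ conditions on the full past $\mathcal{H}_{t-u}$ rather than on the finite window $\mathcal{F}_{t,u}$, so $\boldsymbol{\gamma}_{t,s}$ does not coincide with any partial sum of your projections. The proposed second-order coupling is left vague and, more importantly, is unnecessary: switching to the $\boldsymbol{\gamma}_{t,q}$ decomposition (which still supports the same Burkholder structure, via the reverse-time martingale) makes the second inequality an immediate corollary of the first.
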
 
\begin{corollary}
Suppose conditions in Lemma \ref{lemma.linear_form} hold true and $t_2 > t_1.$  In such case, $\boldsymbol{\epsilon}_{t_1}$ is independent of $\mathbf{E}\left[\boldsymbol{\epsilon}_{t_2}\mid\mathcal{F}_{t_2, t_2 - t_1 - 1}\right].$
    From Lemma \ref{lemma.linear_form}, we have 
    \begin{equation}
    \begin{aligned}
    \left\vert\mathbf{E}\left[\boldsymbol{\epsilon}_{t_1}^\top\boldsymbol{\epsilon}_{t_2}\right]\right\vert &\leq \sum_{j = 1}^d\left\vert \mathbf{E}\left[\boldsymbol{\epsilon}_{t_1}^{(j)}\boldsymbol{\epsilon}_{t_2}^{(j)}\right]\right\vert\\
        & = \sum_{j = 1}^d\left\vert \mathbf{E}\left[\boldsymbol{\epsilon}_{t_1}^{(j)}\left(\boldsymbol{\epsilon}_{t_2}^{(j)} - \mathbf{E}\left[\boldsymbol{\epsilon}_{t_2}^{(j)}\mid\mathcal{F}_{t_2,t_2 - t_1 - 1}\right]\right)\right]\right\vert\\
        &\leq
        \sum_{j = 1}^d \left\Vert \boldsymbol{\epsilon}_{t_1}^{(j)}\right\Vert_M\left\Vert\boldsymbol{\epsilon}_{t_2}^{(j)} - \mathbf{E}\left[\boldsymbol{\epsilon}_{t_2}^{(j)}\mid\mathcal{F}_{t_2,t_2 - t_1 - 1}\right]\right\Vert_M
        \\
        &\leq C\sum_{j = 1}^d\left\Vert\boldsymbol{\eta}^{(j)}_{t_2, t_2 - t_1 -1}\right\Vert_M
        \leq \frac{C_1d}{(t_2  -t_1)^\alpha}.
    \end{aligned}
    \label{eq.covariance_delta}
    \end{equation}
\end{corollary}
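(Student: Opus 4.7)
The plan is to reduce the vector-valued cross-covariance to a sum of $d$ scalar cross-covariances and then bound each using the tail projection bound already recorded in the preamble of the appendix. First I would use linearity of expectation to write
\begin{equation*}
\mathbf{E}\left[\boldsymbol{\epsilon}_{t_1}^\top\boldsymbol{\epsilon}_{t_2}\right] = \sum_{j=1}^d \mathbf{E}\left[\boldsymbol{\epsilon}_{t_1}^{(j)}\boldsymbol{\epsilon}_{t_2}^{(j)}\right],
\end{equation*}
so that the triangle inequality bounds the modulus by $\sum_{j=1}^d |\mathbf{E}[\boldsymbol{\epsilon}_{t_1}^{(j)}\boldsymbol{\epsilon}_{t_2}^{(j)}]|$.

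The key structural observation, which the stated corollary explicitly highlights, is that the conditional expectation $\mathbf{E}[\boldsymbol{\epsilon}_{t_2}^{(j)}\mid\mathcal{F}_{t_2,t_2-t_1-1}]$ is a measurable function of $e_{t_2},e_{t_2-1},\ldots,e_{t_1+1}$ only, hence by the mutual independence of the innovations $\{e_t\}$ it is independent of $\boldsymbol{\epsilon}_{t_1}$, which is measurable with respect to $\sigma(\ldots,e_{t_1-1},e_{t_1})$. Because $\mathbf{E}[\boldsymbol{\epsilon}_{t_1}^{(j)}]=0$, this independence lets me subtract the conditional expectation from $\boldsymbol{\epsilon}_{t_2}^{(j)}$ at no cost:
\begin{equation*}
\mathbf{E}\left[\boldsymbol{\epsilon}_{t_1}^{(j)}\boldsymbol{\epsilon}_{t_2}^{(j)}\right] = \mathbf{E}\left[\boldsymbol{\epsilon}_{t_1}^{(j)}\left(\boldsymbol{\epsilon}_{t_2}^{(j)} - \mathbf{E}\left[\boldsymbol{\epsilon}_{t_2}^{(j)}\mid\mathcal{F}_{t_2,t_2-t_1-1}\right]\right)\right] = \mathbf{E}\left[\boldsymbol{\epsilon}_{t_1}^{(j)}\,\boldsymbol{\eta}_{t_2,t_2-t_1-1}^{(j)}\right],
\end{equation*}
using the notation $\boldsymbol{\eta}_{t,a}=\boldsymbol{\epsilon}_t-\boldsymbol{\gamma}_{t,a}$ introduced in \eqref{eq.def_gamma_eta}.

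Next I would apply the Cauchy--Schwarz inequality (or the Hölder bound $|\mathbf{E}[XY]|\le\|X\|_M\|Y\|_{M/(M-1)}\le\|X\|_M\|Y\|_M$ since $M>2$) to obtain
\begin{equation*}
\left|\mathbf{E}\left[\boldsymbol{\epsilon}_{t_1}^{(j)}\,\boldsymbol{\eta}_{t_2,t_2-t_1-1}^{(j)}\right]\right| \leq \left\Vert\boldsymbol{\epsilon}_{t_1}^{(j)}\right\Vert_M\cdot\left\Vert\boldsymbol{\eta}_{t_2,t_2-t_1-1}^{(j)}\right\Vert_M.
\end{equation*}
Applying the $(M,\alpha)$-short-range dependence condition with the canonical basis vector $\mathbf{e}_j$ in place of $\mathbf{a}$, the first factor is $O(1)$ and the second factor is bounded by the projection tail estimate $\|\boldsymbol{\eta}_{t,a}^{(j)}\|_M \le C/(1+a)^\alpha$ derived at the start of the appendix from the telescoping identity $\boldsymbol{\eta}_{t,a}=\sum_{s=a+1}^\infty(\boldsymbol{\gamma}_{t,s}-\boldsymbol{\gamma}_{t,s-1})$ together with $\sup_{s\ge 0}(1+s)^\alpha\sum_{j\ge s}\delta_j=O(1)$.

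Summing over $j=1,\ldots,d$ then yields the stated bound $|\mathbf{E}[\boldsymbol{\epsilon}_{t_1}^\top\boldsymbol{\epsilon}_{t_2}]|\le C_1 d/(t_2-t_1)^\alpha$. Conceptually there is no real obstacle here, since every ingredient is already in place; the only point that requires mild care is to verify that the per-coordinate $M$-norm bound $\|\boldsymbol{\epsilon}_{t}^{(j)}\|_M=O(1)$ and the per-coordinate projection bound $\|\boldsymbol{\eta}_{t,a}^{(j)}\|_M\le C/(1+a)^\alpha$ genuinely follow from Definition \ref{def.M_alpha_short_range}, which constrains only linear combinations $\mathbf{a}^\top\boldsymbol{\epsilon}_t$ with $|\mathbf{a}|_2=1$; taking $\mathbf{a}$ to be the $j$-th standard basis vector resolves this immediately.
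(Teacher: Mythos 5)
Your proposal is correct and follows essentially the same route as the paper's own derivation: coordinatewise decomposition, centering $\boldsymbol{\epsilon}_{t_2}^{(j)}$ by its conditional expectation (which vanishes in the cross-moment by independence and mean-zero), Hölder's inequality, and the tail bound $\Vert\boldsymbol{\eta}_{t_2,t_2-t_1-1}^{(j)}\Vert_M \leq C/(t_2-t_1)^\alpha$ obtained by taking $\mathbf{a}$ to be a standard basis vector in Definition \ref{def.M_alpha_short_range}. Your explicit remark that the per-coordinate bounds follow from the linear-combination condition via canonical basis vectors is a point the paper leaves implicit, but it is the same argument.
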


\begin{remark}
   The role of Lemma \ref{lemma.linear_form} is similar to that of Lemma 1 in \cite{10.1093/jrsssb/qkad006}, which established bounds for moments of linear combinations of scalar time series.  In our setting, however, we consider vector time series, so the sum of squares in Lemma \ref{lemma.linear_form} involves the Euclidean norm of the linear combination vectors $\mathbf{b}_t.$
\end{remark}

\begin{proof}[Proof of Lemma \ref{lemma.linear_form}]
    Notice that 
    $$
    \boldsymbol{\epsilon}_t = \mathbf{E}\left[\boldsymbol{\epsilon}_t\mid\mathcal{F}_{t,0}\right] + \sum_{q = 1}^\infty \left(\mathbf{E}\left[\boldsymbol{\epsilon}_t\mid\mathcal{F}_{t,q}\right] - \mathbf{E}\left[\boldsymbol{\epsilon}_t\mid\mathcal{F}_{t,q  - 1}\right]\right) = \sum_{q = 0}^\infty \left(\boldsymbol{\gamma}_{t,q} - \boldsymbol{\gamma}_{t,q - 1}\right),
    $$
    here we recall $\boldsymbol{\gamma}_{t, - 1} = 0.$ Therefore, 
    \begin{align*}
    \left\Vert
    \sum_{t = 1}^T \boldsymbol{b}_t^\top \boldsymbol{\epsilon}_t
    \right\Vert_{M}\leq \sum_{q = 0}^\infty 
    \left\Vert
    \sum_{t = 1}^T \boldsymbol{b}_t^\top \left(\boldsymbol{\gamma}_{t,q} - \boldsymbol{\gamma}_{t,q - 1}\right)
    \right\Vert_M.
    \end{align*}
    For any $s = 1,2,\cdots, T$ and $q\geq 0$, define 
    \begin{align*}
        J_{s, q} =  \sum_{t = T - s + 1}^T  \boldsymbol{b}_t^\top\left(\boldsymbol{\gamma}_{t,q} - \boldsymbol{\gamma}_{t,q - 1}\right)
    \end{align*}
    and $\mathcal{J}_{s,q}$ the $\sigma$-field generated by $e_T, e_{T-1},\cdots, e_{T-s + 1 - q}$. Then $J_{s, q}$ is measurable in $\mathcal{J}_{s,q}$, $\mathcal{J}_{s,q}\subset \mathcal{J}_{s + 1,q}$, and 
    \begin{align*}
        \mathbf{E}\left[(J_{s + 1, q} - J_{s, q})\mid\mathcal{J}_{s,q}\right] &= \mathbf{b}_{T-s}^\top \mathbf{E}\left[(\boldsymbol{\gamma}_{T-s,q} - \boldsymbol{\gamma}_{T-s,q - 1})\mid\mathcal{J}_{s,q}\right]\\
        &= \mathbf{b}_{T-s}^\top(\boldsymbol{\gamma}_{T-s,q - 1} - \boldsymbol{\gamma}_{T-s,q - 1}) = 0,
    \end{align*}
    so $J_{s, q}$ forms a martingale. According to Theorem 1.1 of \cite{MR0400380}, 
    \begin{equation}
    \begin{aligned}
        \left\Vert
        \sum_{t = 1}^T  \boldsymbol{b}_t^\top\left(\boldsymbol{\gamma}_{t,q} - \boldsymbol{\gamma}_{t,q - 1}\right)
        \right\Vert_{M}\leq C\sqrt{\sum_{t = 1}^T \left\Vert\boldsymbol{b}_t^\top\left(\boldsymbol{\gamma}_{t,q} - \boldsymbol{\gamma}_{t,q - 1}\right)\right\Vert^2_M}
        \leq C\delta_q\sqrt{\sum_{t = 1}^T \vert\boldsymbol{b}_t\vert^2_2},
    \end{aligned}
    \label{eq.linear_martingale}
    \end{equation}
    and 
    \begin{equation}
        \begin{aligned}
    \left\Vert
    \sum_{t = 1}^T \boldsymbol{b}_t^\top \boldsymbol{\epsilon}_t
    \right\Vert_{M}\leq C\sqrt{\sum_{t = 1}^T \left\vert\boldsymbol{b}_t\right\vert^2_2}\times \left(\sum_{q = 0}^\infty\delta_q\right)\leq C_1\sqrt{\sum_{t = 1}^T \vert\boldsymbol{b}_t\vert^2_2},
        \end{aligned}
    \end{equation}
which proves the first result in \eqref{eq.linear_form}.

    On the other hand, by definition 
    \begin{align*}
        \boldsymbol{\eta}_{t,s} = \boldsymbol{\epsilon}_t - \boldsymbol{\gamma}_{t,s} = \sum_{q = s + 1}^\infty (\boldsymbol{\gamma}_{t,q} - \boldsymbol{\gamma}_{t,q - 1}),
    \end{align*}
    so from \eqref{eq.linear_martingale},
    \begin{equation}
        \begin{aligned}
            \left\Vert\sum_{t = 1}^T \boldsymbol{b}_t^\top \boldsymbol{\eta}_{t,s}\right\Vert_M & \leq \sum_{q  = s + 1}^\infty \left\Vert\sum_{t = 1}^T \mathbf{b}_t^\top(\boldsymbol{\gamma}_{t,q} - \boldsymbol{\gamma}_{t,q - 1})\right\Vert_{M}\\
            &\leq C\sqrt{\sum_{t = 1}^T \vert\boldsymbol{b}_t\vert^2_2}\times \sum_{q  =s + 1}^\infty \delta_q\leq \frac{C_1}{(1 + s)^\alpha}\sqrt{\sum_{t = 1}^T \vert\boldsymbol{b}_t\vert^2_2},
        \end{aligned}
    \end{equation}
    which proves the second result in \eqref{eq.linear_form}.
\end{proof}

\begin{corollary}
    According to \eqref{eq.linear_form}, for any $s\geq 0,$ 
\begin{equation}
    \begin{aligned}
        \left\Vert\sum_{t = 1}^T \boldsymbol{b}_t^\top \boldsymbol{\gamma}_{t,s}\right\Vert_{M} & = \left\Vert\sum_{t = 1}^T \boldsymbol{b}_t^\top (\boldsymbol{\epsilon}_t - \boldsymbol{\eta}_{t,s})\right\Vert_{M}\\
       & \leq \left\Vert\sum_{t = 1}^T \boldsymbol{b}_t^\top \boldsymbol{\epsilon}_t\right\Vert_{M} + \left\Vert\sum_{t = 1}^T \boldsymbol{b}_t^\top \boldsymbol{\eta}_{t,s}\right\Vert_M\\
       &\leq C\sqrt{\sum_{t = 1}^T \vert\mathbf{b}_t\vert^2_2} + \frac{C}{(1 + s)^\alpha}\sqrt{\sum_{t = 1}^T \vert\mathbf{b}_t\vert^2_2}
       \leq C_1\sqrt{\sum_{t = 1}^T \vert\boldsymbol{b}_t\vert^2_2}.
    \end{aligned}
    \label{eq.linear_gamma}
\end{equation}
\end{corollary}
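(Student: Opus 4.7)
The plan is to follow the four-line calculation displayed in \eqref{eq.covariance_delta}, verifying each inequality in turn; the result is essentially a corollary of the second half of Lemma \ref{lemma.linear_form}, so the task is mostly one of assembling ingredients rather than introducing new machinery.

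First I would justify the independence claim. Since $\boldsymbol{\epsilon}_{t_1}$ is measurable with respect to the $\sigma$-field generated by $\{e_s : s \le t_1\}$, while $\mathbf{E}[\boldsymbol{\epsilon}_{t_2}\mid \mathcal{F}_{t_2, t_2 - t_1 - 1}]$ is measurable with respect to the $\sigma$-field generated by $\{e_{t_1+1}, e_{t_1+2}, \ldots, e_{t_2}\}$ (note the cutoff is at lag $t_2 - t_1 - 1$, i.e.\ stopping at $e_{t_1+1}$), these two $\sigma$-fields are independent because the $e_t$ are mutually independent. Combined with $\mathbf{E}[\boldsymbol{\epsilon}_{t_1}^{(j)}] = 0$, this gives
\begin{equation*}
\mathbf{E}\left[\boldsymbol{\epsilon}_{t_1}^{(j)}\mathbf{E}\left[\boldsymbol{\epsilon}_{t_2}^{(j)}\mid\mathcal{F}_{t_2, t_2 - t_1 - 1}\right]\right] = \mathbf{E}\left[\boldsymbol{\epsilon}_{t_1}^{(j)}\right]\cdot\mathbf{E}\left[\boldsymbol{\epsilon}_{t_2}^{(j)}\right] = 0,
\end{equation*}
which justifies subtracting the conditional expectation inside the covariance without changing its value, producing the second line of \eqref{eq.covariance_delta}.

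Next I would bound each term in the sum. The triangle inequality for the inner product $\boldsymbol{\epsilon}_{t_1}^\top \boldsymbol{\epsilon}_{t_2} = \sum_j \boldsymbol{\epsilon}_{t_1}^{(j)}\boldsymbol{\epsilon}_{t_2}^{(j)}$ and Jensen's inequality for the outer expectation give the first line. After the subtraction described above, apply Cauchy--Schwarz (valid since $M > 2 \ge 2$) to bound
\begin{equation*}
\left\vert\mathbf{E}\left[\boldsymbol{\epsilon}_{t_1}^{(j)}\boldsymbol{\eta}_{t_2, t_2 - t_1 - 1}^{(j)}\right]\right\vert \le \left\Vert \boldsymbol{\epsilon}_{t_1}^{(j)}\right\Vert_2\left\Vert\boldsymbol{\eta}_{t_2, t_2 - t_1 - 1}^{(j)}\right\Vert_2 \le \left\Vert \boldsymbol{\epsilon}_{t_1}^{(j)}\right\Vert_M\left\Vert\boldsymbol{\eta}_{t_2, t_2 - t_1 - 1}^{(j)}\right\Vert_M,
\end{equation*}
where the last inequality uses monotonicity of $L^m$-norms.

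Finally I would invoke the $(M,\alpha)$-short-range dependence condition with $\mathbf{a}$ equal to the $j$-th standard basis vector to obtain $\Vert\boldsymbol{\epsilon}_{t_1}^{(j)}\Vert_M = O(1)$ uniformly in $j$ and $t_1$, and apply the second bound of Lemma \ref{lemma.linear_form} (taking $T = 1$ and $\mathbf{b}_1 = \mathbf{e}_j$) to get
\begin{equation*}
\left\Vert\boldsymbol{\eta}_{t_2, t_2 - t_1 - 1}^{(j)}\right\Vert_M \le \frac{C}{(1 + t_2 - t_1 - 1)^\alpha} = \frac{C}{(t_2 - t_1)^\alpha}.
\end{equation*}
Summing over $j = 1, \ldots, d$ then yields the claimed bound $C_1 d / (t_2 - t_1)^\alpha$. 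The only mildly delicate point is ensuring the index arithmetic on the conditional $\sigma$-field aligns ($e_{t_1+1}$ is retained but $e_{t_1}$ is not, so the lag is $t_2 - t_1 - 1$, and $1 + (t_2 - t_1 - 1) = t_2 - t_1$); there is no serious obstacle beyond that bookkeeping.
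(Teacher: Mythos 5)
Your proposal does not prove the statement in question. The corollary to be established is \eqref{eq.linear_gamma}, the moment bound
$\left\Vert\sum_{t = 1}^T \boldsymbol{b}_t^\top \boldsymbol{\gamma}_{t,s}\right\Vert_{M}\leq C_1\sqrt{\sum_{t = 1}^T \vert\boldsymbol{b}_t\vert^2_2}$
for linear combinations of the truncated processes $\boldsymbol{\gamma}_{t,s} = \mathbf{E}\left[\boldsymbol{\epsilon}_t\mid\mathcal{F}_{t,s}\right]$. What you have written instead is a proof of the \emph{other} corollary of Lemma \ref{lemma.linear_form}, namely the autocovariance decay bound $\left\vert\mathbf{E}\left[\boldsymbol{\epsilon}_{t_1}^\top\boldsymbol{\epsilon}_{t_2}\right]\right\vert\leq C_1 d/(t_2 - t_1)^\alpha$ in \eqref{eq.covariance_delta}: you open by saying you will follow "the four-line calculation displayed in \eqref{eq.covariance_delta}," your argument centers on the independence of $\boldsymbol{\epsilon}_{t_1}$ from $\mathbf{E}\left[\boldsymbol{\epsilon}_{t_2}\mid\mathcal{F}_{t_2,t_2-t_1-1}\right]$ and on bounding $\mathbf{E}\left[\boldsymbol{\epsilon}_{t_1}^{(j)}\boldsymbol{\eta}_{t_2,t_2-t_1-1}^{(j)}\right]$ componentwise, and you conclude with the bound $C_1 d/(t_2-t_1)^\alpha$. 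None of this addresses the quantity $\left\Vert\sum_{t}\boldsymbol{b}_t^\top\boldsymbol{\gamma}_{t,s}\right\Vert_M$ or produces a bound of the form $C_1\sqrt{\sum_t\vert\boldsymbol{b}_t\vert_2^2}$.

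The intended argument is the short one displayed in the corollary itself: write $\boldsymbol{\gamma}_{t,s} = \boldsymbol{\epsilon}_t - \boldsymbol{\eta}_{t,s}$ (which holds by the definition \eqref{eq.def_gamma_eta}), apply the triangle inequality for the $\Vert\cdot\Vert_M$ norm to split the sum into $\left\Vert\sum_t\boldsymbol{b}_t^\top\boldsymbol{\epsilon}_t\right\Vert_M$ and $\left\Vert\sum_t\boldsymbol{b}_t^\top\boldsymbol{\eta}_{t,s}\right\Vert_M$, and then invoke the first and second inequalities of \eqref{eq.linear_form} respectively; since $(1+s)^{-\alpha}\leq 1$ for $s\geq 0$, both terms are dominated by $C\sqrt{\sum_t\vert\boldsymbol{b}_t\vert_2^2}$. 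You would need to supply this (admittedly very short) argument; as it stands, your proposal proves a true but different result and leaves the actual claim untouched.
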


The proof of Theorem \ref{theorem.consistent_quadratic} is based on the following decomposition: Suppose $t_1 > t_2,$  then 
$
\boldsymbol{\epsilon}_{t_1}^\top\boldsymbol{\epsilon}_{t_2} =  \boldsymbol{\gamma}_{t_1, t_1 - t_2 -  1}^\top\boldsymbol{\epsilon}_{t_2} + \boldsymbol{\eta}^\top_{t_1, t_1 - t_2  - 1}\boldsymbol{\epsilon}_{t_2}.
$
Since  $\boldsymbol{\gamma}_{t_1, t_1 - t_2 -  1}$ is measurable in the $\sigma$-field $\mathcal{F}_{t_1, t_1 - t_2 - 1},$ $\boldsymbol{\gamma}_{t_1, t_1 - t_2 -  1}$ is independent of $\boldsymbol{\epsilon}_{t_2},$ implying that $\mathbf{E}\left[\boldsymbol{\epsilon}_{t_1}^\top\boldsymbol{\epsilon}_{t_2}\right] = \mathbf{E}\left[\boldsymbol{\eta}^\top_{t_1, t_1 - t_2  - 1}\boldsymbol{\epsilon}_{t_2}\right].$ According to Lemma \ref{lemma.linear_form}, 
$$
\left\Vert
\boldsymbol{\eta}^\top_{t_1, t_1 - t_2  - 1}\mathbf{b}
\right\Vert_M\leq \frac{C\vert\mathbf{b}\vert_2}{(t_1 - t_2)^\alpha}\quad \text{for any vector } \mathbf{b}\in\mathbf{R}^d.
$$
Therefore, despite the fact that  $\boldsymbol{\eta}_{t_1, t_1 - t_2  - 1}$ and $\boldsymbol{\epsilon}_{t_2}$ may exhibit complex dependence, the moments of linear combinations of $\boldsymbol{\eta}_{t_1, t_1 - t_2  - 1}$ are not large for sufficiently large $t_1 - t_2,$ and it suffices to study the products $\boldsymbol{\gamma}_{t_1, t_1 - t_2 -  1}^\top\boldsymbol{\epsilon}_{t_2}$ to bound the moments of the quadratic form $Q$ in \eqref{eq.def_quadratic_form}.

\begin{proof}[Proof of Theorem \ref{theorem.consistent_quadratic}]

\textbf{1. The proof of equation \eqref{eq.whole_formula}}

We first prove \eqref{eq.whole_formula}.     
Since $b_{t_1t_2} = 0$ for $\vert t_1 - t_2\vert < B$, 
    \begin{align*}
        \left\Vert Q\right\Vert_{M/2} &= \left\Vert
        \sum_{t_1 = 1}^T\sum_{t_2 = 1}^T b_{t_1t_2}\left(\boldsymbol{\epsilon}_{t_1}^\top \boldsymbol{\epsilon}_{t_2} - \mathbf{E}\left[\boldsymbol{\epsilon}_{t_1}^\top \boldsymbol{\epsilon}_{t_2}\right]\right)
        \right\Vert_{M/2}\\
        &\leq 
        \left\Vert
        \sum_{t_1 = 1}^{T - B}\sum_{t_2 = t_1 + B}^T b_{t_1t_2}\left(\boldsymbol{\epsilon}_{t_1}^\top \boldsymbol{\epsilon}_{t_2} - \mathbf{E}\left[\boldsymbol{\epsilon}_{t_1}^\top \boldsymbol{\epsilon}_{t_2}\right]\right)
        \right\Vert_{M/2}\\
        &+ \left\Vert
        \sum_{t_2 = 1}^{T - B}\sum_{t_1 = t_2 + B}^T b_{t_1t_2}\left(\boldsymbol{\epsilon}_{t_1}^\top \boldsymbol{\epsilon}_{t_2} - \mathbf{E}\left[\boldsymbol{\epsilon}_{t_1}^\top \boldsymbol{\epsilon}_{t_2}\right]\right)
        \right\Vert_{M/2}.
    \end{align*}
    Define $\boldsymbol{\gamma}_{t,a}$ and $\boldsymbol{\eta}_{t,a}$ as in \eqref{eq.def_gamma_eta}. If $t_2 > t_1$, then 
    $\boldsymbol{\gamma}_{t_2, t_2 - t_1 - 1}$ is independent of $\boldsymbol{\epsilon}_{t_1}$, and 
    \begin{align*}
        \mathbf{E}\left[\boldsymbol{\epsilon}_{t_1}^\top \boldsymbol{\gamma}_{t_2, t_2 - t_1 - 1}\right]  = \left(\mathbf{E}\left[\boldsymbol{\epsilon}_{t_1}\right]\right)^\top \left(\mathbf{E}\left[\boldsymbol{\gamma}_{t_2, t_2 - t_1 - 1}\right]\right)  = 0.
    \end{align*}
    Since $\boldsymbol{\epsilon}_{t_2} = \boldsymbol{\gamma}_{t_2, t_2 - t_1 - 1} + \boldsymbol{\eta}_{t_2,t_2  - t_1 - 1},$ we have 
    \begin{align*}    
    \mathbf{E}\left[\boldsymbol{\epsilon}_{t_1}^\top \boldsymbol{\epsilon}_{t_2}\right] = \mathbf{E}\left[\boldsymbol{\epsilon}_{t_1}^\top\boldsymbol{\eta}_{t_2,t_2 - t_1 - 1}\right].
    \end{align*}
    This further implies that 
    \begin{align*}
    \sum_{t_1 = 1}^{T - B}\sum_{t_2 = t_1 + B}^T b_{t_1t_2}\left(\boldsymbol{\epsilon}_{t_1}^\top \boldsymbol{\epsilon}_{t_2} - \mathbf{E}\left[\boldsymbol{\epsilon}_{t_1}^\top \boldsymbol{\epsilon}_{t_2}\right]\right) &= \sum_{t_1 = 1}^{T - B}\sum_{t_2 = t_1 + B}^T b_{t_1t_2}\boldsymbol{\epsilon}_{t_1}^\top\boldsymbol{\gamma}_{t_2, t_2 - t_1 - 1}\\
    &+ \sum_{t_1 = 1}^{T - B}\sum_{t_2 = t_1 + B}^T b_{t_1t_2}\left(\boldsymbol{\epsilon}_{t_1}^\top \boldsymbol{\eta}_{t_2, t_2 - t_1 - 1} - \mathbf{E}\left[\boldsymbol{\epsilon}_{t_1}^\top \boldsymbol{\eta}_{t_2, t_2 - t_1 - 1}\right]\right).
    \end{align*}
    Notice that 
    $$
    \boldsymbol{\epsilon}_t = \mathbf{E}\left[\boldsymbol{\epsilon}_t\mid\mathcal{F}_{t,0}\right] + \sum_{q = 1}^\infty \left(\mathbf{E}\left[\boldsymbol{\epsilon}_t\mid\mathcal{F}_{t,q}\right] - \mathbf{E}\left[\boldsymbol{\epsilon}_t\mid\mathcal{F}_{t,q - 1}\right]\right) =  \sum_{q = 0}^\infty (\boldsymbol{\gamma}_{t, q} - \boldsymbol{\gamma}_{t, q - 1}),
    $$ 
    where $\boldsymbol{\gamma}_{t,-1}= 0.$ We have 
    \begin{align*}
        &\left\Vert
        \sum_{t_1 = 1}^{T - B}\sum_{t_2 = t_1 + B}^T b_{t_1t_2}\left(\boldsymbol{\epsilon}_{t_1}^\top \boldsymbol{\epsilon}_{t_2} - \mathbf{E}\left[\boldsymbol{\epsilon}_{t_1}^\top \boldsymbol{\epsilon}_{t_2}\right]\right)
        \right\Vert_{M/2}\\
        &\leq \left\Vert
        \sum_{t_1 = 1}^{T - B}\sum_{t_2 = t_1 + B}^T b_{t_1t_2}\boldsymbol{\epsilon}_{t_1}^\top\boldsymbol{\gamma}_{t_2, t_2 - t_1 - 1}
        \right\Vert_{M/2}\\
        &+  \left\Vert
        \sum_{t_1 = 1}^{T - B}\sum_{t_2 = t_1 + B}^T b_{t_1t_2}\left(\boldsymbol{\epsilon}_{t_1}^\top\boldsymbol{\eta}_{t_2,t_2 - t_1 - 1} - \mathbf{E}\left[\boldsymbol{\epsilon}_{t_1}^\top\boldsymbol{\eta}_{t_2,t_2 - t_1 - 1}\right]\right)
        \right\Vert_{M/2}\\
        &\leq 
        \sum_{q = 0}^\infty \left\Vert
        \sum_{t_1 = 1}^{T - B}\sum_{t_2 = t_1 + B}^T b_{t_1t_2}\left(\boldsymbol{\gamma}_{t_1,q} - \boldsymbol{\gamma}_{t_1,q - 1}\right)^\top\boldsymbol{\gamma}_{t_2, t_2 - t_1 - 1}
        \right\Vert_{M/2}\\
        &+ \sum_{q = 0}^\infty
        \left\Vert
        \sum_{t_2 = 1 + B}^T\sum_{t_1 = 1}^{t_2 - B} b_{t_1t_2}\left(\mathbf{E}\left[\boldsymbol{\epsilon}_{t_1}^\top\boldsymbol{\eta}_{t_2,t_2 - t_1 - 1}\mid\mathcal{F}_{t_2, q}\right] - \mathbf{E}\left[\boldsymbol{\epsilon}_{t_1}^\top\boldsymbol{\eta}_{t_2,t_2 - t_1 - 1}\mid\mathcal{F}_{t_2, q - 1}\right]\right)
        \right\Vert_{M/2},
    \end{align*}
    where $\mathbf{E}\left[\boldsymbol{\epsilon}_{t_1}^\top\boldsymbol{\eta}_{t_2,t_2 - t_1 - 1}\mid\mathcal{F}_{t_2, - 1}\right] = \mathbf{E}\left[\boldsymbol{\epsilon}_{t_1}^\top\boldsymbol{\eta}_{t_2,t_2 - t_1 - 1}\right].$    
    For any $q\geq 0$ and any $s = 1,2,\cdots, T - B$, define 
    \begin{align*}
        A_{s,q} = \sum_{t_1 = T - B - s  + 1}^{T - B}\sum_{t_2 = t_1 + B}^T b_{t_1t_2}\left(\boldsymbol{\gamma}_{t_1,q} - \boldsymbol{\gamma}_{t_1,q - 1}\right)^\top\boldsymbol{\gamma}_{t_2, t_2 - t_1 - 1},
    \end{align*}
    and $\mathcal{A}_{s,q}$ the $\sigma$-field generated by $e_{T},\cdots, e_{T - B - s + 1  - q}$. Then $\mathcal{A}_{s,q}\subset \mathcal{A}_{ s + 1,q}$, $A_{s,q}$ is measurable in $\mathcal{A}_{s,q}$, and 
    \begin{align*}
        &\mathbf{E}\left[\left(A_{s + 1,q} - A_{s,q}\right)\mid\mathcal{A}_{s,q}\right]\\
        &= \sum_{t_2 = T - s}^T b_{(T - B - s)t_2}\mathbf{E}\left[\left(\boldsymbol{\gamma}_{T - B - s,q} - \boldsymbol{\gamma}_{T - B - s,q - 1}\right)^\top\boldsymbol{\gamma}_{t_2, t_2 - T + B + s - 1}\mid \mathcal{A}_{s,q}\right]\\
        &= \sum_{t_2 = T - s}^T b_{(T - B - s)t_2}\boldsymbol{\gamma}_{t_2, t_2 - T + B + s - 1}^\top
        \mathbf{E}\left[\left(\boldsymbol{\gamma}_{T - B - s,q} - \boldsymbol{\gamma}_{T - B - s,q - 1}\right) \mid  \mathcal{A}_{s,q}\right]
        \\
        &= \sum_{t_2 = T - s}^T b_{(T - B - s)t_2}\boldsymbol{\gamma}_{t_2, t_2 - T + B + s - 1}^\top\left(\boldsymbol{\gamma}_{T - B - s,q - 1} - \boldsymbol{\gamma}_{T - B - s,q - 1}\right) = 0,
    \end{align*}
    so $A_{s,q}$ forms a martingale. According to Theorem 1.1 of  \cite{MR0400380},
    \begin{equation}
    \begin{aligned}
        &\left\Vert
        \sum_{t_1 =  1}^{T - B}\sum_{t_2 = t_1 + B}^T b_{t_1t_2}\left(\boldsymbol{\gamma}_{t_1,q} - \boldsymbol{\gamma}_{t_1,q - 1}\right)^\top\boldsymbol{\gamma}_{t_2, t_2 - t_1 - 1}
        \right\Vert_{M/2}\\
        &\leq C\sqrt{\sum_{t_1 =  1}^{T - B}\left\Vert \left(\boldsymbol{\gamma}_{t_1,q} - \boldsymbol{\gamma}_{t_1,q - 1}\right)^\top\sum_{t_2 = t_1 + B}^T b_{t_1t_2}\boldsymbol{\gamma}_{t_2, t_2 - t_1 - 1}\right\Vert_{M/2}^2}.
    \end{aligned}
    \label{eq.separate_Q}
    \end{equation}
    Since $\boldsymbol{\epsilon}_{t_2}$ is independent of $e_{t_2+1},\cdots, e_T,$ we have 
    $$
    \boldsymbol{\gamma}_{t_2, t_2 - t_1 - 1} = \mathbf{E}\left[\boldsymbol{\epsilon}_{t_2}\mid\mathcal{F}_{t_2, t_2 -t_1 - 1}\right] = \mathbf{E}\left[\boldsymbol{\epsilon}_{t_2}\mid\mathcal{F}_{T, T -t_1 - 1}\right].
    $$ 
    For any given vector $\boldsymbol{\tau}\in\mathbf{R}^d$, from Lemma \ref{lemma.linear_form}, 
    \begin{equation}
    \begin{aligned}
        &\left\Vert
        \boldsymbol{\tau}^\top \sum_{t_2 = t_1 + B}^T b_{t_1t_2}\boldsymbol{\gamma}_{t_2, t_2 - t_1 - 1}
        \right\Vert_{M/2}\\
        &= \left\Vert
        \mathbf{E}\left[\boldsymbol{\tau}^\top \sum_{t_2 = t_1 + B}^T b_{t_1t_2}\boldsymbol{\epsilon}_{t_2}\mid\mathcal{F}_{T, T -t_1 - 1}
        \right]\right\Vert_{M/2}\\
        &\leq 
        \left\Vert
        \boldsymbol{\tau}^\top \sum_{t_2 = t_1 + B}^T b_{t_1t_2}\boldsymbol{\epsilon}_{t_2}
        \right\Vert_{M/2}
        \leq C\left\vert\boldsymbol{\tau}\right\vert_2 \sqrt{\sum_{t_2 = t_1 + B}^T b^2_{t_1t_2}}.
    \end{aligned}
    \label{eq.use_linear}
    \end{equation}
Since $\boldsymbol{\gamma}_{t_1,q} - \boldsymbol{\gamma}_{t_1,q - 1}$ is independent of $\sum_{t_2 = t_1 + B}^T b_{t_1t_2}\boldsymbol{\gamma}_{t_2, t_2 - t_1 - 1}$, from Theorem 1.7 in \cite{MR2002723} and \eqref{eq.use_linear}, for any random vector $\boldsymbol{\tau}\in\mathbf{R}^d,$
\begin{align*}
    &\mathbf{E}\left[\left(\left\vert
    (\boldsymbol{\gamma}_{t_1,q} - \boldsymbol{\gamma}_{t_1,q - 1})^\top\sum_{t_2 = t_1 + B}^T b_{t_1t_2}\boldsymbol{\gamma}_{t_2, t_2 - t_1 - 1}
    \right\vert^{M/2}\right)\mid \boldsymbol{\gamma}_{t_1,q} - \boldsymbol{\gamma}_{t_1,q - 1} = \boldsymbol{\tau}\right]\\
    &=
    \mathbf{E}\left[\left\vert
    \boldsymbol{\tau}^\top\sum_{t_2 = t_1 + B}^T b_{t_1t_2}\boldsymbol{\gamma}_{t_2, t_2 - t_1 - 1}
    \right\vert^{M/2}\right]
    \\
    &\leq
    C\left(\sum_{t_2 = t_1 + B}^Tb^2_{t_1t_2}\right)^{M/4}\left\vert\boldsymbol{\tau}\right\vert_2^{M/2}
     = C\left(\sum_{t_2 = t_1 + B}^Tb^2_{t_1t_2}\right)^{M/4}\left\vert\boldsymbol{\gamma}_{t_1,q} - \boldsymbol{\gamma}_{t_1,q - 1}\right\vert_2^{M/2}.
\end{align*}
This observation implies that
\begin{align*}
    &\left\Vert\ 
    \left(\boldsymbol{\gamma}_{t_1,q} - \boldsymbol{\gamma}_{t_1,q - 1}\right)^\top\sum_{t_2 = t_1 + B}^T b_{t_1t_2}\boldsymbol{\gamma}_{t_2, t_2 - t_1 - 1}
    \ \right\Vert_{M/2}\\
    &\leq C\left\Vert\ \left\vert\boldsymbol{\gamma}_{t_1,q} - \boldsymbol{\gamma}_{t_1,q - 1}\right\vert_2\ \right\Vert_{M/2}\sqrt{\sum_{t_2 = t_1 + B}^Tb^2_{t_1t_2}}\\
    &\leq C\sqrt{\sum_{t_2 = t_1 + B}^Tb^2_{t_1t_2}} \sqrt{\sum_{j = 1}^d \left\Vert
    \boldsymbol{\gamma}_{t_1,q}^{(j)} - \boldsymbol{\gamma}_{t_1,q - 1}^{(j)}
    \right\Vert^2_{M/2}}
    \leq C_1\delta_q \sqrt{d\sum_{t_2 = t_1 + B}^Tb^2_{t_1t_2}}.
\end{align*}
From \eqref{eq.separate_Q},
\begin{equation}
    \begin{aligned}
        &\sum_{q = 0}^\infty \left\Vert
        \sum_{t_1 = 1}^{T - B}\sum_{t_2 = t_1 + B}^T b_{t_1t_2}\left(\boldsymbol{\gamma}_{t_1,q} - \boldsymbol{\gamma}_{t_1,q - 1}\right)^\top\boldsymbol{\gamma}_{t_2, t_2 - t_1 - 1}
        \right\Vert_{M/2}\\
        &\leq C\sum_{q = 0}^\infty\delta_q \sqrt{d \sum_{t_1 =  1}^{T - B}\sum_{t_2 = t_1 + B}^Tb^2_{t_1t_2}}\leq C_1\sqrt{d \sum_{t_1 =  1}^{T - B}\sum_{t_2 = t_1 + B}^Tb^2_{t_1t_2}}.
    \end{aligned}
\label{eq.summary_first_half}
\end{equation}

For any $q = 0,1,\cdots,$ and any $s = 1,2,\cdots, T - B$, define 
\begin{align*}
    D_{q,s}  = \sum_{t_2 = T - s + 1}^T\sum_{t_1 = 1}^{t_2 - B} b_{t_1t_2}\left(\mathbf{E}\left[\boldsymbol{\epsilon}_{t_1}^\top\boldsymbol{\eta}_{t_2,t_2 - t_1 - 1}\mid \mathcal{F}_{t_2, q}\right] - \mathbf{E}\left[\boldsymbol{\epsilon}_{t_1}^\top\boldsymbol{\eta}_{t_2,t_2 - t_1 - 1}\mid\mathcal{F}_{t_2, q - 1}\right]\right)
\end{align*}
and $\mathcal{D}_{q,s}$ the $\sigma$-field generated by $e_{T},e_{T-1},\cdots, e_{T-s+1-q}$. Then $D_{q,s}$ is measurable in $\mathcal{D}_{q,s}$, $\mathcal{D}_{q,s}\subset \mathcal{D}_{q,s + 1}$, and 
\begin{align*}
    & \mathbf{E}\left[\left(D_{q,s + 1} - D_{q,s}\right)\mid\mathcal{D}_{q,s}\right]\\
    &= 
    \sum_{t_1 = 1}^{T - s - B} b_{t_1(T - s)}\mathbf{E}\left[\left(\mathbf{E}\left[\boldsymbol{\epsilon}_{t_1}^\top\boldsymbol{\eta}_{T - s,T - s - t_1 - 1}\mid\mathcal{F}_{T - s, q}\right] - \mathbf{E}\left[\boldsymbol{\epsilon}_{t_1}^\top\boldsymbol{\eta}_{T - s,T - s - t_1 - 1}\mid\mathcal{F}_{T - s, q - 1}\right]\right)
    \mid\mathcal{D}_{q,s}\right]\\
   &= \sum_{t_1 = 1}^{T - s - B} b_{t_1(T - s)}\left(\mathbf{E}\left[\boldsymbol{\epsilon}_{t_1}^\top\boldsymbol{\eta}_{T - s,T - s - t_1 - 1}\mid\mathcal{F}_{T - s, q  - 1}\right] - \mathbf{E}\left[\boldsymbol{\epsilon}_{t_1}^\top\boldsymbol{\eta}_{T - s,T - s - t_1 - 1}\mid\mathcal{F}_{T - s, q - 1}\right]\right) = 0,
\end{align*}
so $D_{q,s}$ forms a martingale. According to Theorem 1.1 of  \cite{MR0400380}, 
\begin{equation}
\begin{aligned}
    &\left\Vert
    \sum_{t_2 = 1 + B}^T\sum_{t_1 = 1}^{t_2 - B} b_{t_1t_2}\left(\mathbf{E}\left[\boldsymbol{\epsilon}_{t_1}^\top\boldsymbol{\eta}_{t_2,t_2 - t_1 - 1}\mid\mathcal{F}_{t_2, q}\right] - \mathbf{E}\left[\boldsymbol{\epsilon}_{t_1}^\top\boldsymbol{\eta}_{t_2,t_2 - t_1 - 1}\mid\mathcal{F}_{t_2, q - 1}\right]\right)
    \right\Vert_{M/2}\\
    &\leq C\sqrt{\sum_{t_2 = 1 + B}^T\left\Vert
    \sum_{t_1 = 1}^{t_2 - B} b_{t_1t_2}\left(\mathbf{E}\left[\boldsymbol{\epsilon}_{t_1}^\top\boldsymbol{\eta}_{t_2,t_2 - t_1 - 1}\mid\mathcal{F}_{t_2, q}\right] - \mathbf{E}\left[\boldsymbol{\epsilon}_{t_1}^\top\boldsymbol{\eta}_{t_2,t_2 - t_1 - 1}\mid\mathcal{F}_{t_2, q - 1}\right]\right)
    \right\Vert^2_{M/2}}.
\end{aligned}
\label{eq.tail_moment_to_first}
\end{equation}
In the following step, we try to bound the norm 
$$
\left\Vert
    \sum_{t_1 = 1}^{t_2 - B} b_{t_1t_2}\left(\mathbf{E}\left[\boldsymbol{\epsilon}_{t_1}^\top\boldsymbol{\eta}_{t_2,t_2 - t_1 - 1}\mid\mathcal{F}_{t_2, q}\right] - \mathbf{E}\left[\boldsymbol{\epsilon}_{t_1}^\top\boldsymbol{\eta}_{t_2,t_2 - t_1 - 1}\mid\mathcal{F}_{t_2, q - 1}\right]\right)
\right\Vert_{M/2}.
$$ 
If $q \leq B$, then 
\begin{align*}
    &\left\Vert
    \sum_{t_1 = 1}^{t_2 - B} b_{t_1t_2}\left(\mathbf{E}\left[\boldsymbol{\epsilon}_{t_1}^\top\boldsymbol{\eta}_{t_2,t_2 - t_1 - 1}\mid\mathcal{F}_{t_2, q}\right] - \mathbf{E}\left[\boldsymbol{\epsilon}_{t_1}^\top\boldsymbol{\eta}_{t_2,t_2 - t_1 - 1}\mid\mathcal{F}_{t_2, q - 1}\right]\right)
    \right\Vert_{M/2}\\
    &\leq \sum_{t_1 = 1}^{t_2 - B}\left\vert b_{t_1t_2}\right\vert \left\Vert \mathbf{E}\left[\boldsymbol{\epsilon}_{t_1}^\top\boldsymbol{\eta}_{t_2,t_2 - t_1 - 1}\mid \mathcal{F}_{t_2, q}\right]\right\Vert_{M/2} + \sum_{t_1 = 1}^{t_2 - B}\vert b_{t_1t_2}\vert\left\Vert \mathbf{E}\left[\boldsymbol{\epsilon}_{t_1}^\top\boldsymbol{\eta}_{t_2,t_2 - t_1 - 1}\mid\mathcal{F}_{t_2, q  - 1}\right]\right\Vert_{M/2}\\
    &\leq 2\sum_{t_1 = 1}^{t_2 - B}\vert b_{t_1t_2}\vert \left\Vert\boldsymbol{\epsilon}_{t_1}^\top\boldsymbol{\eta}_{t_2,t_2 - t_1 - 1}\right\Vert_{M/2}.
\end{align*}
From Lemma \ref{lemma.linear_form},
\begin{equation}
\begin{aligned}
    \left\Vert
    \boldsymbol{\epsilon}_{t_1}^\top\boldsymbol{\eta}_{t_2,t_2 - t_1 - 1}
    \right\Vert_{M/2}  & = 
    \left\Vert
    \sum_{j = 1}^d \boldsymbol{\epsilon}_{t_1}^{(j)}\boldsymbol{\eta}_{t_2,t_2 - t_1 - 1}^{(j)}
    \right\Vert_{M/2}\\
    &\leq \sum_{j = 1}^d\left\Vert\boldsymbol{\epsilon}_{t_1}^{(j)}\right\Vert_M \left\Vert\boldsymbol{\eta}_{t_2,t_2 - t_1 - 1}^{(j)}\right\Vert_M\leq \frac{Cd}{(t_2 - t_1)^\alpha}.
\end{aligned}
\label{eq.eta_epsilon}
\end{equation}
Furthermore, since $\vert b_{t_1t_2}\vert\leq 1,$
\begin{align*}
    \sum_{t_1 = 1}^{t_2 - B}\vert b_{t_1t_2}\vert \left\Vert\boldsymbol{\epsilon}_{t_1}^\top\boldsymbol{\eta}_{t_2,t_2 - t_1 - 1}\right\Vert_{M/2}
    \leq \sum_{t_1 = 1}^{t_2 - B}\frac{Cd}{(t_2 - t_1)^\alpha}\leq \sum_{s = B}^\infty \frac{Cd}{s^\alpha}\leq \frac{C_1d}{B^{\alpha - 1}}
\end{align*}
for a constant $C_1.$ From \eqref{eq.tail_moment_to_first}, if $q\leq B,$ we have 
\begin{equation}
    \begin{aligned}
    &\left\Vert
    \sum_{t_2 = 1 + B}^T\sum_{t_1 = 1}^{t_2 - B} b_{t_1t_2}\left(\mathbf{E}\left[\boldsymbol{\epsilon}_{t_1}^\top\boldsymbol{\eta}_{t_2,t_2 - t_1 - 1}\mid\mathcal{F}_{t_2, q}\right] - \mathbf{E}\left[\boldsymbol{\epsilon}_{t_1}^\top\boldsymbol{\eta}_{t_2,t_2 - t_1 - 1}\mid\mathcal{F}_{t_2, q - 1}\right]\right)
    \right\Vert_{M/2}\\
    &\leq \frac{Cd\sqrt{T}}{B^{\alpha - 1}}.
    \end{aligned}
    \label{eq.q_big}
\end{equation}
On the other hand, if $q > B,$ we perform the following decomposition:
\begin{equation}
\begin{aligned}
    &\left\Vert
    \sum_{t_1 = 1}^{t_2 - B} b_{t_1t_2}\left(\mathbf{E}\left[\boldsymbol{\epsilon}_{t_1}^\top\boldsymbol{\eta}_{t_2,t_2 - t_1 - 1}\mid\mathcal{F}_{t_2, q}\right] - \mathbf{E}\left[\boldsymbol{\epsilon}_{t_1}^\top\boldsymbol{\eta}_{t_2,t_2 - t_1 - 1}\mid\mathcal{F}_{t_2, q - 1}\right]\right)
    \right\Vert_{M/2}\\
    &\leq \left\Vert
    \sum_{t_1 = 1\vee (t_2 - q)}^{t_2 - B} b_{t_1t_2}\left(\mathbf{E}\left[\boldsymbol{\epsilon}_{t_1}^\top\boldsymbol{\eta}_{t_2,t_2 - t_1 - 1}\mid\mathcal{F}_{t_2, q}\right] - \mathbf{E}\left[\boldsymbol{\epsilon}_{t_1}^\top\boldsymbol{\eta}_{t_2,t_2 - t_1 - 1}\mid\mathcal{F}_{t_2, q - 1}\right]\right)
    \right\Vert_{M/2}\\
    &+ \left\Vert
    \sum_{t_1 = 1}^{t_2 - q - 1} b_{t_1t_2}\left(\mathbf{E}\left[\boldsymbol{\epsilon}_{t_1}^\top\boldsymbol{\eta}_{t_2,t_2 - t_1 - 1}\mid\mathcal{F}_{t_2, q}\right] - \mathbf{E}\left[\boldsymbol{\epsilon}_{t_1}^\top\boldsymbol{\eta}_{t_2,t_2 - t_1 - 1}\mid\mathcal{F}_{t_2, q - 1}\right]\right)
    \right\Vert_{M/2}.
\end{aligned}
\label{eq.decomp_summation}
\end{equation}
For the first term, $q\geq t_2 - t_1$, and we rearrange   $\boldsymbol{\eta}_{t_2,t_2 - t_1 - 1}$ to 
\begin{align*}
    \boldsymbol{\eta}_{t_2,t_2 - t_1 - 1} = \boldsymbol{\epsilon}_{t_2} - \boldsymbol{\gamma}_{t_2,t_2 - t_1 - 1} 
    = \boldsymbol{\eta}_{t_2,q - 1} + \boldsymbol{\gamma}_{t_2,q - 1}  - \boldsymbol{\gamma}_{t_2,t_2 - t_1 - 1}.
\end{align*}
Since $q - 1\geq t_2 - t_1 - 1,$ $\boldsymbol{\gamma}_{t_2,q - 1}  - \boldsymbol{\gamma}_{t_2,t_2 - t_1 - 1}$ is measurable in $\mathcal{F}_{t_2, q - 1}$. Furthermore,
\begin{align*}
    &\left\Vert
    \sum_{t_1 = 1\vee (t_2 - q)}^{t_2 - B} b_{t_1t_2}\left(\mathbf{E}\left[\boldsymbol{\epsilon}_{t_1}^\top\boldsymbol{\eta}_{t_2,t_2 - t_1 - 1}\mid\mathcal{F}_{t_2, q}\right] - \mathbf{E}\left[\boldsymbol{\epsilon}_{t_1}^\top\boldsymbol{\eta}_{t_2,t_2 - t_1 - 1}\mid\mathcal{F}_{t_2, q - 1}\right]\right)
    \right\Vert_{M/2}\\
    &\leq \left\Vert
    \sum_{t_1 = 1\vee (t_2 - q)}^{t_2 - B} b_{t_1t_2}
    \left(\boldsymbol{\gamma}_{t_2,q - 1}  - \boldsymbol{\gamma}_{t_2,t_2 - t_1 - 1}\right)^\top
    \left(\mathbf{E}\left[\boldsymbol{\epsilon}_{t_1}\mid\mathcal{F}_{t_1, t_1 + q - t_2}\right] - \mathbf{E}\left[\boldsymbol{\epsilon}_{t_1}\mid\mathcal{F}_{t_1, t_1 + q - t_2- 1}\right]\right)
    \right\Vert_{M/2}\\
    &+ \left\Vert
    \sum_{t_1 = 1\vee (t_2 - q)}^{t_2 - B} b_{t_1t_2}\left(\mathbf{E}\left[\boldsymbol{\epsilon}_{t_1}^\top\boldsymbol{\eta}_{t_2,q - 1}\mid\mathcal{F}_{t_2, q}\right] - \mathbf{E}\left[\boldsymbol{\epsilon}_{t_1}^\top\boldsymbol{\eta}_{t_2,q - 1}\mid\mathcal{F}_{t_2, q - 1}\right]\right)
    \right\Vert_{M/2}\\
    &\leq \sum_{t_1 = 1\vee (t_2 - q)}^{t_2 - B}\vert b_{t_1t_2}\vert\sum_{j = 1}^d\left\Vert
    \boldsymbol{\gamma}_{t_2,q - 1}^{(j)}  - \boldsymbol{\gamma}_{t_2,t_2 - t_1 - 1}^{(j)}
    \right\Vert_M\left\Vert
    \boldsymbol{\gamma}_{t_1, t_1 + q - t_2}^{(j)} - \boldsymbol{\gamma}_{t_1, t_1 + q - t_2 - 1}^{(j)}
    \right\Vert_M\\
    &+ 2\sum_{t_1 = 1\vee (t_2 - q)}^{t_2 - B}\vert b_{t_1t_2}\vert\sum_{j = 1}^d \left\Vert\boldsymbol{\epsilon}_{t_1}^{(j)}\boldsymbol{\eta}_{t_2, q -  1}^{(j)}\right\Vert_{M/2}.
\end{align*}
From Lemma \ref{lemma.linear_form},
\begin{align*}
    \left\Vert
    \boldsymbol{\gamma}_{t_2,q - 1}^{(j)}  - \boldsymbol{\gamma}_{t_2,t_2 - t_1 - 1}^{(j)}
    \right\Vert_M & = \left\Vert\boldsymbol{\eta}_{t_2,t_2 - t_1 - 1}^{(j)} - \boldsymbol{\eta}_{t_2,q - 1}^{(j)}\right\Vert_{M}\\
    &\leq \left\Vert\boldsymbol{\eta}_{t_2,t_2 - t_1 - 1}^{(j)}\right\Vert_M + \left\Vert\boldsymbol{\eta}_{t_2,q - 1}^{(j)}\right\Vert_{M}\leq \frac{C}{(t_2 - t_1)^\alpha},
\end{align*}
and 
\begin{align*}
    \left\Vert
    \boldsymbol{\gamma}_{t_1, t_1 + q - t_2}^{(j)} - \boldsymbol{\gamma}_{t_1, t_1 + q - t_2 - 1}^{(j)}
    \right\Vert_M &= 
    \left\Vert
    \boldsymbol{\eta}_{t_1, t_1 + q - t_2 - 1}^{(j)} - \boldsymbol{\eta}_{t_1, t_1 + q - t_2}^{(j)}
    \right\Vert_M\\
    & \leq \left\Vert
    \boldsymbol{\eta}_{t_1, t_1 + q - t_2 - 1}^{(j)}\right\Vert_M + \left\Vert \boldsymbol{\eta}_{t_1, t_1 + q - t_2}^{(j)}\right\Vert_M
    \leq \frac{C}{(1 + t_1 + q - t_2)^\alpha}.
\end{align*}
Notice that $\vert b_{t_1t_2}\vert\leq 1,$ so 
\begin{align*}
    &\sum_{t_1 = 1\vee (t_2 - q)}^{t_2 - B}\vert b_{t_1t_2}\vert \sum_{j = 1}^d\left\Vert
    \boldsymbol{\gamma}_{t_2,q - 1}^{(j)}  - \boldsymbol{\gamma}_{t_2,t_2 - t_1 - 1}^{(j)}
    \right\Vert_M\left\Vert
    \boldsymbol{\gamma}_{t_1, t_1 + q - t_2}^{(j)} - \boldsymbol{\gamma}_{t_1, t_1 + q - t_2 - 1}^{(j)}
    \right\Vert_M\\
    &\leq Cd \sum_{t_1 = 1\vee (t_2 - q)}^{t_2 - B}\frac{1}{(t_2 - t_1)^\alpha\times (1 + t_1 + q - t_2)^\alpha}\\
    &\leq \frac{Cd}{B^\alpha}\sum_{t_1 = 1\vee (t_2 - q)}^{t_2 - B}\frac{1}{(1 + t_1 + q - t_2)^\alpha}\\
    &\leq 
    \begin{cases}
    \frac{Cd}{B^\alpha}\sum_{t_1 =  t_2  - q}^{t_2 - B}\frac{1}{(1 + t_1 + q - t_2)^\alpha}\leq \frac{C_1d}{B^\alpha}\quad \text{if}\quad t_2 \geq q + 1,\\
    \frac{Cd}{B^\alpha}\sum_{t_1 = 1}^{t_2 - B}\frac{1}{(1 + t_1 + q - t_2)^\alpha}\leq \frac{C_1d}{B^\alpha(2 + q - t_2)^{\alpha - 1}}
    \quad \text{if}\quad t_2 < q + 1.
    \end{cases}
\end{align*}
From Lemma \ref{lemma.linear_form}, and notice that $\vert b_{t_1t_2}\vert\leq 1,$
\begin{align*}
    \sum_{t_1 = 1\vee (t_2 - q)}^{t_2 - B}\vert b_{t_1t_2}\vert\sum_{j = 1}^d \left\Vert\boldsymbol{\epsilon}_{t_1}^{(j)}\boldsymbol{\eta}_{t_2, q -  1}^{(j)}\right\Vert_{M/2}
    &\leq  \sum_{t_1 = 1\vee (t_2 - q)}^{t_2 - B}\sum_{j = 1}^d \left\Vert \boldsymbol{\epsilon}_{t_1}^{(j)}\right\Vert_M\left\Vert \boldsymbol{\eta}_{t_2, q -  1}^{(j)}\right\Vert_M\\
    &\leq \sum_{t_1 = 1\vee (t_2 - q)}^{t_2 - B}\frac{Cd}{q^\alpha}\leq \frac{Cd}{q^{\alpha - 1}}.
\end{align*}
Therefore, if $q > B,$
\begin{equation}
    \begin{aligned}
        &\left\Vert
    \sum_{t_1 = 1\vee (t_2 - q)}^{t_2 - B} b_{t_1t_2}\left(\mathbf{E}\left[\boldsymbol{\epsilon}_{t_1}^\top\boldsymbol{\eta}_{t_2,t_2 - t_1 - 1}\mid\mathcal{F}_{t_2, q}\right] - \mathbf{E}\left[\boldsymbol{\epsilon}_{t_1}^\top\boldsymbol{\eta}_{t_2,t_2 - t_1 - 1}\mid\mathcal{F}_{t_2, q - 1}\right]\right)
    \right\Vert_{M/2}\\
    &\leq 
    \begin{cases}
        \frac{Cd}{B^\alpha} + \frac{Cd}{q^{\alpha -1}}\quad \text{if} \quad  t_2 \geq q + 1,\\
        \frac{Cd}{B^\alpha(2 + q - t_2)^{\alpha - 1}} + \frac{Cd}{q^{\alpha - 1}}\quad \text{if}\quad t_2 < q + 1. 
    \end{cases}
    \end{aligned}
\end{equation}
Now we consider the second term in \eqref{eq.decomp_summation}.   If $t_2\leq q + 1$, then $t_2 - q - 1 < 1$ and the second term equals $0$. Otherwise, if
$t_2 \geq q + 2$, for the second term, from Lemma \ref{lemma.linear_form}, and notice that $\vert b_{t_1t_2}\vert\leq 1,$
\begin{align*}
    &\left\Vert
    \sum_{t_1 = 1}^{t_2 - q - 1} b_{t_1t_2}\left(\mathbf{E}\left[\boldsymbol{\epsilon}_{t_1}^\top\boldsymbol{\eta}_{t_2,t_2 - t_1 - 1}\mid\mathcal{F}_{t_2, q}\right] - \mathbf{E}\left[\boldsymbol{\epsilon}_{t_1}^\top\boldsymbol{\eta}_{t_2,t_2 - t_1 - 1}\mid\mathcal{F}_{t_2, q - 1}\right]\right)
    \right\Vert_{M/2}\\
    &\leq
    \sum_{t_1 = 1}^{t_2 - q - 1}\left\Vert \mathbf{E}\left[\boldsymbol{\epsilon}_{t_1}^\top\boldsymbol{\eta}_{t_2,t_2 - t_1 - 1}\mid\mathcal{F}_{t_2, q}\right]\right\Vert_{M/2} + \sum_{t_1 = 1}^{t_2 - q - 1}\left\Vert \mathbf{E}\left[\boldsymbol{\epsilon}_{t_1}^\top\boldsymbol{\eta}_{t_2,t_2 - t_1 - 1}\mid\mathcal{F}_{t_2, q - 1}\right]\right\Vert_{M/2}
    \\ 
    &\leq 
    2\sum_{t_1 = 1}^{t_2 - q - 1}\left\Vert \boldsymbol{\epsilon}_{t_1}^\top\boldsymbol{\eta}_{t_2,t_2 - t_1 - 1}\right\Vert_{M/2}
    \\
    & \leq 2\sum_{t_1 = 1}^{t_2 - q - 1}\sum_{j = 1}^d \left\Vert \boldsymbol{\epsilon}_{t_1}^{(j)}\right\Vert_M\left\Vert\boldsymbol{\eta}_{t_2,t_2 - t_1 - 1}^{(j)}\right\Vert_{M}\\
    & \leq Cd\sum_{t_1 = 1}^{t_2 - q - 1}\frac{1}{(t_2 - t_1)^\alpha}\leq \frac{C_1d}{(1 + q)^{\alpha - 1}}.
\end{align*}
Combine the aforementioned results, if $q > B$, we have 
\begin{align*}
    &\left\Vert
    \sum_{t_1 = 1}^{t_2 - B} b_{t_1t_2}\left(\mathbf{E}\left[\boldsymbol{\epsilon}_{t_1}^\top\boldsymbol{\eta}_{t_2,t_2 - t_1 - 1}\mid\mathcal{F}_{t_2, q}\right] - \mathbf{E}\left[\boldsymbol{\epsilon}_{t_1}^\top\boldsymbol{\eta}_{t_2,t_2 - t_1 - 1}\mid\mathcal{F}_{t_2, q - 1}\right]\right)
    \right\Vert_{M/2}\\
    &\leq 
    \begin{cases}
        \frac{Cd}{B^\alpha} + \frac{Cd}{q^{\alpha -1}}\quad \text{if} \quad  t_2 \geq q + 1,\\
        \frac{Cd}{B^\alpha(2 + q - t_2)^{\alpha - 1}} + \frac{Cd}{q^{\alpha - 1}}\quad \text{if}\quad t_2 < q + 1. 
    \end{cases}
\end{align*}
From \eqref{eq.tail_moment_to_first}, if $q > B$,
\begin{align*}
    &\left\Vert
    \sum_{t_2 = 1 + B}^T\sum_{t_1 = 1}^{t_2 - B} b_{t_1t_2}\left(\mathbf{E}\left[\boldsymbol{\epsilon}_{t_1}^\top\boldsymbol{\eta}_{t_2,t_2 - t_1 - 1}\mid\mathcal{F}_{t_2, q}\right] - \mathbf{E}\left[\boldsymbol{\epsilon}_{t_1}^\top\boldsymbol{\eta}_{t_2,t_2 - t_1 - 1}\mid\mathcal{F}_{t_2, q - 1}\right]\right)
    \right\Vert_{M/2}\\
    &\leq C\sqrt{
    \sum_{t_2 = 1 + B}^{q\wedge T} \left(\frac{d^2}{q^{2\alpha - 2}} + \frac{d^2}{B^{2\alpha} (2 + q  - t_2)^{2\alpha - 2}}\right)
    } + C\sqrt{\sum_{t_2 = 1 + q}^T\frac{d^2}{B^{2\alpha}} + \frac{d^2}{q^{2\alpha - 2}}}\\
    &\leq \frac{C_1d\sqrt{T}}{q^{\alpha - 1}} + \frac{C_1d}{B^\alpha}\sqrt{\sum_{t_2 = 1 + q}^T 1} + \frac{C_1 d}{B^\alpha}\sqrt{\sum_{t_2 = 1 + B}^{q\wedge T}\frac{1}{(2 + q - t_2)^{2\alpha - 2}}}.
\end{align*}
If $q\leq T - 1,$ then 
\begin{align*}
    \sum_{t_2 = 1 + B}^{q\wedge T}\frac{1}{(2 + q - t_2)^{2\alpha - 2}} = \sum_{t_2 = 1 + B}^{q}\frac{1}{(2 + q - t_2)^{2\alpha - 2}} =  \sum_{s = 2}^{q - B + 1}\frac{1}{s^{2\alpha - 2}}\leq C
\end{align*}
and 
$$
\sum_{t_2 = 1 + q}^T 1 = T - q\leq T,
$$
in such case 
\begin{equation}
\begin{aligned}
    &\left\Vert
    \sum_{t_2 = 1 + B}^T\sum_{t_1 = 1}^{t_2 - B} b_{t_1t_2}\left(\mathbf{E}\left[\boldsymbol{\epsilon}_{t_1}^\top\boldsymbol{\eta}_{t_2,t_2 - t_1 - 1}\mid\mathcal{F}_{t_2, q}\right] - \mathbf{E}\left[\boldsymbol{\epsilon}_{t_1}^\top\boldsymbol{\eta}_{t_2,t_2 - t_1 - 1}\mid\mathcal{F}_{t_2, q - 1}\right]\right)
    \right\Vert_{M/2}\\
    &\leq \frac{Cd\sqrt{T}}{q^{\alpha - 1}} + \frac{Cd\sqrt{T}}{B^\alpha}.
\end{aligned}
\label{eq.fir_half_fir}
\end{equation}
On the other hand, if $q\geq T,$ then 
\begin{align*}
    \sum_{t_2 = 1 + B}^{q\wedge T}\frac{1}{(2 + q - t_2)^{2\alpha - 2}} = \sum_{t_2 = 1 + B}^{T}\frac{1}{(2 + q - t_2)^{2\alpha - 2}} = \sum_{s = 2 +q - T}^{q - B + 1}\frac{1}{s^{2\alpha - 2}}\leq \frac{C}{(2 + q - T)^{2\alpha - 3}},
\end{align*}
and 
$
\sum_{t_2 = 1 + q}^T 1 = 0.
$
In such case 
\begin{equation}
\begin{aligned}
    &\left\Vert
    \sum_{t_2 = 1 + B}^T\sum_{t_1 = 1}^{t_2 - B} b_{t_1t_2}\left(\mathbf{E}\left[\boldsymbol{\epsilon}_{t_1}^\top\boldsymbol{\eta}_{t_2,t_2 - t_1 - 1}\mid\mathcal{F}_{t_2, q}\right] - \mathbf{E}\left[\boldsymbol{\epsilon}_{t_1}^\top\boldsymbol{\eta}_{t_2,t_2 - t_1 - 1}\mid\mathcal{F}_{t_2, q - 1}\right]\right)
    \right\Vert_{M/2}\\
    &\leq \frac{Cd\sqrt{T}}{q^{\alpha - 1}} + \frac{Cd}{B^\alpha}\sqrt{\frac{1}{(2 + q - T)^{2\alpha - 3}}}.
\end{aligned}
\label{eq.fir_half_sec}
\end{equation}

From \eqref{eq.q_big}, \eqref{eq.fir_half_fir}, and \eqref{eq.fir_half_sec},
\begin{equation}
    \begin{aligned} 
    &\sum_{q = 0}^\infty
        \left\Vert
        \sum_{t_2 = 1 + B}^T\sum_{t_1 = 1}^{t_2 - B} b_{t_1t_2}\left(\mathbf{E}\left[\boldsymbol{\epsilon}_{t_1}^\top\boldsymbol{\eta}_{t_2,t_2 - t_1 - 1}\mid\mathcal{F}_{t_2, q}\right] - \mathbf{E}\left[\boldsymbol{\epsilon}_{t_1}^\top\boldsymbol{\eta}_{t_2,t_2 - t_1 - 1}\mid\mathcal{F}_{t_2, q - 1}\right]\right)
        \right\Vert_{M/2}\\
    &\leq 
    \sum_{q = 0}^B \frac{Cd\sqrt{T}}{B^{\alpha - 1}} + \sum_{q = B + 1}^{T - 1} \left(\frac{Cd\sqrt{T}}{q^{\alpha - 1}} + \frac{Cd\sqrt{T}}{B^\alpha}\right) + \sum_{q = T}^\infty\left(\frac{Cd\sqrt{T}}{q^{\alpha - 1}} + \frac{Cd}{B^\alpha}\sqrt{\frac{1}{(2 + q - T)^{2\alpha - 3}}}\right)
    \\
    &\leq
    \frac{C_1d\sqrt{T}}{B^{\alpha - 2}} + \frac{C_1d T^{3/2}}{B^\alpha} + \frac{C_1 d}{B^\alpha}.
    \end{aligned}
    \label{eq.summ_sec_part}
\end{equation}
From \eqref{eq.summary_first_half} and \eqref{eq.summ_sec_part}, and notice that $d\asymp T,$ we have 
\begin{align*}
    \left\Vert
        \sum_{t_1 = 1}^{T - B}\sum_{t_2 = t_1 + B}^T b_{t_1t_2}\left(\boldsymbol{\epsilon}_{t_1}^\top \boldsymbol{\epsilon}_{t_2} - \mathbf{E}\left[\boldsymbol{\epsilon}_{t_1}^\top \boldsymbol{\epsilon}_{t_2}\right]\right)
        \right\Vert_{M/2}
    &\leq C\sqrt{d\sum_{t_1 =  1}^{T - B}\sum_{t_2 = t_1 + B}^Tb^2_{t_1t_2}} + \frac{CT^{5/2}}{B^\alpha} + \frac{CT^{3/2}}{B^{\alpha - 2}}\\
    & \leq C\sqrt{d\sum_{t_1 =  1}^{T}\sum_{t_2 = 1}^T b^2_{t_1t_2}} + \frac{CT^{5/2}}{B^\alpha} + \frac{CT^{3/2}}{B^{\alpha - 2}}.
\end{align*}
Since 
\begin{align*}
    \left\Vert
        \sum_{t_2 = 1}^{T - B}\sum_{t_1 = t_2 + B}^T b_{t_1t_2}\left(\boldsymbol{\epsilon}_{t_1}^\top \boldsymbol{\epsilon}_{t_2} - \mathbf{E}\left[\boldsymbol{\epsilon}_{t_1}^\top \boldsymbol{\epsilon}_{t_2}\right]\right)
        \right\Vert_{M/2} = \left\Vert
        \sum_{t_1 = 1}^{T - B}\sum_{t_2 = t_1 + B}^T b_{t_1t_2}^\dagger\left(\boldsymbol{\epsilon}_{t_2}^\top \boldsymbol{\epsilon}_{t_1} - \mathbf{E}\left[\boldsymbol{\epsilon}_{t_2}^\top \boldsymbol{\epsilon}_{t_1}\right]\right)
        \right\Vert_{M/2},
\end{align*}
where $b_{t_1t_2}^\dagger = b_{t_2t_1}$, the aforementioned results also hold and we have 
\begin{align*}
    \left\Vert
        \sum_{t_2 = 1}^{T - B}\sum_{t_1 = t_2 + B}^T b_{t_1t_2}\left(\boldsymbol{\epsilon}_{t_1}^\top \boldsymbol{\epsilon}_{t_2} - \mathbf{E}\left[\boldsymbol{\epsilon}_{t_1}^\top \boldsymbol{\epsilon}_{t_2}\right]\right)
        \right\Vert_{M/2} \leq C\sqrt{d\sum_{t_1 =  1}^{T}\sum_{t_2 = 1}^T b^2_{t_1t_2}} + \frac{CT^{5/2}}{B^\alpha} + \frac{CT^{3/2}}{B^{\alpha - 2}}.
\end{align*}
This proves eq.\eqref{eq.whole_formula}.

\textbf{2. The proof of equation \eqref{eq.truncate_moment}.}

We then prove equation \eqref{eq.truncate_moment}. Notice that 
\begin{align*}
    &\left\Vert
        \sum_{t_1 = 1}^T\sum_{t_2 = 1}^T b_{t_1t_2}\left(\boldsymbol{\epsilon}_{t_1}^\top \boldsymbol{\epsilon}_{t_2} - \mathbf{E}\left[\boldsymbol{\epsilon}_{t_1}^\top \boldsymbol{\epsilon}_{t_2}\mid\mathcal{F}_{t_1\vee t_2, \ell}\right]\right)
    \right\Vert_{M/2}\\
    &\leq \left\Vert
        \sum_{t_1 = 1}^{T - B}\sum_{t_2 = t_1 + B}^{T} b_{t_1t_2}\left(\boldsymbol{\epsilon}_{t_1}^\top \boldsymbol{\epsilon}_{t_2} - \mathbf{E}\left[\boldsymbol{\epsilon}_{t_1}^\top \boldsymbol{\epsilon}_{t_2}\mid\mathcal{F}_{t_2, \ell}\right]\right)
        \right\Vert_{M/2}\\
    &+ \left\Vert
        \sum_{t_2 = 1}^{T - B}\sum_{t_1 = t_2 + B}^{T} b_{t_1t_2}\left(\boldsymbol{\epsilon}_{t_1}^\top \boldsymbol{\epsilon}_{t_2} - \mathbf{E}\left[\boldsymbol{\epsilon}_{t_1}^\top \boldsymbol{\epsilon}_{t_2}\mid\mathcal{F}_{t_1, \ell}\right]\right)
    \right\Vert_{M/2}.
\end{align*}
Suppose $t_2 > t_1.$ In this case, notice that 
$$\boldsymbol{\epsilon}_{t_2} = \boldsymbol{\gamma}_{t_2, (t_2 - t_1 - 1)\wedge \ell} + \boldsymbol{\eta}_{t_2, (t_2 - t_1 - 1)\wedge \ell},
$$  
where $ \boldsymbol{\gamma}_{t_2, (t_2 - t_1 - 1)\wedge \ell}$ is independent of $\boldsymbol{\epsilon}_{t_1},$ and is measurable in $\mathcal{F}_{t_2,\ell}.$ From this decomposition, we have 
\begin{equation}
\begin{aligned}
    &\left\Vert
        \sum_{t_1 = 1}^{T - B}\sum_{t_2 = t_1 + B}^{T} b_{t_1t_2}\left(\boldsymbol{\epsilon}_{t_1}^\top \boldsymbol{\epsilon}_{t_2} - \mathbf{E}\left[\boldsymbol{\epsilon}_{t_1}^\top \boldsymbol{\epsilon}_{t_2}|\mathcal{F}_{t_2, \ell}\right]\right)
    \right\Vert_{M/2}\\
    &\leq \left\Vert
        \sum_{t_1 = 1}^{T - B}\sum_{t_2 = t_1 + B}^{T} b_{t_1t_2}\boldsymbol{\gamma}_{t_2, (t_2 - t_1 - 1)\wedge \ell}^\top\left(\boldsymbol{\epsilon}_{t_1} - \mathbf{E}\left[\boldsymbol{\epsilon}_{t_1}\mid\mathcal{F}_{t_2, \ell}\right]\right)
        \right\Vert_{M/2}\\
    &+ \left\Vert
        \sum_{t_1 = 1}^{T - B}\sum_{t_2 = t_1 + B}^{T} b_{t_1t_2}\left(\boldsymbol{\epsilon}_{t_1}^\top \boldsymbol{\eta}_{t_2, (t_2 - t_1 - 1)\wedge \ell} - \mathbf{E}\left[\boldsymbol{\epsilon}_{t_1}^\top \boldsymbol{\eta}_{t_2, (t_2 - t_1 - 1)\wedge \ell}\mid\mathcal{F}_{t_2, \ell}\right]\right)
        \right\Vert_{M/2}\\
    &\leq 
        \left\Vert
        \sum_{t_1 = 1}^{T - B}\sum_{t_2 = t_1 + B}^{T\wedge(t_1 + \ell )} b_{t_1t_2}\boldsymbol{\gamma}_{t_2, (t_2 - t_1 - 1)\wedge \ell}^\top\left(\boldsymbol{\epsilon}_{t_1} - \mathbf{E}\left[\boldsymbol{\epsilon}_{t_1}\mid\mathcal{F}_{t_1, t_1 + \ell - t_2}\right]\right)
        \right\Vert_{M/2}\\
    &+\left\Vert
        \sum_{t_1 = 1}^{T - B}\sum_{t_2 =  t_1 + \ell +  1}^{T} b_{t_1t_2}\boldsymbol{\gamma}_{t_2, (t_2 - t_1 - 1)\wedge \ell}^\top\boldsymbol{\epsilon}_{t_1}
        \right\Vert_{M/2}\\
    &+\sum_{q = \ell + 1}^\infty \left\Vert
        \sum_{t_1 = 1}^{T - B}\sum_{t_2 = t_1 + B}^{T} b_{t_1t_2}\left(\mathbf{E}\left[\boldsymbol{\epsilon}_{t_1}^\top \boldsymbol{\eta}_{t_2, (t_2 - t_1 - 1)\wedge \ell}\mid\mathcal{F}_{t_2, q}\right] - \mathbf{E}\left[\boldsymbol{\epsilon}_{t_1}^\top \boldsymbol{\eta}_{t_2, (t_2 - t_1 - 1)\wedge \ell}\mid\mathcal{F}_{t_2,  q - 1}\right]\right)
        \right\Vert_{M/2}
        .
\end{aligned}
\label{eq.separate_3}
\end{equation}
For the first term in \eqref{eq.separate_3}, notice that 
\begin{align*}
    \boldsymbol{\epsilon}_{t_1} - \mathbf{E}\left[\boldsymbol{\epsilon}_{t_1}\mid\mathcal{F}_{t_1, t_1 + \ell - t_2}\right] = 
    \sum_{q = t_1 + \ell - t_2 + 1}^\infty 
    \boldsymbol{\gamma}_{t_1, q} - \boldsymbol{\gamma}_{t_1, q - 1},\quad \text{and}\quad t_2\leq t_1 + \ell.
\end{align*}
Therefore, we have
\begin{align*}
    &\left\Vert
        \sum_{t_1 = 1}^{T - B}\sum_{t_2 = t_1 + B}^{T\wedge (t_1 + \ell)} b_{t_1t_2}\boldsymbol{\gamma}_{t_2, (t_2 - t_1 - 1)\wedge \ell}^\top\left(\boldsymbol{\epsilon}_{t_1} - \mathbf{E}\left[\boldsymbol{\epsilon}_{t_1}\mid\mathcal{F}_{t_1, t_1 + \ell - t_2}\right]\right)
    \right\Vert_{M/2}\\
    & = \left\Vert
        \sum_{t_1 = 1}^{T - B}\sum_{t_2 = t_1 + B}^{T\wedge (t_1 + \ell)} 
        \sum_{q = t_1 + \ell - t_2 + 1}^\infty 
        b_{t_1t_2}\boldsymbol{\gamma}_{t_2, t_2 - t_1 - 1}^\top\left(\boldsymbol{\gamma}_{t_1, q} - \boldsymbol{\gamma}_{t_1, q - 1}\right)
        \right\Vert_{M/2}\\
    &\leq \sum_{q = 1}^\infty\left\Vert
        \sum_{t_1 = 1}^{T - B}\sum_{t_2 = (t_1 + B)\vee (t_1 + \ell + 1 - q)}^{T\wedge (t_1 + \ell)}
        b_{t_1t_2}\boldsymbol{\gamma}_{t_2, t_2 - t_1 - 1}^\top\left(\boldsymbol{\gamma}_{t_1, q} - \boldsymbol{\gamma}_{t_1, q - 1}\right)
        \right\Vert_{M/2}.
\end{align*}
For any given $q\geq 1$ and $s = 1,\cdots, T-B,$ define 
\begin{align*}
    M_{q,s} = 
        \sum_{t_1 = T - B - s + 1}^{T - B}\sum_{t_2 = (t_1 + B)\vee (t_1 + \ell + 1 - q)}^{T\wedge (t_1 + \ell)}
        b_{t_1t_2}\boldsymbol{\gamma}_{t_2, t_2 - t_1 - 1}^\top(\boldsymbol{\gamma}_{t_1, q} - \boldsymbol{\gamma}_{t_1, q - 1})
\end{align*}
and $\mathcal{M}_{q,s}$ the $\sigma$-field generated by $e_T, e_{T-1},\cdots, e_{T - B - s + 1 - q},$ then $ M_{q,s}$ is measurable in $\mathcal{M}_{q,s}$, $\mathcal{M}_{q,s}\subset \mathcal{M}_{q,s + 1}$, and 
\begin{align*}
    &\mathbf{E}\left[\left(M_{q,s + 1} - M_{q,s}\right)\mid\mathcal{M}_{q,s}\right]\\
    &= \sum_{t_2 = (T - s)\vee (T - B - s + \ell + 1 - q)}^{T\wedge (T - B - s + \ell)}
        b_{(T - B - s)t_2}\mathbf{E}\left[\boldsymbol{\gamma}_{t_2, t_2 - T + B + s - 1}^\top(\boldsymbol{\gamma}_{T - B - s, q} - \boldsymbol{\gamma}_{T - B - s, q - 1})\mid \mathcal{M}_{q,s}\right]\\
    &= \sum_{t_2 = (T - s)\vee (T - B - s + \ell + 1 - q)}^{T\wedge (T - B - s + \ell)}
        b_{(T - B - s)t_2}\boldsymbol{\gamma}_{t_2, t_2 - T + B + s - 1}^\top(\boldsymbol{\gamma}_{T - B - s, q - 1} - \boldsymbol{\gamma}_{T - B - s, q - 1}) = 0,
\end{align*}
so $M_{q,s}$ forms a martingale. According to Theorem 1.1 of \cite{MR0400380},
\begin{align*}
   &\left\Vert
        \sum_{t_1 = 1}^{T - B}\sum_{t_2 = (t_1 + B)\vee (t_1 + \ell + 1 - q)}^{T\wedge (t_1 + \ell)}
        b_{t_1t_2}\boldsymbol{\gamma}_{t_2, t_2 - t_1 - 1}^\top\left(\boldsymbol{\gamma}_{t_1, q} - \boldsymbol{\gamma}_{t_1, q - 1}\right)
    \right\Vert_{M/2}\\
    & \leq C\sqrt{\sum_{t_1 = 1}^{T - B}\left\Vert
        \left(\boldsymbol{\gamma}_{t_1, q} - \boldsymbol{\gamma}_{t_1, q - 1}\right)^\top \sum_{t_2 = (t_1 + B)\vee (t_1 + \ell + 1 - q)}^{T\wedge (t_1 + \ell)}
        b_{t_1t_2}\boldsymbol{\gamma}_{t_2, t_2 - t_1 - 1}
    \right\Vert^2_{M/2}}.
\end{align*}
For any $t_2$ such that $(t_1 + B)\vee (t_1 + \ell + 1 - q)\leq t_2\leq T\wedge (t_1 + \ell),$ 
$$
\boldsymbol{\gamma}_{t_2, t_2 - t_1 - 1} = \mathbf{E}\left[\boldsymbol{\epsilon}_{t_2}\mid\mathcal{F}_{t_2, t_2 - t_1 - 1}\right] = \mathbf{E}\left[\boldsymbol{\epsilon}_{t_2}\mid\mathcal{F}_{T, T - t_1 - 1}\right].
$$
From Lemma \ref{lemma.linear_form}, for any vector $\boldsymbol{\tau}\in\mathbf{R}^d,$
\begin{align*}
        &\left\Vert
        \sum_{t_2 = (t_1 + B)\vee (t_1 + \ell + 1 - q)}^{T\wedge (t_1 + \ell)}
        b_{t_1t_2} \boldsymbol{\tau}^\top\boldsymbol{\gamma}_{t_2, t_2 - t_1 - 1}
        \right\Vert_{M/2}\\
     &= \left\Vert
        \sum_{t_2 = (t_1 + B)\vee (t_1 + \ell + 1 - q)}^{T\wedge (t_1 + \ell)}
        b_{t_1t_2} \boldsymbol{\tau}^\top
        \mathbf{E}\left[\boldsymbol{\epsilon}_{t_2}\mid\mathcal{F}_{T, T - t_1 - 1}\right]
        \right\Vert_{M/2}\\
    &=
    \left\Vert\mathbf{E}\left[
        \sum_{t_2 = (t_1 + B)\vee (t_1 + \ell + 1 - q)}^{T\wedge (t_1 + \ell)}
        b_{t_1t_2} \boldsymbol{\tau}^\top
        \boldsymbol{\epsilon}_{t_2}\mid\mathcal{F}_{T, T - t_1 - 1}\right]
        \right\Vert_{M/2}
    \\
    &\leq 
        \left\Vert
        \sum_{t_2 = (t_1 + B)\vee (t_1 + \ell + 1 - q)}^{T\wedge (t_1 + \ell)}
        b_{t_1t_2}\boldsymbol{\tau}^\top\boldsymbol{\epsilon}_{t_2}
        \right\Vert_{M/2}\\
    &\leq C\vert\boldsymbol{\tau}\vert_2 \sqrt{\sum_{t_2 = (t_1 + B)\vee (t_1 + \ell + 1 - q)}^{T\wedge (t_1 + \ell)}
        b_{t_1t_2}^2}.
\end{align*}
Since $\boldsymbol{\gamma}_{t_1, q} - \boldsymbol{\gamma}_{t_1, q - 1}$ is independent of $\sum_{t_2 = (t_1 + B)\vee (t_1 + \ell + 1 - q)}^{T\wedge (t_1 + \ell)}
        b_{t_1t_2}\boldsymbol{\gamma}_{t_2, t_2 - t_1 - 1},$ from Theorem 1.7 in \cite{MR2002723}, for any vector $\boldsymbol{\tau}\in\mathbf{R}^d,$ 
\begin{align*}
    &\mathbf{E}\left[\left\vert\ (\boldsymbol{\gamma}_{t_1, q} - \boldsymbol{\gamma}_{t_1, q - 1})^\top \sum_{t_2 = (t_1 + B)\vee (t_1 + \ell + 1 - q)}^{T\wedge (t_1 + \ell)}
        b_{t_1t_2}\boldsymbol{\gamma}_{t_2, t_2 - t_1 - 1}\ \right\vert^{M/2}\mid\boldsymbol{\gamma}_{t_1, q} - \boldsymbol{\gamma}_{t_1, q - 1} = \boldsymbol{\tau}\right]\\
    & = \mathbf{E}\left[\left\vert\ \boldsymbol{\tau}^\top \sum_{t_2 = (t_1 + B)\vee (t_1 + \ell + 1 - q)}^{T\wedge (t_1 + \ell)}
        b_{t_1t_2}\boldsymbol{\gamma}_{t_2, t_2 - t_1 - 1}\ \right\vert^{M/2}\right]\\
    &\leq C\left(\sum_{t_2 = (t_1 + B)\vee (t_1 + \ell + 1 - q)}^{T\wedge (t_1 + \ell)}
        b_{t_1t_2}^2\right)^{M/4} \vert \boldsymbol{\tau}\vert_2^{M/2}\\
    &= C\left(\sum_{t_2 = (t_1 + B)\vee (t_1 + \ell + 1 - q)}^{T\wedge (t_1 + \ell)}
        b_{t_1t_2}^2\right)^{M/4} \vert \boldsymbol{\gamma}_{t_1, q} - \boldsymbol{\gamma}_{t_1, q - 1}\vert_2^{M/2}
        ,
\end{align*}
which implies that 
\begin{align*}
&\left\Vert
        \left(\boldsymbol{\gamma}_{t_1, q} - \boldsymbol{\gamma}_{t_1, q - 1}\right)^\top \sum_{t_2 = (t_1 + B)\vee (t_1 + \ell + 1 - q)}^{T\wedge (t_1 + \ell)}
        b_{t_1t_2}\boldsymbol{\gamma}_{t_2, t_2 - t_1 - 1}
\right\Vert_{M/2}\\
&\leq 
C\sqrt{\sum_{t_2 = (t_1 + B)\vee (t_1 + \ell + 1 - q)}^{T\wedge (t_1 + \ell)}
        b_{t_1t_2}^2}\left\Vert\ \left\vert \boldsymbol{\gamma}_{t_1, q} - \boldsymbol{\gamma}_{t_1, q - 1}\right\vert_2\ \right\Vert_{M/2}\\
&\leq C\sqrt{\sum_{t_2 = (t_1 + B)\vee (t_1 + \ell + 1 - q)}^{T\wedge (t_1 + \ell)}
        b_{t_1t_2}^2}\sqrt{\sum_{j = 1}^d \left\Vert \boldsymbol{\gamma}_{t_1, q}^{(j)} - \boldsymbol{\gamma}_{t_1, q - 1}^{(j)}\right\Vert^2_{M/2}}\\
& \leq C_1 \delta_q\sqrt{d\sum_{t_2 = (t_1 + B)\vee (t_1 + \ell + 1 - q)}^{T\wedge (t_1 + \ell)}
        b_{t_1t_2}^2}.
\end{align*}
Therefore, 
\begin{align*}
    &\left\Vert
        \sum_{t_1 = 1}^{T - B}\sum_{t_2 = (t_1 + B)\vee (t_1 + \ell + 1 - q)}^{T\wedge (t_1 + \ell)}
        b_{t_1t_2}\boldsymbol{\gamma}_{t_2, t_2 - t_1 - 1}^\top\left(\boldsymbol{\gamma}_{t_1, q} - \boldsymbol{\gamma}_{t_1, q - 1}\right)
        \right\Vert_{M/2}\\
    &\leq C\delta_q\sqrt{d\sum_{t_1 = 1}^{T - B}\sum_{t_2 = (t_1 + B)\vee (t_1 + \ell + 1 - q)}^{T\wedge (t_1 + \ell)}
        b_{t_1t_2}^2},
\end{align*}
and 
\begin{equation}
\label{eq.fir_term_condition}
    \begin{aligned}
        &\left\Vert
        \sum_{t_1 = 1}^{T - B}\sum_{t_2 = t_1 + B}^{T\wedge (t_1 + \ell)} b_{t_1t_2}\boldsymbol{\gamma}_{t_2, (t_2 - t_1 - 1)\wedge \ell}^\top\left(\boldsymbol{\epsilon}_{t_1} - \mathbf{E}\left[\boldsymbol{\epsilon}_{t_1}|\mathcal{F}_{t_2, \ell}\right]\right)
        \right\Vert_{M/2}\\
        &\leq C\sum_{q = 1}^\infty \delta_q\sqrt{d\sum_{t_1 = 1}^{T - B}\sum_{t_2 = (t_1 + B)\vee (t_1 + \ell + 1 - q)}^{T\wedge (t_1 + \ell)}
        b_{t_1t_2}^2}.
    \end{aligned}
\end{equation}
For the second term in \eqref{eq.separate_3},  notice that $t_2\geq t_1  + \ell + 1,$ we have 
\begin{align*}
    &\left\Vert\sum_{t_1 = 1}^{T - B}\sum_{t_2 = t_1 + \ell +  1}^{T} b_{t_1t_2}\boldsymbol{\gamma}_{t_2, (t_2 - t_1 - 1)\wedge \ell}^\top\boldsymbol{\epsilon}_{t_1}\right\Vert_{M/2}\\
    &= \left\Vert\sum_{t_1 = 1}^{T - B}\sum_{t_2 = t_1 + \ell +  1}^{T} b_{t_1t_2}\boldsymbol{\gamma}_{t_2,  \ell}^\top\boldsymbol{\epsilon}_{t_1}\right\Vert_{M/2}\\
    &\leq  \sum_{q = 0}^\infty \left\Vert\sum_{t_1 = 1}^{T - B}\sum_{t_2 = t_1 + \ell +  1}^{T} b_{t_1t_2}\boldsymbol{\gamma}_{t_2,  \ell}^\top\left(\mathbf{E}\left[\boldsymbol{\epsilon}_{t_1}\mid\mathcal{F}_{t_1, q}\right] - \mathbf{E}\left[\boldsymbol{\epsilon}_{t_1}\mid\mathcal{F}_{t_1, q - 1}\right]\right)\right\Vert_{M/2}\\
   & = \sum_{q = 0}^\infty \left\Vert\sum_{t_1 = 1}^{T - B}\sum_{t_2 = t_1 + \ell +  1}^{T} b_{t_1t_2}\boldsymbol{\gamma}_{t_2,  \ell}^\top\left(\boldsymbol{\gamma}_{t_1,q} - \boldsymbol{\gamma}_{t_1,q - 1}\right)\right\Vert_{M/2},
\end{align*}
where $\boldsymbol{\gamma}_{t_1, - 1} = \mathbf{E}\left[\boldsymbol{\epsilon}_{t_1}\mid\mathcal{F}_{t_1, - 1}\right] = \mathbf{E}\left[\boldsymbol{\epsilon}_{t_1}\right] = 0.$ For any $q\geq 0$ and any $s = 1,2,\cdots, T-B,$ define 
\begin{align*}
    G_{q,s} = 
        \sum_{t_1 = T  - B - s + 1}^{T - B}\sum_{t_2 = t_1 + \ell +  1}^{T} b_{t_1t_2}\boldsymbol{\gamma}_{t_2,  \ell}^\top(\boldsymbol{\gamma}_{t_1,q} - \boldsymbol{\gamma}_{t_1,q - 1})
\end{align*}
and $\mathcal{G}_{q,s}$ the $\sigma$-field generated by $e_{T},\cdots, e_{T - B - s + 1 - q}$. Then $G_{q,s}$ is measurable in $\mathcal{G}_{q,s},$ $\mathcal{G}_{q,s}\subset \mathcal{G}_{q,s + 1},$ and
\begin{align*}
    &\mathbf{E}\left[\left(G_{q,s + 1} - G_{q,s}\right)\mid\mathcal{G}_{q,s}\right]\\
   & = 
    \sum_{t_2 = T - B - s + \ell +  1}^{T} b_{(T-B-s)t_2}\mathbf{E}\left[\boldsymbol{\gamma}_{t_2,  \ell}^\top\left(\boldsymbol{\gamma}_{T - B - s, q} - \boldsymbol{\gamma}_{T - B - s, q - 1}\right)\mid\mathcal{G}_{q,s}\right]\\
    &= \sum_{t_2 = T - B - s + \ell +  1}^{T} b_{(T-B-s)t_2}\boldsymbol{\gamma}_{t_2,  \ell}^\top(\boldsymbol{\gamma}_{T - B - s, q - 1} - \boldsymbol{\gamma}_{T - B - s, q - 1})  = 0,
\end{align*}
so $G_{q,s}$ forms a martingale.From Theorem 1.1 of \cite{MR0400380},
\begin{align*}
    &\left\Vert\sum_{t_1 = 1}^{T - B}\sum_{t_2 = t_1 + \ell +  1}^{T} b_{t_1t_2}\boldsymbol{\gamma}_{t_2,  \ell}^\top\left(\boldsymbol{\gamma}_{t_1,q} - \boldsymbol{\gamma}_{t_1,q - 1}\right)\right\Vert_{M/2}\\
    &\leq C\sqrt{\sum_{t_1 = 1}^{T - B}\left\Vert
    (\boldsymbol{\gamma}_{t_1,q} - \boldsymbol{\gamma}_{t_1,q - 1})^\top\sum_{t_2 = t_1 + \ell +  1}^{T} b_{t_1t_2}\boldsymbol{\gamma}_{t_2,  \ell}
    \right\Vert^2_{M/2}}.
\end{align*}
Since $t_2\geq t_1  + \ell + 1$ in the summation,  we have $\boldsymbol{\gamma}_{t_1,q} - \boldsymbol{\gamma}_{t_1,q - 1}$ is independent of $\sum_{t_2 = t_1 + \ell +  1}^{T} b_{t_1t_2}\boldsymbol{\gamma}_{t_2,  \ell}.$ In addition, for any vector $\boldsymbol{\tau}\in\mathbf{R}^d,$ from \eqref{eq.linear_gamma},
\begin{align*}
    \left\Vert
    \sum_{t_2 = t_1 + \ell +  1}^{T} b_{t_1t_2}\boldsymbol{\tau}^\top\boldsymbol{\gamma}_{t_2,  \ell}
    \right\Vert_{M/2}\leq C\vert\boldsymbol{\tau}\vert_2\sqrt{\sum_{t_2 = t_1 + \ell +  1}^{T} b_{t_1t_2}^2}.
\end{align*}
Therefore,  from Theorem 1.7 of \cite{MR2002723}, for any vector $\boldsymbol{\tau}\in\mathbf{R}^d,$
\begin{align*}
    &\mathbf{E}\left[\left\vert (\boldsymbol{\gamma}_{t_1,q} - \boldsymbol{\gamma}_{t_1,q - 1})^\top\sum_{t_2 = t_1 + \ell +  1}^{T} b_{t_1t_2}\boldsymbol{\gamma}_{t_2,  \ell}\right\vert^{M/2}\mid \boldsymbol{\gamma}_{t_1,q} - \boldsymbol{\gamma}_{t_1,q - 1} = \boldsymbol{\tau}\right]\\
    & = \mathbf{E}\left[\left\vert \boldsymbol{\tau}^\top\sum_{t_2 = t_1 + \ell +  1}^{T} b_{t_1t_2}\boldsymbol{\gamma}_{t_2,  \ell}\right\vert^{M/2}\right]\\
    &\leq C\left\vert\boldsymbol{\tau}\right\vert_2^{M/2}\left(\sum_{t_2 = t_1 + \ell +  1}^{T} b_{t_1t_2}^2\right)^{M/4} = C\left\vert\boldsymbol{\gamma}_{t_1,q} - \boldsymbol{\gamma}_{t_1,q - 1}\right\vert_2^{M/2}\left(\sum_{t_2 = t_1 + \ell +  1}^{T} b_{t_1t_2}^2\right)^{M/4},
\end{align*}
and 
\begin{align*}
   &\left\Vert
    (\boldsymbol{\gamma}_{t_1,q} - \boldsymbol{\gamma}_{t_1,q - 1})^\top\sum_{t_2 = t_1 + \ell +  1}^{T} b_{t_1t_2}\boldsymbol{\gamma}_{t_2,  \ell}
    \right\Vert_{M/2}\\
    &\leq  C\sqrt{\sum_{t_2 = t_1 + \ell +  1}^{T} b_{t_1t_2}^2}\left\Vert\ \left\vert\boldsymbol{\gamma}_{t_1,q} - \boldsymbol{\gamma}_{t_1,q - 1}\right\vert_2\ \right\Vert_{M/2}\\
    &\leq C\sqrt{\sum_{t_2 = t_1 + \ell +  1}^{T} b_{t_1t_2}^2}\sqrt{\sum_{j = 1}^d \Vert \boldsymbol{\gamma}_{t_1,q}^{(j)} - \boldsymbol{\gamma}_{t_1,q - 1}^{(j)}\Vert^2_{M/2}}\leq C_1\delta_q\sqrt{d\sum_{t_2 = t_1 + \ell +  1}^{T} b_{t_1t_2}^2}.
\end{align*}
This result implies that 
\begin{align*}
    &\left\Vert\sum_{t_1 = 1}^{T - B}\sum_{t_2 = t_1 + \ell +  1}^{T} b_{t_1t_2}\boldsymbol{\gamma}_{t_2,  \ell}^\top\left(\boldsymbol{\gamma}_{t_1,q} - \boldsymbol{\gamma}_{t_1,q - 1}\right)\right\Vert_{M/2}\\
    &\leq C\delta_q \sqrt{d\sum_{t_1 = 1}^{T - B}\sum_{t_2 = t_1 + \ell +  1}^{T}b^2_{t_1t_2}}.
\end{align*}
Furthermore,
\begin{equation}
\label{eq.sec_term_fir}
    \begin{aligned}
        &\left\Vert\sum_{t_1 = 1}^{T - B}\sum_{t_2 = t_1 + \ell +  1}^{T} b_{t_1t_2}\boldsymbol{\gamma}_{t_2, (t_2 - t_1 - 1)\wedge \ell}^\top\boldsymbol{\epsilon}_{t_1}\right\Vert_{M/2}\\
        &\leq C\sqrt{d\sum_{t_1 = 1}^{T - B}\sum_{t_2 = t_1 + \ell +  1}^{T}b^2_{t_1t_2}} \sum_{q = 0}^\infty \delta_q\\
        &\leq C_1\sqrt{d\sum_{t_1 = 1}^{T - B}\sum_{t_2 = t_1 + \ell +  1}^{T}b^2_{t_1t_2}}.
    \end{aligned}
\end{equation}
For the third term in \eqref{eq.separate_3},  notice that 
\begin{align*}
    &\sum_{t_1 = 1}^{T - B}\sum_{t_2 = t_1 + B}^{T} b_{t_1t_2}\left(\mathbf{E}\left[\boldsymbol{\epsilon}_{t_1}^\top \boldsymbol{\eta}_{t_2, (t_2 - t_1 - 1)\wedge \ell}\mid\mathcal{F}_{t_2, q}\right] - \mathbf{E}\left[\boldsymbol{\epsilon}_{t_1}^\top \boldsymbol{\eta}_{t_2, (t_2 - t_1 - 1)\wedge \ell}\mid\mathcal{F}_{t_2,  q - 1}\right]\right)\\
    &= \sum_{t_2 = 1 + B}^{T}\sum_{t_1 = 1}^{t_2 - B}b_{t_1t_2}\left(\mathbf{E}\left[\boldsymbol{\epsilon}_{t_1}^\top \boldsymbol{\eta}_{t_2, (t_2 - t_1 - 1)\wedge \ell}\mid\mathcal{F}_{t_2, q}\right] - \mathbf{E}\left[\boldsymbol{\epsilon}_{t_1}^\top \boldsymbol{\eta}_{t_2, (t_2 - t_1 - 1)\wedge \ell}\mid\mathcal{F}_{t_2,  q - 1}\right]\right).
\end{align*}
For any given $q\geq \ell + 1$ and any $s = 1,2,\cdots T - B,$ define 
\begin{align*}
    U_{q,s} = \sum_{t_2 = T - s + 1}^{T}\sum_{t_1 = 1}^{t_2 - B}b_{t_1t_2}\left(\mathbf{E}\left[\boldsymbol{\epsilon}_{t_1}^\top \boldsymbol{\eta}_{t_2, (t_2 - t_1 - 1)\wedge \ell}\mid\mathcal{F}_{t_2, q}\right] - \mathbf{E}\left[\boldsymbol{\epsilon}_{t_1}^\top \boldsymbol{\eta}_{t_2, (t_2 - t_1 - 1)\wedge \ell}\mid\mathcal{F}_{t_2,  q - 1}\right]\right)
\end{align*}
and $\mathcal{U}_{q,s}$ the $\sigma$-field generated by $e_T, e_{T - 1},\cdots, e_{T - s + 1 - q}.$ Then $U_{q,s} $ is measurable in $\mathcal{U}_{q,s},$ $\mathcal{U}_{q,s}\subset \mathcal{U}_{q,s + 1},$ and   
\begin{align*}
    &\mathbf{E}\left[\left(U_{q,s + 1} - U_{q,s}\right)\mid\mathcal{U}_{q,s}\right]\\
    &= 
    \sum_{t_1 = 1}^{T - s - B}b_{t_1(T - s)}\mathbf{E}\left[\left(\mathbf{E}\left[\boldsymbol{\epsilon}_{t_1}^\top \boldsymbol{\eta}_{T - s, (T - s - t_1 - 1)\wedge \ell}\mid\mathcal{F}_{T - s, q}\right] - \mathbf{E}\left[\boldsymbol{\epsilon}_{t_1}^\top \boldsymbol{\eta}_{T - s, (T - s - t_1 - 1)\wedge \ell}\mid\mathcal{F}_{T  - s,  q - 1}\right]\right)\mid\mathcal{U}_{q,s}\right]\\
    &= \sum_{t_1 = 1}^{T - s - B}b_{t_1(T - s)}\left(\mathbf{E}\left[\boldsymbol{\epsilon}_{t_1}^\top \boldsymbol{\eta}_{T - s, (T - s - t_1 - 1)\wedge \ell}\mid\mathcal{F}_{T - s, q  - 1}\right] - \mathbf{E}\left[\boldsymbol{\epsilon}_{t_1}^\top \boldsymbol{\eta}_{T - s, (T - s - t_1 - 1)\wedge \ell}\mid\mathcal{F}_{T  - s,  q - 1}\right]\right) = 0,
\end{align*}
so $U_{q,s}$ forms a martingale.  Theorem 1.1 of \cite{MR0400380} implies 
\begin{equation}
\begin{aligned}
    &\left\Vert
    \sum_{t_1 = 1}^{T - B}\sum_{t_2 = t_1 + B}^{T} b_{t_1t_2}\left(\mathbf{E}\left[\boldsymbol{\epsilon}_{t_1}^\top \boldsymbol{\eta}_{t_2, (t_2 - t_1 - 1)\wedge \ell}\mid\mathcal{F}_{t_2, q}\right] - \mathbf{E}\boldsymbol{\epsilon}_{t_1}^\top \boldsymbol{\eta}_{t_2, (t_2 - t_1 - 1)\wedge \ell}|\mathcal{F}_{t_2,  q - 1}\right)
    \right\Vert_{M/2}\\
    &\leq C\sqrt{\sum_{t_2 = 1 + B}^{T}\left\Vert
    \sum_{t_1 = 1}^{t_2 - B}b_{t_1t_2}\left(\mathbf{E}\left[\boldsymbol{\epsilon}_{t_1}^\top \boldsymbol{\eta}_{t_2, (t_2 - t_1 - 1)\wedge \ell}\mid\mathcal{F}_{t_2, q}\right] - \mathbf{E}\left[\boldsymbol{\epsilon}_{t_1}^\top \boldsymbol{\eta}_{t_2, (t_2 - t_1 - 1)\wedge \ell}\mid\mathcal{F}_{t_2,  q - 1}\right]\right)
    \right\Vert^2_{M/2}}.
\end{aligned}
\label{eq.sum_to_square_root_one}
\end{equation}
Since  $(t_2 - t_1 - 1)\wedge \ell\leq \ell$ and $\ell + 1\leq q $,  we have
\begin{align*}
\boldsymbol{\eta}_{t_2, (t_2 - t_1 - 1)\wedge \ell} &= \boldsymbol{\epsilon}_{t_2} - \boldsymbol{\gamma}_{t_2, q - 1} + \boldsymbol{\gamma}_{t_2, q - 1} - \boldsymbol{\gamma}_{t_2, (t_2 - t_1 - 1)\wedge \ell}\\
&= \boldsymbol{\eta}_{t_2, q - 1} + (\boldsymbol{\gamma}_{t_2, q - 1} - \boldsymbol{\gamma}_{t_2, (t_2 - t_1 - 1)\wedge \ell}),
\end{align*}
and $\boldsymbol{\gamma}_{t_2, q - 1} - \boldsymbol{\gamma}_{t_2, (t_2 - t_1 - 1)\wedge \ell}$ is measurable in $\mathcal{F}_{t_2, q - 1}$.  Furthermore,
\begin{align*}
    &\left\Vert
    \sum_{t_1 = 1}^{t_2 - B}b_{t_1t_2}\left(\mathbf{E}\left[\boldsymbol{\epsilon}_{t_1}^\top \boldsymbol{\eta}_{t_2, (t_2 - t_1 - 1)\wedge \ell}\mid\mathcal{F}_{t_2, q}\right] - \mathbf{E}\left[\boldsymbol{\epsilon}_{t_1}^\top \boldsymbol{\eta}_{t_2, (t_2 - t_1 - 1)\wedge \ell}\mid\mathcal{F}_{t_2,  q - 1}\right]\right)
    \right\Vert_{M/2}\\
    &\leq 
    \left\Vert
    \sum_{t_1 = 1}^{t_2 - B}b_{t_1t_2}\left(\mathbf{E}\left[\boldsymbol{\epsilon}_{t_1}^\top \boldsymbol{\eta}_{t_2, q - 1}\mid\mathcal{F}_{t_2, q}\right] - \mathbf{E}\left[\boldsymbol{\epsilon}_{t_1}^\top \boldsymbol{\eta}_{t_2,  q-  1}\mid\mathcal{F}_{t_2,  q - 1}\right]\right)
    \right\Vert_{M/2}\\
    &+ 
    \left\Vert
    \sum_{t_1 = 1}^{t_2 - B}b_{t_1t_2}\left(\boldsymbol{\gamma}_{t_2, q - 1} - \boldsymbol{\gamma}_{t_2, (t_2 - t_1 - 1)\wedge \ell}\right)^\top \left(
    \mathbf{E}\left[\boldsymbol{\epsilon}_{t_1}\mid\mathcal{F}_{t_2, q}\right] - \mathbf{E}\left[\boldsymbol{\epsilon}_{t_1}\mid\mathcal{F}_{t_2,  q - 1}\right]
    \right)
    \right\Vert_{M/2}
    .
\end{align*}
From Lemma \ref{lemma.linear_form}, 
\begin{equation}
\begin{aligned}
    &\left\Vert
    \sum_{t_1 = 1}^{t_2 - B}b_{t_1t_2}\left(\mathbf{E}\left[\boldsymbol{\epsilon}_{t_1}^\top \boldsymbol{\eta}_{t_2, q - 1}\mid\mathcal{F}_{t_2, q}\right] - \mathbf{E}\left[\boldsymbol{\epsilon}_{t_1}^\top \boldsymbol{\eta}_{t_2,  q-  1}|\mathcal{F}_{t_2,  q - 1}\right]\right)
    \right\Vert_{M/2}\\
    &\leq \sum_{t_1 = 1}^{t_2 - B}\vert b_{t_1t_2}\vert\left(\left\Vert \mathbf{E}\left[\boldsymbol{\epsilon}_{t_1}^\top \boldsymbol{\eta}_{t_2, q - 1}\mid \mathcal{F}_{t_2, q}\right]\right\Vert_{M/2} + \left\Vert\mathbf{E}\left[\boldsymbol{\epsilon}_{t_1}^\top \boldsymbol{\eta}_{t_2,  q-  1}\mid\mathcal{F}_{t_2,  q - 1}\right]\right\Vert_{M/2}\right)
    \\ 
    &\leq
    2\sum_{t_1 = 1}^{t_2 - B}\vert b_{t_1t_2}\vert\left\Vert \boldsymbol{\epsilon}_{t_1}^\top \boldsymbol{\eta}_{t_2,  q-  1}\right\Vert_{M/2}
    \\
    &\leq 2\sum_{t_1 = 1}^{t_2 - B}\vert b_{t_1t_2}\vert \sum_{j = 1}^d \left\Vert
    \boldsymbol{\epsilon}_{t_1}^{(j)}\right\Vert_M\left\Vert\boldsymbol{\eta}_{t_2, q - 1}^{(j)}
    \right\Vert_{M}
    \leq \frac{Cd}{q^{\alpha}}\sum_{t_1 = 1}^{t_2 - B}\vert b_{t_1,t_2}\vert\leq \frac{CdT}{q^{\alpha}}.
\end{aligned}
\label{eq.q_s_1}
\end{equation}
Also since $\vert b_{t_1t_2}\vert\leq 1,$
\begin{align*}
    &\left\Vert
    \sum_{t_1 = 1}^{t_2 - B}b_{t_1t_2}\left(\boldsymbol{\gamma}_{t_2, q - 1} - \boldsymbol{\gamma}_{t_2, (t_2 - t_1 - 1)\wedge \ell}\right)^\top \left(
    \mathbf{E}\left[\boldsymbol{\epsilon}_{t_1}\mid\mathcal{F}_{t_2, q}\right] - \mathbf{E}\left[\boldsymbol{\epsilon}_{t_1}\mid\mathcal{F}_{t_2,  q - 1}\right]
    \right)
    \right\Vert_{M/2}\\
    &\leq \sum_{t_1 = 1}^{t_2 - B}\sum_{j=1}^d\vert b_{t_1t_2}\vert \left\Vert \left(\boldsymbol{\gamma}_{t_2, q - 1}^{(j)} - \boldsymbol{\gamma}_{t_2, (t_2 - t_1 - 1)\wedge \ell}^{(j)}\right)\left(
    \mathbf{E}\left[\boldsymbol{\epsilon}_{t_1}^{(j)}\mid\mathcal{F}_{t_2, q}\right] - \mathbf{E}\left[\boldsymbol{\epsilon}_{t_1}^{(j)}\mid\mathcal{F}_{t_2,  q - 1}\right]
    \right)\right\Vert_{M/2}\\
    &\leq \sum_{t_1 = 1}^{t_2 - B}\sum_{j = 1}^d\left\Vert\boldsymbol{\gamma}_{t_2, q - 1}^{(j)} - \boldsymbol{\gamma}_{t_2, (t_2 - t_1 - 1)\wedge \ell}^{(j)}\right\Vert_{M} \left\Vert\mathbf{E}\left[\boldsymbol{\epsilon}_{t_1}^{(j)}\mid\mathcal{F}_{t_2, q}\right] - \mathbf{E}\left[\boldsymbol{\epsilon}_{t_1}^{(j)}\mid\mathcal{F}_{t_2,  q - 1}\right]\right\Vert_M.
\end{align*}
Since $q\geq \ell + 1,$
\begin{align*}
    \left\Vert\boldsymbol{\gamma}_{t_2, q - 1}^{(j)} - \boldsymbol{\gamma}_{t_2, (t_2 - t_1 - 1)\wedge \ell}^{(j)}\right\Vert_{M}\leq \sum_{s = ((t_2 - t_1 - 1)\wedge \ell) + 1}^{q - 1}\delta_{s}\leq \frac{C}{(1 + (t_2 - t_1 - 1)\wedge \ell)^\alpha},
\end{align*}
and 
\begin{align*}
    \left\Vert\mathbf{E}\left[\boldsymbol{\epsilon}_{t_1}^{(j)}\mid \mathcal{F}_{t_2, q}\right] - \mathbf{E}\left[\boldsymbol{\epsilon}_{t_1}^{(j)}\mid\mathcal{F}_{t_2,  q - 1}\right]\right\Vert_M 
    &= 
    \left\Vert
    \mathbf{E}\left[\boldsymbol{\epsilon}_{t_1}^{(j)}\mid\mathcal{F}_{t_1, t_1 + q  - t_2}\right] - \mathbf{E}\left[\boldsymbol{\epsilon}_{t_1}^{(j)}\mid\mathcal{F}_{t_1, t_1 + q  - t_2 - 1}\right]
    \right\Vert_{M}\\
    &\leq \delta_{t_1 + q - t_2},
\end{align*}
we have 
\begin{align*}
    &\left\Vert\boldsymbol{\gamma}_{t_2, q - 1}^{(j)} - \boldsymbol{\gamma}_{t_2, (t_2 - t_1 - 1)\wedge \ell}^{(j)}\right\Vert_{M} \left\Vert\mathbf{E}\left[\boldsymbol{\epsilon}_{t_1}^{(j)}\mid\mathcal{F}_{t_2, q}\right] - \mathbf{E}\left[\boldsymbol{\epsilon}_{t_1}^{(j)}\mid\mathcal{F}_{t_2,  q - 1}\right]\right\Vert_M\\
    &\leq \frac{C\delta_{t_1 + q - t_2}}{(1 + (t_2 - t_1 - 1)\wedge \ell)^\alpha}.
\end{align*}
Therefore,
\begin{align*}
   &\left\Vert
    \sum_{t_1 = 1}^{t_2 - B}b_{t_1t_2}\left(\boldsymbol{\gamma}_{t_2, q - 1} - \boldsymbol{\gamma}_{t_2, (t_2 - t_1 - 1)\wedge \ell}\right)^\top \left(
    \mathbf{E}\left[\boldsymbol{\epsilon}_{t_1}\mid\mathcal{F}_{t_2, q}\right] - \mathbf{E}\left[\boldsymbol{\epsilon}_{t_1}\mid\mathcal{F}_{t_2,  q - 1}\right]
    \right)
    \right\Vert_{M/2}\\
    &\leq Cd \sum_{t_1 = 1}^{t_2 - B}\frac{\delta_{t_1 + q - t_2}}{(1 + (t_2 - t_1 - 1)\wedge \ell)^\alpha}\\
    &\leq \frac{Cd}{(1 + \ell)^\alpha}\sum_{t_1 =  1}^{t_2 - \ell - 1}\delta_{t_1 + q  - t_2} + \sum_{t_1 = 1\vee (t_2 - \ell)}^{t_2  - B}\frac{Cd\times \delta_{t_1 + q  -  t_2}}{(t_2 - t_1)^\alpha}\\
    &\leq\frac{Cd}{(1 + \ell)^\alpha}\sum_{t_1 =  1}^{t_2 - \ell - 1}\delta_{t_1 + q  - t_2} + \frac{Cd}{B^\alpha}\sum_{t_1 = 1\vee (t_2 - \ell)}^{t_2  - B}\delta_{t_1 + q - t_2}.
\end{align*}
Since
\begin{align*}
    \sum_{t_1 =  1}^{t_2 - \ell - 1}\delta_{t_1 + q  - t_2}\leq 
    \begin{cases}
        C\quad \text{if } t_2\geq q + 1,\\
        \frac{C}{(2 + q - t_2)^\alpha}\quad\text{if } t_2 < q + 1,
    \end{cases}\quad \text{and} \quad \sum_{t_1 = 1\vee (t_2 - \ell)}^{t_2  - B}\delta_{t_1 + q - t_2}\leq \frac{C}{(1 + (1+q - t_2)\vee (q - \ell))^\alpha},
\end{align*}
we have 
\begin{equation}
\begin{aligned}
    &\left\Vert
    \sum_{t_1 = 1}^{t_2 - B}b_{t_1t_2}\left(\boldsymbol{\gamma}_{t_2, q - 1} - \boldsymbol{\gamma}_{t_2, (t_2 - t_1 - 1)\wedge \ell}\right)^\top \left(
    \mathbf{E}\left[\boldsymbol{\epsilon}_{t_1}\mid\mathcal{F}_{t_2, q}\right] - \mathbf{E}\left[\boldsymbol{\epsilon}_{t_1}\mid\mathcal{F}_{t_2,  q - 1}\right]
    \right)
    \right\Vert_{M/2}\\
    &\leq \frac{Cd}{B^\alpha\left(1 + (1+q - t_2)\vee (q - \ell)\right)^\alpha} + 
    \begin{cases}
        \frac{Cd}{(1 + \ell)^\alpha}\quad\text{if } t_2\geq q + 1,\\
        \frac{Cd}{(1 + \ell)^\alpha(2 + q - t_2)^\alpha}\quad\text{if } t_2 < q + 1.
    \end{cases}
\end{aligned}
\label{eq.q_s_2}
\end{equation}
From \eqref{eq.q_s_1} and \eqref{eq.q_s_2}, and notice that 
$$
(1 + q - t_2)\vee (q - \ell)\geq q - \ell,\quad\text{and}\quad \ell + 1 \leq q\quad\text{for the third term,}
$$
we have 
\begin{align*}
    &\left\Vert
    \sum_{t_1 = 1}^{t_2 - B}b_{t_1t_2}\left(\mathbf{E}\left[\boldsymbol{\epsilon}_{t_1}^\top \boldsymbol{\eta}_{t_2, (t_2 - t_1 - 1)\wedge \ell}\mid\mathcal{F}_{t_2, q}\right] - \mathbf{E}\left[\boldsymbol{\epsilon}_{t_1}^\top \boldsymbol{\eta}_{t_2, (t_2 - t_1 - 1)\wedge \ell}\mid \mathcal{F}_{t_2,  q - 1}\right]\right)
    \right\Vert_{M/2}\\
    &\leq \frac{CdT}{q^{\alpha}} + \frac{Cd}{B^\alpha(1 + q - \ell)^\alpha} + 
    \begin{cases}
        \frac{Cd}{(1 + \ell)^\alpha}\quad\text{if } t_2\geq q + 1,\\
        \frac{Cd}{(1 + \ell)^\alpha(2 + q - t_2)^\alpha}\quad\text{if } t_2 < q + 1.
    \end{cases}\\
\end{align*}
Furthermore, from \eqref{eq.sum_to_square_root_one},
\begin{align*}
    &\left\Vert
    \sum_{t_1 = 1}^{T - B}\sum_{t_2 = t_1 + B}^{T} b_{t_1t_2}\left(\mathbf{E}\left[\boldsymbol{\epsilon}_{t_1}^\top \boldsymbol{\eta}_{t_2, (t_2 - t_1 - 1)\wedge \ell}\mid\mathcal{F}_{t_2, q}\right] - \mathbf{E}\left[\boldsymbol{\epsilon}_{t_1}^\top \boldsymbol{\eta}_{t_2, (t_2 - t_1 - 1)\wedge \ell}\mid\mathcal{F}_{t_2,  q - 1}\right]\right)
    \right\Vert_{M/2}\\
    &\leq \sqrt{\sum_{t_2 = 1 + B}^{q\wedge T}\left\Vert
    \sum_{t_1 = 1}^{t_2 - B}b_{t_1t_2}\left(\mathbf{E}\left[\boldsymbol{\epsilon}_{t_1}^\top \boldsymbol{\eta}_{t_2, (t_2 - t_1 - 1)\wedge \ell}\mid\mathcal{F}_{t_2, q}\right] - \mathbf{E}\left[\boldsymbol{\epsilon}_{t_1}^\top \boldsymbol{\eta}_{t_2, (t_2 - t_1 - 1)\wedge \ell}\mid\mathcal{F}_{t_2,  q - 1}\right]\right)
    \right\Vert^2_{M/2}}\\
    &+ \sqrt{\sum_{t_2 = 1 + q}^{T}\left\Vert
    \sum_{t_1 = 1}^{t_2 - B}b_{t_1t_2}\left(\mathbf{E}\left[\boldsymbol{\epsilon}_{t_1}^\top \boldsymbol{\eta}_{t_2, (t_2 - t_1 - 1)\wedge \ell}\mid\mathcal{F}_{t_2, q}\right] - \mathbf{E}\left[\boldsymbol{\epsilon}_{t_1}^\top \boldsymbol{\eta}_{t_2, (t_2 - t_1 - 1)\wedge \ell}\mid\mathcal{F}_{t_2,  q - 1}\right]\right)
    \right\Vert^2_{M/2}}\\
    &\leq C\sqrt{\sum_{t_2 = 1 + B}^{q\wedge T}\left(\frac{dT}{q^{\alpha}} + \frac{d}{B^\alpha(1 + q - \ell)^\alpha} + \frac{d}{(1 + \ell)^\alpha(2 + q - t_2)^\alpha}\right)^2}\\
    & + C\sqrt{\sum_{t_2 = 1 + q}^{T}\left(\frac{dT}{q^{\alpha}} + \frac{d}{B^\alpha(1 + q - \ell)^\alpha} + \frac{d}{(1 + \ell)^\alpha}\right)^2}\\
    &\leq \frac{C_1dT\sqrt{T}}{q^\alpha} + \frac{C_1d\sqrt{T}}{B^\alpha(1 + q - \ell)^\alpha} + \frac{C_1d}{(1 + \ell)^\alpha}\sqrt{\sum_{t_2 = 1 + B}^{q\wedge T}\frac{1}{(2 + q - t_2)^{2\alpha}}} + \frac{C_1 d}{(1 + \ell)^\alpha}\sqrt{\sum_{t_2 =  1 + q}^T1}.
\end{align*}
If $q\leq T - 1,$ then
\begin{align*}
    \sum_{t_2 = 1 + B}^{q\wedge T}\frac{1}{(2 + q - t_2)^{2\alpha}} = \sum_{t_2 = 1 + B}^{q}\frac{1}{(2 + q - t_2)^{2\alpha}}\leq C\quad\text{and } \sum_{t_2 =  1 + q}^T1 = T - q.
\end{align*}
If $q \geq T,$ then 
\begin{align*}
    \sum_{t_2 = 1 + B}^{q\wedge T}\frac{1}{(2 + q - t_2)^{2\alpha}} = \sum_{t_2 = 1 + B}^{T}\frac{1}{(2 + q - t_2)^{2\alpha}}\leq \frac{C}{(2 + q  - T)^{2\alpha - 1}} \quad\text{and } \sum_{t_2 =  1 + q}^T1 = 0.
\end{align*}
Therefore,
\begin{align*}
    &\left\Vert
    \sum_{t_1 = 1}^{T - B}\sum_{t_2 = t_1 + B}^{T} b_{t_1t_2}\left(\mathbf{E}\left[\boldsymbol{\epsilon}_{t_1}^\top \boldsymbol{\eta}_{t_2, (t_2 - t_1 - 1)\wedge \ell}\mid\mathcal{F}_{t_2, q}\right] - \mathbf{E}\left[\boldsymbol{\epsilon}_{t_1}^\top \boldsymbol{\eta}_{t_2, (t_2 - t_1 - 1)\wedge \ell}\mid\mathcal{F}_{t_2,  q - 1}\right]\right)
    \right\Vert_{M/2}\\
    &\leq \frac{CdT\sqrt{T}}{q^\alpha} + \frac{Cd\sqrt{T}}{B^\alpha(1 + q - \ell)^\alpha}
    + \begin{cases}
        \frac{Cd(1 + \sqrt{T - q})}{(1 + \ell)^\alpha}\quad \text{if } q\leq T - 1,\\
        \frac{C d}{(1 + \ell)^\alpha}\frac{1}{(2 + q  - T)^{\alpha - 1/2}}\quad\text{if  } q\geq T.
    \end{cases}
\end{align*}
From this observation, notice that $\ell\geq B,$
\begin{equation}
\begin{aligned}
    &\sum_{q = \ell + 1}^\infty \left\Vert
        \sum_{t_1 = 1}^{T - B}\sum_{t_2 = t_1 + B}^{T} b_{t_1t_2}\left(\mathbf{E}\left[\boldsymbol{\epsilon}_{t_1}^\top \boldsymbol{\eta}_{t_2, (t_2 - t_1 - 1)\wedge \ell}\mid\mathcal{F}_{t_2, q}\right] - \mathbf{E}\left[\boldsymbol{\epsilon}_{t_1}^\top \boldsymbol{\eta}_{t_2, (t_2 - t_1 - 1)\wedge \ell}\mid\mathcal{F}_{t_2,  q - 1}\right]\right)
        \right\Vert_{M/2}\\
    &\leq \sum_{q = \ell + 1}^\infty \frac{CdT\sqrt{T}}{q^\alpha} + \sum_{q = \ell + 1}^\infty \frac{Cd\sqrt{T}}{B^\alpha(1 + q - \ell)^\alpha} + \sum_{q = \ell + 1}^{T  - 1}\frac{Cd(1 + \sqrt{T - q})}{(1 + \ell)^\alpha} 
    + \sum_{q = T}^{\infty}\frac{C d}{(1 + \ell)^\alpha}\frac{1}{(2 + q  - T)^{\alpha - 1/2}}\\
    &\leq \frac{C_1dT\sqrt{T}}{(1 + \ell)^{\alpha - 1}} + \frac{C_1d\sqrt{T}}{B^\alpha} + \frac{C_1dT\sqrt{T}}{(1 + \ell)^\alpha} + \frac{C_1d}{(1 + \ell)^\alpha}\\
    &\leq \frac{C_2dT\sqrt{T}}{(1 + \ell)^{\alpha - 1}} + \frac{C_2d\sqrt{T}}{B^\alpha}.
\end{aligned}
\label{eq.third_term_fir}
\end{equation}
From \eqref{eq.fir_term_condition}, \eqref{eq.sec_term_fir}, and \eqref{eq.third_term_fir}, we have 
\begin{equation}
\begin{aligned}
    &\left\Vert
        \sum_{t_1 = 1}^{T - B}\sum_{t_2 = t_1 + B}^{T} b_{t_1t_2}\left(\boldsymbol{\epsilon}_{t_1}^\top \boldsymbol{\epsilon}_{t_2} - \mathbf{E}\left[\boldsymbol{\epsilon}_{t_1}^\top \boldsymbol{\epsilon}_{t_2}|\mathcal{F}_{t_2, \ell}\right]\right)
    \right\Vert_{M/2}\\
    &\leq C\sum_{q = 1}^\infty \delta_q\sqrt{d\sum_{t_1 = 1}^{T - B}\sum_{t_2 = (t_1 + B)\vee (t_1 + \ell + 1 - q)}^{T\wedge (t_1 + \ell)}
        b_{t_1t_2}^2} + C\sqrt{d\sum_{t_1 = 1}^{T - B}\sum_{t_2 = t_1 + \ell +  1}^{T}b^2_{t_1t_2}}\\
    & + \frac{CdT\sqrt{T}}{(1 + \ell)^{\alpha - 1}} + \frac{Cd\sqrt{T}}{B^\alpha}\\
    &\leq C\sum_{q = 1}^\infty \delta_q\sqrt{d\sum_{\vert t_1 - t_2\vert = B\vee (\ell + 1 - q)}^{\ell}
        b_{t_1t_2}^2} + C\sqrt{d\sum_{\vert t_2  - t_1\vert\geq \ell +  1}b^2_{t_1t_2}}\\
    & + \frac{CdT\sqrt{T}}{(1 + \ell)^{\alpha - 1}} + \frac{Cd\sqrt{T}}{B^\alpha}.
\end{aligned}
\end{equation}
On the other hand,  define $b_{t_1t_2}^\dagger = b_{t_2t_1}$, then 
\begin{align*}
    &\left\Vert
        \sum_{t_2 = 1}^{T - B}\sum_{t_1 = t_2 + B}^{T} b_{t_1t_2}\left(\boldsymbol{\epsilon}_{t_1}^\top \boldsymbol{\epsilon}_{t_2} - \mathbf{E}\left[\boldsymbol{\epsilon}_{t_1}^\top \boldsymbol{\epsilon}_{t_2}\mid\mathcal{F}_{t_1, \ell}\right]\right)
    \right\Vert_{M/2}\\
    &= \left\Vert
        \sum_{t_1 = 1}^{T - B}\sum_{t_2 = t_1 + B}^{T} b_{t_1t_2}^\dagger \left(\boldsymbol{\epsilon}_{t_1}^\top \boldsymbol{\epsilon}_{t_2} - \mathbf{E}\left[\boldsymbol{\epsilon}_{t_1}^\top \boldsymbol{\epsilon}_{t_2}\mid\mathcal{F}_{t_2, \ell}\right]\right)
        \right\Vert_{M/2}\\
    &\leq C \sum_{q = 1}^\infty \delta_q\sqrt{d\sum_{t_1 = 1}^{T - B}\sum_{t_2 = (t_1 + B)\vee (t_1 + \ell + 1 - q)}^{T\wedge (t_1 + \ell)}
        b_{t_1t_2}^{\dagger2}} + C\sqrt{d\sum_{t_1 = 1}^{T - B}\sum_{t_2 = t_1 + \ell +  1}^{T}b^{\dagger2}_{t_1t_2}}\\
    & + \frac{CdT\sqrt{T}}{(1 + \ell)^{\alpha - 1}} + \frac{Cd\sqrt{T}}{B^\alpha}\\
    & = C\sum_{q = 1}^\infty \delta_q \sqrt{d\sum_{t_2 = 1}^{T - B}\sum_{t_1 = (t_2 + B)\vee (t_2 + \ell + 1 - q)}^{T\wedge (t_2 + \ell)}
        b_{t_1t_2}^2}
        \\
    &+ C\sqrt{d\sum_{t_2 = 1}^{T - B}\sum_{t_1 = t_2 + \ell +  1}^{T}b^2_{t_1t_2}}
        + \frac{CdT\sqrt{T}}{(1 + \ell)^{\alpha - 1}} + \frac{Cd\sqrt{T}}{B^\alpha}\\
    &\leq C\sum_{q = 1}^\infty \delta_q\sqrt{d\sum_{\vert t_1 - t_2\vert = B\vee (\ell + 1 - q)}^{\ell}
        b_{t_1t_2}^2} + C\sqrt{d\sum_{\vert t_2  - t_1\vert\geq \ell +  1}b^2_{t_1t_2}}\\
    & + \frac{CdT\sqrt{T}}{(1 + \ell)^{\alpha - 1}} + \frac{Cd\sqrt{T}}{B^\alpha}.
\end{align*}
These two inequalities prove \eqref{eq.truncate_moment}.
\end{proof}

\begin{remark}
The proof of equation \eqref{eq.whole_formula} decomposes the quadratic form  $Q$ into a series of martingales 
\begin{equation}
\sum_{t_1 =  1}^{T - B}\sum_{t_2 = t_1 + B}^T b_{t_1t_2}\left(\boldsymbol{\gamma}_{t_1,q} - \boldsymbol{\gamma}_{t_1,q - 1}\right)^\top\boldsymbol{\gamma}_{t_2, t_2 - t_1 - 1},\quad q\geq0,
\label{eq.b_first_bound}
\end{equation}
where $\boldsymbol{\gamma}_{t_2, t_2 - t_1 - 1}$ is independent of $\boldsymbol{\gamma}_{t_1,q} - \boldsymbol{\gamma}_{t_1,q - 1};$ together with remainder terms of the form
\begin{equation}
\sum_{t_2 = 1 + B}^T\sum_{t_1 = 1}^{t_2 - B} b_{t_1t_2}\left(\mathbf{E}\left[\boldsymbol{\epsilon}_{t_1}^\top\boldsymbol{\eta}_{t_2,t_2 - t_1 - 1}\mid\mathcal{F}_{t_2, q}\right] - \mathbf{E}\left[\boldsymbol{\epsilon}_{t_1}^\top\boldsymbol{\eta}_{t_2,t_2 - t_1 - 1}\mid\mathcal{F}_{t_2, q - 1}\right]\right),\quad q\geq 0.
\label{eq.second_summ}
\end{equation}

From \eqref{eq.summary_first_half}, the sum of squares of coefficients $\sqrt{d \sum_{t_1 =  1}^{T - B}\sum_{t_2 = t_1 + B}^Tb^2_{t_1t_2}}$ in \eqref{eq.whole_formula} arises from \eqref{eq.b_first_bound}. The independence between $\boldsymbol{\gamma}_{t_2, t_2 - t_1 - 1}$ and $\boldsymbol{\gamma}_{t_1,q} - \boldsymbol{\gamma}_{t_1,q - 1}$ makes the moment of the summation in \eqref{eq.b_first_bound} behave similarly to those of independent random variables.

From \eqref{eq.summ_sec_part}, the terms $\frac{CT^{5/2}}{B^\alpha}$ and  $\frac{CT^{3/2}}{B^{\alpha - 2}}$ in \eqref{eq.whole_formula} arise from \eqref{eq.second_summ}.  We note that the products $\boldsymbol{\epsilon}_{t_1}^\top\boldsymbol{\eta}_{t_2,t_2 - t_1 - 1}$ may exhibit  complex dependence structures. In particular, if the time lag $a =  t_2 - t_1$ is large, the dependence between $\boldsymbol{\epsilon}_{t_1}^\top\boldsymbol{\eta}_{t_1 + a,a - 1}$ and $\boldsymbol{\epsilon}_{t_2}^\top\boldsymbol{\eta}_{t_2 + a,a - 1}$ can remain strong even when 
$t_1 - t_2$ is large in absolute value.  
Fortunately, after selecting a sufficiently large bandwidth $B,$ Lemma \ref{lemma.linear_form} ensures that the moments of the elements of $\boldsymbol{\eta}_{t_2,t_2 - t_1 - 1}$ remain small.  Hence, when controlling the moments of the summations \eqref{eq.second_summ}, it is unnecessary to analyze the dependence structure of
$\boldsymbol{\epsilon}_{t_1}^\top\boldsymbol{\eta}_{t_2,t_2 - t_1 - 1}$  in detail.
    \label{remark.dependence_decomposition}
\end{remark}

\begin{remark}
    In the context of ANOVA,  our test statistic introduces two thresholds $B < B_1,$ and assumes that $b_{t_1t_2} = 0$ whenever $\vert t_1 - t_2\vert < B$ or $\vert t_1 - t_2\vert > B_1.$ Under this assumption, by choosing $\ell > B_1,$ we have 
$\sum_{\vert t_1 - t_2\vert \geq \ell + 1} b^2_{t_1t_2} = 0, $ and 
$
\sum_{\vert t_1 - t_2\vert  = B\vee (\ell + 1 - q)}^{\ell} b^2_{t_1t_2} = \sum_{\vert t_1 - t_2\vert  = B\vee (\ell + 1 - q)}^{B_1} b^2_{t_1t_2} = 0
$ if $q < \ell + 1 - B_1.$ Therefore,
\begin{align*}
     & \left\Vert
        \sum_{t_1 = 1}^T\sum_{t_2 = 1}^T b_{t_1t_2}\left(\boldsymbol{\epsilon}_{t_1}^\top \boldsymbol{\epsilon}_{t_2} - \mathbf{E}\left[\boldsymbol{\epsilon}_{t_1}^\top \boldsymbol{\epsilon}_{t_2}\mid\mathcal{F}_{t_1\vee t_2, \ell}\right]\right)
        \right\Vert_{M/2}\\
       &\leq C\sum_{q = \ell + 1  -B_1}^\infty\delta_q\sqrt{d\sum_{\vert t_1 - t_2\vert  = B}^{B_1} b^2_{t_1t_2}}\\
       & + \frac{CdT^{3/2}}{(1 + \ell)^{\alpha-1}} + \frac{Cd\sqrt{T}}{B^\alpha}\\
       &\leq \frac{C_1}{(\ell + 1 -  B_1)^\alpha}\sqrt{d\sum_{\vert t_1 - t_2\vert  = B}^{B_1} b^2_{t_1t_2}} + \frac{CdT^{3/2}}{(1 + \ell)^{\alpha-1}} + \frac{Cd\sqrt{T}}{B^\alpha}.
\end{align*}
\end{remark}

The proof of Theorem \ref{theorem.Gaussian_app} leverages the technique used in the proof of Theorem 4 of \cite{MR3992401}. Specifically, it introduces the function $g_{\psi,t}(\cdot)$ in \eqref{eq.def_g_psi} as a continuously differentiable approximation to the indicator function. 
This approximation allows statisticians to use the Taylor expansion to approximate the expectation of $g_{\psi,x}\left(\frac{Q}{S_T}\right)$ and the expectation of $g_{\psi,x}$ applied to a normal random variable. Theorem \ref{theorem.consistent_quadratic} also plays a decisive role in the proof of Theorem \ref{theorem.Gaussian_app}, which enables the use of ``$m$-approximation'' technique---
approximating the original time series by a sequence of ``$m$-dependent'' random variables (see Definition 6.4.3 of \cite{MR1093459}), which is also illustrated in Section C.1 of \cite{MR3718156}. 
 Theorem \ref{theorem.Gaussian_app} is further employed to  bound the moments of the blocks defined in \eqref{eq.big_block} and \eqref{eq.small_block}.

\begin{proof}[Proof of Theorem \ref{theorem.Gaussian_app}]
    Define the functions
    \begin{equation}
    g_0(x) = \left(
    1 - \min\left(1, \max\left(x,0\right)\right)^4
    \right)^4\quad\text{and}\quad g_{\psi,t}(x) = g_0\left(\psi\left(x - t\right)\right).
    \label{eq.def_g_psi}
    \end{equation}
    According to (S2) and (S4)-(S5) of  \cite{MR3992401}, there exists a constant $g_*$ such that
    $$
    \sup_{x,t\in\mathbf{R}}\left\vert g_{\psi,t}^\prime(x)\right\vert\leq g_*\psi,\quad \sup_{x,t\in\mathbf{R}}\left\vert g_{\psi,t}^{\prime\prime}(x)\right\vert\leq g_*\psi^2,\quad\sup_{x,t\in\mathbf{R}}\left\vert g_{\psi,t}^{\prime\prime\prime}(x)\right\vert\leq g_*\psi^3,
    $$
    and 
    \begin{equation}
    \mathbf{1}_{x\leq t} \leq g_{\psi,t}(x)\leq \mathbf{1}_{x\leq t + 1/\psi},
    \label{eq.g_approximation}
    \end{equation}
    where $\mathbf{1}_{x\leq t}$ denotes the indicator function, which equals $1$ if $x\leq t,$ and 0 otherwise.    
    From \eqref{eq.g_approximation},
    \begin{align*}
            \mathbf{Pr}\left(\frac{Q}{S_T}\leq x\right) - \mathbf{Pr}\left(\xi\leq x\right)
            &\leq \mathbf{E}\left[g_{\psi,x}\left(\frac{Q}{S_T}\right)\right] - \mathbf{E}\left[g_{\psi,x - \frac{1}{\psi}}(\xi)\right]\\
            &\leq 
            \left\vert
            \mathbf{E}\left[g_{\psi,x}\left(\frac{Q}{S_T}\right)\right] - \mathbf{E}\left[g_{\psi,x}(\xi)\right]
            \right\vert + 
            \mathbf{Pr}\left(x - \frac{1}{\psi}\leq \xi\leq x + \frac{1}{\psi}\right)
            \\
            &\leq \sup_{x\in\mathbf{R}}\left\vert\mathbf{E}\left[g_{\psi,x}\left(\frac{Q}{S_T}\right)\right] - \mathbf{E}\left[g_{\psi,x}(\xi)\right]
            \right\vert +  \frac{C}{\psi},
    \end{align*}
    and 
    \begin{align*}
            \mathbf{Pr}\left(\frac{Q}{S_T}\leq x\right) - \mathbf{Pr}\left(\xi\leq x\right)
            &\geq  \mathbf{E}\left[g_{\psi,x - \frac{1}{\psi}}\left(\frac{Q}{S_T}\right)\right] - \mathbf{E}\left[g_{\psi,x}(\xi)\right]\\
            &\geq 
            -\left\vert
            \mathbf{E}\left[g_{\psi,x - \frac{1}{\psi}}\left(\frac{Q}{S_T}\right)\right] - \mathbf{E}\left[g_{\psi, x - \frac{1}{\psi}}(\xi)\right]
            \right\vert
            -
            \mathbf{Pr}\left(x - \frac{1}{\psi}\leq \xi\leq x + \frac{1}{\psi}\right)\\
            &\geq 
            -\sup_{x\in\mathbf{R}}\left\vert\mathbf{E}\left[g_{\psi,x}\left(\frac{Q}{S_T}\right)\right]- \mathbf{E}\left[g_{\psi,x}(\xi)\right]
            \right\vert -  \frac{C}{\psi}.
    \end{align*}
    Therefore, we have 
    \begin{equation}
        \begin{aligned}
            \sup_{x\in\mathbf{R}}\left\vert
            \mathbf{Pr}\left(\frac{Q}{S_T}\leq x\right) - \mathbf{Pr}\left(\xi\leq x\right)
            \right\vert
            \leq \frac{C}{\psi} + \sup_{x\in\mathbf{R}}\left\vert\mathbf{E}\left[g_{\psi,x}\left(\frac{Q}{S_T}\right)\right] - \mathbf{E}\left[g_{\psi,x}(\xi)\right]
            \right\vert.
        \end{aligned}
        \label{eq.Prob_to_E}
    \end{equation}
    For any integer $\ell > B_1,$ from \eqref{eq.truncate_moment}, we have 
    \begin{equation}
    \begin{aligned}
        &\left\Vert
        \frac{Q}{S_T} - \frac{1}{S_T}\sum_{t_1 = 1}^T\sum_{t_2=  1}^T b_{t_1t_2}\left(\mathbf{E}\left[\boldsymbol{\epsilon}_{t_1}^\top \boldsymbol{\epsilon}_{t_2}\mid\mathcal{F}_{t_1\vee t_2, \ell}\right] - \mathbf{E}\left[\boldsymbol{\epsilon}_{t_1}^\top \boldsymbol{\epsilon}_{t_2}\right]\right)
        \right\Vert_{M/2}\\
        & = \frac{1}{S_T}\left\Vert
        \sum_{t_1 = 1}^T\sum_{t_2=  1}^T b_{t_1t_2}\left(\boldsymbol{\epsilon}_{t_1}^\top \boldsymbol{\epsilon}_{t_2} - \mathbf{E}\left[\boldsymbol{\epsilon}_{t_1}^\top \boldsymbol{\epsilon}_{t_2}\mid\mathcal{F}_{t_1\vee t_2, \ell}\right]\right)
        \right\Vert_{M/2}\\ 
        &\leq \frac{C}{S_T}\sum_{q =  \ell + 1 - B_1}^\infty\delta_q\sqrt{d\sum_{\vert t_1 - t_2\vert = \ell + 1 - q}^{B_1}b^2_{t_1t_2}} + \frac{CdT^{3/2}}{S_T\ell^{\alpha - 1}} + \frac{Cd\sqrt{T}}{S_TB^\alpha}\\
        &\leq \frac{C_1\sqrt{dT(B_1 - B)}}{S_T(\ell + 1  - B_1)^\alpha} + \frac{CdT^{3/2}}{S_T\ell^{\alpha - 1}} + \frac{Cd\sqrt{T}}{S_TB^\alpha}\\
        &\leq \frac{C_2}{(\ell + 1 - B_1)^\alpha} + \frac{C_2T^{3/2}}{\sqrt{(B_1 -  B)}\ell^{\alpha - 1}} + \frac{C_2T^{1/2}}{\sqrt{(B_1 - B)}B^\alpha}.
    \end{aligned}
    \label{eq.condition_change}
    \end{equation}
Therefore, 
\begin{equation}
\begin{aligned}
    &\sup_{x\in\mathbf{R}}\left\vert
    \mathbf{E}\left[g_{\psi,x}\left(\frac{Q}{S_T}\right)\right]- \mathbf{E}\left[g_{\psi,x}\left(\frac{1}{S_T}\sum_{t_1 = 1}^T\sum_{t_2=  1}^T b_{t_1t_2}\left(\mathbf{E}\left[\boldsymbol{\epsilon}_{t_1}^\top \boldsymbol{\epsilon}_{t_2}\mid\mathcal{F}_{t_1\vee t_2, \ell}\right] - \mathbf{E}\left[\boldsymbol{\epsilon}_{t_1}^\top \boldsymbol{\epsilon}_{t_2}\right]\right)\right)
    \right]\right\vert\\
    &\leq g_*\psi\left\Vert \frac{Q}{S_T} - \frac{1}{S_T}\sum_{t_1 = 1}^T\sum_{t_2=  1}^T b_{t_1t_2}\left(\mathbf{E}\left[\boldsymbol{\epsilon}_{t_1}^\top \boldsymbol{\epsilon}_{t_2}\mid\mathcal{F}_{t_1\vee t_2, \ell}\right] - \mathbf{E}\left[\boldsymbol{\epsilon}_{t_1}^\top \boldsymbol{\epsilon}_{t_2}\right]\right)\right\Vert_{M/2}\\
    &\leq \frac{C\psi}{(\ell + 1 - B_1)^\alpha} + \frac{C\psi T^{3/2}}{\sqrt{(B_1 -  B)}\ell^{\alpha - 1}} + \frac{C\psi T^{1/2}}{\sqrt{(B_1 - B)}B^\alpha}.
\end{aligned}
    \label{eq.E_truncate}
\end{equation}
Notice that $b_{t_1t_2} = 0$ for $\vert t_1  -t_2\vert < B$ or $\vert t_1 - t_2\vert > B_1,$ so we have 
\begin{align*}
    &\sum_{t_1 = 1}^T\sum_{t_2=  1}^T b_{t_1t_2}\left(\mathbf{E}\left[\boldsymbol{\epsilon}_{t_1}^\top \boldsymbol{\epsilon}_{t_2}\mid\mathcal{F}_{t_1\vee t_2, \ell}\right] - \mathbf{E}\left[\boldsymbol{\epsilon}_{t_1}^\top \boldsymbol{\epsilon}_{t_2}\right]\right)\\
    & = \sum_{t_1 = 1}^T\sum_{t_2 = (t_1 - B_1)\vee 1}^{(t_1 - B)} b_{t_1t_2}\left(\mathbf{E}\left[\boldsymbol{\epsilon}_{t_1}^\top \boldsymbol{\epsilon}_{t_2}\mid\mathcal{F}_{t_1, \ell}\right] - \mathbf{E}\left[\boldsymbol{\epsilon}_{t_1}^\top \boldsymbol{\epsilon}_{t_2}\right]\right)\\
    & + \sum_{t_2 = 1}^T\sum_{t_1 = (t_2 - B_1)\vee 1}^{(t_2 - B)} b_{t_1t_2}\left(\mathbf{E}\left[\boldsymbol{\epsilon}_{t_1}^\top \boldsymbol{\epsilon}_{t_2}\mid\mathcal{F}_{t_2, \ell}\right] - \mathbf{E}\left[\boldsymbol{\epsilon}_{t_1}^\top \boldsymbol{\epsilon}_{t_2}\right]\right)\\
    & =  2\sum_{t_1 = 1}^T\sum_{t_2 = (t_1 - B_1)\vee 1}^{(t_1 - B)} b_{t_1t_2}^{\circ}\left(\mathbf{E}\left[\boldsymbol{\epsilon}_{t_1}^\top \boldsymbol{\epsilon}_{t_2}\mid\mathcal{F}_{t_1, \ell}\right] - \mathbf{E}\left[\boldsymbol{\epsilon}_{t_1}^\top \boldsymbol{\epsilon}_{t_2}\right]\right),
\end{align*}
where $b_{t_1t_2}^{\circ} = \left(b_{t_1t_2} + b_{t_2t_1}\right) / 2.$    With this definition $\left\vert b_{t_1t_2}^{\circ} \right\vert\leq 1,$  and $b_{t_1t_2}^{\circ} = 0$ if $\left\vert t_1 - t_2\right\vert < B$ or $\left\vert t_1 - t_2\right\vert > B_1.$ For an integer $v > \ell,$ define the big-block
\begin{equation}
    A_q = 2\sum_{t_1 = (q - 1)(v + \ell) +  1}^{((q - 1)(v + \ell) + v)\wedge T}\sum_{t_2 = (t_1 - B_1)\vee 1}^{(t_1 - B)} b_{t_1t_2}^{\circ}\left(\mathbf{E}\left[\boldsymbol{\epsilon}_{t_1}^\top \boldsymbol{\epsilon}_{t_2}\mid\mathcal{F}_{t_1, \ell}\right] - \mathbf{E}\left[\boldsymbol{\epsilon}_{t_1}^\top \boldsymbol{\epsilon}_{t_2}\right]\right),
    \label{eq.big_block}
\end{equation}
and the small-block 
\begin{equation}
    a_q = 2\sum_{t_1 = (q - 1)(v + \ell) +  v + 1}^{(q(v + \ell))\wedge T}\sum_{t_2 = (t_1 - B_1)\vee 1}^{(t_1 - B)} b_{t_1t_2}^{\circ}\left(\mathbf{E}\left[\boldsymbol{\epsilon}_{t_1}^\top \boldsymbol{\epsilon}_{t_2}\mid\mathcal{F}_{t_1, \ell}\right] - \mathbf{E}\left[\boldsymbol{\epsilon}_{t_1}^\top \boldsymbol{\epsilon}_{t_2}\right]\right).
    \label{eq.small_block}
\end{equation}
Define $R = \lceil\frac{T}{v + \ell}\rceil,$ where $\lceil x\rceil$ represents the smallest integer that is larger than or equal to $x.$ Then
\begin{equation}
\begin{aligned}
    &\sum_{q  =1}^R A_q + \sum_{q = 1}^R a_q \\
    &= 2\sum_{q =  1}^R \sum_{t_1 = (q - 1)(v + \ell) + 1}^{(q(v + \ell))\wedge T}\sum_{t_2 = (t_1 - B_1)\vee 1}^{(t_1 - B)} b_{t_1t_2}^{\circ}\left(\mathbf{E}\left[\boldsymbol{\epsilon}_{t_1}^\top \boldsymbol{\epsilon}_{t_2}\mid\mathcal{F}_{t_1, \ell}\right] - \mathbf{E}\left[\boldsymbol{\epsilon}_{t_1}^\top \boldsymbol{\epsilon}_{t_2}\right]\right)\\
    &= 2\sum_{t_1 = 1}^T\sum_{t_2 = (t_1 - B_1)\vee 1}^{(t_1 - B)} b_{t_1t_2}^{\circ}\left(\mathbf{E}\left[\boldsymbol{\epsilon}_{t_1}^\top \boldsymbol{\epsilon}_{t_2}\mid\mathcal{F}_{t_1, \ell}\right] - \mathbf{E}\left[\boldsymbol{\epsilon}_{t_1}^\top \boldsymbol{\epsilon}_{t_2}\right]\right).
\end{aligned}
\label{eq.a_plus_A}
\end{equation}
Furthermore, $A_q$ are mutually independent, and $a_q$ are mutually independent. From Theorem 2 of \cite{MR0133849},
\begin{align*}
    &\left\Vert
    2\sum_{t_1 = 1}^T\sum_{t_2 = (t_1 - B_1)\vee 1}^{(t_1 - B)} b_{t_1t_2}^{\circ}\left(\mathbf{E}\left[\boldsymbol{\epsilon}_{t_1}^\top \boldsymbol{\epsilon}_{t_2}\mid\mathcal{F}_{t_1, \ell}\right] - \mathbf{E}\left[\boldsymbol{\epsilon}_{t_1}^\top \boldsymbol{\epsilon}_{t_2}\right]\right) - \sum_{q  =1}^R A_q
    \right\Vert_{M/2}\\
    &= 
    \left\Vert
    \sum_{q = 1}^R a_q
    \right\Vert_{M/2}\leq C\sqrt{\sum_{q = 1}^R \left\Vert a_q\right\Vert^2_{M/2}}.
\end{align*}
From Theorem \ref{theorem.consistent_quadratic},
\begin{align*}
    \left\Vert a_q\right\Vert_{M/2} &\leq  2\left\Vert \sum_{t_1 = (q - 1)(v + \ell) +  v + 1}^{(q(v + \ell))\wedge T}\sum_{t_2 = (t_1 - B_1)\vee 1}^{(t_1 - B)} b_{t_1t_2}^{\circ}\left(\boldsymbol{\epsilon}_{t_1}^\top \boldsymbol{\epsilon}_{t_2} - \mathbf{E}\left[\boldsymbol{\epsilon}_{t_1}^\top \boldsymbol{\epsilon}_{t_2}\right]\right)\right\Vert_{M/2}\\
    & + 
    2\left\Vert
    \sum_{t_1 = (q - 1)(v + \ell) +  v + 1}^{(q(v + \ell))\wedge T}\sum_{t_2 = (t_1 - B_1)\vee 1}^{(t_1 - B)} b_{t_1t_2}^{\circ}\left(\boldsymbol{\epsilon}_{t_1}^\top \boldsymbol{\epsilon}_{t_2} -\mathbf{E}\left[\boldsymbol{\epsilon}_{t_1}^\top \boldsymbol{\epsilon}_{t_2}\mid\mathcal{F}_{t_1, \ell}\right]\right)
    \right\Vert_{M/2}\\
    &\leq C\sqrt{d\sum_{t_1 = (q - 1)(v + \ell) +  v + 1}^{(q(v + \ell))\wedge T}\sum_{t_2 = (t_1 - B_1)\vee 1}^{(t_1 - B)} b_{t_1t_2}^{\circ2}} + \frac{CT^{5/2}}{B^\alpha} + \frac{CT^{3/2}}{B^{\alpha - 2}}\\
    & + C\left(\sum_{q = 1}^\infty\delta_q\right)\sqrt{d\sum_{t_1 = (q - 1)(v + \ell) +  v + 1}^{(q(v + \ell))\wedge T}\sum_{t_2 = (t_1 - B_1)\vee 1}^{(t_1 - B)} b_{t_1t_2}^{\circ 2}} + \frac{CdT^{3/2}}{\ell^{\alpha-1}} + \frac{Cd\sqrt{T}}{B^\alpha}\\
    &\leq C_1\sqrt{d\ell (B_1 - B)} + \frac{C_1T^{5/2}}{B^\alpha} + \frac{C_1T^{3/2}}{B^{\alpha - 2}} +\frac{C_1T^{5/2}}{\ell^{\alpha - 1}}.
\end{align*}
Therefore,  we have 
\begin{equation}
\begin{aligned}
&\left\Vert
\frac{1}{S_T}\sum_{q = 1}^R a_q
\right\Vert_{M/2}\\
&\leq \frac{C}{S_T}\left(\sqrt{R d\ell (B_1 - B)} + \frac{\sqrt{R}T^{5/2}}{B^\alpha}
    + \frac{\sqrt{R}T^{3/2}}{B^{\alpha - 2}} +\frac{\sqrt{R}T^{5/2}}{\ell^{\alpha - 1}}
    \right)\\
    &\leq  C_1\sqrt{\frac{\ell}{v}} + \frac{C_1T^2}{B^\alpha\sqrt{v(B_1 - B)}} + \frac{C_1T}{B^{\alpha - 2}\sqrt{v(B_1 - B)}} + \frac{C_1T^2}{\ell^{\alpha - 1}\sqrt{v(B_1-  B)}},
\end{aligned}
    \label{eq.s_q_calculate}
\end{equation}
and 
\begin{equation}
\begin{aligned}
    &\sup_{x\in\mathbf{R}}
    \left\vert
    \mathbf{E}\left[g_{\psi,x}\left(\frac{1}{S_T}\sum_{t_1 = 1}^T\sum_{t_2=  1}^T b_{t_1t_2}\left(\mathbf{E}\left[\boldsymbol{\epsilon}_{t_1}^\top \boldsymbol{\epsilon}_{t_2}\mid\mathcal{F}_{t_1\vee t_2, \ell}\right] - \mathbf{E}\left[\boldsymbol{\epsilon}_{t_1}^\top \boldsymbol{\epsilon}_{t_2}\right]\right)\right)\right]\right.\\
    &\left.- \mathbf{E}\left[g_{\psi,x}\left(\frac{1}{S_T}\sum_{q = 1}^R A_q\right)\right]
    \right\vert\\
    &\leq \frac{C\psi}{S_T}\left\Vert
    \sum_{q = 1}^R a_q
    \right\Vert_{M/2}\\
    &\leq C_1\psi\left(\sqrt{\frac{\ell}{v}} + \frac{T^{2}}{B^\alpha\sqrt{v(B_1 - B)}} + \frac{T}{B^{\alpha - 2}\sqrt{v(B_1 - B)}} + \frac{T^{2}}{\ell^{\alpha - 1}\sqrt{v(B_1 - B)}}\right).
\end{aligned}
\label{eq.truncate_to_Big_block}
\end{equation}
    Define $A^*_q, q = 1,\cdots, R,$ as independent normal random variables such that 
    $$
    \mathbf{E}\left[A^*_q\right] = 0,\quad \mathrm{Var}(A^*_q) = \mathrm{Var}(A_q),
    $$ 
    and $A^*_{q_1}$ and $A_{q_2}$ are independent for different $q_1$ and $q_2.$ Define the summation 
    $$
    T_u = \sum_{q = 1}^{u - 1}A_q + \sum_{q = u + 1}^R A^*_q,\quad  \text{then}\quad T_u + A_u = T_{u + 1} + A^*_{u+1},
    $$
    where $A_u, A^*_u$ are independent of $T_u.$ From Taylor expansion,
    \begin{equation}
    \begin{aligned}
        &\mathbf{E}\left[g_{\psi,x}\left(\frac{T_u + A_u}{S_T}\right)\mid T_u\right]- \mathbf{E}\left[g_{\psi,x}\left(\frac{T_u + A_u^*}{S_T}\right)\mid T_u\right]\\
        &= g_{\psi,x}\left(\frac{T_u}{S_T}\right) + g_{\psi,x}^\prime \left(\frac{T_u}{S_T}\right)\mathbf{E}\left[\frac{A_u}{S_T}\right] + \frac{1}{2}g_{\psi,x}^{\prime\prime} \left(\frac{T_u}{S_T}\right)\mathbf{E}\left(\frac{A_u}{S_T}\right)^2
        + \frac{1}{6}\mathbf{E}\left[g_{\psi,x}^{\prime\prime\prime} (\zeta_1)\left(\frac{A_u}{S_T}\right)^3\mid T_u\right]\\
        &- g_{\psi,x}\left(\frac{T_u}{S_T}\right) - g_{\psi,x}^\prime \left(\frac{T_u}{S_T}\right)\mathbf{E}\left[\frac{A_u^*}{S_T}\right] - \frac{1}{2}g_{\psi,x}^{\prime\prime} \left(\frac{T_u}{S_T}\right)\mathbf{E}\left(\frac{A_u^*}{S_T}\right)^2
        - \frac{1}{6}\mathbf{E}\left[g_{\psi,x}^{\prime\prime\prime}(\zeta_2)\left(\frac{A_u^*}{S_T}\right)^3\mid T_u\right],
    \end{aligned}
    \label{eq.Taylor_expansion}
    \end{equation}
    where $\zeta_1,\zeta_2$ are two random variables. Since 
    $$\mathbf{E}\left[A_u\right] = \mathbf{E}\left[A_u^*\right] = 0,\quad \mathrm{Var}\left(A_u\right) = \mathrm{Var}\left(A_u^{*}\right),
    $$ 
    and $A_u^{*}$ has normal distribution, we have 
    \begin{equation}
        \begin{aligned}
            &\left\vert
            \mathbf{E}\left[g_{\psi,x}\left(\frac{T_u + A_u}{S_T}\right)\mid T_u\right]- \mathbf{E}\left[g_{\psi,x}\left(\frac{T_u + A_u^*}{S_T}\right)\mid T_u\right]
            \right\vert\\
            &\leq 
            \left\vert
            \frac{1}{6}\mathbf{E}\left[g_{\psi,x}^{\prime\prime\prime} (\zeta_1)\left(\frac{A_u}{S_T}\right)^3\mid T_u\right]
            \right\vert +\left\vert \frac{1}{6}\mathbf{E}\left[g_{\psi,x}^{\prime\prime\prime}(\zeta_2)\left(\frac{A_u^*}{S_T}\right)^3\mid T_u\right]\right\vert\\
            &\leq \frac{C\psi^3}{S^3_T}\left(
            \left\Vert A_u\right\Vert_{M/2}^3 + \left\Vert A_u^*\right\Vert_{M/2}^3
            \right)\leq \frac{C_1\psi^3 \left\Vert A_u\right\Vert_{M/2}^3}{S^3_T}.
        \end{aligned}
        \label{eq.g_psi_approx}
    \end{equation}
From Theorem \ref{theorem.consistent_quadratic},
\begin{equation}
    \begin{aligned}
        &\left\Vert
        \frac{A_u}{S_T}
        \right\Vert_{M/2}\\
        &\leq \frac{2}{S_T}\left\Vert
        \sum_{t_1 = (q - 1)(v + \ell) +  1}^{((q - 1)(v + \ell) + v)\wedge T}\sum_{t_2 = (t_1 - B_1)\vee 1}^{(t_1 - B)} b_{t_1t_2}^{\circ}\left(\boldsymbol{\epsilon}_{t_1}^\top \boldsymbol{\epsilon}_{t_2} - \mathbf{E}\left[\boldsymbol{\epsilon}_{t_1}^\top \boldsymbol{\epsilon}_{t_2}\right]\right)
        \right\Vert_{M/2}\\
        &+ \frac{2}{S_T}\left\Vert
        \sum_{t_1 = (q - 1)(v + \ell) +  1}^{((q - 1)(v + \ell) + v)\wedge T}\sum_{t_2 = (t_1 - B_1)\vee 1}^{(t_1 - B)} b_{t_1t_2}^{\circ}\left(\mathbf{E}\left[\boldsymbol{\epsilon}_{t_1}^\top \boldsymbol{\epsilon}_{t_2}\mid\mathcal{F}_{t_1, \ell}\right] - \boldsymbol{\epsilon}_{t_1}^\top \boldsymbol{\epsilon}_{t_2}\right)
        \right\Vert_{M/2}\\
        &\leq \frac{C}{S_T}\left(\sqrt{d\sum_{t_1 = (q - 1)(v + \ell) +  1}^{((q - 1)(v + \ell) + v)\wedge T}\sum_{t_2 = (t_1 - B_1)\vee 1}^{(t_1 - B)} b_{t_1t_2}^{\circ2}} + \frac{T^{5/2}}{B^\alpha} + \frac{T^{3/2}}{B^{\alpha - 2}}\right)\\
        & + \frac{C}{S_T}\left(
        \left(\sum_{q = 1}^\infty\delta_q\right)\sqrt{d\sum_{t_1 = (q - 1)(v + \ell) +  1}^{((q - 1)(v + \ell) + v)\wedge T}\sum_{t_2 = (t_1 - B_1)\vee 1}^{(t_1 - B)} b_{t_1t_2}^{\circ2}} + \frac{dT^{3/2}}{\ell^{\alpha - 1}} + \frac{d\sqrt{T}}{B^\alpha}
        \right)\\
        &\leq \frac{C_1}{S_T}\left(
        \sqrt{dv(B_1 - B)} + \frac{T^{5/2}}{B^\alpha} + \frac{T^{3/2}}{B^{\alpha - 2}} + \frac{T^{5/2}}{\ell^{\alpha - 1}}
        \right)\\
        &\leq C_2\sqrt{\frac{v}{T}} + \frac{C_2T^{3/2}}{B^\alpha\sqrt{(B_1  - B)}} + \frac{C_2T^{1/2}}{B^{\alpha - 2}\sqrt{(B_1-  B)}} + \frac{C_2T^{3/2}}{\ell^{\alpha - 1}\sqrt{(B_1-  B)}}.
    \end{aligned}
    \label{eq.Big_S}
\end{equation}
This implies that 
\begin{equation}
    \begin{aligned}
        &\sup_{x\in\mathbf{R}}\left\vert
        \mathbf{E}\left[
        g_{\psi,x}\left(\frac{\sum_{q = 1}^R A_q}{S_T}\right) 
        \right] - \mathbf{E}\left[g_{\psi,x}\left(\frac{\sum_{q = 1}^R A^*_u}{S_T}\right)\right]
        \right\vert\\
        & =  \sup_{x\in\mathbf{R}}\left\vert
        \mathbf{E}\left[
        g_{\psi,x}\left(\frac{T_{R} + A_R}{S_T}\right) 
        \right] - \mathbf{E}\left[g_{\psi,x}\left(\frac{T_1 + A_1^*}{S_T}\right)\right]
        \right\vert\\
        &\leq \sum_{u = 1}^R\sup_{x\in\mathbf{R}}\left\vert
        \mathbf{E}\left[
        g_{\psi,x}\left(\frac{T_{u} + A_u}{S_T}\right) 
        \right] - \mathbf{E}\left[g_{\psi,x}\left(\frac{T_u + A_u^*}{S_T}\right)\right]
        \right\vert\\
        &\leq \sum_{u = 1}^R \frac{C\psi^3 \left\Vert A_u\right\Vert_{M/2}^3}{S^3_T}\\
        &\leq C_1\psi^3 R \left(\frac{v^{3/2}}{T^{3/2}} + \frac{T^{9/2}}{B^{3\alpha}(B_1  - B)^{3/2}} + \frac{T^{3/2}}{B^{3\alpha - 6}(B_1 - B)^{3/2}} + \frac{T^{9/2}}{\ell^{3\alpha - 3}(B_1  -B)^{3/2}}\right)\\
        &\leq C_2\psi^3\left(\sqrt{\frac{v}{T}} + \frac{T^{11/2}}{B^{3\alpha}v(B_1  - B)^{3/2}} + \frac{T^{5/2}}{B^{3\alpha - 6}v(B_1 - B)^{3/2}} + \frac{T^{11/2}}{\ell^{3\alpha - 3}v(B_1  -B)^{3/2}}\right).
    \end{aligned}
    \label{eq.big_block_gaussian}
\end{equation}
    Finally, notice that  $\frac{1}{S_T}\sum_{q = 1}^RA_q^*$ has normal distribution with mean $0$ and variance 
    $$
    \mathrm{Var}\left(
    \frac{1}{S_T}\sum_{q = 1}^RA_q^*
    \right) = 
    \frac{1}{S^2_T}\sum_{q = 1}^R \mathrm{Var}\left(A^*_q\right) = \frac{1}{S^2_T}\sum_{q = 1}^R \mathrm{Var}(A_q) = \frac{1}{S^2_T}\left\Vert\sum_{q = 1}^R A_q\right\Vert_2^2,
    $$
    we have 
    \begin{equation}
    \begin{aligned}
        \mathbf{E}\left[g_{\psi,x}\left(\frac{1}{S_T}\sum_{q = 1}^RA_q^*\right)\right] = \mathbf{E}\left[g_{\psi,x}\left(
        \frac{Z}{S_T}\left\Vert\sum_{q = 1}^R A_q\right\Vert_2
        \right)\right],
    \end{aligned}
    \label{eq.change_to_Z}
    \end{equation}
    where $Z$ is a normal random with standard normal distribution. Similarly, since $\xi$ has normal distribution with $\mathbf{E}[\xi] = 0$ and $\mathrm{Var}(\xi) = \mathrm{Var}\left(Q / S_T\right),$ we have 
    \begin{align*}
        \mathbf{E}\left[
        g_{\psi,x}\left(\xi\right)
        \right] = \mathbf{E}\left[
        g_{\psi,x}\left(
        \frac{Z}{S_T}\left\Vert
        \sum_{t_1 = 1}^T\sum_{t_2 = 1}^T b_{t_1t_2}\left(\boldsymbol{\epsilon}_{t_1}^\top \boldsymbol{\epsilon}_{t_2} - \mathbf{E}\left[\boldsymbol{\epsilon}_{t_1}^\top \boldsymbol{\epsilon}_{t_2}\right]\right)
        \right\Vert_2
        \right)
        \right],
    \end{align*}
where $Z$ is the standard normal random variable that coincides with that in \eqref{eq.change_to_Z}. Therefore, 
\begin{align*}
    &\left\vert
    \mathbf{E}\left[g_{\psi,x}\left(\frac{1}{S_T}\sum_{q = 1}^RA_q^*\right)\right] - \mathbf{E}\left[
        g_{\psi,x}\left(\xi\right)
        \right]
    \right\vert\\
    & = \left\vert
    \mathbf{E}\left[g_{\psi,x}\left(
        \frac{Z}{S_T}\left\Vert\sum_{q = 1}^R A_q\right\Vert_2
        \right)\right] - \mathbf{E}\left[
        g_{\psi,x}\left(
        \frac{Z}{S_T}\left\Vert
        \sum_{t_1 = 1}^T\sum_{t_2 = 1}^T b_{t_1t_2}\left(\boldsymbol{\epsilon}_{t_1}^\top \boldsymbol{\epsilon}_{t_2} - \mathbf{E}\left[\boldsymbol{\epsilon}_{t_1}^\top \boldsymbol{\epsilon}_{t_2}\right]\right)
        \right\Vert_2
        \right)
        \right]
    \right\vert\\
    &\leq \frac{C\psi}{S_T}\left\vert\ 
    \left\Vert\sum_{q = 1}^R A_q\right\Vert_2 - \left\Vert
        \sum_{t_1 = 1}^T\sum_{t_2 = 1}^T b_{t_1t_2}\left(\boldsymbol{\epsilon}_{t_1}^\top \boldsymbol{\epsilon}_{t_2} - \mathbf{E}\left[\boldsymbol{\epsilon}_{t_1}^\top \boldsymbol{\epsilon}_{t_2}\right]\right)
        \right\Vert_2\ 
    \right\vert\\
    &.\leq \frac{C\psi}{S_T}
    \left\Vert\sum_{q = 1}^R A_q - 
        \sum_{t_1 = 1}^T\sum_{t_2 = 1}^T b_{t_1t_2}\left(\boldsymbol{\epsilon}_{t_1}^\top \boldsymbol{\epsilon}_{t_2} - \mathbf{E}\left[\boldsymbol{\epsilon}_{t_1}^\top \boldsymbol{\epsilon}_{t_2}\right]\right)
        \right\Vert_2.
\end{align*}
    From \eqref{eq.a_plus_A},
    $$\sum_{q = 1}^R A_q + \sum_{q = 1}^R a_q = \sum_{t_1 = 1}^T\sum_{t_2=  1}^T b_{t_1t_2}\left(\mathbf{E}\left[\boldsymbol{\epsilon}_{t_1}^\top \boldsymbol{\epsilon}_{t_2}\mid\mathcal{F}_{t_1\vee t_2, \ell}\right] - \mathbf{E}\left[\boldsymbol{\epsilon}_{t_1}^\top \boldsymbol{\epsilon}_{t_2}\right]\right),
    $$ so from \eqref{eq.condition_change} and \eqref{eq.s_q_calculate},
    \begin{align*}
    &\frac{1}{S_T}
    \left\Vert\sum_{q = 1}^R A_q - 
        \sum_{t_1 = 1}^T\sum_{t_2 = 1}^T b_{t_1t_2}\left(\boldsymbol{\epsilon}_{t_1}^\top \boldsymbol{\epsilon}_{t_2} - \mathbf{E}\left[\boldsymbol{\epsilon}_{t_1}^\top \boldsymbol{\epsilon}_{t_2}\right]\right)\right\Vert_2\\
    &\leq \left\Vert\frac{\sum_{q = 1}^R a_q}{S_T}\right\Vert_2 + \frac{1}{S_T}\left\Vert
    \sum_{t_1 = 1}^T\sum_{t_2 = 1}^T b_{t_1t_2}\left(\boldsymbol{\epsilon}_{t_1}^\top \boldsymbol{\epsilon}_{t_2} - \mathbf{E}\left[\boldsymbol{\epsilon}_{t_1}^\top \boldsymbol{\epsilon}_{t_2}\mid\mathcal{F}_{t_1\vee t_2, \ell}\right]\right)
    \right\Vert_2\\
    &\leq  C\sqrt{\frac{\ell}{v}} + \frac{CT^2}{B^\alpha\sqrt{v(B_1 - B)}} + \frac{CT}{B^{\alpha - 2}\sqrt{v(B_1 - B)}} + \frac{CT^2}{\ell^{\alpha - 1}\sqrt{v(B_1-  B)}}\\
    & +  \frac{C}{(\ell + 1 - B_1)^\alpha} + \frac{CT^{3/2}}{\sqrt{(B_1 -  B)}\ell^{\alpha - 1}} + \frac{CT^{1/2}}{\sqrt{(B_1 - B)}B^\alpha}.
    \end{align*}
    This implies that 
    \begin{equation}
        \begin{aligned}
            &\left\vert
    \mathbf{E}\left[g_{\psi,x}\left(\frac{1}{S_T}\sum_{q = 1}^RA_q^*\right)\right] - \mathbf{E}\left[
        g_{\psi,x}\left(\xi\right)
        \right]
    \right\vert\\
    &\leq  C\psi\sqrt{\frac{\ell}{v}} + \frac{C\psi T^2}{B^\alpha\sqrt{v(B_1 - B)}} + \frac{C\psi T}{B^{\alpha - 2}\sqrt{v(B_1 - B)}} + \frac{C\psi T^2}{\ell^{\alpha - 1}\sqrt{v(B_1-  B)}}\\
    & +  \frac{C\psi}{(\ell + 1 - B_1)^\alpha} + \frac{C\psi T^{3/2}}{\sqrt{(B_1 -  B)}\ell^{\alpha - 1}} + \frac{C\psi T^{1/2}}{\sqrt{(B_1 - B)}B^\alpha}
        \end{aligned}
        \label{eq.gaussian_xi}
    \end{equation}
    We choose the parameters 
    $$\psi = \log(T),\quad \ell = 
    \lfloor
    T^{\frac{3\kappa_2 + 1}{4}}
    \rfloor,\quad\text{and}\quad v = \lfloor T^{\frac{\kappa_2 + 1}{2}}\rfloor,$$
    where $\lfloor x\rfloor$ denotes the largest integer that is smaller than or equal to $x.$
    In such case 
    $$
    \sqrt{\frac{\ell}{v}} \asymp T^{\frac{\kappa_2 - 1}{8}},\quad \sqrt{\frac{v}{T}} \asymp T^{\frac{\kappa_2 - 1}{4}},\quad\text{and}\quad \frac{1}{(1 + \ell - B_1)^\alpha} \asymp T^{-\frac{\alpha(1 + 3\kappa_2)}{4}}.
    $$
    Furthermore, 
    \begin{align*}
    &\frac{T^{3/2}}{\sqrt{(B_1 - B)}\ell^{\alpha - 1}}  = O\left(T^{\frac{7 - \alpha}{4} - \frac{(3\alpha - 1)\kappa_2}{4}}\right),\quad  \frac{T^{1/2}}{\sqrt{(B_1 - B)}B^\alpha}= O\left(T^{\frac{1 - \kappa_2}{2} - \alpha\kappa_1}\right),\\
    &\frac{T^2}{\ell^{\alpha - 1}\sqrt{v(B_1 - B)}} = O\left(T^{\frac{8 - \alpha}{4} - \frac{3\alpha\kappa_2}{4}}\right),\quad \frac{T}{B^{\alpha - 2}\sqrt{v(B_1-  B)}} = O\left(T^{\frac{3(1 - \kappa_2)}{4} - (\alpha - 2)\kappa_1}\right),\\
    &\frac{T^2}{B^\alpha \sqrt{v(B_1-  B)}} = O\left(T^{\frac{7 - 3\kappa_2}{4} - \alpha\kappa_1}\right),\quad
    \frac{T^{11/2}}{B^{3\alpha}v(B_1  - B)^{3/2}} = O\left(T^{5 - 2\kappa_2 - 3\alpha\kappa_1}\right),\\ 
    &\frac{T^{11/2}}{\ell^{3\alpha - 3}v(B_1  -B)^{3/2}} = O\left(T^{\frac{23 - 3\alpha}{4} - \frac{(9\alpha - 1)\kappa_2}{4}}\right),\quad \frac{T^{5/2}}{B^{3\alpha - 6}v(B_1 - B)^{3/2}} = O\left(T^{2 - 2\kappa_2 - (3\alpha - 6)\kappa_1}\right).
    \end{align*} 
    Since $\kappa_1 > \frac{2}{\alpha}$ and $\alpha > 4,$ we have 
    \begin{align*}
    &\frac{1 - \kappa_2}{2} - \alpha\kappa_1 < \frac{1}{2} - 2 = -\frac{3}{2},\quad  
    \frac{3(1 - \kappa_2)}{4} - (\alpha - 2)\kappa_1 < \frac{3}{4} - 1 = -\frac{1}{4},\\ 
    &\frac{7 - 3\kappa_2}{4} - \alpha\kappa_1 < \frac{7}{4} - 2 = -\frac{1}{4},\quad
    5 - 2\kappa_2 - 3\alpha\kappa_1 < 5 - 6 = -1,\\ 
    &2 - 2\kappa_2 - (3\alpha - 6)\kappa_1 < 2 - 3 = -1.
    \end{align*}
    On the other hand, $\frac{2}{\alpha} < \kappa_2 < 1,$  so 
    \begin{align*}
    &7 - \alpha- (3\alpha - 1)\kappa_2 < 1  - \alpha + \kappa_2 < -2,\\ 
    &8 - \alpha - 3\alpha\kappa_2 < 2 -\alpha < -2,\\
    &23 - 3\alpha - (9\alpha - 1)\kappa_2 < 5 - 3\alpha + \kappa_2 < -6.
    \end{align*}
    Therefore, from \eqref{eq.Prob_to_E}, \eqref{eq.E_truncate}, \eqref{eq.truncate_to_Big_block}, \eqref{eq.big_block_gaussian}, and \eqref{eq.gaussian_xi}, we prove \eqref{eq.Gaussian_approx}.
\end{proof}

The proof of Theorem \ref{theorem.var_estimation} relies on balancing the bias and variance of the variance estimator. However, compared to the setting of the classical Newey--West estimator, such as (5) in \cite{MR890864}, the variance of the quadratic form $Q$ is influenced by fourth-order cumulants of time series data. Our work therefore weights products of the second-order residuals $\vartheta_t$ defined in \eqref{eq.def_vartheta_t} instead of the original time series data to construct the variance estimator.

Another notable trick here is that our variance estimator does not incorporate bias arising from data covariances. Specifically, since $b_{t_1t_2} = 0$ if $\vert t_1 - t_2\vert < B$ or $\vert t_1  -t_2\vert > B_1,$ we have 
\begin{equation}
\begin{aligned}
    Q & = \sum_{t_1 = 1}^T\sum_{t_2 = 1}^T b_{t_1t_2}\left(\boldsymbol{\epsilon}^\top_{t_1}\boldsymbol{\epsilon}_{t_2} - \mathbf{E}\left[\boldsymbol{\epsilon}^\top_{t_1}\boldsymbol{\epsilon}_{t_2}\right]\right)\\
      & = \sum_{t_1 = 2}^T\sum_{t_2  = 1}^{t_1 - 1}b_{t_1t_2}\left(\boldsymbol{\epsilon}^\top_{t_1}\boldsymbol{\epsilon}_{t_2} - \mathbf{E}\left[\boldsymbol{\epsilon}^\top_{t_1}\boldsymbol{\epsilon}_{t_2}\right]\right) + \sum_{t_2  = 2}^T\sum_{t_1 = 1}^{t_2 - 1}b_{t_1t_2}\left(\boldsymbol{\epsilon}^\top_{t_1}\boldsymbol{\epsilon}_{t_2} - \mathbf{E}\left[\boldsymbol{\epsilon}^\top_{t_1}\boldsymbol{\epsilon}_{t_2}\right]\right)\\
      & = 2\sum_{t_1 = B+1}^T\sum_{t_2  = 1\vee (t_1 - B_1)}^{t_1 - B}b_{t_1t_2}^\circ\left(\boldsymbol{\epsilon}^\top_{t_1}\boldsymbol{\epsilon}_{t_2} - \mathbf{E}\left[\boldsymbol{\epsilon}^\top_{t_1}\boldsymbol{\epsilon}_{t_2}\right]\right),
\end{aligned}
\label{eq.Q_to_one_side}
\end{equation}
where $b_{t_1t_2}^\circ = (b_{t_1t_2} + b_{t_2t_1}) / 2.$ The variance of $Q$ is then given by 
\begin{equation}
    \begin{aligned}
        &\mathrm{Var}\left( \frac{Q}{S_T}\right) = \frac{4}{S_T^2}\mathbf{E}\left[
        \left(\sum_{t_1 = B+1}^T\sum_{t_2  = 1\vee (t_1 - B_1)}^{t_1 - B}b_{t_1t_2}^\circ\left(\boldsymbol{\epsilon}^\top_{t_1}\boldsymbol{\epsilon}_{t_2} - \mathbf{E}\left[\boldsymbol{\epsilon}^\top_{t_1}\boldsymbol{\epsilon}_{t_2}\right]\right)\right)^2
        \right] = {\Gamma} - {\Theta},\\
       &\text{where}\quad  {\Gamma} = \frac{4}{S^2_T}\sum_{t_1 = B+1}^T\sum_{t_2  = 1\vee (t_1 - B_1)}^{t_1 - B}\sum_{t_3 = B+1}^T\sum_{t_4  = 1\vee (t_3 - B_1)}^{t_3 - B}b_{t_1t_2}^\circ b_{t_3t_4}^\circ\mathbf{E}\left[\boldsymbol{\epsilon}^\top_{t_1}\boldsymbol{\epsilon}_{t_2}\boldsymbol{\epsilon}^\top_{t_3}\boldsymbol{\epsilon}_{t_4}\right],\\
    &\text{and}\quad {\Theta} = \frac{4}{S_T^2}\sum_{t_1 = B+1}^T\sum_{t_2  = 1\vee (t_1 - B_1)}^{t_1 - B}\sum_{t_3 = B+1}^T\sum_{t_4  = 1\vee (t_3 - B_1)}^{t_3 - B}b_{t_1t_2}^\circ b_{t_3t_4}^\circ\mathbf{E}\left[\boldsymbol{\epsilon}^\top_{t_1}\boldsymbol{\epsilon}_{t_2}\right]\mathbf{E}\left[\boldsymbol{\epsilon}^\top_{t_3}\boldsymbol{\epsilon}_{t_4}\right].
    \end{aligned}
    \label{eq.decomposition_var_Q}
\end{equation}
The following proof establishes that the proposed estimator is consistent for $\Gamma,$ and that $\Theta$ is negligible compared to $\Gamma.$ This phenomenon arises because $b_{t_1t_2} = 0$ if $\vert t_1 - t_2\vert < B,$ and is crucial in our setup---the time series data are not assumed to be stationary, implying that $\mathbf{E}\left[\boldsymbol{\epsilon}^\top_{t_1}\boldsymbol{\epsilon}_{t_2}\right]$ can vary with respect to different time lag $\vert  t_1 - t_2\vert.$ Consequently, estimating each individual covariance term $\mathbf{E}\left[\boldsymbol{\epsilon}^\top_{t_1}\boldsymbol{\epsilon}_{t_2}\right]$ would in general be infeasible without imposing further structural assumptions. Fortunately, since $\Theta$ is negligible compared to $\Gamma,$ estimating $\mathbf{E}\left[\boldsymbol{\epsilon}^\top_{t_1}\boldsymbol{\epsilon}_{t_2}\right]$ becomes unnecessary.

\begin{proof}[Proof of Theorem \ref{theorem.var_estimation}]
The proof is separated into two parts: that $\Theta$ is small and that $\Gamma$ can be estimated from our estimator. From \eqref{eq.eta_epsilon} and use the same notation as in \eqref{eq.def_gamma_eta}, for any $t_2 > t_1,$
\begin{align*}
    \left\vert\mathbf{E}\left[
    \boldsymbol{\epsilon}^\top_{t_1}\boldsymbol{\epsilon}_{t_2}
    \right]\right\vert &= \left\vert\mathbf{E}\left[\boldsymbol{\epsilon}^\top_{t_1}\left(\boldsymbol{\epsilon}_{t_2} - \mathbf{E}\left[\boldsymbol{\epsilon}_{t_2}\mid\mathcal{F}_{t_2,t_2 - t_1 - 1}\right]\right)\right]\right\vert\\
    & = \left\vert\mathbf{E}\left[\boldsymbol{\epsilon}^\top_{t_1}\boldsymbol{\eta}_{t_2,t_2 - t_1 - 1}\right]\right\vert\leq \frac{Cd}{(t_2 - t_1)^\alpha}.
\end{align*}
Notice that $\vert b_{t_1t_2}\vert\leq 1,$ so $\vert b_{t_1t_2}^\circ\vert\leq \left(\vert b_{t_1t_2}\vert + \vert b_{t_2t_1}\vert\right) / 2\leq 1,$ and 
\begin{align*}
    \left\vert
    \sum_{t_1 = B+1}^T\sum_{t_2  = 1\vee (t_1 - B_1)}^{t_1 - B}b_{t_1t_2}^\circ\mathbf{E}\left[\boldsymbol{\epsilon}^\top_{t_1}\boldsymbol{\epsilon}_{t_2}\right]
    \right\vert &\leq  \sum_{t_1 = B+1}^T\sum_{t_2  = 1\vee (t_1 - B_1)}^{t_1 - B}\left\vert \mathbf{E}\left[\boldsymbol{\epsilon}^\top_{t_1}\boldsymbol{\epsilon}_{t_2}\right]\right\vert\\
    &\leq Cd\sum_{t_1 = B+1}^T\sum_{t_2  = 1\vee (t_1 - B_1)}^{t_1 - B}\frac{1}{(t_1 - t_2)^\alpha }\\
    &\leq C_1 d\sum_{t_1 = B+1}^T\frac{1}{B^{\alpha - 1}}\leq \frac{C_2T^2}{B^{\alpha - 1}}.
\end{align*}
This implies that 
\begin{equation}
    \begin{aligned}
        \left\vert\Theta\right\vert\leq \frac{C}{S^2_T}\frac{T^4}{B^{2\alpha - 2}} \leq \frac{C_1T^4}{Td(B_1 - B)B^{2\alpha - 2}}\leq \frac{C_2T^2}{(B_1 - B)B^{2\alpha - 2}}.
    \end{aligned}
    \label{eq.abs_Theta}
\end{equation}
From the assumptions in Theorem \ref{theorem.var_estimation}, 
\begin{align*}
    \frac{T^2}{(B_1 - B)B^{2\alpha - 2}} = O\left( T^{2 - (2\alpha - 2)\kappa_1 - \kappa_2}\right) = O(1/T).
\end{align*}
Therefore, the bias $\left\vert\Theta\right\vert$ is negligible compared to $\Gamma,$ so its estimation is unnecessary.

We then prove the consistency of the estimation of $\Gamma.$  From \eqref{eq.def_vartheta_t},
\begin{align*}
    \Gamma &= \frac{4}{S^2_T}\sum_{t_1 = B+1}^T\sum_{t_3 = B+1}^T \mathbf{E}
    \left[
    \left(\sum_{t_2  = 1\vee (t_1 - B_1)}^{t_1 - B}b^\circ_{t_1t_2}\boldsymbol{\epsilon}^\top_{t_1}\boldsymbol{\epsilon}_{t_2}\right)\left(\sum_{t_4  = 1\vee (t_3 - B_1)}^{t_3 - B}b^\circ_{t_3t_4}\boldsymbol{\epsilon}_{t_3}^\top\boldsymbol{\epsilon}_{t_4}\right)
    \right]\\
    & = \frac{1}{S^2_T}\sum_{t_1 = B+1}^T\sum_{t_3 = B+1}^T \mathbf{E}\left[\vartheta_{t_1}\vartheta_{t_3}\right],
\end{align*}
so 
\begin{align*}
    &\left\vert\frac{1}{S^2_T}\sum_{t_1 = B + 1}^T\sum_{t_2 = B+1}^T \mathcal{K}\left(\frac{t_1 - t_2}{H}\right)\vartheta_{t_1}\vartheta_{t_2}  - \Gamma\right\vert\\
    &\leq \left\vert \frac{1}{S^2_T}\sum_{t_1 = B + 1}^T\sum_{t_2 = B+1}^T \mathcal{K}\left(\frac{t_1 - t_2}{H}\right)\left(\vartheta_{t_1}\vartheta_{t_2} - \mathbf{E}\left[\vartheta_{t_1}\vartheta_{t_2}\right]\right)\right\vert\\
    &+ \left\vert\frac{1}{S^2_T}\sum_{t_1 = B + 1}^T\sum_{t_2 = B+1}^T\left(\mathcal{K}\left(\frac{t_1 - t_2}{H}\right) - 1\right)\mathbf{E}\left[\vartheta_{t_1}\vartheta_{t_2}\right]\right\vert.
\end{align*}
Define 
\begin{align*}
    \iota_t = \vartheta_{t} - \mathbf{E}\left[\vartheta_{t}\right] = 2\sum_{t_2 = (t - B_1)\vee 1}^{t - B} b^\circ_{tt_2}\left(\boldsymbol{\epsilon}_{t}^\top \boldsymbol{\epsilon}_{t_2} - \mathbf{E}\left[\boldsymbol{\epsilon}_{t}^\top \boldsymbol{\epsilon}_{t_2}\right]\right),
\end{align*}
then $\mathbf{E}\left[ \iota_t\right] = 0,$ and 
\begin{align*}
     &\left\vert\frac{1}{S^2_T}\sum_{t_1 = B + 1}^T\sum_{t_2 = B+1}^T \mathcal{K}\left(\frac{t_1 - t_2}{H}\right)\left(\vartheta_{t_1}\vartheta_{t_2} - \mathbf{E}\left[\vartheta_{t_1}\vartheta_{t_2}\right]\right)\right\vert\\
     &\leq  \left\vert\frac{1}{S^2_T}\sum_{t_1 = B + 1}^T\sum_{t_2 = B+1}^T \mathcal{K}\left(\frac{t_1 - t_2}{H}\right)\left(\iota_{t_1}\iota_{t_2} - \mathbf{E}\left[\iota_{t_1}\iota_{t_2}\right]\right)\right\vert\\
     & + \frac{1}{S^2_T}\left\vert\sum_{t_1 = B + 1}^T\sum_{t_2 = B+1}^T \mathcal{K}\left(\frac{t_1 - t_2}{H}\right)\iota_{t_1}\mathbf{E}\left[\vartheta_{t_2}\right]\right\vert\\
     & + \frac{1}{S^2_T}\left\vert\sum_{t_1 = B + 1}^T\sum_{t_2 = B+1}^T \mathcal{K}\left(\frac{t_1 - t_2}{H}\right)\iota_{t_2}\mathbf{E}\left[\vartheta_{t_1}\right]\right\vert,
\end{align*}
and
\begin{equation}
\begin{aligned}
    &\left\vert\frac{1}{S^2_T}\sum_{t_1 = B + 1}^T\sum_{t_2 = B+1}^T\left(\mathcal{K}\left(\frac{t_1 - t_2}{H}\right) - 1\right)\mathbf{E}\left[\vartheta_{t_1}\vartheta_{t_2}\right]\right\vert\\
    &\leq \left\vert\frac{1}{S^2_T}\sum_{t_1 = B + 1}^T\sum_{t_2 = B+1}^T\left(\mathcal{K}\left(\frac{t_1 - t_2}{H}\right) - 1\right)\mathbf{E}\left[\iota_{t_1}\iota_{t_2}\right]\right\vert\\
    &+ \frac{1}{S^2_T}\sum_{t_1 = B + 1}^T\sum_{t_2 = B+1}^T\left( 1 - \mathcal{K}\left(\frac{t_1 - t_2}{H}\right)\right)\left\vert\mathbf{E}\left[\vartheta_{t_1}\right]\right\vert\times \left\vert\mathbf{E}\left[\vartheta_{t_2}\right]\right\vert.
\end{aligned}
\label{eq.bias_decomposition}
\end{equation}
From Theorem \ref{theorem.consistent_quadratic},
\begin{align*}
    \left\Vert
    \iota_t
    \right\Vert_{M/2} &= 2\left\Vert
    \sum_{t_2 = (t - B_1)\vee 1}^{t - B} b^\circ_{tt_2}\left(\boldsymbol{\epsilon}_{t}^\top \boldsymbol{\epsilon}_{t_2} - \mathbf{E}\left[\boldsymbol{\epsilon}_{t}^\top \boldsymbol{\epsilon}_{t_2}\right]\right)
    \right\Vert_{M/2}\\
    &\leq C\sqrt{d\sum_{t_2 = (t - B_1)\vee 1}^{t - B}b^{\circ2}_{tt_2}} + \frac{CT^{5/2}}{B^\alpha} + \frac{CT^{3/2}}{B^{\alpha - 2}}\\
    &\leq C_1\sqrt{d(B_1-  B)} + \frac{CT^{5/2}}{B^\alpha} + \frac{CT^{3/2}}{B^{\alpha - 2}}.
\end{align*}
Since $B\asymp T^{\kappa_1},$  $\alpha\kappa_1 > 2,$ $\kappa_1 < 1/6,$ and $\alpha > 14, $ we have 
\begin{equation}
\left\Vert\iota_t\right\Vert_{M/2}\leq C\sqrt{d(B_1-  B)}.
\label{eq.moment_iota}
\end{equation}
On the other hand, from \eqref{eq.covariance_delta}, 
\begin{equation}
    \begin{aligned}
        \left\vert
    \mathbf{E}\left[\vartheta_{t}\right]
    \right\vert & \leq 2\sum_{t_2 = (t -  B_1)\vee 1}^{t - B}\vert b_{tt_2}^\circ\vert\left\vert \mathbf{E}\left[\boldsymbol{\epsilon}_{t}^\top \boldsymbol{\epsilon}_{t_2}\right]\right\vert\\
    &\leq Cd\sum_{t_2 = (t - B_1)\vee 1}^{t - B}\frac{1}{(t - t_2)^\alpha}\leq \frac{C_1d}{B^{\alpha - 1}}.
    \label{eq.bias_theta}
    \end{aligned}
\end{equation}
For any $a  \geq  B_1,$ from \eqref{eq.truncate_moment},
\begin{equation}
\begin{aligned}
    \left\Vert
    \iota_t - \mathbf{E}
    \left[
    \iota_t\mid \mathcal{F}_{t,a}
    \right]
    \right\Vert_{M/2} &= 2\left\Vert
    \sum_{t_2  = (t - B_1)\vee 1}^{t - B}b^\circ_{tt_2}\left(\boldsymbol{\epsilon}_t^\top\boldsymbol{\epsilon}_{t_2} - \mathbf{E}\left[\boldsymbol{\epsilon}_t^\top\boldsymbol{\epsilon}_{t_2}\mid \mathcal{F}_{t,a}\right]\right)
    \right\Vert_{M/2}\\
    &\leq C\sum_{q = a + 1 - B_1}^\infty\delta_q\sqrt{d\sum_{t_2  = (t - B_1)\vee 1}^{t - B}b^{\circ 2}_{tt_2}} +\frac{CdT^{3/2}}{a^{\alpha - 1}} + \frac{Cd\sqrt{T}}{B^\alpha}\\
    &\leq \frac{C_1\sqrt{d(B_1  -B)}}{(a + 1 - B_1)^\alpha} + \frac{CT^{5/2}}{a^{\alpha - 1}} + \frac{CT^{3/2}}{B^\alpha}.
\end{aligned}
\label{eq.truncate_iota}
\end{equation}
This implies that 
\begin{align*}
    &\left\vert\frac{1}{S^2_T}\sum_{t_1 = B + 1}^T\sum_{t_2 = B+1}^T\left(\mathcal{K}\left(\frac{t_1 - t_2}{H}\right) - 1\right)\mathbf{E}\left[\iota_{t_1}\iota_{t_2}\right]\right\vert\\
    &\leq \frac{2}{S_T^2}\sum_{q = 0}^{T-B-1}\left(1 - \mathcal{K}\left(\frac{q}{H}\right)\right)\sum_{t_1 = B+1}^{T- q}\left\vert \mathbf{E}\left[\iota_{t_1}\iota_{t_1+q}\right]\right\vert\\
    &\leq \frac{2}{S^2_T}\sum_{q = 0}^{B_1}\left(1 - \mathcal{K}\left(\frac{q}{H}\right)\right)\sum_{t_1 = B + 1}^{T - q}\left\Vert \iota_{t_1}\right\Vert_{M/2}\left\Vert \iota_{t_1 + q}\right\Vert_{M/2}\\
    & + \frac{2}{S^2_T}\sum_{q = B_1 + 1}^{H}\left(1 - \mathcal{K}\left(\frac{q}{H}\right)\right)\sum_{t_1 = B+1}^{T  - q}\left\vert
    \mathbf{E}\left[\iota_{t_1}\left(\iota_{t_1 + q} - \mathbf{E}\left[\iota_{t_1 + q}\mid\mathcal{F}_{t_1+q, q - 1}\right]\right)\right]
    \right\vert\\
    & +\frac{2}{S^2_T}\sum_{q = H+1}^{T - B - 1}\sum_{t_1 = B+1}^{T  - q}\left\vert
    \mathbf{E}\left[\iota_{t_1}\left(\iota_{t_1 + q} - \mathbf{E}\left[\iota_{t_1 + q}\mid\mathcal{F}_{t_1+q, q - 1}\right]\right)\right]
    \right\vert.
\end{align*}    
From Definition \ref{definition.kernel_function}, we have $1 - \mathcal{K}(x) = \mathcal{K}(0) - \mathcal{K}(x)\leq Cx$ for any $x\in[0,1],$ so from \eqref{eq.moment_iota},
\begin{align*}
    &\frac{1}{S^2_T}\sum_{q = 0}^{B_1}\left(1 - \mathcal{K}\left(\frac{q}{H}\right)\right)\sum_{t_1 = B + 1}^{T - q}\left\Vert \iota_{t_1}\right\Vert_{M/2}\left\Vert \iota_{t_1 + q}\right\Vert_{M/2}\\
    &\leq \frac{CTd(B_1 - B)}{Td(B_1 - B)}\sum_{q = 0}^{B_1}\left(1 - \mathcal{K}\left(\frac{q}{H}\right)\right)\\
    & = C\sum_{q = 0}^{B_1}\left(1 - \mathcal{K}\left(\frac{q}{H}\right)\right) \leq \sum_{q = 0}^{B_1} \frac{C_1q}{H}\leq \frac{C_2 B^2_1}{H}.
\end{align*}
Similarly, from  \eqref{eq.truncate_iota},
\begin{align*}
    &\frac{1}{S^2_T}\sum_{q = B_1 + 1}^{H}\left(1 - \mathcal{K}\left(\frac{q}{H}\right)\right)\sum_{t_1 = B+1}^{T  - q}\left\vert
    \mathbf{E}\left[\iota_{t_1}\left(\iota_{t_1 + q} - \mathbf{E}\left[\iota_{t_1 + q}\mid\mathcal{F}_{t_1+q, q - 1}\right]\right)\right]
    \right\vert\\
    &\leq \frac{C}{Td(B_1 - B)}\sum_{q = B_1 + 1}^H \frac{q}{H}\sum_{t_1 = B+1}^{T  - q}\left\Vert
    \iota_{t_1}
    \right\Vert_{M/2}\left\Vert
    \iota_{t_1 + q} - \mathbf{E}\left[\iota_{t_1 + q}\mid\mathcal{F}_{t_1+q, q - 1}\right]
    \right\Vert_{M/2}\\
    &\leq \frac{C_1}{H}\sum_{q = B_1 + 1}^H\left(\frac{q}{(q - B_1)^\alpha} + \frac{qT^{5/2}}{(q - 1)^{\alpha - 1}\sqrt{d(B_1 - B)}} + \frac{qT^{3/2}}{B^\alpha\sqrt{d(B_1- B)}}\right)\\
    &=
    \frac{C_1}{H}\sum_{q = B_1 + 1}^H\left(\frac{1}{(q - B_1)^{\alpha - 1}} + \frac{B_1}{(q - B_1)^\alpha} + \frac{T^{5/2}}{(q - 1)^{\alpha - 2}\sqrt{d(B_1 - B)}} + \frac{T^{5/2}}{(q - 1)^{\alpha - 1}\sqrt{d(B_1 - B)}}\right.\\
    &\left.+\frac{qT^{3/2}}{B^\alpha\sqrt{d(B_1- B)}}
    \right)
    \\
    &\leq \frac{C_2B_1}{H} + \frac{C_2T^{2}}{HB_1^{\alpha - 3}\sqrt{(B_1 - B)}} + \frac{C_2TH}{B^\alpha\sqrt{(B_1 - B)}},
\end{align*}
and
\begin{align*}
    &\frac{1}{S^2_T}\sum_{q = H+1}^{T - B - 1}\sum_{t_1 = B+1}^{T  - q}\left\vert
    \mathbf{E}\left[\iota_{t_1}\left(\iota_{t_1 + q} - \mathbf{E}\left[\iota_{t_1 + q}\mid\mathcal{F}_{t_1+q, q - 1}\right]\right)\right]
    \right\vert\\
    &\leq \frac{C}{Td(B_1 - B)}\sum_{q = H+1}^{T - B - 1}\sum_{t_1 = B+1}^{T  - q}\left\Vert\iota_{t_1}\right\Vert_{M/2}\left\Vert\iota_{t_1 + q} - \mathbf{E}\left[\iota_{t_1 + q}\mid\mathcal{F}_{t_1+q, q - 1}\right]\right\Vert_{M/2}\\
    &\leq C_1\sum_{q = H+1}^{T - B - 1}\left(
    \frac{1}{(q - B_1)^\alpha} + \frac{T^{5/2}}{(q - 1)^{\alpha - 1}\sqrt{d(B_1 - B)}} + \frac{T^{3/2}}{B^\alpha\sqrt{d(B_1 - B)}}
    \right)\\
    &\leq \frac{C_2}{(H - B_1)^{\alpha - 1}} + \frac{C_2T^2}{H^{\alpha - 2}\sqrt{(B_1 - B)}} + \frac{C_2T^2}{B^\alpha\sqrt{(B_1  - B)}}.
\end{align*}
Therefore, notice that $B_1\asymp T^{\kappa_2}$ with $\kappa_1 < \kappa_2 < 1/6$ and $H\asymp T^{1/3}\log(T),$ we have
\begin{equation}
    \begin{aligned}
        &\left\vert\frac{1}{S^2_T}\sum_{t_1 = B + 1}^T\sum_{t_2 = B+1}^T\left(\mathcal{K}\left(\frac{t_1 - t_2}{H}\right) - 1\right)\mathbf{E}\left[\iota_{t_1}\iota_{t_2}\right]\right\vert\\
        &\leq \frac{CB^2_1}{H} + \frac{CT^2}{HB^{\alpha - 3}_1\sqrt{(B_1 - B)}} + \frac{CT^2}{B^\alpha\sqrt{(B_1 - B)}}\\
        &\leq \frac{C_1}{T^{\frac{1}{3} - 2\kappa_2}\log(T)} + \frac{C_1T^{5/3}}{\log(T)T^{\kappa_2(\alpha - \frac{5}{2})}} + \frac{C_1}{
        T^{\kappa_2 / 2}}.
    \end{aligned}
     \label{eq.var_bias}
\end{equation}
From \eqref{eq.bias_theta},
\begin{align*}
    &\frac{1}{S^2_T}\sum_{t_1 = B + 1}^T\sum_{t_2 = B+1}^T\left( 1 - \mathcal{K}\left(\frac{t_1 - t_2}{H}\right)\right)\left\vert\mathbf{E}\left[\vartheta_{t_1}\right]\right\vert\times \left\vert\mathbf{E}\left[\vartheta_{t_2}\right]\right\vert\\
    &\leq \frac{1}{S_T^2}\left(\sum_{t = B + 1}^T\left\vert\mathbf{E}\left[\vartheta_{t}\right]\right\vert\right)^2
    \leq \frac{C}{Td(B_1 - B)}\frac{T^2d^2}{B^{2\alpha - 2}}\leq \frac{CTd}{(B_1 - B)B^{2\alpha - 2}}.
\end{align*}
Since $B\asymp T^{\kappa_1}$ with $\frac{2}{\alpha}<\kappa_1 < \frac{1}{6}$ and $d\asymp T,$ we have 
\begin{align*}
    \frac{Td}{(B_1 - B)B^{2\alpha - 2}}\leq \frac{CT^2B^2}{(B_1 - B)T^4}\leq C_1T^{-5/3}.
\end{align*}
Therefore, from \eqref{eq.bias_decomposition}, we have 
\begin{equation}
    \begin{aligned}
        &\left\vert\frac{1}{S^2_T}\sum_{t_1 = B + 1}^T\sum_{t_2 = B+1}^T\left(\mathcal{K}\left(\frac{t_1 - t_2}{H}\right) - 1\right)\mathbf{E}\left[\vartheta_{t_1}\vartheta_{t_2}\right]\right\vert\\
        &\leq \frac{C}{T^{\frac{1}{3} - 2\kappa_2}\log(T)} + \frac{CT^{5/3}}{\log(T)T^{\kappa_2(\alpha - \frac{5}{2})}} + \frac{C}{T^{\kappa_2 / 2}} + \frac{CTd}{(B_1 - B)B^{2\alpha - 2}}\\
        &\leq \frac{C_1}{T^{\frac{1}{3} - 2\kappa_2}\log(T)} + \frac{C_1T^{5/3}}{\log(T)T^{\kappa_2(\alpha - \frac{5}{2})}} + \frac{C_1}{T^{\kappa_2 / 2}}.
    \end{aligned}
    \label{eq.bias_diff}
\end{equation}
On the other hand,
\begin{align*}
    &\left\Vert\frac{1}{S^2_T}\sum_{t_1 = B + 1}^T\sum_{t_2 = B+1}^T \mathcal{K}\left(\frac{t_1 - t_2}{H}\right)\left(\iota_{t_1}\iota_{t_2} - \mathbf{E}\left[\iota_{t_1}\iota_{t_2}\right]\right)\right\Vert_{M/4}\\
    &\leq \frac{2}{S^2_T}\sum_{q = 0}^{T -  B - 1}\mathcal{K}\left(\frac{q}{H}\right)\left\Vert\sum_{t_1 = B + 1}^{T - q}\left(\iota_{t_1}\iota_{t_1+q} - \mathbf{E}\left[\iota_{t_1}\iota_{t_1+q}\right]\right)\right\Vert_{M/4}.
\end{align*}
    For any $t$ and $s$, define 
    \begin{equation}
    \omega_{t,s} = 
    \begin{cases}
    \mathbf{E}\left[\iota_{t}\mid \mathcal{F}_{t,s}\right]\ \text{if } s\geq 0,\\
    0\ \text{if } s < 0,
    \end{cases}\quad\text{and}\quad \varphi_{t,s} = \iota_{t} - \omega_{t,s}.
    \label{eq.def_omega_psi}
    \end{equation}
From this definition,   $\omega_{t_1 + q,q -1}$ is independent of $\iota_{t_1},$ and 
$$
\mathbf{E}\left[\omega_{t_1 + q,q -1}\iota_{t_1}\right] = \mathbf{E}\left[\omega_{t_1 + q,q -1}\right]\mathbf{E}\left[\iota_{t_1}\right] = 0,\quad\text{therefore}\quad \mathbf{E}\left[\iota_{t_1}\iota_{t_1+q}\right] = \mathbf{E}\left[\iota_{t_1}\varphi_{t_1 + q, q - 1}\right].
$$
Define 
$$
\mathbf{E}\left[\iota_{t_1}\varphi_{t_1 + q,q - 1}\mid \mathcal{F}_{t_1 + q, - 1}\right]  =\mathbf{E}\left[\iota_{t_1}\varphi_{t_1 + q,q - 1}\right].
$$
For any $t_1,$
\begin{align*}
    \iota_{t_1}
    = \sum_{q = 0}^T \left(\omega_{t_1, q} - \omega_{t_1, q - 1}\right) +  \iota_{t_1} - \omega_{t_1, T} = \sum_{q = 0}^T (\omega_{t_1, q} - \omega_{t_1, q - 1}) + \varphi_{t_1,T},
\end{align*}
and 
\begin{align*}
    &\iota_{t_1}\varphi_{t_1 + s,s - 1} - \mathbf{E}\left[\iota_{t_1}\varphi_{t_1 + s,s - 1}\right]\\
    &= \sum_{q = 0}^T\left(\mathbf{E}\left[\iota_{t_1}\varphi_{t_1 + s,s - 1}\mid\mathcal{F}_{t_1 + s, q}\right] - \mathbf{E}\left[\iota_{t_1}\varphi_{t_1 + s,s - 1}\mid\mathcal{F}_{t_1 + s, q - 1}\right]\right)\\
    &+ \left(\iota_{t_1}\varphi_{t_1 + s,s - 1} - \mathbf{E}\left[\iota_{t_1}\varphi_{t_1 + s,s - 1}\mid\mathcal{F}_{t_1 + s, T}\right]\right).
\end{align*}
Therefore, for any $q = 0,1,\cdots, T - B - 1,$
\begin{equation}
    \begin{aligned}
        &\left\Vert
        \sum_{t_1 = B + 1}^{T - q}\left(\iota_{t_1}\iota_{t_1+q} - \mathbf{E}\left[\iota_{t_1}\iota_{t_1+q}\right]\right)
        \right\Vert_{M/4}\\
        &\leq \left\Vert
        \sum_{t_1 = B + 1}^{T - q}\iota_{t_1}\omega_{t_1+q, q - 1}
        \right\Vert_{M/4} +\left\Vert
        \sum_{t_1 = B + 1}^{T - q}\left(\iota_{t_1}\varphi_{t_1+q, q - 1} - \mathbf{E}\left[\iota_{t_1}\varphi_{t_1+q, q - 1}\right]\right)
        \right\Vert_{M/4}\\
        &\leq \sum_{l = 0}^T\left\Vert
        \sum_{t_1 = B + 1}^{T - q}(\omega_{t_1, l} - \omega_{t_1, l - 1})\omega_{t_1+q, q - 1}
        \right\Vert_{M/4} + \left\Vert \sum_{t_1 = B + 1}^{T - q}\varphi_{t_1,T}\omega_{t_1+q, q - 1}\right\Vert_{M/4}\\
        & + \sum_{l = 0}^T\left\Vert
        \sum_{t_1 = B + 1}^{T - q}\left(
        \mathbf{E}\left[\iota_{t_1}\varphi_{t_1 + q,q - 1}\mid\mathcal{F}_{t_1 + q, l}\right] - \mathbf{E}\left[\iota_{t_1}\varphi_{t_1 + q,q - 1}\mid\mathcal{F}_{t_1 + q, l - 1}\right]\right)
        \right\Vert_{M/4}\\
        & + \left\Vert
        \sum_{t_1 = B + 1}^{T - q}\left(\iota_{t_1}\varphi_{t_1 + q,q - 1} - \mathbf{E}\left[\iota_{t_1}\varphi_{t_1 + q,q - 1}\mid\mathcal{F}_{t_1 + q, T}\right]\right)
        \right\Vert_{M/4}.
    \end{aligned}
    \label{eq.separate_terms}
\end{equation}
For any given $l = 0,1,\cdots,T, q = 0,1,\cdots, T - B - 1,$ and $z = 1,\cdots, T - q - B,$ define 
\begin{align*}
    T_{q,l,z} = \sum_{t_1 = T - q - z + 1}^{T - q}(\omega_{t_1, l} - \omega_{t_1, l - 1})\omega_{t_1+q, q - 1},
\end{align*}
and $\mathcal{T}_{q,l,z}$ the $\sigma$-field generated by $e_T,e_{T-1},\cdots,e_{T - q -z + 1 - l},$ then 
$\mathcal{T}_{q,l,z}\subset \mathcal{T}_{q,l,z + 1},$  $T_{q,l,z}$ is measurable in $\mathcal{T}_{q,l,z},$ and 
\begin{align*}
    \mathbf{E}\left[\left(T_{q,l,z + 1} - T_{q,l,z}\right)\mid \mathcal{T}_{q,l,z}\right] 
    &= \mathbf{E}\left[(\omega_{T - q - z, l} - \omega_{T - q - z, l - 1})\omega_{T - z , q - 1}\mid \mathcal{T}_{q,l,z}\right]\\
    & = \omega_{T - z , q - 1}\left(\omega_{T - q - z, l - 1} - \omega_{T - q - z, l - 1}\right) = 0,
\end{align*}
so $T_{q,l,z}$ forms a martingale. From Theorem 1.1 of \cite{MR0400380},
\begin{equation}
    \begin{aligned}
        &\left\Vert
        \sum_{t_1 = B + 1}^{T - q}(\omega_{t_1, l} - \omega_{t_1, l - 1})\omega_{t_1+q, q - 1}
        \right\Vert_{M/4}\\
        &\leq C\sqrt{\sum_{t_1 = B + 1}^{T - q}\left\Vert (\omega_{t_1, l} - \omega_{t_1, l - 1})\omega_{t_1+q, q - 1}\right\Vert^2_{M/4}}\\
        &\leq C\sqrt{\sum_{t_1 = B + 1}^{T - q}\left\Vert\omega_{t_1, l} - \omega_{t_1, l - 1}\right\Vert_{M/2}^2\left\Vert\iota_{t_1+q}\right\Vert^2_{M/2}}\\
        &\leq C_1\sqrt{d(B_1 - B)}\sqrt{\sum_{t_1 = B + 1}^{T - q}\left\Vert\omega_{t_1, l} - \omega_{t_1, l - 1}\right\Vert_{M/2}^2}.
    \end{aligned}
\end{equation}
If $l\leq B_1,$ then 
\begin{align*}
    \left\Vert \omega_{t_1, l} - \omega_{t_1, l - 1}\right\Vert_{M/2} &\leq  \left\Vert
    \omega_{t_1, l}
    \right\Vert_{M/2} + \left\Vert \omega_{t_1, l - 1}\right\Vert_{M/2}\\
    &\leq 2\left\Vert \iota_{t_1}\right\Vert_{M/2}\leq C\sqrt{d(B_1 -  B)},
\end{align*}
making 
\begin{align*}
    \left\Vert
        \sum_{t_1 = B + 1}^{T - q}(\omega_{t_1, l} - \omega_{t_1, l - 1})\omega_{t_1+q, q - 1}
        \right\Vert_{M/4}
    \leq C\sqrt{T}d(B_1 - B).
\end{align*}
If $l \geq B_1 + 1,$ from \eqref{eq.truncate_iota}, 
\begin{align*}
    \left\Vert \omega_{t_1, l} - \omega_{t_1, l - 1}\right\Vert_{M/2} &= \left\Vert \varphi_{t_1,l} - \varphi_{t_1,l - 1}\right\Vert_{M/2}\\
    &\leq \left\Vert \varphi_{t_1,l}\right\Vert_{M/2} + \left\Vert\varphi_{t_1,l - 1}\right\Vert_{M/2}\\
    &\leq \frac{C\sqrt{d(B_1-  B)}}{(l - B_1)^\alpha} + \frac{CT^{5/2}}{(l - 1)^{\alpha - 1}} + \frac{CT^{3/2}}{B^\alpha},
\end{align*}
making
\begin{align*}
    &\left\Vert
        \sum_{t_1 = B + 1}^{T - q}(\omega_{t_1, l} - \omega_{t_1, l - 1})\omega_{t_1+q, q - 1}
        \right\Vert_{M/4}\\
    &\leq \frac{C\sqrt{T}d(B_1 - B)}{(l - B_1)^\alpha} + \frac{CT^{7/2}\sqrt{(B_1  - B)}}{(l - 1)^{\alpha - 1}} + \frac{CT^{5/2}\sqrt{(B_1  - B)}}{B^\alpha}.
\end{align*}
Therefore,
\begin{equation}
    \begin{aligned}
    &\frac{1}{S^2_T}\sum_{l = 0}^T\left\Vert
        \sum_{t_1 = B + 1}^{T - q}(\omega_{t_1, l} - \omega_{t_1, l - 1})\omega_{t_1+q, q - 1}
        \right\Vert_{M/4}\\
    &\leq \frac{C}{S^2_T}\sum_{l = 0}^{B_1}\sqrt{T}d(B_1 - B)\\
    &+ \frac{C}{S^2_T}\sum_{l = B_1 + 1}^T\left(
    \frac{\sqrt{T}d(B_1 - B)}{(l - B_1)^\alpha} + \frac{T^{7/2}\sqrt{(B_1  - B)}}{(l - 1)^{\alpha - 1}} + \frac{T^{5/2}\sqrt{(B_1  - B)}}{B^\alpha}
    \right)\\
    &\leq \frac{C_1}{S^2_T}B_1\sqrt{T}d(B_1  - B) +\frac{C}{S^2_T}\sqrt{T}d(B_1 - B)\sum_{l = B_1 + 1}^\infty\frac{1}{(l - B_1)^\alpha}\\
    &+ \frac{C}{S^2_T}T^{7/2}\sqrt{(B_1 - B)}\sum_{l = B_1 + 1}^T\frac{1}{(l - 1)^{\alpha - 1}} + \frac{CT^{7/2}\sqrt{(B_1 - B)}}{S^2_TB^\alpha}\\
    &\leq \frac{C_2 B_1}{\sqrt{T}} +\frac{C_2}{\sqrt{T}} + \frac{C_2T^{3/2}}{B_1^{\alpha - 3/2}} + \frac{C_2T^{3/2}}{B^\alpha\sqrt{(B_1-  B)}}. 
    \end{aligned}
\end{equation}
Since $B_1\asymp T^{\kappa_2}$ with $\kappa_2 > \frac{4}{2\alpha - 3},$ we have 
\begin{align*}
    \frac{T^{3/2}}{B_1^{\alpha - 3/2}}\leq \frac{CB_1}{\sqrt{T}}\quad\text{and}\quad \frac{C_2T^{3/2}}{B^\alpha\sqrt{(B_1-  B)}}\leq \frac{CB_1}{\sqrt{T}}
\end{align*}
for a constant $C.$ Therefore, we have 
\begin{equation}
    \begin{aligned}
        \frac{1}{S^2_T}\sum_{l = 0}^T\left\Vert
        \sum_{t_1 = B + 1}^{T - q}(\omega_{t_1, l} - \omega_{t_1, l - 1})\omega_{t_1+q, q - 1}
        \right\Vert_{M/4}\leq \frac{CB_1}{\sqrt{T}}.
    \end{aligned}
    \label{eq.fir_part_omega}
\end{equation}

Besides,
\begin{equation}
\begin{aligned}
    &\frac{1}{S^2_T}\left\Vert \sum_{t_1 = B + 1}^{T - q}\varphi_{t_1,T}\omega_{t_1+q, q - 1}\right\Vert_{M/4}\\
    &\leq \frac{1}{S^2_T}\sum_{t_1 = B+1}^{T - q}\left\Vert \varphi_{t_1,T}\right\Vert_{M/2}\left\Vert \omega_{t_1+q, q - 1}\right\Vert_{M/2}\\
    &\leq\frac{C}{\sqrt{d(B_1 - B)}}\left(\frac{\sqrt{d(B_1  -B)}}{(T - B_1)^\alpha} + \frac{CT^{5/2}}{(T-1)^{\alpha - 1}} + \frac{T^{3/2}}{B^\alpha}\right)\\
    &\leq \frac{C_1}{T^\alpha} + \frac{C_1T^2}{T^{\alpha - 1}\sqrt{(B_1-  B)}} + \frac{C_1T}{B^\alpha\sqrt{(B_1 - B)}}.
\end{aligned}
\label{eq.sec_part_omega}
\end{equation}
For any given $l = 0,1,\cdots, T$ any $q = 0,1,\cdots, T-B-1,$ and any $z =  1,2,\cdots, T-q-B,$ define 
\begin{align*}
        N_{q,l,z} = \sum_{t_1 = T-q-z+1}^{T - q}\left(
        \mathbf{E}\left[\iota_{t_1}\varphi_{t_1 + q,q - 1}\mid\mathcal{F}_{t_1 + q, l}\right] - \mathbf{E}\left[\iota_{t_1}\varphi_{t_1 + q,q - 1}\mid\mathcal{F}_{t_1 + q, l - 1}\right]\right)
\end{align*}
and $\mathcal{N}_{q,l,z}$ the $\sigma$-field generated by $e_T,e_{T-1},\cdots,e_{T-z+1-l},$ then $N_{q,l,z}$ is measurable in $\mathcal{N}_{q,l,z},$ $\mathcal{N}_{q,l,z}\subset \mathcal{N}_{q,l,z+1},$ and 
\begin{align*}
    &\mathbf{E}\left[
    N_{q,l,z+1} - N_{q,l,z} \mid \mathcal{N}_{q,l,z}
    \right]\\
    & = \mathbf{E}\left[\left(\mathbf{E}\left[\iota_{T-q-z}\varphi_{T-z,q - 1}\mid\mathcal{F}_{T-z, l}\right] - \mathbf{E}\left[\iota_{T-q-z}\varphi_{T-z,q - 1}\mid\mathcal{F}_{T-z, l - 1}\right]\right)\mid \mathcal{N}_{q,l,z}\right]\\
     & = \mathbf{E}\left[\iota_{T-q-z}\varphi_{T-z,q - 1}\mid\mathcal{F}_{T-z, l - 1}\right] - \mathbf{E}\left[\iota_{T-q-z}\varphi_{T-z,q - 1}\mid \mathcal{F}_{T-z, l - 1}\right] =0,
\end{align*}
so $N_{q,l,z}$ forms a martingale, and from Theorem 1.1 of \cite{MR0400380}, 
\begin{align*}
    &\left\Vert
    \sum_{t_1 = B + 1}^{T - q}\left(
        \mathbf{E}\left[\iota_{t_1}\varphi_{t_1 + q,q - 1}\mid\mathcal{F}_{t_1 + q, l}\right] - \mathbf{E}\left[\iota_{t_1}\varphi_{t_1 + q,q - 1}\mid\mathcal{F}_{t_1 + q, l - 1}\right]\right)
    \right\Vert_{M/4}\\
    &\leq C\sqrt{ \sum_{t_1 = B + 1}^{T - q}\left\Vert \mathbf{E}\left[\iota_{t_1}\varphi_{t_1 + q,q - 1}\mid\mathcal{F}_{t_1 + q, l}\right] - \mathbf{E}\left[\iota_{t_1}\varphi_{t_1 + q,q - 1}\mid\mathcal{F}_{t_1 + q, l - 1}\right]\right\Vert^2_{M/4}}.
\end{align*}
If $l\leq q,$ then 
\begin{align*}
    &\left\Vert \mathbf{E}\left[\iota_{t_1}\varphi_{t_1 + q,q - 1}\mid\mathcal{F}_{t_1 + q, l}\right] - \mathbf{E}\left[\iota_{t_1}\varphi_{t_1 + q,q - 1}\mid\mathcal{F}_{t_1 + q, l - 1}\right]\right\Vert_{M/4}\\
    &\leq \left\Vert\mathbf{E}\left[\iota_{t_1}\varphi_{t_1 + q,q - 1}\mid\mathcal{F}_{t_1 + q, l}\right]\right\Vert_{M/4} + \left\Vert \mathbf{E}\left[\iota_{t_1}\varphi_{t_1 + q,q - 1}\mid\mathcal{F}_{t_1 + q, l}\right]\right\Vert_{M/4}\\
    &\leq 2\left\Vert \iota_{t_1}\right\Vert_{M/2}\left\Vert \varphi_{t_1 + q,q - 1}\right\Vert_{M/2}\\
    &\leq C\sqrt{d(B_1 - B)}\left\Vert  \varphi_{t_1 + q,q - 1}\right\Vert_{M/2},
\end{align*}
making 
\begin{align*}
    &\left\Vert
    \sum_{t_1 = B + 1}^{T - q}\left(
        \mathbf{E}\left[\iota_{t_1}\varphi_{t_1 + q,q - 1}\mid\mathcal{F}_{t_1 + q, l}\right] - \mathbf{E}\left[\iota_{t_1}\varphi_{t_1 + q,q - 1}\mid\mathcal{F}_{t_1 + q, l - 1}\right]\right)
    \right\Vert_{M/4}\\
    &\leq C\sqrt{d(B_1 - B)}\sqrt{\sum_{t_1 = B + 1}^{T - q}\left\Vert  \varphi_{t_1 + q,q - 1}\right\Vert_{M/2}^2}.
\end{align*}
Otherwise if $l\geq q + 1,$ we notice that 
\begin{align*}
    \varphi_{t_1 + q,q - 1} & = \iota_{t_1 + q} - \omega_{t_1 + q, q - 1}\\
    &= \iota_{t_1 + q} - \omega_{t_1 + q, l - 1} + 
    \omega_{t_1 + q, l - 1} - 
    \omega_{t_1 + q, q - 1}= \varphi_{t_1 + q,l - 1} + \left(\omega_{t_1 + q, l - 1} - 
    \omega_{t_1 + q, q - 1}\right).
\end{align*}
Since $q\leq  l - 1,$ $\omega_{t_1 + q, l - 1} - \omega_{t_1 + q, q - 1}$ is measurable in $\mathcal{F}_{t_1 + q, l - 1},$ and 
\begin{align*}
    &\left\Vert \mathbf{E}\left[\iota_{t_1}\varphi_{t_1 + q,q - 1}\mid\mathcal{F}_{t_1 + q, l}\right] - \mathbf{E}\left[\iota_{t_1}\varphi_{t_1 + q,q - 1}\mid\mathcal{F}_{t_1 + q, l - 1}\right]\right\Vert_{M/4}\\
    &\leq \left\Vert \mathbf{E}\left[\iota_{t_1}\varphi_{t_1 + q,l - 1}\mid\mathcal{F}_{t_1 + q, l}\right] - \mathbf{E}\left[\iota_{t_1}\varphi_{t_1 + q,l - 1}\mid\mathcal{F}_{t_1 + q, l - 1}\right]\right\Vert_{M/4}\\
    &+ \left\Vert\left(\omega_{t_1 + q, l - 1} - 
    \omega_{t_1 + q, q - 1}\right)\left(\mathbf{E}\left[\iota_{t_1}\mid\mathcal{F}_{t_1, l - q}\right] - \mathbf{E}\left[\iota_{t_1}\mid\mathcal{F}_{t_1, l - 1 - q}\right]\right)\right\Vert_{M/4}\\
    &\leq 2\left\Vert \iota_{t_1}\right\Vert_{M/2}\left\Vert \varphi_{t_1 + q,l - 1}\right\Vert_{M/2}\\
    & +\left\Vert \omega_{t_1 + q, l - 1} - 
    \omega_{t_1 + q, q - 1}\right\Vert_{M/2}\left\Vert\omega_{t_1, l - q} - \omega_{t_1, l - q - 1}\right\Vert_{M/2},
\end{align*}
making 
\begin{align*}
    &\left\Vert
    \sum_{t_1 = B + 1}^{T - q}\left(
        \mathbf{E}\left[\iota_{t_1}\varphi_{t_1 + q,q - 1}\mid\mathcal{F}_{t_1 + q, l}\right] - \mathbf{E}\left[\iota_{t_1}\varphi_{t_1 + q,q - 1}\mid\mathcal{F}_{t_1 + q, l - 1}\right]\right)
    \right\Vert_{M/4}\\
    &\leq C\sqrt{d(B_1 - B)}\sqrt{\sum_{t_1 = B+1}^{T-q}\left\Vert \varphi_{t_1 + q,l - 1}\right\Vert^2_{M/2}}\\
    & + C\sqrt{\sum_{t_1 = B+1}^{T-q}\left\Vert \omega_{t_1 + q, l - 1} - 
    \omega_{t_1 + q, q - 1}\right\Vert_{M/2}^2\left\Vert\omega_{t_1, l - q} - \omega_{t_1, l - q - 1}\right\Vert_{M/2}^2}.
\end{align*}
If $l \geq q + 1 + B_1,$ then 
\begin{align*}
    \left\Vert\omega_{t_1, l - q} - \omega_{t_1, l - q - 1}\right\Vert_{M/2} &= \left\Vert\varphi_{t_1, l - q} - \varphi_{t_1, l - q - 1}\right\Vert_{M/2}\\
    &\leq \left\Vert \varphi_{t_1, l - q}\right\Vert_{M/2} + \left\Vert \varphi_{t_1, l - q - 1}\right\Vert_{M/2}\\
    &\leq \frac{C\sqrt{d(B_1 - B)}}{(l - q - B_1)^\alpha} + \frac{CT^{5/2}}{(l - q - 1)^{\alpha - 1}} +\frac{CT^{3/2}}{B^\alpha}.
\end{align*}
Otherwise if $l \leq q + B_1,$
\begin{align*}
    \left\Vert\omega_{t_1, l - q} - \omega_{t_1, l - q - 1}\right\Vert_{M/2}\leq 2\left\Vert\iota_{t_1}\right\Vert_{M/2}\leq C\sqrt{d(B_1 - B)}.
\end{align*}
Therefore,
\begin{align*}
    &\frac{1}{S^2_T}\sum_{l = 0}^T\left\Vert
        \sum_{t_1 = B + 1}^{T - q}\left(
        \mathbf{E}\left[\iota_{t_1}\varphi_{t_1 + q,q - 1}\mid\mathcal{F}_{t_1 + q, l}\right] - \mathbf{E}\left[\iota_{t_1}\varphi_{t_1 + q,q - 1}\mid\mathcal{F}_{t_1 + q, l - 1}\right]\right)
        \right\Vert_{M/4}\\
    &\leq \frac{C\sqrt{d(B_1 - B)}}{S^2_T}\left(\sum_{l = 0}^q \sqrt{\sum_{t_1 = B + 1}^{T - q}\left\Vert  \varphi_{t_1 + q,q - 1}\right\Vert_{M/2}^2} + \sum_{l = q + 1}^T\sqrt{\sum_{t_1 = B+1}^{T-q}\left\Vert \varphi_{t_1 + q,l - 1}\right\Vert^2_{M/2}}\right)\\
    & +\frac{1}{S^2_T}\sum_{l = q + 1}^T\sqrt{\sum_{t_1 = B+1}^{T-q}\left\Vert \omega_{t_1 + q, l - 1} - 
    \omega_{t_1 + q, q - 1}\right\Vert_{M/2}^2\left\Vert\omega_{t_1, l - q} - \omega_{t_1, l - q - 1}\right\Vert_{M/2}^2}\\
    &\leq \frac{C_1}{T\sqrt{d(B_1 - B)}}\left(\sum_{l = 0}^q \sqrt{\sum_{t_1 = B + 1}^{T - q}\left\Vert  \varphi_{t_1 + q,q - 1}\right\Vert_{M/2}^2} + \sum_{l = q + 1}^T\sqrt{\sum_{t_1 = B+1}^{T-q}\left\Vert \varphi_{t_1 + q,l - 1}\right\Vert^2_{M/2}}\right)\\
    & + \frac{C_1}{T\sqrt{d(B_1 - B)}}\sum_{l = q+1}^{(q + B_1)\wedge T}\sqrt{\sum_{t_1 = B+1}^{T-q}\left\Vert \omega_{t_1 + q, l - 1} - 
    \omega_{t_1 + q, q - 1}\right\Vert_{M/2}^2}\\
    &+ \frac{C_1}{Td(B_1 - B)}\sum_{l = q + B_1 + 1}^T\left(
    \frac{\sqrt{d(B_1 - B)}}{(l - q - B_1)^\alpha} + \frac{T^{5/2}}{(l - q - 1)^{\alpha - 1}} +\frac{T^{3/2}}{B^\alpha}
    \right)\sqrt{\sum_{t_1 = B+1}^{T-q}\left\Vert \omega_{t_1 + q, l - 1} - 
    \omega_{t_1 + q, q - 1}\right\Vert_{M/2}^2}.
\end{align*}
If $q\leq B_1,$ then  
$$\left\Vert  \varphi_{t_1 + q,q - 1}\right\Vert_{M/2}\leq 2\left\Vert\iota_{t_1 + q}\right\Vert_{M/2}\leq C\sqrt{d(B_1 - B)},
$$ 
notice that $\kappa_2 > \frac{4}{2\alpha - 3}$ and $\kappa_1 > 2/\alpha,$ so 
\begin{align*}
    &\frac{1}{T\sqrt{d(B_1 - B)}}\left(\sum_{l = 0}^q \sqrt{\sum_{t_1 = B + 1}^{T - q}\left\Vert  \varphi_{t_1 + q,q - 1}\right\Vert_{M/2}^2} + \sum_{l = q + 1}^T\sqrt{\sum_{t_1 = B+1}^{T-q}\left\Vert \varphi_{t_1 + q,l - 1}\right\Vert^2_{M/2}}\right)\\
    &\leq \frac{C}{T\sqrt{d(B_1 - B)}}\sum_{l = 0}^q \sqrt{T\left(\sqrt{d(B_1 - B)}\right)^2} + \frac{C}{T\sqrt{d(B_1 - B)}}\sum_{l = q + 1}^{B_1}\sqrt{T\left(\sqrt{d(B_1 - B)}\right)^2}\\
    &+ \frac{C}{T\sqrt{d(B_1 - B)}}\sum_{l = B_1 + 1}^T\sqrt{T\left(\frac{\sqrt{d(B_1  -B)}}{(l - B_1)^\alpha} + \frac{CT^{5/2}}{(l - 1)^{\alpha - 1}} + \frac{CT^{3/2}}{B^\alpha}\right)^2}\\
    &\leq \frac{C_1B_1}{\sqrt{T}} + \frac{C_1}{\sqrt{T}}\sum_{l = B_1 + 1}^T\frac{1}{(l - B_1)^\alpha} + \frac{C_1T^2}{\sqrt{d(B_1 - B)}}\sum_{l = B_1 + 1}^T\frac{1}{(l - 1)^{\alpha - 1}} + \frac{C_1T^2}{B^\alpha\sqrt{d(B_1 - B)}}\\
    &\leq \frac{C_2B_1}{\sqrt{T}} + \frac{C_2T^2}{\sqrt{d(B_1 - B)}B_1^{\alpha - 2}} + \frac{C_1T^2}{B^\alpha\sqrt{d(B_1 - B)}}\leq \frac{C_3B_1}{\sqrt{T}}.
\end{align*}
If $q \geq B_1 + 1, $ then 
\begin{align*}
    \left\Vert  \varphi_{t_1 + q,q - 1}\right\Vert_{M/2}\leq \frac{C\sqrt{d(B_1 - B)}}{(q - B_1)^\alpha} + \frac{CT^{5/2}}{(q - 1)^{\alpha - 1}} + \frac{CT^{3/2}}{B^\alpha},
\end{align*}
and 
\begin{align*}
    &\frac{1}{T\sqrt{d(B_1 - B)}}\left(\sum_{l = 0}^q \sqrt{\sum_{t_1 = B + 1}^{T - q}\left\Vert  \varphi_{t_1 + q,q - 1}\right\Vert_{M/2}^2} + \sum_{l = q + 1}^T\sqrt{\sum_{t_1 = B+1}^{T-q}\left\Vert \varphi_{t_1 + q,l - 1}\right\Vert^2_{M/2}}\right)\\
    &\leq \frac{C q}{\sqrt{Td(B_1 - B)}}\left(
    \frac{\sqrt{d(B_1 - B)}}{(q - B_1)^\alpha} + \frac{T^{5/2}}{(q - 1)^{\alpha -  1}} + \frac{T^{3/2}}{B^\alpha}
    \right)\\
    &+ \frac{C}{\sqrt{Td(B_1 - B)}}\sum_{l = q + 1}^T\left(\frac{\sqrt{d(B_1 - B)}}{(l - B_1)^\alpha} + \frac{T^{5/2}}{(l - 1)^{\alpha - 1}} + \frac{T^{3/2}}{B^\alpha}\right)\\
    &\leq \frac{Cq}{\sqrt{T}(q - B_1)^\alpha} + \frac{CT^2q}{\sqrt{d(B_1-  B)}(q - 1)^{\alpha - 1}} + \frac{CqT}{\sqrt{d(B_1 - B)}B^\alpha}\\
    & + \frac{C_1}{\sqrt{T}(q + 1 - B_1)^{\alpha - 1}}+ \frac{C_1T^2}{\sqrt{d(B_1 - B)}q^{\alpha - 2}} +  \frac{C_1T^2}{\sqrt{d(B_1 - B)}B^\alpha}.
\end{align*}
On the other hand, since 
\begin{align*}
\left\Vert \omega_{t_1 + q, l - 1} - 
    \omega_{t_1 + q, q - 1}\right\Vert_{M/2} &= 
    \left\Vert
    \varphi_{t_1 + q, q - 1} - \varphi_{t_1 + q, l - 1}
    \right\Vert_{M/2}\\
    &\leq \left\Vert \varphi_{t_1 + q, q - 1}\right\Vert_{M/2} 
    + \left\Vert \varphi_{t_1 + q, l - 1}\right\Vert_{M/2}, 
\end{align*}
and 
\begin{align*}
    \left\Vert \omega_{t_1 + q, l - 1} - 
    \omega_{t_1 + q, q - 1}\right\Vert_{M/2} &\leq \left\Vert \omega_{t_1 + q, l - 1} \right\Vert_{M/2}
    + \left\Vert
    \omega_{t_1 + q, q - 1}\right\Vert_{M/2}\\
    &\leq 2\left\Vert\iota_{t_1 + q}\right\Vert_{M/2}\leq C\sqrt{d(B_1 - B)}.
\end{align*}
If $q\leq B_1,$ then 
\begin{align*}
    &\frac{1}{T\sqrt{d(B_1 - B)}}\sum_{l = q+1}^{(q + B_1)\wedge T}\sqrt{\sum_{t_1 = B+1}^{T-q}\left\Vert \omega_{t_1 + q, l - 1} - 
    \omega_{t_1 + q, q - 1}\right\Vert_{M/2}^2}\\
    &\leq \frac{CB_1}{\sqrt{Td(B_1 - B)}}\sqrt{d(B_1 - B)}\leq \frac{CB_1}{\sqrt{T}},
\end{align*}
and
\begin{align*}
    &\frac{1}{Td(B_1 - B)}\sum_{l = q + B_1 + 1}^T\left(
    \frac{\sqrt{d(B_1 - B)}}{(l - q - B_1)^\alpha} + \frac{T^{5/2}}{(l - q - 1)^{\alpha - 1}} +\frac{T^{3/2}}{B^\alpha}
    \right)\sqrt{\sum_{t_1 = B+1}^{T-q}\left\Vert \omega_{t_1 + q, l - 1} - 
    \omega_{t_1 + q, q - 1}\right\Vert_{M/2}^2}\\
    &\leq \frac{C}{\sqrt{Td(B_1 - B)}}\sum_{l = q + B_1 + 1}^T\left(
    \frac{\sqrt{d(B_1 - B)}}{(l - q - B_1)^\alpha} + \frac{T^{5/2}}{(l - q - 1)^{\alpha - 1}} +\frac{T^{3/2}}{B^\alpha}
    \right)\\
    &\leq \frac{C_1}{\sqrt{T}}+ \frac{C_1T^2}{\sqrt{d(B_1 - B)}B_1^{\alpha - 2}} + \frac{C_1T^2}{B^\alpha\sqrt{d(B_1 - B)}}.
\end{align*}
If $q\geq B_1 +1,$ then 
\begin{align*}
    &\frac{1}{T\sqrt{d(B_1 - B)}}\sum_{l = q+1}^{(q + B_1)\wedge T}\sqrt{\sum_{t_1 = B+1}^{T-q}\left\Vert \omega_{t_1 + q, l - 1} - 
    \omega_{t_1 + q, q - 1}\right\Vert_{M/2}^2}\\
    &\leq \frac{CB_1}{\sqrt{Td(B_1 - B)}}\left(\frac{\sqrt{d(B_1 - B)}}{(q - B_1)^\alpha} + \frac{T^{5/2}}{(q - 1)^{\alpha - 1}} + \frac{T^{3/2}}{B^\alpha}\right)\\
    &\leq \frac{CB_1}{\sqrt{T}(q  - B_1)^\alpha} +\frac{CB_1T^2}{\sqrt{d(B_1 - B)}(q - 1)^{\alpha - 1}} 
    + \frac{CB_1T}{\sqrt{d(B_1 - B)}B^\alpha}. 
\end{align*}
Since $\kappa_2 > \frac{4}{2\alpha - 3},$ we also have 
\begin{align*}
    &\frac{1}{Td(B_1 - B)}\sum_{l = q + B_1 + 1}^T\left(
    \frac{\sqrt{d(B_1 - B)}}{(l - q - B_1)^\alpha} + \frac{T^{5/2}}{(l - q - 1)^{\alpha - 1}} +\frac{T^{3/2}}{B^\alpha}
    \right)\sqrt{\sum_{t_1 = B+1}^{T-q}\left\Vert \omega_{t_1 + q, l - 1} - 
    \omega_{t_1 + q, q - 1}\right\Vert_{M/2}^2}\\
    &\leq \frac{C}{\sqrt{T}d(B_1 - B)}\left(\sum_{l = q + B_1 + 1}^T\left(
    \frac{\sqrt{d(B_1 - B)}}{(l - q - B_1)^\alpha} + \frac{T^{5/2}}{(l - q - 1)^{\alpha - 1}} +\frac{T^{3/2}}{B^\alpha}
    \right)\right)\\
    &\times\left(\frac{\sqrt{d(B_1 - B)}}{(q - B_1)^\alpha} + \frac{T^{5/2}}{(q - 1)^{\alpha - 1}} + \frac{T^{3/2}}{B^\alpha}\right)\\
    &\leq \left(\frac{C}{\sqrt{Td(B_1 - B)}(q - B_1)^\alpha} + \frac{CT^2}{d(B_1 -B)(q - 1)^{\alpha - 1}} + \frac{CT}{dB^\alpha(B_1 -  B)}\right)\\
    &\times\left(\sqrt{d(B_1 - B)} +\frac{T^{5/2}}{B_1^{\alpha - 2}} + \frac{T^{5/2}}{B^\alpha}\right)\\
    &\leq \frac{C_1}{\sqrt{T}(q - B_1)^\alpha} + \frac{CT^2}{\sqrt{d(B_1 -B)}(q - 1)^{\alpha - 1}} + \frac{CT}{B^\alpha\sqrt{d(B_1 - B)}}.
\end{align*}
We remark that the last inequality holds because 
$$
\frac{T^{5/2}}{B_1^{\alpha - 2}}\leq C\sqrt{d(B_1 - B)}\quad\text{and}\quad \frac{T^{5/2}}{B^\alpha}\leq C\sqrt{d(B_1 - B)}
$$
for a constant $C.$

From these observations, if $q\leq B_1,$ for $\kappa_2 > \frac{4}{2\alpha - 3},$ we have  
\begin{equation}
    \begin{aligned}
        &\frac{1}{S^2_T}\sum_{l = 0}^T\left\Vert
        \sum_{t_1 = B + 1}^{T - q}\left(
        \mathbf{E}\left[\iota_{t_1}\varphi_{t_1 + q,q - 1}\mid\mathcal{F}_{t_1 + q, l}\right] - \mathbf{E}\left[\iota_{t_1}\varphi_{t_1 + q,q - 1}\mid\mathcal{F}_{t_1 + q, l - 1}\right]\right)
        \right\Vert_{M/4}\\
       &\leq \frac{CB_1}{\sqrt{T}} + \frac{CT^2}{\sqrt{d(B_1 - B)}B_1^{\alpha - 2}} + \frac{CT^2}{B^\alpha\sqrt{d(B_1 - B)}}\leq \frac{C_1B_1}{\sqrt{T}}.
    \end{aligned}
    \label{eq.thi_part_omega}
\end{equation}
On the other hand, if $B_1 + 1\leq q \leq T - B - 1,$ we have 
\begin{equation}
    \begin{aligned}
        &\frac{1}{S^2_T}\sum_{l = 0}^T\left\Vert
        \sum_{t_1 = B + 1}^{T - q}\left(
        \mathbf{E}\left[\iota_{t_1}\varphi_{t_1 + q,q - 1}\mid\mathcal{F}_{t_1 + q, l}\right] - \mathbf{E}\left[\iota_{t_1}\varphi_{t_1 + q,q - 1}\mid\mathcal{F}_{t_1 + q, l - 1}\right]\right)
        \right\Vert_{M/4}\\
        &\leq \frac{CB_1}{\sqrt{T}(q - B_1)^\alpha} +\frac{C}{\sqrt{T}(q - B_1)^{\alpha - 1}} + \frac{CT^2}{\sqrt{d(B_1 - B)}(q-1)^{\alpha - 2}} \\
        &+ \frac{CB_1T^2}{\sqrt{d(B_1 - B)}(q-1)^{\alpha - 1}} + \frac{CT^2}{\sqrt{d(B_1 -  B)}B^\alpha}.
    \end{aligned}
    \label{eq.thi_part_omega_greater}
\end{equation}
Finally,  since 
\begin{align*}
    \varphi_{t_1 + q,q - 1} = \iota_{t_1+q} - \omega_{t_1 + q, q - 1} =  \varphi_{t_1 + q, T} + \omega_{t_1 + q, T} - \omega_{t_1 + q, q - 1},
\end{align*}
and $\omega_{t_1 + q, T} - \omega_{t_1 + q, q - 1}$ is measurable in $\mathcal{F}_{t_1 + q, T}.$ Therefore, 
\begin{align*}
    &\frac{1}{S^2_T}\left\Vert
        \sum_{t_1 = B + 1}^{T - q}\left(\iota_{t_1}\varphi_{t_1 + q,q - 1} - \mathbf{E}\left[\iota_{t_1}\varphi_{t_1 + q,q - 1}\mid\mathcal{F}_{t_1 + q, T}\right]\right)
        \right\Vert_{M/4}\\
    &\leq \frac{1}{S^2_T}\left\Vert
        \sum_{t_1 = B + 1}^{T - q}\left(\iota_{t_1}\varphi_{t_1 + q,T} - \mathbf{E}\left[\iota_{t_1}\varphi_{t_1 + q,T}\mid\mathcal{F}_{t_1 + q, T}\right]\right)
        \right\Vert_{M/4}\\
    & + \frac{1}{S^2_T}\left\Vert
    \sum_{t_1 = B + 1}^{T - q}\left(\omega_{t_1 + q, T} - \omega_{t_1 + q, q - 1}\right)
    \left(\iota_{t_1} - \mathbf{E}\left[\iota_{t_1}\mid \mathcal{F}_{t_1, T - q}\right]\right)
    \right\Vert_{M/4}\\
    &\leq \frac{2}{S^2_T}\sum_{t_1 = B + 1}^{T - q}\left\Vert \iota_{t_1}\varphi_{t_1 + q,T}\right\Vert_{M/4}
    + \frac{1}{S^2_T}\sum_{t_1 = B + 1}^{T - q}\left\Vert
    \left(\omega_{t_1 + q, T} - \omega_{t_1 + q, q - 1}\right)
    \left(\iota_{t_1} - \mathbf{E}\left[\iota_{t_1}\mid \mathcal{F}_{t_1, T - q}\right]\right)
    \right\Vert_{M/4}\\
    &\leq \frac{2}{S^2_T}\sum_{t_1 = B + 1}^{T - q}\left\Vert \iota_{t_1}\right\Vert_{M/2}\left\Vert\varphi_{t_1 + q,T}\right\Vert_{M/2} + \frac{1}{S^2_T}\sum_{t_1 = B + 1}^{T - q}\left\Vert \omega_{t_1 + q, T} - \omega_{t_1 + q, q - 1}\right\Vert_{M/2}\left\Vert 
    \varphi_{t_1,T-q}\right\Vert_{M/2}.
\end{align*}
Notice that
\begin{align*}
    &\frac{1}{S^2_T}\sum_{t_1 = B + 1}^{T - q}\left\Vert \iota_{t_1}\right\Vert_{M/2}\left\Vert\varphi_{t_1 + q,T}\right\Vert_{M/2}\\
    &\leq \frac{CT}{Td(B_1  -B)}\sqrt{d(B_1 - B)}\left(
    \frac{\sqrt{d(B_1  -B)}}{(T + 1 - B_1)^\alpha} + \frac{T^{5/2}}{T^{\alpha - 1}} + \frac{T^{3/2}}{B^\alpha}
    \right)\\
    & = \frac{C}{(T + 1 - B_1)^\alpha } + \frac{CT^{5/2}}{T^{\alpha - 1}\sqrt{d(B_1 - B)}} + \frac{CT^{3/2}}{B^\alpha\sqrt{d(B_1 - B)}}.
\end{align*}
If $q\leq T / 2,$ then $T-q\geq T/2 >B_1$  for sufficiently large $T,$ and 
\begin{align*}
    &\frac{1}{S^2_T}\sum_{t_1 = B + 1}^{T - q}\left\Vert \omega_{t_1 + q, T} - \omega_{t_1 + q, q - 1}\right\Vert_{M/2}\left\Vert \varphi_{t_1,T-q}\right\Vert_{M/2}\\
    &\leq \frac{CT}{Td(B_1 - B)}\sqrt{d(B_1 -B)}\left(
    \frac{\sqrt{d(B_1  -B)}}{(T-q + 1 - B_1)^\alpha} + \frac{T^{5/2}}{(T-q)^{\alpha - 1}} + \frac{T^{3/2}}{B^\alpha}\right)\\
    &\leq \frac{C_1}{T^\alpha} + \frac{C_1T^{5/2}}{T^{\alpha - 1}\sqrt{d(B_1  -B)}} + \frac{CT^{3/2}}{B^\alpha\sqrt{d(B_1 - B)}}.
\end{align*}
Otherwise if $q > T/2,$ then 
\begin{align*}
    &\frac{1}{S^2_T}\sum_{t_1 = B + 1}^{T - q}\left\Vert \omega_{t_1 + q, T} - \omega_{t_1 + q, q - 1}\right\Vert_{M/2}\left\Vert \varphi_{t_1,T-q}\right\Vert_{M/2}\\
    &\leq \frac{CT}{Td(B_1 - B)}\left(\frac{\sqrt{d(B_1  -B)}}{(q - B_1)^\alpha} + \frac{T^{5/2}}{(q-1)^{\alpha - 1}} + \frac{T^{3/2}}{B^\alpha}\right)\sqrt{d(B_1 - B)}\\
    &= \frac{C}{(q - B_1)^\alpha} + \frac{CT^{5/2}}{\sqrt{d(B_1 - B)}(q-1)^{\alpha - 1}} +\frac{CT^{3/2}}{B^\alpha\sqrt{d(B_1  -B)}}\\
    &\leq \frac{C_1}{T^\alpha} + \frac{C_1T^{5/2}}{\sqrt{d(B_1 - B)}T^{\alpha - 1}} + \frac{CT^{3/2}}{B^\alpha\sqrt{d(B_1  -B)}}.
\end{align*}
Therefore, we have for any $q,$
\begin{equation}
    \begin{aligned}
        &\frac{1}{S^2_T}\left\Vert
        \sum_{t_1 = B + 1}^{T - q}\left(\iota_{t_1}\varphi_{t_1 + q,q - 1} - \mathbf{E}\left[\iota_{t_1}\varphi_{t_1 + q,q - 1}\mid\mathcal{F}_{t_1 + q, T}\right]\right)
        \right\Vert_{M/4}\\
        &\leq \frac{C}{(T + 1 - B_1)^\alpha } + \frac{CT^{5/2}}{T^{\alpha - 1}\sqrt{d(B_1 - B)}} + \frac{CT^{3/2}}{B^\alpha\sqrt{d(B_1 - B)}}\\
        & + \frac{C}{T^\alpha} + \frac{CT^{5/2}}{T^{\alpha - 1}\sqrt{d(B_1  -B)}} + \frac{CT^{3/2}}{B^\alpha\sqrt{d(B_1 - B)}}\\
        &\leq \frac{C_1}{T^\alpha} +\frac{C_1T^{5/2}}{T^{\alpha - 1}\sqrt{d(B_1  -B)}} + \frac{C_1T^{3/2}}{B^\alpha\sqrt{d(B_1 - B)}}.
    \end{aligned}
    \label{eq.fourth_part_omega}
\end{equation}
If $q\leq B_1,$ from \eqref{eq.fir_part_omega}, \eqref{eq.sec_part_omega}, \eqref{eq.thi_part_omega},  \eqref{eq.fourth_part_omega},
\begin{equation}
    \begin{aligned}
        &\frac{1}{S^2_T}\left\Vert
        \sum_{t_1 = B + 1}^{T - q}\left(\iota_{t_1}\iota_{t_1+q} - \mathbf{E}\left[\iota_{t_1}\iota_{t_1+q}\right]\right)
        \right\Vert_{M/4}
        &\leq \frac{CB_1}{\sqrt{T}}.
    \end{aligned}
\end{equation}
On the other hand, if $B_1 + 1\leq q,$ from \eqref{eq.fir_part_omega}, \eqref{eq.sec_part_omega}, \eqref{eq.thi_part_omega_greater}, \eqref{eq.fourth_part_omega},
\begin{equation}
    \begin{aligned}
        &\frac{1}{S^2_T}\left\Vert
        \sum_{t_1 = B + 1}^{T - q}\left(\iota_{t_1}\iota_{t_1+q} - \mathbf{E}\left[\iota_{t_1}\iota_{t_1+q}\right]\right)
        \right\Vert_{M/4}\\
        &\leq \frac{C B_1}{\sqrt{T}} + \frac{CB_1}{\sqrt{T}(q - B_1)^\alpha} +\frac{C}{\sqrt{T}(q - B_1)^{\alpha - 1}} \\
        &+ \frac{CT^2}{\sqrt{d(B_1 - B)}(q-1)^{\alpha - 2}}
        + \frac{CB_1T^2}{\sqrt{d(B_1 - B)}(q-1)^{\alpha - 1}}\\
        &\leq \frac{C_1 B_1}{\sqrt{T}}.
    \end{aligned}
\end{equation}
Therefore, we have 
\begin{equation}
    \begin{aligned}
        &\left\Vert\frac{1}{S^2_T}\sum_{t_1 = B + 1}^T\sum_{t_2 = B+1}^T \mathcal{K}\left(\frac{t_1 - t_2}{H}\right)\left(\iota_{t_1}\iota_{t_2} - \mathbf{E}\left[\iota_{t_1}\iota_{t_2}\right]\right)\right\Vert_{M/4}\\
        &\leq \frac{CB_1}{\sqrt{T}}\sum_{q = 0}^{T-B-1} \mathcal{K}\left(\frac{q}{H}\right) 
        \leq \frac{C_1B_1H}{\sqrt{T}} .
    \end{aligned}
    \label{eq.var_diff}
\end{equation}
From \eqref{eq.moment_iota} and \eqref{eq.bias_theta}, 
\begin{align*}
    &\frac{1}{S_T^2}\left\Vert\sum_{t_1 = B + 1}^T\sum_{t_2 = B+1}^T \mathcal{K}\left(\frac{t_1 - t_2}{H}\right)\iota_{t_1}\mathbf{E}\left[\vartheta_{t_2}\right]\right\Vert_{M/2}\\
    &\leq \frac{1}{S_T^2}\sum_{t_1 = B + 1}^T\sum_{t_2 = B+1}^T \mathcal{K}\left(\frac{t_1 - t_2}{H}\right)\left\Vert \iota_{t_1}\right\Vert_{M/2}\left\vert \mathbf{E}\left[\vartheta_{t_2}\right]\right\vert\\
    &\leq \frac{C}{Td(B_1 - B)}\sum_{t_1 = B + 1}^T\sum_{t_2 = B+1}^T \mathcal{K}\left(\frac{t_1 - t_2}{H}\right)\sqrt{d(B_1 - B)}\frac{d}{B^{\alpha - 1}}\\
    &\leq \frac{C_1TH}{\sqrt{d(B_1 - B)}B^{\alpha - 1}},
\end{align*}
and 
\begin{align*}
    &\frac{1}{S^2_T}\left\Vert\sum_{t_1 = B + 1}^T\sum_{t_2 = B+1}^T \mathcal{K}\left(\frac{t_1 - t_2}{H}\right)\iota_{t_2}\mathbf{E}\left[\vartheta_{t_1}\right]\right\Vert_{M/2}\\
    &\leq \frac{1}{S^2_T}\sum_{t_1 = B + 1}^T\sum_{t_2 = B+1}^T\mathcal{K}\left(\frac{t_1 - t_2}{H}\right)\left\Vert \iota_{t_2}\right\Vert_{M/2}\left\vert \mathbf{E}\left[\vartheta_{t_1}\right]\right\vert\\
&\leq \frac{CTH}{\sqrt{d(B_1 - B)}B^{\alpha - 1}}.
\end{align*}
Therefore, we have 
\begin{equation}
    \begin{aligned}
        &\left\Vert\frac{1}{S^2_T}\sum_{t_1 = B + 1}^T\sum_{t_2 = B+1}^T \mathcal{K}\left(\frac{t_1 - t_2}{H}\right)\left(\vartheta_{t_1}\vartheta_{t_2} - \mathbf{E}\left[\vartheta_{t_1}\vartheta_{t_2}\right]\right)\right\Vert_{M/4}\\
        &\leq \frac{CB_1H}{\sqrt{T}}  + \frac{CTH}{\sqrt{d(B_1 - B)}B^{\alpha - 1}}\leq \frac{CB_1H}{\sqrt{T}}.
    \end{aligned}
    \label{eq.moment_theta_2}
\end{equation}
From \eqref{eq.decomposition_var_Q}, \eqref{eq.abs_Theta}, \eqref{eq.bias_diff}, and \eqref{eq.moment_theta_2}, since $\kappa_2 < 1/6$ and $\frac{1}{6} > \kappa_1> \frac{2}{\alpha},$
\begin{equation}
    \begin{aligned}
        &\left\Vert\frac{1}{S^2_T}\sum_{t_1 = B + 1}^T\sum_{t_2 = B+1}^T \mathcal{K}\left(\frac{t_1 - t_2}{H}\right)\vartheta_{t_1}\vartheta_{t_2}  - \mathrm{Var}\left(\frac{Q}{S_T}\right)\right\Vert_{M/4}\\
        &\leq
        \left\Vert\frac{1}{S^2_T}\sum_{t_1 = B + 1}^T\sum_{t_2 = B+1}^T \mathcal{K}\left(\frac{t_1 - t_2}{H}\right)\vartheta_{t_1}\vartheta_{t_2}  - \Gamma\right\Vert_{M/4} + \left\vert\Theta\right\vert
        \\
        &\leq \frac{CB_1H}{\sqrt{T}} + \frac{C}{T^{\frac{1}{3} - 2\kappa_2}\log(T)} + \frac{CT^{5/3}}{\log(T)T^{\kappa_2(\alpha - \frac{5}{2})}} + \frac{C}{T^{\kappa_2 / 2}}\\
        &  + \frac{CT^2}{(B_1 - B)B^{2\alpha - 2}}\\
        &\leq \frac{C_1B_1H}{\sqrt{T}}
        + \frac{C}{T^{\frac{1}{3} - 2\kappa_2}\log(T)}  + \frac{C}{T^{\kappa_2 / 2}}.
    \end{aligned}
\end{equation}
Since 
\begin{align*}
    &\frac{HB_1}{\sqrt{T}} = O\left(\frac{\log(T)}{T^{1/6 - \kappa_2}}\right),
\end{align*}
we have 
\begin{equation}
    \begin{aligned}
        &\left\Vert\frac{1}{S^2_T}\sum_{t_1 = B + 1}^T\sum_{t_2 = B+1}^T \mathcal{K}\left(\frac{t_1 - t_2}{H}\right)\vartheta_{t_1}\vartheta_{t_2}  - \mathrm{Var}\left(\frac{Q}{S_T}\right)\right\Vert_{M/4}\\
        &= O\left(\frac{\log(T)}{T^{1/6 - \kappa_2}} + \frac{1}{T^{\kappa_2 / 2}}\right),
    \end{aligned}
\end{equation}
which proves \eqref{eq.var_esti}
\end{proof}

\section{Proofs of theoretical results in Section \ref{section.Asymptotic_testing}}
In the proofs of the results in Section \ref{section.Asymptotic_testing}, we introduce additional notation for the filters: For any $k = 1,2,\cdots,K,$ $t\in\mathbf{Z},$ and $\ell= 0,1,2,\cdots,$ define $\mathcal{F}_{t,\ell}^{(k)}$ as the $\sigma$-field generated by $e_{t,k}, e_{t-1,k},\cdots,e_{t - \ell,k}.$ This new notation is introduced to reflect the fact that the data in this section are generated from $K$ different populations.

The proof of Theorem \ref{theorem.consistent_ANOVA} leverages Theorem \ref{theorem.consistent_quadratic} to establish both the consistency and the asymptotic distributional results for the test statistics $\widehat{R}.$

\begin{proof}[Proof of Theorem \ref{theorem.consistent_ANOVA}]

\textbf{1. The proof of equation \eqref{eq.consistency_R}.}  From \eqref{eq.def_RK}, 
\begin{align*}
    \widehat{R}  - \frac{1}{\sqrt{d}}\sum_{k = 2}^K\left\vert \boldsymbol{\mu}_k  - \boldsymbol{\mu}_1\right\vert_2^2 = \sum_{k = 2}^K \left(\widehat{R}_k  - \frac{1}{\sqrt{d}}\vert \boldsymbol{\mu}_k  - \boldsymbol{\mu}_1\vert_2^2\right),
\end{align*}
and 
\begin{equation}
\begin{aligned}
    &\widehat{R}_k - \frac{1}{\sqrt{d}}\left\vert
    \boldsymbol{\mu}_k  - \boldsymbol{\mu}_1
    \right\vert_2^2\\
    &= \frac{1}{V_k\sqrt{d}}\sum_{B\leq \vert t_1 - t_2\vert\leq B_1}^{T_k}\boldsymbol{\epsilon}_{t_1,k}^\top \boldsymbol{\epsilon}_{t_2,k} + \frac{2}{V_k\sqrt{d}}\sum_{B\leq \vert t_1 - t_2\vert\leq B_1}^{T_k}\boldsymbol{\mu}_k^\top\boldsymbol{\epsilon}_{t_1,k}\\
    & + \frac{1}{V_1\sqrt{d}}\sum_{B\leq \vert t_1 - t_2\vert\leq B_1}^{T_1}\boldsymbol{\epsilon}_{t_1,1}^\top \boldsymbol{\epsilon}_{t_2,1} + \frac{2}{V_1\sqrt{d}}\sum_{B\leq \vert t_1 - t_2\vert\leq B_1}^{T_1}\boldsymbol{\mu}_1^\top\boldsymbol{\epsilon}_{t_1,1}\\
    & - \frac{2}{T_kT_1\sqrt{d}}\sum_{t_1 = 1}^{T_k}\sum_{t_2 = 1}^{T_1}\boldsymbol{\epsilon}_{t_1,k}^\top\boldsymbol{\epsilon}_{t_2,1} - \frac{2}{T_k\sqrt{d}}\sum_{t_1 = 1}^{T_k}\boldsymbol{\mu}_1^\top\boldsymbol{\epsilon}_{t_1,k} - \frac{2}{T_1\sqrt{d}}\sum_{t_1 = 1}^{T_1}\boldsymbol{\mu}_k^\top\boldsymbol{\epsilon}_{t_1,1}.
\end{aligned}
\label{eq.expand_R_k}
\end{equation}
Notice that 
\begin{equation}
\begin{aligned}
    &\left\Vert
    \frac{1}{V_k\sqrt{d}}\sum_{B\leq \vert t_1 - t_2\vert\leq B_1}^{T_k}\boldsymbol{\epsilon}_{t_1,k}^\top \boldsymbol{\epsilon}_{t_2,k}
    \right\Vert_{M/2}\\
    &\leq \left\Vert
    \frac{1}{V_k\sqrt{d}}\sum_{B\leq \vert t_1 - t_2\vert\leq B_1}^{T_k}\left(\boldsymbol{\epsilon}_{t_1,k}^\top \boldsymbol{\epsilon}_{t_2,k} - \mathbf{E}\left[\boldsymbol{\epsilon}_{t_1,k}^\top \boldsymbol{\epsilon}_{t_2,k}\right]\right)
    \right\Vert_{M/2}\\
    & + \frac{1}{V_k\sqrt{d}}\sum_{B\leq \vert t_1 - t_2\vert\leq B_1}^{T_k}\left\vert \mathbf{E}\left[\boldsymbol{\epsilon}_{t_1,k}^\top \boldsymbol{\epsilon}_{t_2,k}\right]\right\vert.
\end{aligned}
\end{equation}
From Theorem \ref{theorem.consistent_quadratic}, for any $k = 1,\cdots, K,$
\begin{equation}
\begin{aligned}
    &\left\Vert
\frac{1}{V_k\sqrt{d}}\sum_{B\leq \vert t_1 -  t_2\vert\leq B_1}^{T_k}\left(\boldsymbol{\epsilon}_{t_1,k}^\top\boldsymbol{\epsilon}_{t_2,k} - \mathbf{E}\left[\boldsymbol{\epsilon}_{t_1,k}^\top\boldsymbol{\epsilon}_{t_2,k}\right]\right)
    \right\Vert_{M/2}\\
    &\leq \frac{C}{V_k\sqrt{d}}\left(\sqrt{d\times \sum_{B\leq \vert t_1 -  t_2\vert\leq B_1}^{T_k} 1^2} + \frac{T^{5/2}}{B^\alpha} + \frac{T^{3/2}}{B^{\alpha - 2}}\right) = O\left(\frac{1}{\sqrt{V_k}} \right).
\end{aligned}
\label{eq.V_k_sqrt}
\end{equation}
From \eqref{eq.covariance},
\begin{equation}
\begin{aligned}
    \left\vert
    \frac{1}{V_k\sqrt{d}}\sum_{B\leq \vert t_1 -  t_2\vert\leq B_1}^{T_k}\mathbf{E}\left[\boldsymbol{\epsilon}_{t_1,k}^\top\boldsymbol{\epsilon}_{t_2,k}\right]
    \right\vert &\leq \frac{1}{V_k\sqrt{d}}\sum_{B\leq \vert t_1 -  t_2\vert\leq B_1}^{T_k}\sum_{j = 1}^d \left\vert
    \mathbf{E}\left[\boldsymbol{\epsilon}_{t_1,k}^{(j)}\boldsymbol{\epsilon}_{t_2,k}^{(j)}\right]
    \right\vert\\
    &\leq \frac{Cd}{V_k\sqrt{d}}\sum_{B\leq \vert t_1 -  t_2\vert\leq B_1}^{T_k} \frac{1}{(1 + \vert t_1 - t_2\vert)^\alpha}\\
    &\leq \frac{C_1dT_k}{V_k\sqrt{d} B^{\alpha - 1}} = O\left(\frac{B}{B_1 \mathcal{T}_\circ^{3/2}}\right).
\end{aligned}
\label{eq.V_k_bias}
\end{equation}
From Lemma \ref{lemma.linear_form}, 
\begin{align*}
    \left\Vert\frac{1}{V_k\sqrt{d}}\sum_{B\leq \vert t_1 -  t_2\vert\leq B_1}^{T_k}\boldsymbol{\mu}_k^\top\boldsymbol{\epsilon}_{t_1,k} \right\Vert_{M / 2} 
    &= \frac{1}{V_k\sqrt{d}}\left\Vert
    \sum_{t_1 =  1}^{T_k}\boldsymbol{\epsilon}_{t_1,k}^\top\left(\boldsymbol{\mu}_k\sum_{B\leq \vert t_2 -t_1\vert\leq B_1}1\right)
    \right\Vert_{M / 2}\\
    &\leq \frac{C}{V_k\sqrt{d}}\sqrt{\sum_{t_1 =  1}^{T_k}\left\vert\boldsymbol{\mu}_k\right\vert_2^2\times \left(\sum_{B\leq \vert t_2 -t_1\vert\leq B_1}1\right)^2}.
\end{align*}
For any given $t_1 = 1,2,\cdots, T_k,$ 
\begin{align*}
    \sum_{B\leq \vert t_2 -t_1\vert\leq B_1}1 = \sum_{t_2 = t_1 + B}^{(t_1 + B_1)\wedge T_k}1 + \sum_{t_2 = (t_1  -B_1)\vee 1}^{t_1 - B}1\asymp (B_1 - B),
\end{align*}
so 
\begin{equation}
    \left\Vert\frac{1}{V_k\sqrt{d}}\sum_{B\leq \vert t_1 -  t_2\vert\leq B_1}^{T_k}\boldsymbol{\mu}_k^\top\boldsymbol{\epsilon}_{t_1,k}\right\Vert_{M / 2} 
    \leq \frac{C\vert\boldsymbol{\mu}_k\vert_2\sqrt{T_k}\times (B_1  - B)}{V_k\sqrt{d}} = O\left(\frac{1}{\sqrt{\mathcal{T}_\circ}}\right).
\label{eq.V_k_sqrt_d}
\end{equation}
From Lemma \ref{lemma.linear_form}, 
\begin{align*}
    \left\Vert
    \frac{1}{T_k\sqrt{d}}\sum_{t_1 = 1}^{T_k}\boldsymbol{\mu}_1^\top\boldsymbol{\epsilon}_{t_1,k}
    \right\Vert_{M}\leq \frac{C}{T_k\sqrt{d}}\sqrt{\sum_{t_1 = 1}^{T_k}\left\vert\boldsymbol{\mu}_1\right\vert^2_2}\leq \frac{C_1}{\sqrt{\mathcal{T}_\circ}},
\end{align*}
and 
\begin{align*}
    \left\Vert\frac{1}{T_1\sqrt{d}}\sum_{t_1 = 1}^{T_1}\boldsymbol{\mu}_k^\top\boldsymbol{\epsilon}_{t_1,1}\right\Vert_{M}\leq \frac{C}{T_1\sqrt{d}}\sqrt{\sum_{t_1 = 1}^{T_1}\left\vert\boldsymbol{\mu}_k\right\vert^2_2}\leq \frac{C_1}{\sqrt{\mathcal{T}_\circ}}.
\end{align*}
Define $\overline{\boldsymbol{\epsilon}}_{1} =  \frac{1}{T_1}\sum_{t = 1}^{T_1}\boldsymbol{\epsilon}_{t,1},$ then 
\begin{align*}
    \left\Vert
    \frac{1}{T_1T_k\sqrt{d}}\sum_{t_1 =1}^{T_k}\sum_{t_2 = 1}^{T_1}\boldsymbol{\epsilon}_{t_1, k}^\top \boldsymbol{\epsilon}_{t_2, 1}
    \right\Vert_{M/2} = \left\Vert\frac{1}{T_k\sqrt{d}}\sum_{t_1 =1}^{T_k}\overline{\boldsymbol{\epsilon}}_{1}^\top\boldsymbol{\epsilon}_{t_1, k}\right\Vert_{M/2}.
\end{align*}
Since $\overline{\boldsymbol{\epsilon}}_{1}$ is independent of $\boldsymbol{\epsilon}_{t_1, k}$ for $k\geq 2,$ from Lemma \ref{lemma.linear_form} and Theorem 1.7 in \cite{MR2002723}, for any vector $\boldsymbol{\tau}\in\mathbf{R}^d,$
\begin{align*}
    \mathbf{E}
    \left[
    \left\vert 
    \frac{1}{T_k\sqrt{d}}\sum_{t_1 =1}^{T_k}\overline{\boldsymbol{\epsilon}}_{1}^\top\boldsymbol{\epsilon}_{t_1, k}
    \right\vert^{M/2}\mid \overline{\boldsymbol{\epsilon}}_{1} = \boldsymbol{\tau}
    \right] & = 
    \mathbf{E}\left[\left\vert 
    \frac{1}{T_k\sqrt{d}}\sum_{t_1 =1}^{T_k}\boldsymbol{\tau}^\top\boldsymbol{\epsilon}_{t_1, k}
    \right\vert^{M/2}\right]
    \\
    &\leq
    \frac{C\left\vert \boldsymbol{\tau}\right\vert_2^{M/2}}{\left(\sqrt{T_kd}\right)^{M/2}}
    = \frac{C\left\vert\overline{\boldsymbol{\epsilon}}_{1}\right\vert_2^{M/2}}{\left(\sqrt{T_kd}\right)^{M/2}},
\end{align*}
so 
\begin{align*}
    \left\Vert\frac{1}{T_k\sqrt{d}}\sum_{t_1 =1}^{T_k}\overline{\boldsymbol{\epsilon}}_{1}^\top\boldsymbol{\epsilon}_{t_1, k} \right\Vert_{M/2}\leq \frac{C\left\Vert\ 
    \vert \overline{\boldsymbol{\epsilon}}_{1}\vert_2\ 
    \right 
    \Vert_{M/2}}{\sqrt{T_k d}}\leq \frac{C}{\sqrt{T_k d}}\sqrt{\sum_{j = 1}^d \left\Vert\overline{\boldsymbol{\epsilon}}_{1}^{(j)}\right\Vert_{M/2}^2}.
\end{align*}
From Lemma \ref{lemma.linear_form}, 
$$
\left\Vert\overline{\boldsymbol{\epsilon}}_{1}^{(j)}\right\Vert_{M/2} = \frac{1}{T_1}\left\Vert\sum_{t = 1}^{T_1}\boldsymbol{\epsilon}^{(j)}_{t, 1}\right\Vert
\leq \frac{C}{T_1}\sqrt{\sum_{t = 1}^{T_1}1^2} = \frac{C}{\sqrt{T_1}},
$$ 
so 
\begin{equation}
\begin{aligned}
    \left\Vert
    \frac{1}{T_1T_k\sqrt{d}}\sum_{t_1 =1}^{T_k}\sum_{t_2 = 1}^{T_1}\boldsymbol{\epsilon}_{t_1, k}^\top \boldsymbol{\epsilon}_{t_2, 1}
    \right\Vert_{M/2}\leq \frac{C}{\sqrt{T_1T_k}} = O\left(\frac{1}{\mathcal{T}_\circ}\right).
\end{aligned}
\label{eq.T_kT_1}
\end{equation}
From \eqref{eq.V_k_sqrt}, \eqref{eq.V_k_bias}, \eqref{eq.V_k_sqrt_d}, and \eqref{eq.T_kT_1}, we have 
\begin{equation}
   \left\Vert
    \ 
    \widehat{R}_k  - \frac{1}{\sqrt{d}}\left\vert \boldsymbol{\mu}_k  - \boldsymbol{\mu}_1\right \vert_2^2
    \ 
    \right\Vert_{M/2}\leq \frac{C}{\sqrt{\mathcal{T}_\circ}},
\end{equation}
which proves \eqref{eq.consistency_R}.

\textbf{2. The proof of equation \eqref{eq.prob_res}.} From \eqref{eq.Prob_to_E},
\begin{equation}
\begin{aligned}
    &\sup_{x\in\mathbf{R}}\left\vert
        \mathbf{Pr}\left(\sqrt{\mathcal{T}_\circ(B_1- B)}\widehat{R}\leq x\right) - \mathbf{Pr}\left(\zeta \leq x\right)
    \right\vert\\
    &\leq \frac{C}{\psi} + \sup_{x\in\mathbf{R}}\left\vert\mathbf{E}\left[g_{\psi,x}\left(\sqrt{\mathcal{T}_\circ(B_1- B)}\widehat{R}\right)\right] - \mathbf{E}\left[g_{\psi,x}(\zeta)\right]
            \right\vert.
\end{aligned}
\label{eq.prob_to_g}
\end{equation}
Under $H_0,$ we have $\boldsymbol{\mu}_1 = \cdots = \boldsymbol{\mu}_K.$ From \eqref{eq.expand_R_k}, for any $k = 2,3,\cdots, K,$
\begin{align*}
    &\left\Vert
    \widehat{R}_k - \frac{1}{V_k\sqrt{d}}\sum_{B\leq \vert t_1  - t_2\vert\leq B_1}^{T_k}\left(\boldsymbol{\epsilon}_{t_1,k}^\top\boldsymbol{\epsilon}_{t_2, k} - \mathbf{E}\left[\boldsymbol{\epsilon}_{t_1,k}^\top\boldsymbol{\epsilon}_{t_2, k}\right]\right)\right. \\
    &\left.- \frac{1}{V_1\sqrt{d}}\sum_{B\leq \vert t_1 - t_2\vert\leq B_1}^{T_1}\left(\boldsymbol{\epsilon}_{t_1,1}^\top\boldsymbol{\epsilon}_{t_2, 1} - \mathbf{E}\left[\boldsymbol{\epsilon}_{t_1,1}^\top\boldsymbol{\epsilon}_{t_2, 1}\right]\right)
    \right\Vert_{M/2}\\
    &\leq \frac{1}{V_k\sqrt{d}}\sum_{B\leq \vert t_1 - t_2\vert\leq B_1}^{T_k}\left\vert
    \mathbf{E}\left[\boldsymbol{\epsilon}_{t_1,k}^\top\boldsymbol{\epsilon}_{t_2,k}\right]
    \right\vert
    + \frac{1}{V_1\sqrt{d}}\sum_{B\leq \vert t_1 - t_2\vert\leq B_1}^{T_k}\left\vert
    \mathbf{E}\left[\boldsymbol{\epsilon}^\top_{t_1,1}\boldsymbol{\epsilon}_{t_2,1}\right]
    \right\vert\\
    & + 
    2\left\Vert
    \frac{1}{V_k\sqrt{d}}\sum_{B\leq\vert t_1  -t_2\vert\leq B_1}^{T_k}\boldsymbol{\mu}_1^\top\boldsymbol{\epsilon}_{t_1,k} - \frac{1}{T_k\sqrt{d}}\sum_{t_1 = 1}^{T_k}\boldsymbol{\mu}_1^\top\boldsymbol{\epsilon}_{t_1,k}
    \right\Vert_{M/2}\\
    & + 
    2\left\Vert
    \frac{1}{V_1\sqrt{d}}\sum_{B\leq \vert t_1 - t_2\vert\leq B_1}^{T_1}\boldsymbol{\mu}_1^\top\boldsymbol{\epsilon}_{t_1,1} - \frac{1}{T_1\sqrt{d}}\sum_{t_1 = 1}^{T_1}\boldsymbol{\mu}_1^\top\boldsymbol{\epsilon}_{t_1,1}
    \right\Vert_{M/2}\\
    & + \frac{2}{T_kT_1\sqrt{d}}\left\Vert\sum_{t_1 = 1}^{T_k}\sum_{t_2 = 1}^{T_1}\boldsymbol{\epsilon}_{t_1,k}^\top\boldsymbol{\epsilon}_{t_2,1}\right\Vert_{M/2}.
\end{align*}
For any $k = 1,2,\cdots, K,$ from Lemma \ref{lemma.linear_form},
\begin{align*}
    &\left\Vert\frac{1}{V_k\sqrt{d}}\sum_{B\leq \vert t_1 -  t_2\vert\leq B_1}^{T_k}\boldsymbol{\mu}_1^\top\boldsymbol{\epsilon}_{t_1,k} - \frac{1}{T_k\sqrt{d}}\sum_{t_1 = 1}^{T_k} \boldsymbol{\mu}_1^\top \boldsymbol{\epsilon}_{t_1,k}\right\Vert_{M/2}\\
    &= 
    \left\Vert
    \sum_{t_1 = 1}^{T_k}\boldsymbol{\mu}_1^\top\boldsymbol{\epsilon}_{t_1,k}\left(\frac{1}{V_k\sqrt{d}}\sum_{t_2 = t_1 + B}^{(t_1 + B_1)\wedge T_k}1 + \frac{1}{V_k\sqrt{d}}\sum_{t_2 = (t_1 -  B_1)\vee 1}^{t_1 - B}1  - \frac{1}{T_k\sqrt{d}}\right)
    \right\Vert_{M/2}\\
    &\leq \frac{C\vert\boldsymbol{\mu}_1\vert_2}{\sqrt{d}}\sqrt{\sum_{t_1 = 1}^{T_k}\left(\frac{1}{V_k}\sum_{t_2 = t_1 + B}^{(t_1 + B_1)\wedge T_k}1 + \frac{1}{V_k}\sum_{t_2 = (t_1 -  B_1)\vee 1}^{t_1 - B}1  - \frac{1}{T_k}\right)^2}.
\end{align*}
If $1 + B_1\leq t_1\leq T_k - B_1,$ then 
\begin{align*}
    \frac{1}{V_k}\sum_{t_2 = t_1 + B}^{(t_1 + B_1)\wedge T_k}1 + \frac{1}{V_k}\sum_{t_2 = (t_1 -  B_1)\vee 1}^{t_1 - B}1  - \frac{1}{T_k}
    &= \frac{1}{V_k}\sum_{t_2 = t_1 + B}^{t_1 + B_1}1 + \frac{1}{V_k}\sum_{t_2 = t_1 -  B_1}^{t_1 - B}1  - \frac{1}{T_k}\\
    &= \frac{2(B_1 - B + 1)}{(2T_k - B - B_1)\times (B_1-  B+1)} - \frac{1}{T_k}\\
    &= \frac{B + B_1}{T_k\times(2T_k - B - B_1)}\asymp \frac{B_1}{T^2_k}.
\end{align*}
If $t_1\leq B_1$ or $t_1 \geq T_k - B_1 + 1,$ then for sufficiently large $\mathcal{T}_\circ,$
\begin{align*}
    \frac{1}{V_k}\sum_{t_2 = t_1 + B}^{(t_1 + B_1)\wedge T_k}1 + \frac{1}{V_k}\sum_{t_2 = (t_1 -  B_1)\vee 1}^{t_1 - B}1  - \frac{1}{T_k}
    &\leq \frac{1}{V_k}\sum_{t_2 = t_1 + B}^{t_1 + B_1}1 + \frac{1}{V_k}\sum_{t_2 = t_1 -  B_1}^{t_1 - B}1  - \frac{1}{T_k}\\
    &= \frac{B + B_1}{T_k\times(2T_k - B - B_1)}\leq \frac{2B_1}{T^2_k},\\
     \text{and } \frac{1}{V_k}\sum_{t_2 = t_1 + B}^{(t_1 + B_1)\wedge T_k}1 + \frac{1}{V_k}\sum_{t_2 = (t_1 -  B_1)\vee 1}^{t_1 - B}1  - \frac{1}{T_k}\geq -\frac{1}{T_k}.
\end{align*}
Therefore, for sufficiently large $\mathcal{T}_\circ,$
\begin{align*}
    \left\vert\frac{1}{V_k}\sum_{t_2 = t_1 + B}^{(t_1 + B_1)\wedge T_k}1 + \frac{1}{V_k}\sum_{t_2 = (t_1 -  B_1)\vee 1}^{t_1 - B}1  - \frac{1}{T_k}\right\vert\leq \frac{2B_1}{T^2_k}\vee \frac{1}{T_k} = \frac{1}{T_k},
\end{align*}
and 
\begin{align*}
    &\sum_{t_1 = 1}^{T_k}\left(\frac{1}{V_k}\sum_{t_2 = t_1 + B}^{(t_1 + B_1)\wedge T_k}1 + \frac{1}{V_k}\sum_{t_2 = (t_1 -  B_1)\vee 1}^{t_1 - B}1  - \frac{1}{T_k}\right)^2\\
    &\leq \sum_{t_1 =  1}^{B_1} \frac{1}{T^2_k} + \sum_{t_1  = B_1 + 1}^{T_k - B_1}\frac{CB^2_1}{T^4_k} + \sum_{t_1 = T_k - B_1 + 1}^{T_k}  \frac{1}{T^2_k}\leq \frac{C_1B_1}{T^2_k},
\end{align*}
which implies 
\begin{equation}
    \left\Vert\frac{1}{V_k\sqrt{d}}\sum_{B\leq \vert t_1 -  t_2\vert\leq B_1}^{T_k}\boldsymbol{\mu}_1^\top\boldsymbol{\epsilon}_{t_1,k} - \frac{1}{T_k\sqrt{d}}\sum_{t_1 = 1}^{T_k} \boldsymbol{\mu}_1^\top \boldsymbol{\epsilon}_{t_1,k}\right\Vert_{M/2}
    \leq \frac{C\sqrt{B_1}}{\mathcal{T}_\circ}.    
    \label{eq.difference}
\end{equation}
From \eqref{eq.V_k_bias}, \eqref{eq.T_kT_1}, and \eqref{eq.difference}, 
we have 
\begin{align*}
    &\left\Vert
    \sqrt{\mathcal{T}_\circ(B_1  - B)}\widehat{R}_k - \frac{\sqrt{\mathcal{T}_\circ(B_1  - B)}}{V_k\sqrt{d}}\sum_{B\leq \vert t_1  - t_2\vert\leq B_1}^{T_k}\left(\boldsymbol{\epsilon}_{t_1,k}^\top\boldsymbol{\epsilon}_{t_2, k} - \mathbf{E}\left[\boldsymbol{\epsilon}_{t_1,k}^\top\boldsymbol{\epsilon}_{t_2, k}\right]\right)\right.\\
    &\left.- \frac{\sqrt{\mathcal{T}_\circ(B_1  - B)}}{V_1\sqrt{d}}\sum_{B\leq \vert t_1 - t_2\vert\leq B_1}^{T_1}\left(\boldsymbol{\epsilon}_{t_1,1}^\top\boldsymbol{\epsilon}_{t_2, 1} - \mathbf{E}\left[\boldsymbol{\epsilon}_{t_1,1}^\top\boldsymbol{\epsilon}_{t_2, 1}\right]\right)
    \right\Vert_{M/2}\\
    &\leq \frac{C\sqrt{\mathcal{T}_\circ  B_1}B}{B_1 \mathcal{T}_\circ^{3/2}} + \frac{C\sqrt{\mathcal{T}_\circ  B_1}}{\mathcal{T}_\circ} + \frac{CB_1 \sqrt{\mathcal{T}_\circ}}{\mathcal{T}_\circ} = O\left(\frac{B_1}{\sqrt{\mathcal{T}_\circ}}\right).
\end{align*}
For $\widehat{R} = \sum_{k = 2}^K \widehat{R}_k,$ we have 
    \begin{align*}
        &\sup_{x\in\mathbf{R}}\left\vert
        \mathbf{E}\left[g_{\psi,x}\left(\sqrt{\mathcal{T}_\circ(B_1 - B)}\widehat{R}\right)\right]
        \right.\\
        & - \mathbf{E}\left[g_{\psi,x}\left(\sum_{k = 2}^K\frac{\sqrt{\mathcal{T}_\circ(B_1  - B)}}{V_k\sqrt{d}}\sum_{B\leq \vert t_1  - t_2\vert\leq B_1}^{T_k}\left(\boldsymbol{\epsilon}_{t_1,k}^\top\boldsymbol{\epsilon}_{t_2, k} - \mathbf{E}\left[\boldsymbol{\epsilon}_{t_1,k}^\top\boldsymbol{\epsilon}_{t_2, k}\right]\right)\right.\right.\\
        &\left.\left.\left. + \left( K - 1\right)\frac{\sqrt{\mathcal{T}_\circ(B_1 - B)}}{V_1\sqrt{d}}\sum_{B\leq \vert t_1 - t_2\vert\leq B_1}^{T_1}\left(\boldsymbol{\epsilon}_{t_1,1}^\top\boldsymbol{\epsilon}_{t_2, 1} - \mathbf{E}\boldsymbol{\epsilon}_{t_1,1}^\top\boldsymbol{\epsilon}_{t_2, 1}\right)\right)\right]\right\vert\\
        &\leq C\psi\left\Vert
        \sqrt{\mathcal{T}_\circ(B_1 - B)}\widehat{R}\right.\\
        &- \sum_{k = 2}^K\frac{\sqrt{\mathcal{T}_\circ(B_1  - B)}}{V_k\sqrt{d}}\sum_{B\leq \vert t_1  - t_2\vert\leq B_1}^{T_k}\left(\boldsymbol{\epsilon}_{t_1,k}^\top\boldsymbol{\epsilon}_{t_2, k} - \mathbf{E}\left[\boldsymbol{\epsilon}_{t_1,k}^\top\boldsymbol{\epsilon}_{t_2, k}\right]\right)\\
        & \left. - (K-1)\frac{\sqrt{\mathcal{T}_\circ(B_1 - B)}}{V_1\sqrt{d}}\sum_{B\leq \vert t_1 - t_2\vert\leq B_1}^{T_1}\left(\boldsymbol{\epsilon}_{t_1,1}^\top\boldsymbol{\epsilon}_{t_2, 1} - \mathbf{E}\left[\boldsymbol{\epsilon}_{t_1,1}^\top\boldsymbol{\epsilon}_{t_2, 1}\right]\right)
        \right\Vert_{M/2}\leq \frac{C\psi B_1}{\sqrt{\mathcal{T}_\circ}},
    \end{align*}
making 
\begin{equation}
    \begin{aligned}
        &\sup_{x\in\mathbf{R}}\left\vert
        \mathbf{E}\left[g_{\psi,x}\left(\sqrt{\mathcal{T}_\circ(B_1 - B)}\widehat{R}\right)\right]
        \right.\\
        & - \mathbf{E}\left[g_{\psi,x}\left(\sum_{k = 2}^K\frac{\sqrt{\mathcal{T}_\circ(B_1  - B)}}{V_k\sqrt{d}}\sum_{B\leq \vert t_1  - t_2\vert\leq B_1}^{T_k}\left(\boldsymbol{\epsilon}_{t_1,k}^\top\boldsymbol{\epsilon}_{t_2, k} - \mathbf{E}\left[\boldsymbol{\epsilon}_{t_1,k}^\top\boldsymbol{\epsilon}_{t_2, k}\right]\right)\right.\right.\\
        &\left.\left.\left. + \left( K - 1\right)\frac{\sqrt{\mathcal{T}_\circ(B_1 - B)}}{V_1\sqrt{d}}\sum_{B\leq \vert t_1 - t_2\vert\leq B_1}^{T_1}\left(\boldsymbol{\epsilon}_{t_1,1}^\top\boldsymbol{\epsilon}_{t_2, 1} - \mathbf{E}\left[\boldsymbol{\epsilon}_{t_1,1}^\top\boldsymbol{\epsilon}_{t_2, 1}\right]\right)\right)\right]\right\vert\\
        &= O\left(\frac{\psi B_1}{\sqrt{\mathcal{T}_\circ}}\right).
    \end{aligned}
    \label{eq.g_to_sum_epsilon}
\end{equation}
Choose integer $\ell =  \lfloor \mathcal{T}_\circ^{\frac{3}{8}}\rfloor,$ for sufficiently large $\mathcal{T}_\circ,$ we have $\ell > B_1,$ and from Theorem \ref{theorem.consistent_quadratic},
\begin{equation}
\begin{aligned}
&\left\Vert\left(\frac{\sqrt{\mathcal{T}_\circ(B_1  - B)}}{V_k\sqrt{d}}\sum_{B\leq \vert t_1  - t_2\vert\leq B_1}^{T_k}\left(\boldsymbol{\epsilon}_{t_1,k}^\top\boldsymbol{\epsilon}_{t_2, k} - \mathbf{E}\left[\boldsymbol{\epsilon}_{t_1,k}^\top\boldsymbol{\epsilon}_{t_2, k}\right]\right)\right)\right.\\
&\left. - \left(
\frac{\sqrt{\mathcal{T}_\circ(B_1  - B)}}{V_k\sqrt{d}}\sum_{B\leq \vert t_1  - t_2\vert\leq B_1}^{T_k}\left(\mathbf{E}\left[\boldsymbol{\epsilon}_{t_1,k}^\top\boldsymbol{\epsilon}_{t_2, k}\mid\mathcal{F}_{t_1\vee t_2,\ell}^{(k)}\right] - \mathbf{E}\left[\boldsymbol{\epsilon}_{t_1,k}^\top\boldsymbol{\epsilon}_{t_2, k}\right]\right)
\right)\right\Vert_{M/2}\\
& = 
\frac{\sqrt{\mathcal{T}_\circ(B_1  - B)}}{V_k\sqrt{d}}
\left\Vert
\sum_{B\leq \vert t_1  - t_2\vert\leq B_1}^{T_k}\left(\boldsymbol{\epsilon}_{t_1,k}^\top\boldsymbol{\epsilon}_{t_2, k} - \mathbf{E}\left[\boldsymbol{\epsilon}_{t_1,k}^\top\boldsymbol{\epsilon}_{t_2, k}\mid\mathcal{F}_{t_1\vee t_2,\ell}^{(k)}\right]\right)
\right\Vert_{M/2}\\
&\leq \frac{C}{(\ell + 1 - B_1)^\alpha} + \frac{C\mathcal{T}_\circ^{3/2}}{\ell^{\alpha - 1}\sqrt{(B_1 - B)}} 
+ \frac{C\sqrt{\mathcal{T}_\circ}}{B^\alpha\sqrt{(B_1 - B)}} = O\left(\frac{1}{\mathcal{T}_\circ^{3/2}\sqrt{(B_1 - B)}}\right).
\end{aligned}
\label{eq.diff_cut_tail}
\end{equation}
Therefore, 
\begin{equation}
    \begin{aligned}
        &\sup_{x\in\mathbf{R}}\left\vert
        \mathbf{E}\left[g_{\psi,x}\left(\sum_{k = 2}^K\frac{\sqrt{\mathcal{T}_\circ(B_1  - B)}}{V_k\sqrt{d}}\sum_{B\leq \vert t_1  - t_2\vert\leq B_1}^{T_k}\left(\boldsymbol{\epsilon}_{t_1,k}^\top\boldsymbol{\epsilon}_{t_2, k} - \mathbf{E}\left[\boldsymbol{\epsilon}_{t_1,k}^\top\boldsymbol{\epsilon}_{t_2, k}\right]\right)\right.\right.\right.\\
        &\left.\left.\left. + \left( K - 1\right)\frac{\sqrt{\mathcal{T}_\circ(B_1 - B)}}{V_1\sqrt{d}}\sum_{B\leq \vert t_1 - t_2\vert\leq B_1}^{T_1}\left(\boldsymbol{\epsilon}_{t_1,1}^\top\boldsymbol{\epsilon}_{t_2, 1} - \mathbf{E}\left[\boldsymbol{\epsilon}_{t_1,1}^\top\boldsymbol{\epsilon}_{t_2, 1}\right]\right)\right)\right]\right.\\
        & - \mathbf{E}\left[
        g_{\psi,x}\left(
        \sum_{k = 2}^K\frac{\sqrt{\mathcal{T}_\circ(B_1  - B)}}{V_k\sqrt{d}}\sum_{B\leq \vert t_1  - t_2\vert\leq B_1}^{T_k}\left(\mathbf{E}\left[\boldsymbol{\epsilon}_{t_1,k}^\top\boldsymbol{\epsilon}_{t_2, k}\mid\mathcal{F}_{t_1\vee t_2,\ell}^{(k)}\right] - \mathbf{E}\left[\boldsymbol{\epsilon}_{t_1,k}^\top\boldsymbol{\epsilon}_{t_2, k}\right]\right)\right.\right.\\
        & + \left.\left.\left.\left( K - 1\right)\frac{\sqrt{\mathcal{T}_\circ(B_1 - B)}}{V_1\sqrt{d}}\sum_{B\leq \vert t_1 - t_2\vert\leq B_1}^{T_1}\left(\mathbf{E}\left[\boldsymbol{\epsilon}_{t_1,1}^\top\boldsymbol{\epsilon}_{t_2, 1}\mid\mathcal{F}_{t_1\vee t_2,\ell}^{(1)}\right] - \mathbf{E}\left[\boldsymbol{\epsilon}_{t_1,1}^\top\boldsymbol{\epsilon}_{t_2, 1}\right]\right)\right)\right]
        \right\vert\\
        &\leq C\psi\sum_{k = 2}^K \frac{\sqrt{\mathcal{T}_\circ(B_1  - B)}}{V_k\sqrt{d}}\left\Vert 
        \sum_{B\leq \vert t_1  - t_2\vert\leq B_1}^{T_k}\left(\boldsymbol{\epsilon}_{t_1,k}^\top\boldsymbol{\epsilon}_{t_2, k} - \mathbf{E}\left[\boldsymbol{\epsilon}_{t_1,k}^\top\boldsymbol{\epsilon}_{t_2, k}\mid\mathcal{F}_{t_1\vee t_2,\ell}^{(k)}\right]\right)
        \right\Vert_{M/2}\\
        & + C\psi (K-1) \frac{\sqrt{\mathcal{T}_\circ(B_1 - B)}}{V_1\sqrt{d}}\left\Vert
        \sum_{B\leq \vert t_1  - t_2\vert\leq B_1}^{T_k}\left(\boldsymbol{\epsilon}_{t_1,1}^\top\boldsymbol{\epsilon}_{t_2, 1} - \mathbf{E}\left[\boldsymbol{\epsilon}_{t_1,1}^\top\boldsymbol{\epsilon}_{t_2, 1}\mid\mathcal{F}_{t_1\vee t_2,\ell}^{(1)}\right]\right)
        \right\Vert_{M/2}\\
        & = O\left(\frac{\psi}{\mathcal{T}_\circ^{3/2}\sqrt{(B_1 - B)}}\right).
    \end{aligned}
    \label{eq.e_to_truncate}
\end{equation}
Choose $ v = \lfloor \mathcal{T}_\circ^{\frac{7}{12}}\rfloor,$ and choose the big-blocks
\begin{align*}
    A_{q,k} = 2\sum_{t_1 = (q - 1)\times (v + \ell) + 1}^{((q - 1)\times (v + \ell) + v)\wedge T_k}\sum_{t_2 = (t_1 - B_1)\vee 1}^{(t_1 - B)}\left(\mathbf{E}\left[\boldsymbol{\epsilon}_{t_1,k}^\top\boldsymbol{\epsilon}_{t_2,k}\mid\mathcal{F}_{t_1,\ell}^{(k)}\right] - \mathbf{E}\left[\boldsymbol{\epsilon}_{t_1,k}^\top\boldsymbol{\epsilon}_{t_2,k}\right]\right),
\end{align*}
the small-blocks
\begin{align*}
    a_{q,k} = 2\sum_{t_1 = (q - 1)\times (v + \ell) + v + 1}^{(q\times (v + \ell))\wedge T_k}\sum_{t_2 = (t_1 - B_1)\vee 1}^{(t_1 - B)}\left(\mathbf{E}\left[\boldsymbol{\epsilon}_{t_1,k}^\top\boldsymbol{\epsilon}_{t_2,k}\mid\mathcal{F}_{t_1,\ell}^{(k)}\right] - \mathbf{E}\left[\boldsymbol{\epsilon}_{t_1,k}^\top\boldsymbol{\epsilon}_{t_2,k}\right]\right)
\end{align*}
for $q = 1,2,\cdots, R = \lceil\frac{\mathcal{T}_\dagger}{v + \ell}\rceil.$ Then 
\begin{align*}
    &\sum_{B\leq \vert t_1 - t_2\vert\leq B_1}^{T_k}\left(\mathbf{E}\left[\boldsymbol{\epsilon}_{t_1,k}^\top\boldsymbol{\epsilon}_{t_2,k}\mid \mathcal{F}_{t_1\vee t_2, \ell}^{(k)}\right] - \mathbf{E}\left[\boldsymbol{\epsilon}_{t_1,k}^\top\boldsymbol{\epsilon}_{t_2,k}\right]\right)\\
    &= 2\sum_{t_1 = B + 1}^{T_k}\sum_{t_2 = (t_1 - B_1)\vee 1}^{t_1 - B}\left(\mathbf{E}\left[\boldsymbol{\epsilon}_{t_1,k}^\top\boldsymbol{\epsilon}_{t_2,k}\mid\mathcal{F}_{t_1, \ell}^{(k)}\right] - \mathbf{E}\left[\boldsymbol{\epsilon}_{t_1,k}^\top\boldsymbol{\epsilon}_{t_2,k}\right]\right)\\
    &= \sum_{q =  1}^R A_{q,k} + \sum_{q=  1}^R a_{q,k},
\end{align*}
and $ A_{q,k}$ are mutually independent with respect to different $q$ or $k$; and $a_{q,k}$ are mutually independent with respect to different $q$ or $k.$ Since $a_{q,k}$ are mutually independent, from Theorem 2 of \cite{MR0133849},
\begin{align*}
    &\frac{\sqrt{\mathcal{T}_\circ(B_1 - B)}}{V_k\sqrt{d}}\left\Vert
    \sum_{B\leq \vert t_1  - t_2\vert\leq B_1}^{T_k}\left(\mathbf{E}\left[\boldsymbol{\epsilon}_{t_1,k}^\top\boldsymbol{\epsilon}_{t_2,k}\mid\mathcal{F}_{t_1\vee t_2,\ell}^{(k)}\right] - \mathbf{E}\left[\boldsymbol{\epsilon}_{t_1,k}^\top\boldsymbol{\epsilon}_{t_2,k}\right]\right) - \sum_{q =  1}^R A_{q,k}
    \right\Vert_{M/2}\\
    & = \frac{\sqrt{\mathcal{T}_\circ(B_1 - B)}}{V_k\sqrt{d}}\left\Vert
    \sum_{q=  1}^R a_{q,k}
    \right\Vert_{M/2}\leq \frac{C\sqrt{\mathcal{T}_\circ(B_1 - B)}}{V_k\sqrt{d}}\sqrt{\sum_{q = 1}^R\left\Vert a_{q,k}\right\Vert^2_{M/2}}.
\end{align*}
From Theorem \ref{theorem.consistent_quadratic}, for any $q,k,$
\begin{align*}
    \left\Vert
    a_{q,k}
    \right\Vert_{M/2} &\leq 2\left\Vert
    \sum_{t_1 = (q - 1)\times (v + \ell) + v + 1}^{(q\times (v + \ell))\wedge T_k}\sum_{t_2 = (t_1 - B_1)\vee 1}^{(t_1 - B)}\left(\boldsymbol{\epsilon}_{t_1,k}^\top\boldsymbol{\epsilon}_{t_2,k} - \mathbf{E}\left[\boldsymbol{\epsilon}_{t_1,k}^\top\boldsymbol{\epsilon}_{t_2,k}\right]\right)
    \right\Vert_{M/2}\\
    &+ 2\left\Vert
    \sum_{t_1 = (q - 1)\times (v + \ell) + v + 1}^{(q\times (v + \ell))\wedge T_k}\sum_{t_2 = (t_1 - B_1)\vee 1}^{(t_1 - B)}\left(\boldsymbol{\epsilon}_{t_1,k}^\top\boldsymbol{\epsilon}_{t_2,k} - \mathbf{E}\left[\boldsymbol{\epsilon}_{t_1,k}^\top\boldsymbol{\epsilon}_{t_2,k}\mid\mathcal{F}_{t_1,\ell}^{(k)}\right]\right)
    \right\Vert_{M/2}\\
    &\leq C\sqrt{d\sum_{t_1 = (q - 1)\times (v + \ell) + v + 1}^{(q\times (v + \ell))\wedge T_k}\sum_{t_2 = (t_1 - B_1)\vee 1}^{(t_1 - B)}1^2} + \frac{CT^{5/2}}{B^\alpha} + \frac{CT^{3/2}}{B^{\alpha - 2}}\\
    & + \frac{C\sqrt{d\sum_{t_1 = (q - 1)\times (v + \ell) + v + 1}^{(q\times (v + \ell))\wedge T_k}\sum_{t_2 = (t_1 - B_1)\vee 1}^{(t_1 - B)}1^2}}{(\ell + 1 - B_1)^\alpha} + \frac{CdT^{3/2}}{\ell^{\alpha - 1}} + \frac{Cd\sqrt{T}}{B^\alpha}\\
    &\leq C_1\sqrt{d\ell \left(B_1 - B\right)}.
\end{align*}
Therefore,
\begin{equation}
\begin{aligned}
    &\frac{\sqrt{\mathcal{T}_\circ(B_1 - B)}}{V_k\sqrt{d}}\left\Vert
    \sum_{B\leq \vert t_1  - t_2\vert\leq B_1}^{T_k}\left(\mathbf{E}\left[\boldsymbol{\epsilon}_{t_1,k}^\top\boldsymbol{\epsilon}_{t_2,k}\mid\mathcal{F}_{t_1\vee t_2,\ell}^{(k)}\right] - \mathbf{E}\left[\boldsymbol{\epsilon}_{t_1,k}^\top\boldsymbol{\epsilon}_{t_2,k}\right]\right) - \sum_{q =  1}^R A_{q,k}
    \right\Vert_{M/2}\\
    &\leq \frac{C\sqrt{\mathcal{T}_\circ(B_1 - B)}}{\mathcal{T}_\circ(B_1 - B)\sqrt{d}}\sqrt{\frac{\mathcal{T}_\dagger}{v}}\sqrt{d\ell(B_1 - B)} = O\left(\sqrt{\frac{\ell}{v}}\right),
\end{aligned}
\label{eq.small_block}
\end{equation}
and we have 
\begin{equation}
    \begin{aligned}
        &\sup_{x\in\mathbf{R}}\left\vert\mathbf{E}\left[
        g_{\psi,x}\left(
        \sum_{k = 2}^K\frac{\sqrt{\mathcal{T}_\circ(B_1  - B)}}{V_k\sqrt{d}}\sum_{B\leq \vert t_1  - t_2\vert\leq B_1}^{T_k}\left(\mathbf{E}\left[\boldsymbol{\epsilon}_{t_1,k}^\top\boldsymbol{\epsilon}_{t_2, k}\mid\mathcal{F}_{t_1\vee t_2,\ell}^{(k)}\right] - \mathbf{E}\left[\boldsymbol{\epsilon}_{t_1,k}^\top\boldsymbol{\epsilon}_{t_2, k}\right]\right)\right.\right.\right.\\
        & + \left.\left.\left( K - 1\right)\frac{\sqrt{\mathcal{T}_\circ(B_1 - B)}}{V_1\sqrt{d}}\sum_{B\leq \vert t_1 - t_2\vert\leq B_1}^{T_1}\left(\mathbf{E}\left[\boldsymbol{\epsilon}_{t_1,1}^\top\boldsymbol{\epsilon}_{t_2, 1}\mid\mathcal{F}_{t_1\vee t_2,\ell}^{(1)}\right] - \mathbf{E}\left[\boldsymbol{\epsilon}_{t_1,1}^\top\boldsymbol{\epsilon}_{t_2, 1}\right]\right)\right)\right]\\
        &\left.-\mathbf{E}\left[g_{\psi,x}\left(\sum_{k = 2}^K\frac{\sqrt{\mathcal{T}_\circ(B_1  - B)}}{V_k\sqrt{d}}\sum_{q = 1}^R A_{q,k} + (K - 1)\frac{\sqrt{\mathcal{T}_\circ(B_1 - B)}}{V_1\sqrt{d}}\sum_{q = 1}^R A_{q,1}\right)\right]\right\vert\\
        &= O\left(\psi\sqrt{\frac{\ell}{v}}\right).
    \end{aligned}
    \label{eq.truncate_to_big_sum}
\end{equation}
Define $A^*_{q,k}, q = 1,\cdots, R, k = 1,\cdots, K, $ as independent normal random variables such that 
$$
\mathbf{E}\left[A^*_{q,k}\right] = 0,\quad \mathrm{Var}(A^*_{q,k}) = \mathrm{Var}(A_{q,k}),
$$
and $A^*_{q_1, k_1}$ and $A_{q_2,k_2}$ are independent for any  $q_1,q_2,k_1,k_2$. Define the summation 
\begin{align*}
    T_u &= \sum_{q = 1}^{u - 1}\left(\sum_{k = 2}^K\frac{\sqrt{\mathcal{T}_\circ(B_1 - B)}}{V_k\sqrt{d}}A_{q,k} + (K-1)\frac{\sqrt{\mathcal{T}_\circ(B_1 - B)}}{V_1\sqrt{d}}A_{q,1}\right)\\
&+ \sum_{q = u+1}^{R}\left(\sum_{k = 2}^K\frac{\sqrt{\mathcal{T}_\circ(B_1 - B)}}{V_k\sqrt{d}}A_{q,k}^* + (K-1)\frac{\sqrt{\mathcal{T}_\circ(B_1 - B)}}{V_1\sqrt{d}}A_{q,1}^*\right)
\end{align*}
and 
\begin{align*}
    R_u = \sum_{k = 2}^K\frac{\sqrt{\mathcal{T}_\circ(B_1 - B)}}{V_k\sqrt{d}}A_{u,k} + (K-1)\frac{\sqrt{\mathcal{T}_\circ(B_1 - B)}}{V_1\sqrt{d}}A_{u,1},\\
    R^*_u = \sum_{k = 2}^K\frac{\sqrt{\mathcal{T}_\circ(B_1 - B)}}{V_k\sqrt{d}}A_{u,k}^* + (K-1)\frac{\sqrt{\mathcal{T}_\circ(B_1 - B)}}{V_1\sqrt{d}}A_{u,1}^*,
\end{align*}
then $R^*_u, R_u$ are independent of $T_u,$ and 
$
T_u + R_u = T_{u+1} + R^*_{u+1}.
$
From Theorem \ref{theorem.consistent_quadratic},
\begin{align*}
    \left\Vert
     A_{q,k}
    \right\Vert_{M/2} & \leq 2\left\Vert
    \sum_{t_1 = (q - 1)\times (v + \ell) + 1}^{((q - 1)\times (v + \ell) + v)\wedge T_k}\sum_{t_2 = (t_1 - B_1)\vee 1}^{(t_1 - B)}\left(\boldsymbol{\epsilon}_{t_1,k}^\top\boldsymbol{\epsilon}_{t_2,k} - \mathbf{E}\left[\boldsymbol{\epsilon}_{t_1,k}^\top\boldsymbol{\epsilon}_{t_2,k}\right]\right)
    \right\Vert_{M/2}\\
    & + 2\left\Vert\sum_{t_1 = (q - 1)\times (v + \ell) + 1}^{((q - 1)\times (v + \ell) + v)\wedge T_k}\sum_{t_2 = (t_1 - B_1)\vee 1}^{(t_1 - B)}\left(\boldsymbol{\epsilon}_{t_1,k}^\top\boldsymbol{\epsilon}_{t_2,k} - \mathbf{E}\left[\boldsymbol{\epsilon}_{t_1,k}^\top\boldsymbol{\epsilon}_{t_2,k}\mid\mathcal{F}_{t_1, \ell}^{(k)}\right]\right)\right\Vert_{M/2}\\
    &\leq C\sqrt{d\sum_{t_1 = (q - 1)\times (v + \ell) + 1}^{((q - 1)\times (v + \ell) + v)\wedge T_k}\sum_{t_2 = (t_1 - B_1)\vee 1}^{(t_1 - B)}1^2} + \frac{CT^{5/2}}{B^\alpha} + \frac{CT^{3/2}}{B^{\alpha - 2}} \\
    & + \frac{C\sqrt{d}}{(\ell + 1 - B_1)^\alpha}\sqrt{\sum_{t_1 = (q - 1)\times (v + \ell) + 1}^{((q - 1)\times (v + \ell) + v)\wedge T_k}\sum_{t_2 = (t_1 - B_1)\vee 1}^{(t_1 - B)}1^2} + \frac{CdT^{3/2}}{\ell^{\alpha - 1}} + \frac{Cd\sqrt{T}}{B^\alpha}\\
    &\leq C_1\sqrt{dv(B_1 - B)}.
\end{align*}
Since $A^*_{q, k}$ is normal random variable, we have 
\begin{align*}
    \left\Vert
    A_{q,k}^*
    \right\Vert_{M/2}\leq C\left\Vert A_{q,k}\right\Vert_2\leq C_1\sqrt{dv (B_1 - B)}.
\end{align*}
Therefore, 
\begin{align*}
    \left\Vert
    R_u
    \right\Vert_{M/2}&\leq \sum_{k = 2}^K\frac{\sqrt{\mathcal{T}_\circ(B_1 - B)}}{V_k\sqrt{d}}\left\Vert A_{u,k}\right\Vert_{M/2} + (K-1)\frac{\sqrt{\mathcal{T}_\circ(B_1 - B)}}{V_1\sqrt{d}}\left\Vert A_{u,1}\right\Vert_{M/2}\\
    &\leq C\sqrt{\frac{v}{\mathcal{T}_\circ}},
\end{align*}
and 
\begin{align*}
    \Vert
    R^*_u
    \Vert_{M/2} &\leq \sum_{k = 2}^K\frac{\sqrt{\mathcal{T}_\circ(B_1 - B)}}{V_k\sqrt{d}}\Vert A_{u,k}^*\Vert_{M/2} + \frac{(K-1)\times \sqrt{\mathcal{T}_\circ(B_1 - B)}}{V_1\sqrt{d}}\Vert A_{u,1}^*\Vert_{M/2}\\
    &\leq C\sqrt{\frac{v}{\mathcal{T}_\circ}}.
\end{align*}
From \eqref{eq.Taylor_expansion},
\begin{align*}
    &\sup_{x\in\mathbf{R}}\left\vert\mathbf{E}\left[
    g_{\psi,x}\left(
    T_u + R_u
    \right)\mid T_u
    \right] - \mathbf{E}\left[g_{\psi,x}\left(T_u + R^*_u\right)\mid T_u\right]\right\vert\\
    &\leq C\psi^3\left(\left\Vert R_u\right\Vert^3_{M/2} + \left\Vert R_u^*\right\Vert^3_{M/2}\right)
    \leq C_1\psi^3\left(\frac{v}{\mathcal{T}_\circ}\right)^{3/2},
\end{align*}
and 
\begin{equation}
\begin{aligned}
    &\sup_{x\in\mathbf{R}}\left\vert
    \mathbf{E}\left[g_{\psi,x}\left(\sum_{k = 2}^K\frac{\sqrt{\mathcal{T}_\circ(B_1 - B)}}{V_k\sqrt{d}}\sum_{q = 1}^R A_{q,k} + (K - 1)\frac{\sqrt{\mathcal{T}_\circ(B_1 - B)}}{V_1\sqrt{d}}\sum_{q = 1}^R A_{q,1}\right)\right]\right.\\
    &\left.- \mathbf{E}\left[g_{\psi,x}\left(\sum_{k = 2}^K\frac{\sqrt{\mathcal{T}_\circ(B_1 - B)}}{V_k\sqrt{d}}\sum_{q = 1}^R A_{q,k}^* + (K - 1)\frac{\sqrt{\mathcal{T}_\circ (B_1 - B)}}{V_1\sqrt{d}}\sum_{q = 1}^R A_{q,1}^*\right)\right]
    \right\vert\\
    &\leq \sum_{u = 1}^R\sup_{x\in\mathbf{R}}\left\vert
    \mathbf{E}\left[g_{\psi,x}(T_u + R_u)\right] -  \mathbf{E}\left[g_{\psi,x}(T_u + R_u^*)\right]
    \right\vert\leq C\psi^3\sqrt{\frac{v}{\mathcal{T}_\circ}}.
\end{aligned}
\label{eq.A_to_A_star}
\end{equation}
Finally, the term 
$$
\sum_{k = 2}^K\frac{\sqrt{\mathcal{T}_\circ (B_1 - B)}}{V_k\sqrt{d}}\sum_{q = 1}^R A_{q,k}^* + (K - 1)\frac{\sqrt{\mathcal{T}_\circ (B_1 - B)}}{V_1\sqrt{d}}\sum_{q = 1}^R A_{q,1}^*
$$
has normal distribution with mean $0$ and variance
\begin{align*}
    &\mathrm{Var}\left(
    \sum_{k = 2}^K\frac{\sqrt{\mathcal{T}_\circ (B_1 - B)}}{V_k\sqrt{d}}\sum_{q = 1}^R A_{q,k}^* + (K - 1)\frac{\sqrt{\mathcal{T}_\circ (B_1 - B)}}{V_1\sqrt{d}}\sum_{q = 1}^R A_{q,1}^*
    \right)\\
    &= \sum_{k = 2}^K\sum_{q = 1}^R \frac{\mathcal{T}_\circ (B_1 - B)}{V^2_k d} \mathrm{Var}\left(A_{q,k}\right) + (K - 1)^2 \frac{\mathcal{T}_\circ(B_1 - B)}{V^2_1 d}\sum_{q =  1}^R \mathrm{Var}\left(A_{q,1}\right)\\
    &= \left\Vert
    \sum_{k = 2}^K\frac{\sqrt{\mathcal{T}_\circ (B_1 - B)}}{V_k\sqrt{d}}\sum_{q = 1}^R A_{q,k} + (K - 1)\frac{\sqrt{\mathcal{T}_\circ (B_1 - B)}}{V_1\sqrt{d}}\sum_{q = 1}^R A_{q,1}
    \right\Vert^2_2.
\end{align*}
Since 
\begin{align*}
    &\left\vert\ \left\Vert
    \sum_{k = 2}^K\frac{\sqrt{\mathcal{T}_\circ (B_1 - B)}}{V_k\sqrt{d}}\sum_{q = 1}^R A_{q,k} + (K - 1)\frac{\sqrt{\mathcal{T}_\circ (B_1 - B)}}{V_1\sqrt{d}}\sum_{q = 1}^R A_{q,1}
    \right\Vert_2\right.\\
    &- 
    \left\Vert
    \sum_{k = 2}^K\frac{\sqrt{\mathcal{T}_\circ (B_1 - B)}}{V_k\sqrt{d}}\sum_{B\leq \vert t_1 - t_2\vert\leq B_1}^{T_k}\left(\boldsymbol{\epsilon}_{t_1,k}^\top\boldsymbol{\epsilon}_{t_2,k} - \mathbf{E}\left[\boldsymbol{\epsilon}_{t_1,k}^\top\boldsymbol{\epsilon}_{t_2,k}\right]\right)\right.\\
    &\left.\left.+ (K - 1)\frac{\sqrt{\mathcal{T}_\circ(B_1 - B)}}{V_1\sqrt{d}}\sum_{B\leq \vert t_1 - t_2\vert\leq B_1}^{T_1}\left(\boldsymbol{\epsilon}_{t_1,1}^\top\boldsymbol{\epsilon}_{t_2,1} - \mathbf{E}\left[\boldsymbol{\epsilon}_{t_1,1}^\top\boldsymbol{\epsilon}_{t_2,1}\right]\right)
    \right\Vert_{2}\right\vert\\
   & \leq \sum_{k = 2}^K
    \frac{\sqrt{\mathcal{T}_\circ (B_1 - B)}}{V_k\sqrt{d}}
    \left\Vert\ \sum_{B\leq \vert t_1 - t_2\vert\leq B_1}^{T_k}\left(\boldsymbol{\epsilon}_{t_1,k}^\top\boldsymbol{\epsilon}_{t_2,k} - \mathbf{E}\left[\boldsymbol{\epsilon}_{t_1,k}^\top\boldsymbol{\epsilon}_{t_2,k}\right]\right) - \sum_{q = 1}^R A_{q,k}\right\Vert_{M/2}\\
    &+ (K - 1)\frac{\sqrt{\mathcal{T}_\circ(B_1 - B)}}{V_1\sqrt{d}}\left\Vert
    \sum_{B\leq \vert t_1 -  t_2\vert\leq B_1}^{T_1}\left(\boldsymbol{\epsilon}_{t_1,1}^\top\boldsymbol{\epsilon}_{t_2,1} - \mathbf{E}\left[\boldsymbol{\epsilon}_{t_1,1}^\top\boldsymbol{\epsilon}_{t_2,1}\right]\right) - \sum_{q = 1}^R A_{q,1}
    \right\Vert_{M/2},
\end{align*}
and for any $k = 1,\cdots,K,$ from \eqref{eq.diff_cut_tail} and \eqref{eq.small_block}, 
\begin{align*}
    &\frac{\sqrt{\mathcal{T}_\circ (B_1 - B)}}{V_k\sqrt{d}}\left\Vert\ \sum_{B\leq \vert t_1 - t_2\vert\leq B_1}^{T_k}\left(\boldsymbol{\epsilon}_{t_1,k}^\top\boldsymbol{\epsilon}_{t_2,k} - \mathbf{E}\left[\boldsymbol{\epsilon}_{t_1,k}^\top\boldsymbol{\epsilon}_{t_2,k}\right]\right) - \sum_{q = 1}^R A_{q,k}\right\Vert_{M/2}\\
    &\leq 
    \frac{\sqrt{\mathcal{T}_\circ (B_1 - B)}}{V_k\sqrt{d}}\left\Vert
    \sum_{q = 1}^R a_{q,k}
    \right\Vert_{M/2}  + 
    \frac{\sqrt{\mathcal{T}_\circ (B_1 - B)}}{V_k\sqrt{d}}\left\Vert
    \sum_{B\leq \vert t_1 - t_2\vert\leq B_1}^{T_k}\left(\boldsymbol{\epsilon}_{t_1,k}^\top\boldsymbol{\epsilon}_{t_2,k} - \mathbf{E}\left[\boldsymbol{\epsilon}_{t_1,k}^\top\boldsymbol{\epsilon}_{t_2,k}\mid\mathcal{F}_{t_1\vee t_t,\ell}^{(k)}\right]\right)
    \right\Vert_{M/2}\\
    &\leq C\sqrt{\frac{\ell}{v}} + \frac{C}{\mathcal{T}_\circ^{3/2}\sqrt{(B_1 - B)}},
\end{align*}
making 
\begin{equation}
\begin{aligned}
    &\left\vert\ \left\Vert
    \sum_{k = 2}^K\frac{\sqrt{\mathcal{T}_\circ (B_1 - B)}}{V_k\sqrt{d}}\sum_{q = 1}^R A_{q,k} + (K - 1)\frac{\sqrt{\mathcal{T}_\circ (B_1 - B)}}{V_1\sqrt{d}}\sum_{q = 1}^R A_{q,1}
    \right\Vert_2\right.\\
    &- 
    \left\Vert
    \sum_{k = 2}^K\frac{\sqrt{\mathcal{T}_\circ (B_1 - B)}}{V_k\sqrt{d}}\sum_{B\leq \vert t_1 - t_2\vert\leq B_1}^{T_k}\left(\boldsymbol{\epsilon}_{t_1,k}^\top\boldsymbol{\epsilon}_{t_2,k} - \mathbf{E}\left[\boldsymbol{\epsilon}_{t_1,k}^\top\boldsymbol{\epsilon}_{t_2,k}\right]\right)\right.\\
    &\left.\left.+ (K - 1)\frac{\sqrt{\mathcal{T}_\circ(B_1 - B)}}{V_1\sqrt{d}}\sum_{B\leq \vert t_1 - t_2\vert\leq B_1}^{T_1}\left(\boldsymbol{\epsilon}_{t_1,1}^\top\boldsymbol{\epsilon}_{t_2,1} - \mathbf{E}\left[\boldsymbol{\epsilon}_{t_1,1}^\top\boldsymbol{\epsilon}_{t_2,1}\right]\right)
    \right\Vert_{2}\right\vert\\
    & = O\left(\sqrt{\frac{\ell}{v}} + \frac{1}{\mathcal{T}_\circ^{3/2}\sqrt{(B_1 - B)}}\right).
\end{aligned}
\label{eq.two_norm_diff}
\end{equation}
Define $Z$ as a standard normal random variable (normal random variable with mean $0$ and variance $1$), then 
\begin{equation}
\begin{aligned}
    &\sup_{x\in\mathbf{R}}\left\vert
    \mathbf{E}\left[g_{\psi,x}\left(\sum_{k = 2}^K\frac{\sqrt{\mathcal{T}(B_1 - B)}}{V_k\sqrt{d}}\sum_{q = 1}^R A_{q,k}^* + (K - 1)\frac{\sqrt{\mathcal{T}(B_1 - B)}}{V_1\sqrt{d}}\sum_{q = 1}^R A_{q,1}^*\right)\right] - \mathbf{E}\left[g_{\psi,x}(\zeta)\right]
    \right\vert\\
    &= \sup_{x\in\mathbf{R}}\left\vert
    \mathbf{E}\left[g_{\psi,x}\left(\left\Vert
    \sum_{k = 2}^K\frac{\sqrt{\mathcal{T}_\circ (B_1 - B)}}{V_k\sqrt{d}}\sum_{q = 1}^R A_{q,k} + (K - 1)\frac{\sqrt{\mathcal{T}_\circ (B_1 - B)}}{V_1\sqrt{d}}\sum_{q = 1}^R A_{q,1}
    \right\Vert_2 Z\right)\right]\right.\\
    &- \mathbf{E}\left[g_{\psi,x}\left(\left\Vert
    \sum_{k = 2}^K\frac{\sqrt{\mathcal{T}_\circ (B_1 - B)}}{V_k\sqrt{d}}\sum_{B\leq \vert t_1 - t_2\vert\leq B_1}^{T_k}\left(\boldsymbol{\epsilon}_{t_1,k}^\top\boldsymbol{\epsilon}_{t_2,k} - \mathbf{E}\left[\boldsymbol{\epsilon}_{t_1,k}^\top\boldsymbol{\epsilon}_{t_2,k}\right]\right)\right.\right.\right.\\
    &\left.\left.\left.\left.+ (K - 1)\frac{\sqrt{\mathcal{T}_\circ(B_1 - B)}}{V_1\sqrt{d}}\sum_{B\leq \vert t_1 - t_2\vert\leq B_1}^{T_1}\left(\boldsymbol{\epsilon}_{t_1,1}^\top\boldsymbol{\epsilon}_{t_2,1} - \mathbf{E}\left[\boldsymbol{\epsilon}_{t_1,1}^\top\boldsymbol{\epsilon}_{t_2,1}\right]\right)
    \right\Vert_{2} Z\right)\right]
    \right\vert\\
    & = O\left(\psi\sqrt{\frac{\ell}{v}} + \frac{\psi}{\mathcal{T}_\circ^{3/2}\sqrt{(B_1 - B)}}\right)
\end{aligned}
\label{eq.A_star_to_zeta}
\end{equation}
Choose $\psi = \log^3(T).$  From \eqref{eq.prob_to_g}, \eqref{eq.g_to_sum_epsilon}, \eqref{eq.e_to_truncate}, \eqref{eq.truncate_to_big_sum},  \eqref{eq.A_to_A_star}, \eqref{eq.A_star_to_zeta}, we have 
\begin{equation}
    \begin{aligned}
        \sup_{x\in\mathbf{R}}\left\vert
        \mathbf{Pr}\left(\sqrt{\mathcal{T}_\circ(B_1- B)}\widehat{R}\leq x\right) - \mathbf{Pr}\left(\zeta \leq x\right)
    \right\vert\\
     = O\left(\frac{1}{\psi} + \frac{\psi B_1}{\sqrt{\mathcal{T}_\circ}} + \frac{\psi}{\mathcal{T}_\circ^{3/2}\sqrt{(B_1 - B)}} + \psi\sqrt{\frac{\ell}{v}} + \psi^3\sqrt{\frac{v}{\mathcal{T}_\circ}}\right) = o(1),
    \end{aligned}
\end{equation}
which proves \eqref{eq.prob_res}.
\end{proof}

\begin{corollary}[Further discussion of Remark \ref{remark.power_of_test}]
\label{corollary.power_performance}
     With any given threshold $\mathcal{\gamma} > 0,$ we have 
    \begin{align*}
        & \mathbf{Pr}\left(
        \sqrt{\mathcal{T}_\circ (B_1- B)}\widehat{R} \leq \mathcal{\gamma}
        \right)\\
        & =  \mathbf{Pr}\left(
        \widehat{R} - \frac{1}{\sqrt{d}}\sum_{k = 2}^K\vert \boldsymbol{\mu}_k  - \boldsymbol{\mu}_1\vert_2^2\leq \frac{\mathcal{\gamma}}{\sqrt{\mathcal{T}_\circ (B_1- B)}}- \frac{1}{\sqrt{d}}\sum_{k = 2}^K\vert \boldsymbol{\mu}_k  - \boldsymbol{\mu}_1\vert_2^2
        \right).
    \end{align*}
    We notice that $d\asymp \mathcal{T}_\circ,$ if the condition 
    \begin{align*}
        1 = o\left(\sum_{k = 2}^K\vert \boldsymbol{\mu}_k  - \boldsymbol{\mu}_1\vert_2^2\right)
    \end{align*}
    holds true, then for sufficiently large $\mathcal{T}_\circ,$
    \begin{align*}
        \frac{\mathcal{\gamma}}{\sqrt{\mathcal{T}_\circ (B_1- B)}} - \frac{1}{\sqrt{d}}\sum_{k = 2}^K\vert \boldsymbol{\mu}_k  - \boldsymbol{\mu}_1\vert_2^2 < 0,
    \end{align*}
    and  we have 
    \begin{align*}
        & \mathbf{Pr}\left(
        \sqrt{\mathcal{T}_\circ (B_1- B)}\widehat{R} \leq \mathcal{\gamma}
        \right)\\
        &\leq \mathbf{Pr}\left(
        \left\vert\ \widehat{R} - \frac{1}{\sqrt{d}}\sum_{k = 2}^K\vert \boldsymbol{\mu}_k  - \boldsymbol{\mu}_1\vert_2^2\ \right\vert\geq \left\vert\ \frac{\mathcal{\gamma}}{\sqrt{\mathcal{T}_\circ (B_1- B)}}- \frac{1}{\sqrt{d}}\sum_{k = 2}^K\vert \boldsymbol{\mu}_k  - \boldsymbol{\mu}_1\vert_2^2\ \right\vert
        \right)\\
        &\leq \frac{\mathbf{E}\left\vert\ \widehat{R} - \frac{1}{\sqrt{d}}\sum_{k = 2}^K\vert \boldsymbol{\mu}_k  - \boldsymbol{\mu}_1\vert_2^2\ \right\vert^{M/2}}{\left\vert\ \frac{\mathcal{\gamma}}{\sqrt{\mathcal{T}_\circ (B_1- B)}}- \frac{1}{\sqrt{d}}\sum_{k = 2}^K\vert \boldsymbol{\mu}_k  - \boldsymbol{\mu}_1\vert_2^2\ \right\vert^{M/2}}\\
        & \leq \frac{C}{\left(\sqrt{\mathcal{T}_\circ}\times \left(\frac{1}{\sqrt{d}}\sum_{k = 2}^K\vert \boldsymbol{\mu}_k  - \boldsymbol{\mu}_1\vert_2^2 - \frac{\mathcal{\gamma}}{\sqrt{\mathcal{T}_\circ (B_1- B)}}\right)\right)^{M/2}},
    \end{align*}
    which tends to 0 as $\mathcal{T}_\circ$ tends to infinity, making the power of the test tending to 1. 
\end{corollary}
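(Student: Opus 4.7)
The plan is to prove the corollary by a direct Markov-inequality argument that leverages the consistency result \eqref{eq.consistency_R} of Theorem \ref{theorem.consistent_ANOVA}. First I would rewrite the event $\{\sqrt{\mathcal{T}_\circ(B_1-B)}\widehat{R}\leq \gamma\}$ as
\begin{equation*}
\left\{\widehat{R} - \frac{1}{\sqrt{d}}\sum_{k=2}^{K}\vert\boldsymbol{\mu}_k-\boldsymbol{\mu}_1\vert_2^2 \leq \frac{\gamma}{\sqrt{\mathcal{T}_\circ(B_1-B)}} - \frac{1}{\sqrt{d}}\sum_{k=2}^{K}\vert\boldsymbol{\mu}_k-\boldsymbol{\mu}_1\vert_2^2\right\}.
\end{equation*}
This centers the test statistic around its (approximate) mean, namely the signal term $d^{-1/2}\sum_{k=2}^{K}\vert\boldsymbol{\mu}_k-\boldsymbol{\mu}_1\vert_2^2$.

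Next, under the separation condition $1=o\bigl(\sum_{k=2}^{K}\vert\boldsymbol{\mu}_k-\boldsymbol{\mu}_1\vert_2^2\bigr)$ together with $d\asymp \mathcal{T}_\circ$, the signal satisfies
\begin{equation*}
\frac{1}{\sqrt{d}}\sum_{k=2}^{K}\vert\boldsymbol{\mu}_k-\boldsymbol{\mu}_1\vert_2^2 \gg \frac{1}{\sqrt{\mathcal{T}_\circ}} \gg \frac{\gamma}{\sqrt{\mathcal{T}_\circ(B_1-B)}}
\end{equation*}
for sufficiently large $\mathcal{T}_\circ$. In particular the right-hand side of the inequality above is negative and its absolute value is of the same order as the signal. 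Thus the event is contained in
\begin{equation*}
\left\{\left\vert\widehat{R} - \frac{1}{\sqrt{d}}\sum_{k=2}^{K}\vert\boldsymbol{\mu}_k-\boldsymbol{\mu}_1\vert_2^2\right\vert \geq \frac{1}{\sqrt{d}}\sum_{k=2}^{K}\vert\boldsymbol{\mu}_k-\boldsymbol{\mu}_1\vert_2^2 - \frac{\gamma}{\sqrt{\mathcal{T}_\circ(B_1-B)}}\right\}.
\end{equation*}

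Applying Markov's inequality with the $M/2$-norm and invoking the bound \eqref{eq.consistency_R}, the probability of this deviation event is at most
\begin{equation*}
\frac{\bigl\Vert\widehat{R}-d^{-1/2}\sum_{k=2}^{K}\vert\boldsymbol{\mu}_k-\boldsymbol{\mu}_1\vert_2^2\bigr\Vert_{M/2}^{M/2}}{\bigl(d^{-1/2}\sum_{k=2}^{K}\vert\boldsymbol{\mu}_k-\boldsymbol{\mu}_1\vert_2^2 - \gamma/\sqrt{\mathcal{T}_\circ(B_1-B)}\bigr)^{M/2}} \leq \frac{C}{\bigl(\sqrt{\mathcal{T}_\circ}\bigl(d^{-1/2}\sum_{k=2}^{K}\vert\boldsymbol{\mu}_k-\boldsymbol{\mu}_1\vert_2^2 - \gamma/\sqrt{\mathcal{T}_\circ(B_1-B)}\bigr)\bigr)^{M/2}}.
\end{equation*}
Since $\sqrt{\mathcal{T}_\circ}\cdot d^{-1/2}\sum_{k=2}^{K}\vert\boldsymbol{\mu}_k-\boldsymbol{\mu}_1\vert_2^2\asymp\sum_{k=2}^{K}\vert\boldsymbol{\mu}_k-\boldsymbol{\mu}_1\vert_2^2\to\infty$ and the subtracted term $\sqrt{\mathcal{T}_\circ}\cdot\gamma/\sqrt{\mathcal{T}_\circ(B_1-B)}=\gamma/\sqrt{B_1-B}\to 0$, the denominator diverges and the bound tends to zero.

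The main step is really just linking the two pieces: matching the order of the stochastic fluctuation (controlled at rate $1/\sqrt{\mathcal{T}_\circ}$ by Theorem \ref{theorem.consistent_ANOVA}) against the signal size (forced to dominate by hypothesis). There is no genuine obstacle here; the only subtlety is checking that the threshold correction $\gamma/\sqrt{\mathcal{T}_\circ(B_1-B)}$ is negligible compared to the signal, which follows because $B_1-B\to\infty$ and the signal already dominates the $1/\sqrt{\mathcal{T}_\circ}$ scale by a factor that goes to infinity. The entire argument is essentially just consistency plus Markov's inequality with the $M/2$-moment bound, so it can be written quite compactly.
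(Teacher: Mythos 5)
Your proposal is correct and follows essentially the same route as the paper's own proof: center the event around the signal term, use the separation condition to make the threshold negative, and apply Markov's inequality with the $M/2$-th moment bound from \eqref{eq.consistency_R}. No gaps.
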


Since the normal distribution has a bounded density, the validity of Algorithm \ref{algorithm.bootstrap} depends on the consistent estimation of the variance of $\widehat{R},$ as established in the following proof.

\begin{proof}[Proof of Theorem \ref{Theorem.bootstrap}]
Define the conditional mean and variance as follows: 
$$\mathbf{E}^*\left[\cdot\right] = \mathbf{E}\left[\cdot\mid\mathbf{x}_{t,k}, k = 1,\cdots, K, t = 1,\cdots, T_k\right],\quad \text{and}\quad \mathrm{Var}^*(x) = \mathbf{E}^*\left[x^2\right] - (\mathbf{E}^*\left[x\right])^2.
$$
We also call such mean and variance ``the expectation and the variance in the bootstrap world.'' From Algorithm \ref{algorithm.bootstrap}, $\widehat{S}^*_b$ has normal distribution in the bootstrap world, with mean
\begin{align*}
    \mathbf{E}^*\left[\widehat{S}^*_b\right] &=  2\sum_{k = 2}^K\frac{\sqrt{\mathcal{T}_\circ (B_1 - B)}}{V_k\sqrt{d}}\sum_{t = B+1}^{T_k}
    \widehat{\vartheta}_{t,k}\mathbf{E}\left[\varepsilon^*_{t,k}\right]\\
    &+ 2(K - 1)\frac{\sqrt{\mathcal{T}_\circ (B_1 - B)}}{V_1\sqrt{d}}\sum_{t = B+1}^{T_k}\widehat{\vartheta}_{t,1}\mathbf{E}\left[\varepsilon^*_{t,1}\right] = 0,
\end{align*}
and variance 
\begin{align*}
    \mathrm{Var}^*\left(
    \widehat{S}^*_b
    \right) &= 4\sum_{k = 2}^K \frac{\mathcal{T}_\circ (B_1 - B)}{V_k^2d} 
    \mathrm{Var}^*\left(
    \sum_{t = B+1}^{T_k}
    \widehat{\vartheta}_{t,k}\varepsilon^*_{t,k}
    \right)\\
    &+ 4(K - 1)^2\frac{\mathcal{T}_\circ (B_1 - B)}{V_1^2d} \mathrm{Var}^*\left(\sum_{t = B+1}^{T_k}\widehat{\vartheta}_{t,1}\varepsilon^*_{t,1}\right).
\end{align*}
For any $k = 1,2,\cdots, K,$ from Step 4 of Algorithm \ref{algorithm.bootstrap},
\begin{align*}
     \mathrm{Var}^*\left(
    \sum_{t = B+1}^{T_k}
    \widehat{\vartheta}_{t,k}\varepsilon^*_{t,k}
    \right) = \sum_{t_1 = B+1}^{T_k}\sum_{t_2 = B+1}^{T_k} \mathcal{K}\left(\frac{t_1 - t_2}{H}\right)\widehat{\vartheta}_{t_1,k}\widehat{\vartheta}_{t_2,k}.
\end{align*}
Since 
\begin{align*}
&\left\Vert
    \frac{4\mathcal{T}_\circ(B_1 - B)}{V^2_k d}\sum_{t_1 = B+1}^{T_k}\sum_{t_2 = B+1}^{T_k}\widehat{\vartheta}_{t_1,k}\widehat{\vartheta}_{t_2,k} \mathcal{K}\left(\frac{t_1 - t_2}{H}\right)\right.\\
&\left.- \mathrm{Var}\left(\frac{\sqrt{\mathcal{T}_\circ(B_1-  B)}}{V_k\sqrt{d}}\sum_{B\leq \vert t_1 - t_2\vert\leq B_1}^{T_k}\left(\boldsymbol{\epsilon}_{t_1,k}^\top\boldsymbol{\epsilon}_{t_2,k} - \mathbf{E}\left[\boldsymbol{\epsilon}_{t_1,k}^\top\boldsymbol{\epsilon}_{t_2,k}\right]\right)\right)
\right\Vert_{M/4}\\
&\leq  \frac{4\mathcal{T}_\circ (B_1 - B)}{V^2_k d}\left\Vert
    \sum_{t_1 = B+1}^{T_k}\sum_{t_2 = B+1}^{T_k}\left(\widehat{\vartheta}_{t_1,k}\widehat{\vartheta}_{t_2,k} - \vartheta_{t_1,k}\vartheta_{t_2,k}\right)\mathcal{K}\left(\frac{t_1 - t_2}{H}\right)
    \right\Vert_{M/4}\\
&+ \left\Vert
    \frac{4\mathcal{T}_\circ (B_1 - B)}{V^2_k d}\sum_{t_1 = B+1}^{T_k}\sum_{t_2 = B+1}^{T_k}\vartheta_{t_1,k}\vartheta_{t_2,k}\mathcal{K}\left(\frac{t_1 - t_2}{H}\right)\right.\\
&\left.- \mathrm{Var}\left(\frac{\sqrt{\mathcal{T}_\circ(B_1-  B)}}{V_k\sqrt{d}}\sum_{B\leq \vert t_1 - t_2\vert\leq B_1}^{T_k}\left(\boldsymbol{\epsilon}_{t_1,k}^\top\boldsymbol{\epsilon}_{t_2,k} - \mathbf{E}\boldsymbol{\epsilon}_{t_1,k}^\top\boldsymbol{\epsilon}_{t_2,k}\right)\right)
    \right\Vert_{M/4},
\end{align*}
where we define 
$$
\vartheta_{t,k} = \sum_{t_2 = (t  - B_1)\vee 1}^{t  - B}\boldsymbol{\epsilon}_{t,k}^\top\boldsymbol{\epsilon}_{t_2,k}.
$$
From Theorem \ref{theorem.var_estimation}, 
\begin{align*}
    &\left\Vert
    \frac{4\mathcal{T}_\circ(B_1 - B)}{V^2_k d}\sum_{t_1 = B+1}^{T_k}\sum_{t_2 = B+1}^{T_k}\vartheta_{t_1,k}\vartheta_{t_2,k}\mathcal{K}\left(\frac{t_1 - t_2}{H}\right)\right.\\
    &\left.- \mathrm{Var}\left(\frac{\sqrt{\mathcal{T}_\circ(B_1-  B)}}{V_k\sqrt{d}}\sum_{B\leq \vert t_1 - t_2\vert\leq B_1}^{T_k}\left(\boldsymbol{\epsilon}_{t_1,k}^\top\boldsymbol{\epsilon}_{t_2,k} - \mathbf{E}\left[\boldsymbol{\epsilon}_{t_1,k}^\top\boldsymbol{\epsilon}_{t_2,k}\right]\right)\right)
    \right\Vert_{M/4}\\
    &= O\left(\frac{\log(\mathcal{T}_\circ)}{\mathcal{T}_\circ^{1/6 - \kappa_2}} + \frac{1}{\mathcal{T}_\circ^{\kappa_2 / 2}}\right).
\end{align*}
On the other hand, 
\begin{align*}
    &\left\Vert
    \sum_{t_1 = B+1}^{T_k}\sum_{t_2 = B+1}^{T_k}\left(\widehat{\vartheta}_{t_1,k}\widehat{\vartheta}_{t_2,k} - \vartheta_{t_1,k}\vartheta_{t_2,k}\right)\mathcal{K}\left(\frac{t_1 - t_2}{H}\right)
    \right\Vert_{M/4}\\
    &\leq 2\sum_{u = 0}^{T_k - B - 1}\mathcal{K}\left(\frac{u}{H}\right)\left\Vert\sum_{t_1 = B+1}^{T_k - u}\left(\widehat{\vartheta}_{t_1,k}\widehat{\vartheta}_{t_1 + u,k} - \vartheta_{t_1,k}\vartheta_{t_1 + u,k}\right)\right\Vert_{M/4}\\
    &\leq 2\sum_{u = 0}^{T_k - B - 1}\mathcal{K}\left(\frac{u}{H}\right)\left\Vert\sum_{t_1 = B+1}^{T_k - u}\vartheta_{t_1,k}\left(\widehat{\vartheta}_{t_1 + u,k} - \vartheta_{t_1 + u,k}\right)\right\Vert_{M/4}\\
    &+ 2\sum_{u = 0}^{T_k - B - 1}\mathcal{K}\left(\frac{u}{H}\right)\left\Vert\sum_{t_1 = B+1}^{T_k - u}\vartheta_{t_1 + u,k}\left(\widehat{\vartheta}_{t_1,k} - \vartheta_{t_1,k}\right)\right\Vert_{M/4}\\
    &+ 2\sum_{u = 0}^{T_k - B - 1}\mathcal{K}\left(\frac{u}{H}\right)\left\Vert\sum_{t_1 = B+1}^{T_k - u}\left(\widehat{\vartheta}_{t_1 + u,k} - \vartheta_{t_1 + u, k}\right)\left(\widehat{\vartheta}_{t_1,k} - \vartheta_{t_1,k}\right)\right\Vert_{M/4}.
\end{align*}
From \eqref{eq.moment_iota} and \eqref{eq.bias_theta},
\begin{equation}
\begin{aligned}
    \left\Vert\vartheta_{t,k}\right\Vert_{M/2}&\leq 
    \left\Vert
    \sum_{t_2 = (t  - B_1)\vee 1}^{t  - B}\left(\boldsymbol{\epsilon}_{t,k}^\top\boldsymbol{\epsilon}_{t_2,k} - \mathbf{E}\left[\boldsymbol{\epsilon}_{t,k}^\top\boldsymbol{\epsilon}_{t_2,k}\right]\right)
    \right\Vert_{M/2} + \left\vert\sum_{t_2 = (t  - B_1)\vee 1}^{t  - B}\mathbf{E}\left[\boldsymbol{\epsilon}_{t,k}^\top\boldsymbol{\epsilon}_{t_2,k}\right]\right\vert\\
    &\leq C\sqrt{d(B_1 - B)} + \frac{Cd}{B^{\alpha - 1}}\leq C_1\sqrt{\mathcal{T}_\circ(B_1 - B)}.
\end{aligned}
\label{eq.size_vartheta}
\end{equation}
Define 
$$
\overline{\boldsymbol{\epsilon}}_k = \frac{1}{T_k}\sum_{t = 1}^{T_k}\boldsymbol{\epsilon}_{t,k},
$$ 
then 
\begin{align*}
    \left\Vert
    \widehat{\vartheta}_{t,k} - \vartheta_{t, k}
    \right\Vert_{M/2} &= \left\Vert
    \sum_{t_2 = (t - B_1)\vee 1}^{t - B}\left(\left(\boldsymbol{\epsilon}_{t,k} - \overline{\boldsymbol{\epsilon}}_k\right)^\top\left(\boldsymbol{\epsilon}_{t_2,k} - \overline{\boldsymbol{\epsilon}}_k\right) - \boldsymbol{\epsilon}_{t,k}^\top \boldsymbol{\epsilon}_{t_2,k}\right)
    \right\Vert_{M/2}\\
    &\leq 
    \left\Vert
    \overline{\boldsymbol{\epsilon}}_k^\top\sum_{t_2 = (t - B_1)\vee 1}^{t - B}\boldsymbol{\epsilon}_{t_2,k}
    \right\Vert_{M/2} + (B_1 - B + 1)\left\Vert
    \boldsymbol{\epsilon}_{t,k}^\top\overline{\boldsymbol{\epsilon}}_k
    \right\Vert_{M/2}\\
    & + (B_1 - B + 1)\left\Vert\overline{\boldsymbol{\epsilon}}_k^\top\overline{\boldsymbol{\epsilon}}_k\right\Vert_{M/2}.
\end{align*}
Notice that 
\begin{align*}
    \left\Vert
    \overline{\boldsymbol{\epsilon}}_k^\top\sum_{t_2 = (t - B_1)\vee 1}^{t - B}\boldsymbol{\epsilon}_{t_2,k}
    \right\Vert_{M/2} &= \frac{1}{T_k}
    \left\Vert
    \sum_{t_1 = 1}^{T_k}\sum_{t_2 = (t - B_1)\vee 1}^{t - B}\boldsymbol{\epsilon}_{t_1,k}^\top\boldsymbol{\epsilon}_{t_2,k}
    \right\Vert_{M/2}\\
    &\leq \frac{1}{T_k}
    \left\Vert
    \sum_{t_2 = (t - B_1)\vee 1}^{t - B}\sum_{t_1 = 1}^{t_2 - B - 1}\boldsymbol{\epsilon}_{t_1,k}^\top\boldsymbol{\epsilon}_{t_2,k} - \mathbf{E}\left[\boldsymbol{\epsilon}_{t_1,k}^\top\boldsymbol{\epsilon}_{t_2,k}\right]
    \right\Vert_{M/2}\\
    &+ \frac{1}{T_k}\sum_{t_2 = (t - B_1)\vee 1}^{t - B}\sum_{t_1 = 1}^{t_2 - B - 1}\left\vert \mathbf{E}\left[\boldsymbol{\epsilon}_{t_1,k}^\top\boldsymbol{\epsilon}_{t_2,k}\right]\right\vert\\
    & + \frac{1}{T_k}
    \left\Vert
    \sum_{t_2 = (t - B_1)\vee 1}^{t - B}\sum_{t_1 =  (t_2 - B)\vee 1}^{(t_2 + B)\wedge T_k}\boldsymbol{\epsilon}_{t_1,k}^\top\boldsymbol{\epsilon}_{t_2,k}
    \right\Vert_{M/2}\\
    & + \frac{1}{T_k}
    \left\Vert
    \sum_{t_2 = (t - B_1)\vee 1}^{t - B}\sum_{t_1 = t_2 + B + 1}^{T_k}
    \boldsymbol{\epsilon}_{t_1,k}^\top\boldsymbol{\epsilon}_{t_2,k} - \mathbf{E}\left[\boldsymbol{\epsilon}_{t_1,k}^\top\boldsymbol{\epsilon}_{t_2,k}\right]
    \right\Vert_{M/2}\\
    & + \frac{1}{T_k}\sum_{t_2 = (t - B_1)\vee 1}^{t - B}\sum_{t_1 = t_2 +B + 1}^{T_k}\left\vert \mathbf{E}\left[\boldsymbol{\epsilon}_{t_1,k}^\top\boldsymbol{\epsilon}_{t_2,k}\right]\right\vert.
\end{align*}
From Theorem \ref{theorem.consistent_quadratic}, 
\begin{align*}
    &\frac{1}{T_k}
    \left\Vert
    \sum_{t_2 = (t - B_1)\vee 1}^{t - B}\sum_{t_1 = 1}^{t_2 - B - 1}\boldsymbol{\epsilon}_{t_1,k}^\top\boldsymbol{\epsilon}_{t_2,k} - \mathbf{E}\left[\boldsymbol{\epsilon}_{t_1,k}^\top\boldsymbol{\epsilon}_{t_2,k}\right]
    \right\Vert_{M/2}\\
    &\leq \frac{C}{T_k}\sqrt{d\sum_{t_2 = (t - B_1)\vee 1}^{t - B}\sum_{t_1 = 1}^{t_2 - B - 1}1^2} + \frac{CT_k^{3/2}}{B^\alpha} + \frac{CT_k^{1/2}}{B^{\alpha - 2}}\\
    &\leq C_1\sqrt{(B_1 -  B)}.
\end{align*}
From \eqref{eq.covariance},
\begin{align*}
   \frac{1}{T_k}\sum_{t_2 = (t - B_1)\vee 1}^{t - B}\sum_{t_1 = 1}^{t_2 - B - 1}\left\vert \mathbf{E}\left[\boldsymbol{\epsilon}_{t_1,k}^\top\boldsymbol{\epsilon}_{t_2,k}\right]\right\vert&\leq  \frac{Cd}{T_k}\sum_{t_2 = (t - B_1)\vee 1}^{t - B}\sum_{t_1 = 1}^{t_2 - B - 1}\frac{1}{(1  + t_2 - t_1)^\alpha}\\
   &\leq \frac{C_1(B_1 - B)}{B^{\alpha - 1}}.
\end{align*}
For any $p < q$ and $k = 1,\cdots,K,$ notice that 
\begin{align*}
    \left\Vert
    \boldsymbol{\epsilon}_{p,k}^\top\boldsymbol{\epsilon}_{q,k}
    \right\Vert_{M/2} &\leq\left\Vert\boldsymbol{\epsilon}_{p,k}^\top\mathbf{E}\left[\boldsymbol{\epsilon}_{q,k}\mid\mathcal{F}^{(k)}_{q,q - p - 1}\right]
    \right\Vert_{M/2}\\
    & + \left\Vert\boldsymbol{\epsilon}_{p,k}^\top\left(\boldsymbol{\epsilon}_{q,k} - \mathbf{E}\left[\boldsymbol{\epsilon}_{q,k}\mid\mathcal{F}^{(k)}_{q,q - p - 1}\right]\right)\right\Vert_{M/2}.
\end{align*}
Since $\mathbf{E}\left[\boldsymbol{\epsilon}_{q,k}\mid\mathcal{F}^{(k)}_{q,q - p - 1}\right]$ is independent of $\boldsymbol{\epsilon}_{p,k},$ from Definition \ref{def.M_alpha_short_range} and Theorem 1.7 of  \cite{MR2002723}, for any $\boldsymbol{\tau}\in\mathbf{R}^d,$ 
\begin{align*}
    \mathbf{E}\left[\left\vert \boldsymbol{\epsilon}_{p,k}^\top\mathbf{E}\left[\boldsymbol{\epsilon}_{q,k}\mid\mathcal{F}^{(k)}_{q,q - p - 1}\right]\right\vert^{M/2}\mid \mathbf{E}\left[\boldsymbol{\epsilon}_{q,k}\mid\mathcal{F}^{(k)}_{q,q - p - 1}\right] = \boldsymbol{\tau}\right] &= \mathbf{E}\left[\left\vert \boldsymbol{\epsilon}_{p,k}^\top\boldsymbol{\tau}\right\vert^{M/2}\right]\\
    &\leq C\left\vert \boldsymbol{\tau}\right\vert_2^{M/2}\\
    & = C\left\vert \mathbf{E}\left[\boldsymbol{\epsilon}_{q,k}\mid\mathcal{F}^{(k)}_{q,q - p - 1}\right]\right\vert_2^{M/2},
\end{align*}
making 
\begin{align*}
    \left\Vert\boldsymbol{\epsilon}_{p,k}^\top\mathbf{E}\left[\boldsymbol{\epsilon}_{q,k}\mid\mathcal{F}^{(k)}_{q,q - p - 1}\right]
    \right\Vert_{M/2}&\leq C\left\Vert\ \left\vert \mathbf{E}\left[\boldsymbol{\epsilon}_{q,k}\mid\mathcal{F}^{(k)}_{q,q - p - 1}\right]\right\vert_2\ \right\Vert_{M/2}\\
    &\leq C\sqrt{\sum_{j = 1}^d \left\Vert \mathbf{E}\left[\boldsymbol{\epsilon}_{q,k}^{(j)}\mid\mathcal{F}^{(k)}_{q,q - p - 1}\right]\right\Vert_{M/2}^2}\leq C_1\sqrt{d}.
\end{align*}
On the other hand,
\begin{align*}
    \left\Vert\boldsymbol{\epsilon}_{p,k}^\top\left(\boldsymbol{\epsilon}_{q,k} - \mathbf{E}\left[\boldsymbol{\epsilon}_{q,k}\mid\mathcal{F}^{(k)}_{q,q - p - 1}\right]\right)\right\Vert_{M/2} &\leq \sum_{j = 1}^d\left\Vert\boldsymbol{\epsilon}_{p,k}^{(j)}\left(\boldsymbol{\epsilon}_{q,k}^{(j)} - \mathbf{E}\left[\boldsymbol{\epsilon}_{q,k}^{(j)}\mid\mathcal{F}^{(k)}_{q,q - p - 1}\right]\right)\right\Vert_{M/2}\\
    &\leq \sum_{j = 1}^d \left\Vert \boldsymbol{\epsilon}_{p,k}^{(j)}\right\Vert_M\left\Vert  \boldsymbol{\epsilon}_{q,k}^{(j)} - \mathbf{E}\left[\boldsymbol{\epsilon}_{q,k}^{(j)}\mid\mathcal{F}^{(k)}_{q,q - p - 1}\right]\right\Vert_M\\
    &\leq \frac{Cd}{(q - p)^\alpha}.
\end{align*}
If $p = q,$ then 
\begin{align*}
    \left\Vert
    \boldsymbol{\epsilon}_{p,k}^\top\boldsymbol{\epsilon}_{p,k}
    \right\Vert_{M/2}\leq \sum_{j = 1}^d\left\Vert
    \boldsymbol{\epsilon}_{p,k}^{(j)}\boldsymbol{\epsilon}_{p,k}^{(j)}
    \right\Vert_{M/2}\leq Cd.
\end{align*}
Therefore, for any $p,q,$
\begin{equation}
\begin{aligned}
    \left\Vert
    \boldsymbol{\epsilon}_{p,k}^\top\boldsymbol{\epsilon}_{q,k}
    \right\Vert_{M/2}\leq C\sqrt{d} + \frac{Cd}{(1 + \vert q - p\vert )^\alpha},
\end{aligned}
\label{eq.moment}
\end{equation}
and 
\begin{align*}
    \frac{1}{T_k}
    \left\Vert
    \sum_{t_2 = (t - B_1)\vee 1}^{t - B}\sum_{t_1 = (t_2 - B)\vee 1}^{(t_2 + B)\wedge T_k}\boldsymbol{\epsilon}_{t_1,k}^\top\boldsymbol{\epsilon}_{t_2,k}
    \right\Vert_{M/2} &\leq \frac{1}{T_k}\sum_{t_2 = (t - B_1)\vee 1}^{t - B}\sum_{t_1 = (t_2 - B)\vee 1}^{(t_2 + B)\wedge T_k}\left\Vert \boldsymbol{\epsilon}_{t_1,k}^\top\boldsymbol{\epsilon}_{t_2,k}\right\Vert_{M/2}\\
    &\leq \frac{CB(B_1 - B)\sqrt{d}}{T_k}\\
    &+ \frac{Cd}{T_k}\sum_{t_2 = (t - B_1)\vee 1}^{t - B}\sum_{t_1 = (t_2 - B)\vee 1}^{(t_2 + B)\wedge T_k}\frac{1}{(1 + \vert t_1 - t_2\vert )^\alpha}\\
    &\leq C_1\left(B_1 - B\right).
\end{align*}
From Theorem \ref{theorem.consistent_quadratic},
\begin{align*}
    &\frac{1}{T_k}
    \left\Vert
    \sum_{t_2 = (t - B_1)\vee 1}^{t - B}\sum_{t_1 = t_2 + B + 1}^{T_k}
    \boldsymbol{\epsilon}_{t_1,k}^\top\boldsymbol{\epsilon}_{t_2,k} - \mathbf{E}\left[\boldsymbol{\epsilon}_{t_1,k}^\top\boldsymbol{\epsilon}_{t_2,k}\right]
    \right\Vert_{M/2}\\
    & \leq \frac{C}{T_k}\sqrt{d\sum_{t_2 = (t - B_1)\vee 1}^{t - B}\sum_{t_1 = t_2 + B + 1}^{T_k}1^2} + \frac{CT_k^{3/2}}{B^\alpha} + \frac{CT_k^{1/2}}{B^{\alpha - 2}}\\
    &\leq C_1\sqrt{(B_1 - B)}.
\end{align*}
From \eqref{eq.covariance},
\begin{align*}
    \frac{1}{T_k}\sum_{t_2 = (t - B_1)\vee 1}^{t - B}\sum_{t_1 = t_2 + B + 1}^{T_k}\left\vert \mathbf{E}\left[\boldsymbol{\epsilon}_{t_1,k}^\top\boldsymbol{\epsilon}_{t_2,k}\right]\right\vert
    &\leq \frac{Cd}{T_k}\sum_{t_2 = (t - B_1)\vee 1}^{t - B}\sum_{t_1 = t_2 + B + 1}^{T_k}\frac{1}{(1 + t_1 - t_2)^\alpha}\\
    &\leq \frac{Cd(B_1 - B)}{T_k B^{\alpha - 1}} \leq \frac{C_1(B_1 - B)}{B^{\alpha - 1}}.
\end{align*}
From these observations, we have
\begin{equation}
\begin{aligned}
    \left\Vert
    \overline{\boldsymbol{\epsilon}}_k^\top\sum_{t_2 = (t - B_1)\vee 1}^{t - B}\boldsymbol{\epsilon}_{t_2,k}
    \right\Vert_{M/2}\leq C\sqrt{(B_1 - B)} + \frac{CB_1}{B^{\alpha - 1}} + C(B_1 - B) \leq C_1B_1.
\end{aligned}
\label{eq.delta_first_term}
\end{equation}
Notice that 
\begin{align*}
    \left\Vert
    \boldsymbol{\epsilon}_{t,k}^\top\overline{\boldsymbol{\epsilon}}_k
    \right\Vert_{M/2} &\leq \frac{1}{T_k}\left\Vert \sum_{t_1 = 1}^{t - B-1}\boldsymbol{\epsilon}_{t,k}^\top\boldsymbol{\epsilon}_{t_1,k} - \mathbf{E}\left[\boldsymbol{\epsilon}_{t,k}^\top\boldsymbol{\epsilon}_{t_1,k}\right]\right\Vert_{M/2}
     + \frac{1}{T_k}\sum_{t_1 = 1}^{t - B-1}\left\vert\mathbf{E}\left[\boldsymbol{\epsilon}_{t,k}^\top\boldsymbol{\epsilon}_{t_1,k}\right]\right\vert\\
     & + \frac{1}{T_k}\left\Vert \sum_{t_1 = (t - B)\vee 1}^{(t + B)\wedge T_k}\boldsymbol{\epsilon}_{t,k}^\top\boldsymbol{\epsilon}_{t_1,k}\right\Vert_{M/2}
     + \frac{1}{T_k}\left\Vert \sum_{t_1 = t + B +1}^{T_k}\boldsymbol{\epsilon}_{t,k}^\top\boldsymbol{\epsilon}_{t_1,k} - \mathbf{E}\left[\boldsymbol{\epsilon}_{t,k}^\top\boldsymbol{\epsilon}_{t_1,k}\right]\right\Vert_{M/2}\\
     & + \frac{1}{T_k}\sum_{t_1 = t + B +1}^{T_k}\left\vert \mathbf{E}\left[\boldsymbol{\epsilon}_{t,k}^\top\boldsymbol{\epsilon}_{t_1,k}\right]\right\vert.
\end{align*}
From Theorem \ref{theorem.consistent_quadratic},
\begin{align*}
    \frac{1}{T_k}\left\Vert \sum_{t_1 = 1}^{t - B-1}\boldsymbol{\epsilon}_{t,k}^\top\boldsymbol{\epsilon}_{t_1,k} - \mathbf{E}\left[\boldsymbol{\epsilon}_{t,k}^\top\boldsymbol{\epsilon}_{t_1,k}\right]\right\Vert_{M/2}&\leq \frac{C}{T_k}\sqrt{d\sum_{t_1 = 1}^{t - B-1} 1^2} + \frac{CT_k^{3/2}}{B^\alpha} + \frac{CT_k^{1/2}}{B^{\alpha - 2}}\\
    &\leq C_1,
\end{align*}
and 
\begin{align*}
    \frac{1}{T_k}\left\Vert \sum_{t_1 = t+B+1}^{T_k}\boldsymbol{\epsilon}_{t,k}^\top\boldsymbol{\epsilon}_{t_1,k} - \mathbf{E}\left[\boldsymbol{\epsilon}_{t,k}^\top\boldsymbol{\epsilon}_{t_1,k}\right]\right\Vert_{M/2}&\leq \frac{C}{T_k}\sqrt{d\sum_{t_1 = t+B+1}^{T_k}1^2} + \frac{CT_k^{3/2}}{B^\alpha} + \frac{CT_k^{1/2}}{B^{\alpha - 2}}\\
    &\leq C_1.
\end{align*}
From \eqref{eq.covariance}, 
\begin{align*}
    &\frac{1}{T_k}\sum_{t_1 = 1}^{t - B - 1}\left\vert \mathbf{E}\left[\boldsymbol{\epsilon}_{t,k}^\top\boldsymbol{\epsilon}_{t_1,k}\right]\right\vert 
    + \frac{1}{T_k}\sum_{t_1 = t+B+1}^{T_k}\left\vert \mathbf{E}\left[\boldsymbol{\epsilon}_{t,k}^\top\boldsymbol{\epsilon}_{t_1,k}\right]\right\vert\\
    &\leq \frac{Cd}{T_k}\sum_{t_1 = 1}^{t - B - 1}\frac{1}{(1 + t - t_1)^\alpha} + \frac{Cd}{T_k}\sum_{t_1 = t + B +1}^{T_k}\frac{1}{(1 + t_1 - t)^\alpha}\\
    &\leq \frac{C_1d}{T_kB^{\alpha - 1}} \leq \frac{C_2}{B^{\alpha - 1}}.
\end{align*}
From \eqref{eq.moment},
\begin{align*}
    \frac{1}{T_k}\left\Vert \sum_{t_1 = (t - B)\vee 1}^{(t + B)\wedge T_k}\boldsymbol{\epsilon}_{t,k}^\top\boldsymbol{\epsilon}_{t_1,k}\right\Vert_{M/2} &\leq \frac{1}{T_k}\sum_{t_1 = (t - B)\vee 1 }^{(t + B)\wedge T_k}\left\Vert \boldsymbol{\epsilon}_{t,k}^\top\boldsymbol{\epsilon}_{t_1,k}\right\Vert_{M/2}\\
    &\leq \frac{CB\sqrt{d}}{T_k} + \frac{Cd}{T_k}\sum_{t_1 = t - B}^{t + B}\frac{1}{(1 + \vert t_1 - t\vert)^\alpha}\leq C_1.
\end{align*}
Therefore,
\begin{equation}
\begin{aligned}
    \left\Vert
    \boldsymbol{\epsilon}_{t,k}^\top\overline{\boldsymbol{\epsilon}}_k
    \right\Vert_{M/2}\leq C.
\end{aligned}
\label{eq.delta_second_term}
\end{equation}
Notice that 
\begin{align*}
\left\Vert\overline{\boldsymbol{\epsilon}}_k^\top\overline{\boldsymbol{\epsilon}}_k\right\Vert_{M/2} &= \frac{1}{T^2_k}
    \left\Vert
    \sum_{t_1 = 1}^{T_k}\sum_{t_2 = 1}^{T_k}\boldsymbol{\epsilon}_{t_1,k}^\top\boldsymbol{\epsilon}_{t_2,k}
    \right\Vert_{M/2}\\
    &\leq \frac{1}{T_k^2}\left\Vert
    \sum_{t_1 = 1}^{T_k}\sum_{t_2 = 1}^{t_1 - B - 1}\left(\boldsymbol{\epsilon}_{t_1,k}^\top\boldsymbol{\epsilon}_{t_2,k} - \mathbf{E}\left[\boldsymbol{\epsilon}_{t_1,k}^\top\boldsymbol{\epsilon}_{t_2,k}\right]\right)
    \right\Vert_{M/2}\\
    &+ \frac{1}{T_k^2}\sum_{t_1 = 1}^{T_k}\sum_{t_2 = 1}^{t_1 - B - 1}\left\vert \mathbf{E}\left[\boldsymbol{\epsilon}_{t_1,k}^\top\boldsymbol{\epsilon}_{t_2,k}\right]\right\vert
    + \frac{1}{T_k^2}\left\Vert
    \sum_{t_1 = 1}^{T_k}\sum_{t_2 = (t_1 - B)\vee 1}^{(t_1 + B)\wedge T_k}\boldsymbol{\epsilon}_{t_1,k}^\top\boldsymbol{\epsilon}_{t_2,k}
    \right\Vert_{M/2}\\
    & + \frac{1}{T_k^2}\left\Vert
    \sum_{t_1 = 1}^{T_k}\sum_{t_2 = t_1+B+1}^{T_k}\left(\boldsymbol{\epsilon}_{t_1,k}^\top\boldsymbol{\epsilon}_{t_2,k} - \mathbf{E}\left[\boldsymbol{\epsilon}_{t_1,k}^\top\boldsymbol{\epsilon}_{t_2,k}\right]\right)
    \right\Vert_{M/2}\\
    & +\frac{1}{T_k^2}\sum_{t_1 = 1}^{T_k}\sum_{t_2 = t_1+B+1}^{T_k}\left\vert\mathbf{E}\left[\boldsymbol{\epsilon}_{t_1,k}^\top\boldsymbol{\epsilon}_{t_2,k}\right]\right\vert.
\end{align*}
From Theorem \ref{theorem.consistent_quadratic},
\begin{align*}
    &\frac{1}{T_k^2}\left\Vert
    \sum_{t_1 = 1}^{T_k}\sum_{t_2 = 1}^{t_1 - B - 1}\left(\boldsymbol{\epsilon}_{t_1,k}^\top\boldsymbol{\epsilon}_{t_2,k} - \mathbf{E}\left[\boldsymbol{\epsilon}_{t_1,k}^\top\boldsymbol{\epsilon}_{t_2,k}\right]\right)
    \right\Vert_{M/2} + \frac{1}{T_k^2}\left\Vert
    \sum_{t_1 = 1}^{T_k}\sum_{t_2 = t_1+B+1}^{T_k}\left(\boldsymbol{\epsilon}_{t_1,k}^\top\boldsymbol{\epsilon}_{t_2,k} - \mathbf{E}\left[\boldsymbol{\epsilon}_{t_1,k}^\top\boldsymbol{\epsilon}_{t_2,k}\right]\right)
    \right\Vert_{M/2}\\
    &\leq \frac{C}{T^2_k}\sqrt{\sum_{t_1 = 1}^{T_k}\sum_{t_2 = 1}^{t_1 - B - 1}1^2} + 
    \frac{C}{T^2_k}\sqrt{\sum_{t_1 = 1}^{T_k}\sum_{t_2 = t_1+B+1}^{T_k}1^2} + \frac{CT_k^{1/2}}{B^\alpha} + \frac{C}{\sqrt{T_k}B^{\alpha - 2}}
    \leq \frac{C_1}{T_k}.
\end{align*}
From \eqref{eq.covariance},
\begin{align*}
    &\frac{1}{T_k^2}\sum_{t_1 = 1}^{T_k}\sum_{t_2 = 1}^{t_1 - B - 1}\left\vert \mathbf{E}\left[\boldsymbol{\epsilon}_{t_1,k}^\top\boldsymbol{\epsilon}_{t_2,k}\right]\right\vert +  \frac{1}{T_k^2}\sum_{t_1 = 1}^{T_k}\sum_{t_2 = t_1+B+1}^{T_k}\left\vert\mathbf{E}\left[\boldsymbol{\epsilon}_{t_1,k}^\top\boldsymbol{\epsilon}_{t_2,k}\right]\right\vert\\
    &\leq \frac{Cd}{T^2_k}\sum_{t_1 = 1}^{T_k}\sum_{t_2 = 1}^{t_1 - B - 1}\frac{1}{(1 + t_1 -  t_2)^\alpha} + \frac{Cd}{T^2_k}\sum_{t_1 = 1}^{T_k}\sum_{t_2 = t_1 + B + 1}^{T_k}\frac{1}{(1 + t_2 -  t_1)^\alpha}
    \leq \frac{C_1d}{T_kB^{\alpha - 1}}.
\end{align*}
From \eqref{eq.moment},
\begin{align*}
    \frac{1}{T_k^2}\left\Vert
    \sum_{t_1 = 1}^{T_k}\sum_{t_2 = (t_1 - B)\vee 1}^{(t_1 + B)\wedge T_k}\boldsymbol{\epsilon}_{t_1,k}^\top\boldsymbol{\epsilon}_{t_2,k}
    \right\Vert_{M/2} &\leq \frac{1}{T_k^2}\sum_{t_1 = 1}^{T_k}\sum_{t_2 = (t_1 - B)\vee 1}^{(t_1 + B)\wedge T_k}\left\Vert \boldsymbol{\epsilon}_{t_1,k}^\top\boldsymbol{\epsilon}_{t_2,k}\right\Vert_{M/2}\\
    &\leq \frac{1}{T_k^2}\sum_{t_1 = 1}^{T_k}\sum_{t_2 = (t_1 - B)\vee 1}^{(t_1 + B)\wedge T_k}\left(C\sqrt{d} + \frac{Cd}{(1 + \vert t_1 - t_2\vert)^\alpha }\right)\\
    &\leq \frac{C_1B\sqrt{d}}{T_k} + \frac{C_1d}{T_k}\leq C_2.
\end{align*}
Therefore, 
\begin{equation}
\begin{aligned}
\left\Vert\overline{\boldsymbol{\epsilon}}_k^\top\overline{\boldsymbol{\epsilon}}_k\right\Vert_{M/2} \leq \frac{C}{T_k} + \frac{Cd}{T_kB^{\alpha - 1}} + C\leq C_1.
\end{aligned}
\label{eq.delta_third_term}
\end{equation}
From \eqref{eq.delta_first_term}, \eqref{eq.delta_second_term}, and \eqref{eq.delta_third_term}, we have
\begin{equation}
    \begin{aligned}
        \left\Vert
    \widehat{\vartheta}_{t,k} - \vartheta_{t, k}
    \right\Vert_{M/2}\leq CB_1.
    \end{aligned}
    \label{eq.delta_vatthteta}
\end{equation}
From  \eqref{eq.size_vartheta} and \eqref{eq.delta_vatthteta},
\begin{align*}
    &\sum_{u = 0}^{T_k - B - 1}\mathcal{K}\left(\frac{u}{H}\right)\left\Vert\sum_{t_1 = B+1}^{T_k - u}\vartheta_{t_1,k}\left(\widehat{\vartheta}_{t_1 + u,k} - \vartheta_{t_1 + u,k}\right)\right\Vert_{M/4}\\
    &\leq \sum_{u = 0}^{T_k - B - 1}\mathcal{K}\left(\frac{u}{H}\right)\sum_{t_1 = B+1}^{T_k - u}\left\Vert \vartheta_{t_1,k}\right\Vert_{M/2}\left\Vert \widehat{\vartheta}_{t_1 + u,k} - \vartheta_{t_1 + u,k}\right\Vert_{M/2}\\
    &\leq C\sum_{u = 0}^{T_k - B - 1}\mathcal{K}\left(\frac{u}{H}\right)T_k\sqrt{\mathcal{T}_\circ(B_1 - B)}B_1
    \leq C_1H\mathcal{T}_\circ^{3/2}B_1^{3/2},
\end{align*}
and 
\begin{align*}
    &\sum_{u = 0}^{T_k - B - 1}\mathcal{K}\left(\frac{u}{H}\right)\left\Vert\sum_{t_1 = B+1}^{T_k - u}\vartheta_{t_1 + u,k}\left(\widehat{\vartheta}_{t_1,k} - \vartheta_{t_1,k}\right)\right\Vert_{M/4}\\
    &\leq \sum_{u = 0}^{T_k - B - 1}\sum_{t_1 = B+1}^{T_k - u}\mathcal{K}\left(\frac{u}{H}\right)\left\Vert\vartheta_{t_1 + u,k}\right\Vert_{M/2}\left\Vert \widehat{\vartheta}_{t_1,k} - \vartheta_{t_1,k}\right\Vert_{M/2}\\
    &\leq C\sum_{u = 0}^{T_k - B - 1}\mathcal{K}\left(\frac{u}{H}\right) T_k\sqrt{\mathcal{T}_\circ(B_1 - B)}B_1\leq C_1H\mathcal{T}_\circ^{3/2}B_1^{3/2}.
\end{align*}
Furthermore, we have 
\begin{align*}
    &\sum_{u = 0}^{T_k - B - 1}\mathcal{K}\left(\frac{u}{H}\right)\left\Vert\sum_{t_1 = B+1}^{T_k - u}\left(\widehat{\vartheta}_{t_1 + u,k} - \vartheta_{t_1 + u, k}\right)\left(\widehat{\vartheta}_{t_1,k} - \vartheta_{t_1,k}\right)\right\Vert_{M/4}\\
    &\leq \sum_{u = 0}^{T_k - B - 1} \sum_{t_1 = B+1}^{T_k - u}\mathcal{K}\left(\frac{u}{H}\right)\left\Vert \widehat{\vartheta}_{t_1 + u,k} - \vartheta_{t_1 + u, k}\right\Vert_{M/2}\left\Vert \widehat{\vartheta}_{t_1,k} - \vartheta_{t_1,k}\right\Vert_{M/2}\\
    &\leq C\sum_{u = 0}^{T_k - B - 1}\mathcal{K}\left(\frac{u}{H}\right) T_kB^2_1\leq C_1H\mathcal{T}_\circ B^2_1.
\end{align*}
From these observations, we have 
\begin{equation}
    \begin{aligned}
        &\frac{4\mathcal{T}_\circ(B_1 - B)}{V^2_k d}\left\Vert
    \sum_{t_1 = B+1}^{T_k}\sum_{t_2 = B+1}^{T_k}\left(\widehat{\vartheta}_{t_1,k}\widehat{\vartheta}_{t_2,k} - \vartheta_{t_1,k}\vartheta_{t_2,k}\right)\mathcal{K}\left(\frac{t_1 - t_2}{H}\right)
    \right\Vert_{M/4}\\
    &\leq \frac{C}{\mathcal{T}_\circ B_1 d} H\mathcal{T}_\circ^{3/2}B^{3/2}_1 + \frac{C}{\mathcal{T}_\circ B_1 d}H\mathcal{T}_\circ B^2_1\leq \frac{C_1H\sqrt{B_1}}{\sqrt{\mathcal{T}_\circ}},
    \end{aligned}
\end{equation}
and 
\begin{equation}
\begin{aligned}
    &\left\Vert
    \frac{4\mathcal{T}_\circ(B_1 - B)}{V^2_k d}\sum_{t_1 = B+1}^{T_k}\sum_{t_2 = B+1}^{T_k}\widehat{\vartheta}_{t_1,k}\widehat{\vartheta}_{t_2,k}\mathcal{K}\left(\frac{t_1 - t_2}{H}\right)\right.\\
&\left.- \mathrm{Var}\left(\frac{\sqrt{\mathcal{T}_\circ(B_1-  B)}}{V_k\sqrt{d}}\sum_{B\leq \vert t_1 - t_2\vert\leq B_1}^{T_k}\left(\boldsymbol{\epsilon}_{t_1,k}^\top\boldsymbol{\epsilon}_{t_2,k} - \mathbf{E}\left[\boldsymbol{\epsilon}_{t_1,k}^\top\boldsymbol{\epsilon}_{t_2,k}\right]\right)\right)
\right\Vert_{M/4}\\
&\leq \frac{C\log(\mathcal{T}_\circ)}{\mathcal{T}_\circ^{1/6 - \kappa_2}} + \frac{C}{\mathcal{T}_\circ^{\kappa_2 / 2}} + \frac{CH\sqrt{B_1}}{\sqrt{\mathcal{T}_\circ}}\\
&\leq \frac{C_1\log(\mathcal{T}_\circ)}{\mathcal{T}_\circ^{1/6 - \kappa_2}} + \frac{C_1}{\mathcal{T}_\circ^{\kappa_2 / 2}}.
\end{aligned}
\label{eq.var_S_b}
\end{equation}
From \eqref{eq.var_S_b}, we have 
\begin{align*}
    &\left\vert
    \mathrm{Var}^*\left(
    \widehat{S}^*_b
    \right) - \mathrm{Var}\left(\zeta\right)
    \right\vert\\
    &\leq \sum_{k = 2}^K\left\vert
    \frac{4\mathcal{T}_\circ (B_1 - B)}{V_k^2d}\sum_{t_1 = B+1}^{T_k}\sum_{t_2  = B+1}^{T_k}\mathcal{K}\left(\frac{t_1 - t_2}{H}\right)\widehat{\vartheta}_{t_1,k}\widehat{\vartheta}_{t_2,k}\right.\\
    &\left.- \mathrm{Var}\left(\frac{\sqrt{\mathcal{T}_\circ(B_1-  B)}}{V_k\sqrt{d}}\sum_{B\leq \vert t_1 - t_2\vert\leq B_1}^{T_k}\left(\boldsymbol{\epsilon}_{t_1,k}^\top\boldsymbol{\epsilon}_{t_2,k} - \mathbf{E}\left[\boldsymbol{\epsilon}_{t_1,k}^\top\boldsymbol{\epsilon}_{t_2,k}\right]\right)\right)
    \right\vert\\
    &+ (K - 1)\left\vert
    \frac{4\mathcal{T}_\circ (B_1 - B)}{V_1^2d}\sum_{t_1 = B+1}^{T_1}\sum_{t_2  = B+1}^{T_k}\mathcal{K}\left(\frac{t_1 - t_2}{H}\right)\widehat{\vartheta}_{t_1,1}\widehat{\vartheta}_{t_2,1}\right.\\
    &\left.- \mathrm{Var}\left(\frac{\sqrt{\mathcal{T}_\circ(B_1-  B)}}{V_1\sqrt{d}}\sum_{B\leq \vert t_1 - t_2\vert\leq B_1}^{T_k}\left(\boldsymbol{\epsilon}_{t_1,1}^\top\boldsymbol{\epsilon}_{t_2,1} - \mathbf{E}\left[\boldsymbol{\epsilon}_{t_1,1}^\top\boldsymbol{\epsilon}_{t_2,1}\right]\right)\right)
    \right\vert\\
    &= O_p\left(\frac{\log(\mathcal{T}_\circ)}{\mathcal{T}_\circ^{1/6 - \kappa_2}} + \frac{1}{\mathcal{T}_\circ^{\kappa_2 / 2}}\right).
\end{align*}

Choose $\psi = \sqrt{\log(\mathcal{T}_\circ)},$ then 
\begin{align*}
   \mathbf{Pr}^*\left(\widehat{S}^*_b\leq x\right) - \mathbf{Pr}\left(\zeta\leq x\right)
   &\leq\mathbf{E}^*\left[g_{\psi,x}\left(\widehat{S}^*_b\right)\right] - \mathbf{E}\left[g_{\psi,x - 1/\psi}\left(\zeta\right)\right]\\
   &\leq \sup_{x\in\mathbf{R}}\left\vert
   \mathbf{E}^*\left[g_{\psi,x}\left(\widehat{S}^*_b\right)\right] - \mathbf{E}\left[g_{\psi,x}\left(\zeta\right)\right]
   \right\vert + \mathbf{Pr}\left(x - \frac{1}{\psi} \leq \zeta\leq x + \frac{1}{\psi}\right)\\
   &\leq  \sup_{x\in\mathbf{R}}\left\vert
   \mathbf{E}^*\left[g_{\psi,x}\left(\widehat{S}^*_b\right)\right] - \mathbf{E}\left[g_{\psi,x}\left(\zeta\right)\right]
   \right\vert + \frac{C}{\psi},
\end{align*}
and 
\begin{align*}
    \mathbf{Pr}^*\left(\widehat{S}^*_b\leq x\right) - \mathbf{Pr}\left(\zeta\leq x\right)
   &\geq \mathbf{E}^*\left[g_{\psi,x - 1/\psi}\left(\widehat{S}^*_b\leq x\right)\right] - \mathbf{E}\left[g_{\psi,x}\left(\zeta\right)\right]\\
   &\geq -\sup_{x\in\mathbf{R}}\left\vert
   \mathbf{E}^*\left[g_{\psi,x}\left(\widehat{S}^*_b\right)\right] - \mathbf{E}\left[g_{\psi,x}\left(\zeta\right)\right]
   \right\vert - \mathbf{Pr}\left(x - \frac{1}{\psi} \leq \zeta\leq x + \frac{1}{\psi}\right)\\
   &\geq -\sup_{x\in\mathbf{R}}\left\vert
   \mathbf{E}^*\left[g_{\psi,x}\left(\widehat{S}^*_b\right)\right] - \mathbf{E}\left[g_{\psi,x}\left(\zeta\right)\right]
   \right\vert - \frac{C}{\psi}.
\end{align*}
Since $\widehat{S}^*_b$ has normal distribution conditional on observations, from Assumption 4,
\begin{align*}
    &\left\vert\mathbf{E}^*\left[g_{\psi,x}\left(\widehat{S}^*_b\right)\right] - \mathbf{E}\left[g_{\psi,x}(\zeta)\right]\right\vert\\
    &\leq C\psi\left\vert\sqrt{\mathrm{Var}^*\left(
    \widehat{S}^*_b
    \right)} - \sqrt{\mathrm{Var}(\zeta)}\right\vert\\
    &\leq C_1\psi\left\vert \mathrm{Var}^*\left(
    \widehat{S}^*_b\right) - \mathrm{Var}(\zeta)\right\vert = O_p\left(\frac{\log^{3/2}(\mathcal{T}_\circ)}{\mathcal{T}_\circ^{1/6 - \kappa_2}} + \frac{\sqrt{\log(\mathcal{T}_\circ)}}{\mathcal{T}_\circ^{\kappa_2 / 2}}\right),
\end{align*}
so
\begin{align*}
    \sup_{x\in\mathbf{R}}\left\vert
        \mathbf{Pr}^*\left(\widehat{S}^*_b\leq x\right) - \mathbf{Pr}\left(\zeta\leq x\right)
        \right\vert = O_p\left(\frac{1}{\sqrt{\log(\mathcal{T}_\circ)}}\right),
\end{align*}
which proves \eqref{eq.consistency_bootstrap}.
\end{proof}


\bibliographystyle{apalike} 
\bibliography{Arxiv}       

\end{document}